\tikzset{external/force remake}
\numberwithin{equation}{section} 
\newcommand{\wwphi}{\widetilde {\widetilde{\phi}}}
\newcommand\ld{L}
\newcommand\supp{\mathrm{supp}}
\newtheorem{theorem}{Theorem}[section]
\newtheorem{proposition}[theorem]{Proposition}
\newtheorem{remark}[theorem]{Remark}
\newtheorem{lemma}[theorem]{Lemma}
\newtheorem{definition}[theorem]{Definition}
\newtheorem{corollary}[theorem]{Corollary}
\newtheorem{conjecture}[theorem]{Conjecture}
\newcommand\ep{{\varepsilon}}
\newcommand\zu{[0,1]}
\newcommand\la{\lambda}
\newcommand\mk{\medskip}
\newcommand\sk{\smallskip}
\newcommand{\R}{\mathbb{R}}
\newcommand{\N}{\mathbb{N}}
\newcommand{\Z}{\mathbb{Z}}
\newcommand\zud{[0,1]^d}
\newcommand \ds{\displaystyle}
\newcommand \ml{multifractal }
\newcommand\si{\sigma}
 \newcommand{\SD}{\mathscr{S}_{d,\mathcal{M}}}
 \newcommand{\MDs}{\mathscr{E}_d}
 \newcommand{\SDs}{\mathscr{S}_{d}}
\newcommand{\sd}{\sigma}
\newcommand{\TD}{\mathscr{T}_{d,\mathcal{M}}}
\newcommand{\TDs}{\mathscr{T}_d}
 \newcommand\La{\Lambda}
 \newcommand{\MD}{\mathcal{M}_d}
\newcommand\wb{\widetilde{B}}
\newcommand\omegat{\omega^\mu}
\newcommand{\alm}{\alpha_{\min}}
\newcommand\MF{MF } 
\newcommand\SMF{SMF } 
\newcommand\lai{ \overline{\lambda}}
\newcommand\ki{ \overline{k}}
\newcommand\ji{ \overline{j}}
\newcommand\lau{ \lambda^{\uparrow}}
 \newcommand\lah{\widehat{\la}}
\newcommand\dom{\mbox{dom}}
\begin{document}
\title[Besov spaces in multifractal environment]{Besov spaces in multifractal environment and the Frisch-Parisi conjecture}

\author{Julien Barral \and St\'ephane Seuret }         

\address{Julien Barral, LAGA, CNRS UMR 7539, Institut Galil\'ee, Universit\'e Sorbonne Paris Nord, Sorbonne Paris Cit\'e, 99 avenue Jean-Baptiste Cl\'ement , 93430  Villetaneuse, 
France}
\address{St\'ephane Seuret, Universit\'e Paris-Est, LAMA (UMR 8050), UPEMLV, UPEC, CNRS, F-94010, Cr\'eteil, France}
\date{Received: date / Revised version: date}

\begin{abstract}
In this article,   a solution to the so-called Frisch-Parisi conjecture is brought.  This achievement is based on three ingredients  developed in this paper. First  almost-doubling fully supported Radon measures on $\R^d$ with a prescribed singularity spectrum are constructed. Second we define new \textit{heterogeneous} Besov  spaces $B^{\mu,p}_{q}$ and find a characterization using wavelet coefficients. Finally, we fully describe the multifractal nature of typical functions in the function spaces $B^{\mu,p}_{q}$. Combining these  three results, we find Baire function spaces   in which typical functions have  a prescribed  singularity spectrum and satisfy a multifractal formalism. This yields an answer to the Frisch-Parisi conjecture.

\end{abstract}

\maketitle

\tableofcontents
\section{Introduction}

This paper deals with multifractal analysis of functions, which originates from the first geometric quantification of the H\"older singularities structure in fully developed turbulence \cite{Mandel2,Mandel1,FrischParisi}. This subject is an instance of the natural concept of multifractality, which comes into play as soon as, given a mapping $h:X\to A$ between a metric space $(X,d)$ and a set $A$, one describes geometrically the level sets of $h$ by considering the mapping $\sigma:\alpha\in A\mapsto \dim h^{-1}(\{\alpha\})$, where $\dim$ stands for the Hausdorff dimension.  Indeed, in many   situations, the  level sets of~$h$ form an uncountable family of  disjoint fractal sets, and $\sigma$ is sometimes called {\it multifractal spectrum}.  This spectrum provides a hierarchy between these level sets, according to their size measured by their Hausdorff dimension. Such spectra occur  in many mathematical fields, such as harmonic and functional analysis (in the description of fine properties of  Fourier series \cite{JaffRiemann,BayHeu} or typical elements in functional spaces \cite{BUC_Nagy,JAFF_FRISCH}),  probability theory (to describe fine properties of Brownian motion or SLE curves~\cite{OreyTaylor,Perkins,Lawleracta,GMS}, multiplicative chaos and Gaussian free field, random covering problems~\cite{BM04,HuMillerPeres,RV,BFan}),  ergodic theory, dynamical and iterated function systems (to analyse  Gibbs/harmonic measures on conformal repellers, Birkhoff averages,  and self-similar measures \cite{pesin2,Makarov,Feng2007,FL09,Shmerkin_annals}), metric number theory (Diophantine approximation and ubiquity theory~\cite{Jarnik,HillVelani2,BS}, shrinking targets problems and dynamical covering problems~\cite{HillVelani,FanSchmTrou}), the previous references being far from exhaustive. 

\sk 

%
 In the multifractal analysis  of a real valued function $f \in L^\infty_{\rm loc}(\R^d)$, the function~$h$~of interest~is the pointwise H\"older exponent function $h_f$, which is defined as follows. Given~$x_0 \in \R^d$, and $H\in \R_+$, $f$ is said to
belong to ${\mathscr C}^{H}(x_0)$ if
there exist  a polynomial~$P$ of degree at most $\lfloor H\rfloor$, a constant $C>0$, and a neighborhood $V$ of $x_0$ such that 
$$
\forall\, x\in V, \quad   |f(x)-P(x-x_0)|\leq C|x-x_0|^{H}.
$$


\sk

The associated multifractal spectrum,  also called {\it  singularity spectrum} of $f$, is  the mapping
  $$\sigma_f: H \in \R\cup\{\infty\} \mapsto \dim
\, E_f(H)\in [0,d]\cup\{-\infty\}, \  \mbox{where } E_f(H):= h_f^{-1}(\{H\})$$
(note that $E_f(H)=\emptyset$ for $H<0$).  Again, $\dim$ stands for the Hausdorff dimension, with the convention   $\dim \emptyset =-\infty$. The function $f$ is said to be  {\em multifractal} when $E_f(H)\neq\emptyset$ for at least two values of $H$. 

\sk
 
The idea of considering this spectrum goes back to the physicists U. Frisch and G. Parisi~\cite{FrischParisi}, who aimed at  quantifying geometrically  the local variations of  the velocity field of a turbulent fluid, and introduced the term {\it multifractal}. Another fundamental idea pointed out by Frisch and Parisi consisted in coupling   the singularity spectrum  with a large deviations approach, in order to statistically describe the H\"older singularities distribution (similar to  Mandelbrot's approach   for measures \cite{Mandel2}). This led to the notion of {\em multifractal formalisms}  for functions. Since defining rigorously such a formalism is a bit involved and will be done later in Section~\ref{statement}, let us say at the moment that schematically,  in such a formalism,  the  singularity spectrum $\sigma_f$ of a H\"older continuous function $f$  is always dominated by (and in good cases, coincides with) the Legendre-Fenchel transform 
$$\zeta_f^ *(H) := \inf_{q\in\R} Hq-\zeta_f(q)$$
 of a function $\zeta_f:\R\to\R$, called the {\it scaling function} or the {\em $L^q$-spectrum} of $f$: for every $h\geq 0$, $\sigma_f(h) \le \zeta_f^ *(h)$. The mapping $\zeta_f$  is a kind of free energy function  encapsulating the asymptotic statistical distribution of the   H\"older singularities as the   observation scale tends to 0, and it can be numerically estimated \cite{JAFFARD:2006:A}. For instance,  in their seminal article, Frisch and Parisi used for $\zeta_f$  the scaling exponent of the moments of the increments of $f$,  informally defined~as 
$$
|h|^{-d}\int_\Omega|f(x+h)-f(x)|^q\, {d}x\sim |h|^{\zeta_f(q)} \quad \text{as }h\to 0,
$$ 
where $\Omega$ is a fixed bounded domain on which $f$ is supposed to be fully supported. The heuristics developed in \cite{FrischParisi} lead to seek for the largest as possible classes of functions for which the  equality 
\begin{equation}\label{FPvalid}
\sigma_f(H)=\zeta_f^*(H)
\end{equation}
 holds at any $H$ such that $\zeta_f^*(H)\ge 0$. In such a situation, one says that {\em the multifractal formalism holds} for $f$, or that $f$ satisfies the multifractal formalism. Then,  the spectrum $\sigma_f$ is a continuous concave mapping with support included in $(0,+\infty)$, and assuming that the topological support of $f$ is full, one necessarily has $\sigma_f(H)=d=-\zeta_f(0)$ for some $H\geq 0$ (for instance the level set $E_f(H)$ may have a positive Lebesgue measure). 
 
\sk We will come back to rigorous definitions of multifractal formalisms for functions and measures in  Sections~\ref{secMFF} and \ref{wqb}. The concept of multifractal formalism motivated many works in geometric measure theory \cite{BRMICHPEY,Olsen,LN,JLVVOJAK}, dynamical systems  in connection with the thermodynamic formalism~\cite{Pesin}, and analysis~\cite{Jaf97a,JAFF_FRISCH,JaffardPSPUM}. It provides a powerful framework to describe the fine geometric structure of invariant measures of various dynamical systems~\cite{ColletLebPor,Rand,Pesin} and     self-similar and self-affine measures~\cite{Kin95,Olsen,Olsen98,LN,FL09,BarralFeng2},  self-similar functions~\cite{Jaf97a},  as well as  limit measures or functions in multiplicative chaos theory~\cite{HW,BM04,BaJin10}. The  singularity spectrum and  its suitable extensions to non-bounded functions have also been used to describe the regularity properties of celebrated functions like Riemann's and Brjuno's functions \cite{JaffRiemann,SU,JM},   stochastic processes like L\'evy processes and   general classes of Markov processes~\cite{jaffard1999levy,BFJS-markov,yang2018ihp}, as well as L\'evy processes in multifractal time~\cite{BaSeuLevy}. 

\begin{figure} 
\includegraphics[scale=0.093]{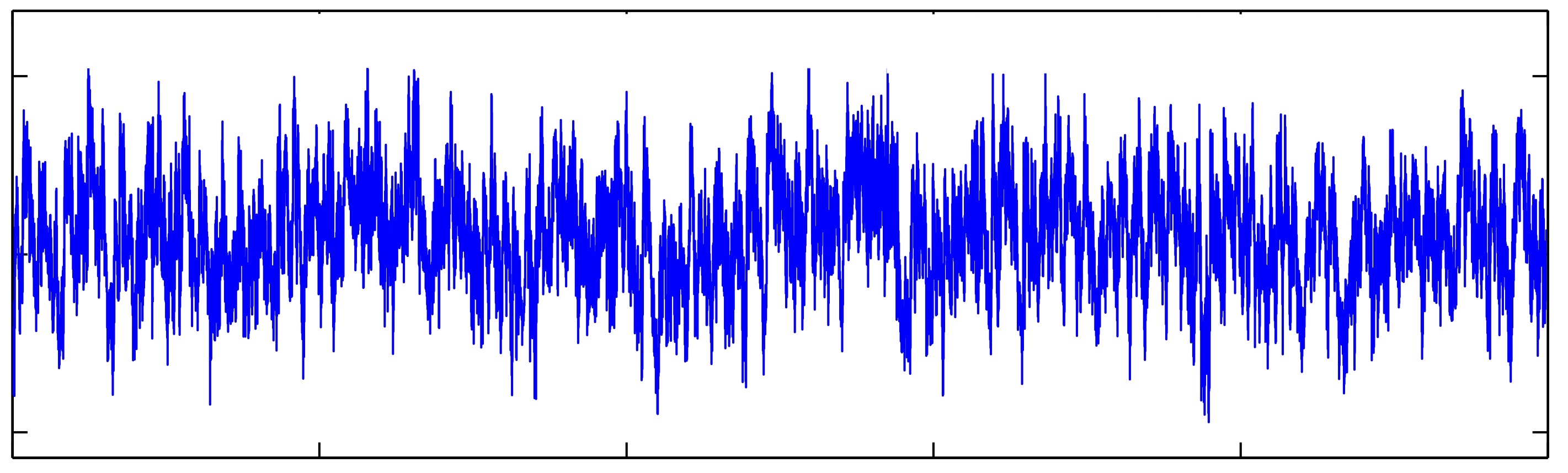} \includegraphics[scale=1.7]{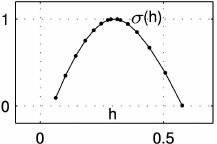}
 \caption{Estimated singularity spectrum (right) for the 1D velocity of a turbulent flow (left) - Credit to P. Abry, H. Wendt}
\label{figure00}
 \end{figure}

\sk

Multifractal formalisms are also relevant in many applications, due to the existence of  stable  algorithms that precisely estimate scaling functions $\zeta_f$  of numerical data. 
Then,  a key observation is that for most of real-life data associated with  intermittent phenomena, their estimated singularity spectra $\zeta_f^*$ have a characteristic  strictly concave bell shape (see \cite{MandMemor} and Figure~\ref{figure00}). This is also the case for the singularity spectra of important classes of functions possessing scaling properties \cite{Jaf97a,BaSeuLevy,BaJin10}. This behavior is in striking contrast to the results established for typical functions in the classical functional spaces, where ``typical'' is meant in the sense of Baire categories
\footnote{Recall that in a Baire topological space $E$, a property $\mathcal{P}$ is called typical, or generic,  when the set $\{f \in E:  \text{ $f$ satisfies $\mathcal{P}$}\}$ is of second category in~$E$, or equivalently contains a dense $G_\delta$-set, that is the intersection of a countable family of  dense and open sets. One says that typical elements in $E$ satisfy $\mathcal{P}$ when $\mathcal{P}$ is  typical in $E$.
}.  Indeed, it has been proved that typical increasing real functions (Buczolich\&Nagy \cite{BUC_Nagy}), typical functions in   Sobolev and Besov spaces (Jaffard \cite{JAFF_FRISCH}, Jaffard\&Meyer \cite{JaffardMeyer}), and typical measures (Buczolich\&Seuret, Bayart \cite{BuS2,Bay}) satisfy a multifractal formalism and have  an affine increasing singularity spectrum. One   concludes that, from the multifractal standpoint,   realistic behaviors are not reproduced by typical  elements in the standard function spaces. A precise statement regarding the typical  singularity spectrum in Besov spaces  is recalled  in Remark \ref{rem-spectra2}, and  the validity of   multifractal formalisms in these spaces is discussed in Section~\ref{secMFF} (see also Figure~\ref{figure2}). 

\sk

On the other hand, the previous genericity results show that many multifractal functions do satisfy some multifractal formalism without assuming  any scale invariance properties. In~\cite{JAFF_FRISCH}, Jaffard seeks for Baire topological spaces of functions in which typical functions have a prescribed singularity spectrum, and do obey some multifractal formalism. He names this inverse problem   ``Frisch-Parisi conjecture'', and provides a partial solution to it: he finds that some  intersections of homogeneous Besov spaces are Baire topological spaces in which typical functions possess an increasing compactly supported singularity spectrum, with a prescribed concave part, and another part which is necessarily linear; moreover, typical elements partially obey some multifractal formalism (see Section~\ref{fpc} for a detailed description of Jaffard's result). Again, no scale invariance is assumed.

The  \textit{Frisch-Parisi conjecture}   considered by Jaffard is formulated as follows:

\begin{conjecture}[Frisch-Parisi conjecture]\label{FP}

Let $\SDs$ be the set of functions $\sigma: \R\to [0,d]\cup \{-\infty\}$ such that  $\sigma$ is concave, continuous,  with compact support included in $(0,+\infty)$ and  whose maximum equals~ $d$. For every $\sigma \in \SDs$, there exists a Baire functional space  of functions defined on $\R^d$ in which  any typical element $f$  obeys some multifractal formalism and   satisfies $\sigma_f=\sigma$.
\end{conjecture}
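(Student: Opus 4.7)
The plan follows the three-ingredient strategy announced in the abstract.

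First, given a target spectrum $\sigma\in \SDs$, I would construct an auxiliary fully supported Radon measure $\mu$ on $\zud$ whose multifractal data encode $\sigma$: concretely, $\mu$ should be almost-doubling and its $L^q$-spectrum $\tau_\mu$ should be the unique concave function such that a suitable affine image of $\tau_\mu^*$ coincides with $\sigma$ on the support of $\sigma$. Classical cascade- or Bernoulli-product-type constructions on nested $b$-adic grids produce measures with prescribed $L^q$-spectra, but only for a restricted class of shapes (typically with linear parts forced at the boundary). To cover every $\sigma\in\SDs$ I would glue together such building blocks scale by scale, using a carefully tuned sequence of inhomogeneous Bernoulli weights on a refining sequence of grids, and then verify by direct estimates on $\mu(\ijk)$ that the resulting measure remains almost-doubling, which is a quantitative comparison between $\mu(\ijk)$ and $\mu$ of its neighbors. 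This is where the constraint that $\sigma$ be concave, continuous and supported in $(0,\infty)$ enters fundamentally.

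Second, with $\mu$ in hand I would define the heterogeneous Besov space $\bmu$ by replacing the uniform scaling weight $2^{-jH}$ of the usual Besov norm by a $\mu$-adapted weight: informally, $f$ with wavelet coefficients $c_{j,k}$ belongs to $\bmu$ when $\sup_j \sum_k |c_{j,k}|^p \, \mu(\ijk)^{-\alpha}$ is finite, for an appropriate exponent $\alpha$. The key task is the wavelet characterization, which should say that $f\in\bmu$ iff its wavelet coefficients satisfy such a $\mu$-weighted summability condition independently of the choice of (sufficiently regular) wavelet basis. The proof adapts the standard Littlewood--Paley / wavelet arguments for Besov spaces, with the almost-doubling property of $\mu$ compensating for the lack of the uniform scaling $|\ijk|=2^{-jd}$; concretely, it permits comparing $\mu(\ijk)$ to $\mu$ of dyadic ancestors, descendants and neighbors with only polylogarithmic losses, so the classical norm equivalences survive. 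Completeness of $\bmu$ also follows from this characterization, making $\bmu$ a Baire space.

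Third, I would run a Baire category argument in $\bmu$. For the upper bound $\sigma_f(H)\le \sigma(H)$ valid for every $f\in\bmu$, the wavelet characterization forces $|c_{j,k}|\lesssim \mu(\ijk)^{\alpha}$ on a $\mu$-dominant set of cubes, which combined with the multifractal analysis of $\mu$ and a standard Jaffard-type counting argument yields a dimension bound on $E_f(H)$. For the lower bound, I would build, for each $\sigma\in \SDs$, an explicit saturating function $f_\sigma\in \bmu$ whose wavelet coefficients are lacunary sequences placed exactly along the level sets of the local scaling function of $\mu$, and whose singularity spectrum equals $\sigma$. A density/translation argument, applied to a countable family of such saturating functions parameterized by approximations of $\sigma$ and by $L^q$-spectrum targets, would then produce a dense $G_\delta$ subset of $\bmu$ whose elements inherit both the prescribed $\sigma_f=\sigma$ and the validity of the multifractal formalism, the latter because the upper bound on $\sigma_f$ already matches $\zeta_f^*$ pointwise for the selected $f$.

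The main obstacle, as I see it, is the simultaneous coordination of the three ingredients: the measure $\mu$ must be rich enough to let its level sets of local dimension carry arbitrary prescribed Hausdorff dimensions (step one), its almost-doubling must be strong enough to force a clean wavelet theory on $\bmu$ (step two), and the saturating functions must be placed finely enough along these level sets to realize \emph{every} $H$ in $\supp(\sigma)$ with the correct dimension while staying dense in $\bmu$ (step three). Each of these is individually delicate, and making them compatible — in particular avoiding the forced linear part that restricted Jaffard's partial solution — is the core difficulty the paper has to overcome.
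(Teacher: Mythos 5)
Your blueprint reproduces the paper's architecture (prescribed measures, $\mu$-adapted Besov spaces with a wavelet description, saturation function plus Baire category), but as stated it has two genuine gaps. First, you never address the role of the integrability exponent $p$. In a space whose norm involves $\big(\sum_k |c_{j,k}/\mu(\lambda_{j,k})|^p\big)^{1/p}$ with $p<+\infty$, the typical singularity spectrum is \emph{not} a rescaled copy of $\sigma_\mu$: it is the Legendre transform of $\zeta_{\mu,p}(t)=\frac{p-t}{p}\tau_\mu\!\left(\frac{pt}{p-t}\right)$ (linear for $t\ge p$), i.e.\ $\sigma_\mu$ composed with the shift $\theta_p(\alpha)=\alpha+\tau_\mu^*(\alpha)/p$, plus an additional affine piece of slope $p$ produced by dyadic ubiquity whenever $\sigma_\mu(\alpha_{\min})>0$. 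Consequently a construction with finite $p$ can only realize spectra with $\sigma(H_{\min})=0$ and $\sigma'(H_{\min}^+)\le p$; to reach \emph{every} $\sigma\in\mathscr S_d$ one must either take $p=+\infty$ (the paper's choice, where $\zeta_{\mu,+\infty}=\tau_\mu$) or invert $\theta_p$ when defining the target measure. Relatedly, since singularity spectra of measures satisfy $\sigma_\mu\le \mathrm{Id}$ and must touch the diagonal, one cannot ask for $\tau_\mu^*=\sigma$ directly; the paper first rescales $\sigma$ to $\sigma(\cdot/s)$ with $\sigma^*(s)=0$ and then uses the power $\mu^s$ as environment. Your phrase ``a suitable affine image'' gestures at this but the proof needs it explicitly.

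Second, two of your ``the classical arguments survive'' claims are not available in this generality. Under mere almost-doubling the wavelet estimates lose factors $e^{\phi(j)}$ with $\phi(j)=o(j)$ arbitrary (not polylogarithmic), so one does not get a norm equivalence for $B^{\mu,p}_q(\R^d)$ itself, only two-sided bounds up to perturbed environments $\mu^{(\pm\ep)}$; this forces passing to the Fr\'echet intersection $\widetilde B^{\mu,p}_q(\R^d)=\bigcap_{\ep}B^{\mu^{(-\ep)},p}_q(\R^d)$, which is the space actually used in the Baire argument, and it is only there that basis-independence holds. Finally, the validity of the formalism does not follow merely because ``the upper bound on $\sigma_f$ matches $\zeta_f^*$'': on the decreasing branch of a bell-shaped $\sigma$ the wavelet-leader scaling function of a typical $f$ satisfies $\zeta_f^\Psi=-\infty$ on $\R_-^*$ (at least for $q<+\infty$), so $(\zeta_f^\Psi)^*$ only gives the trivial bound $d$ there; one must introduce a weak formalism along subsequences (the paper's WWMF) to make the statement ``obeys some multifractal formalism'' true on the whole support of $\sigma$.
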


\sk

\begin{figure}
 \includegraphics[scale=0.04]{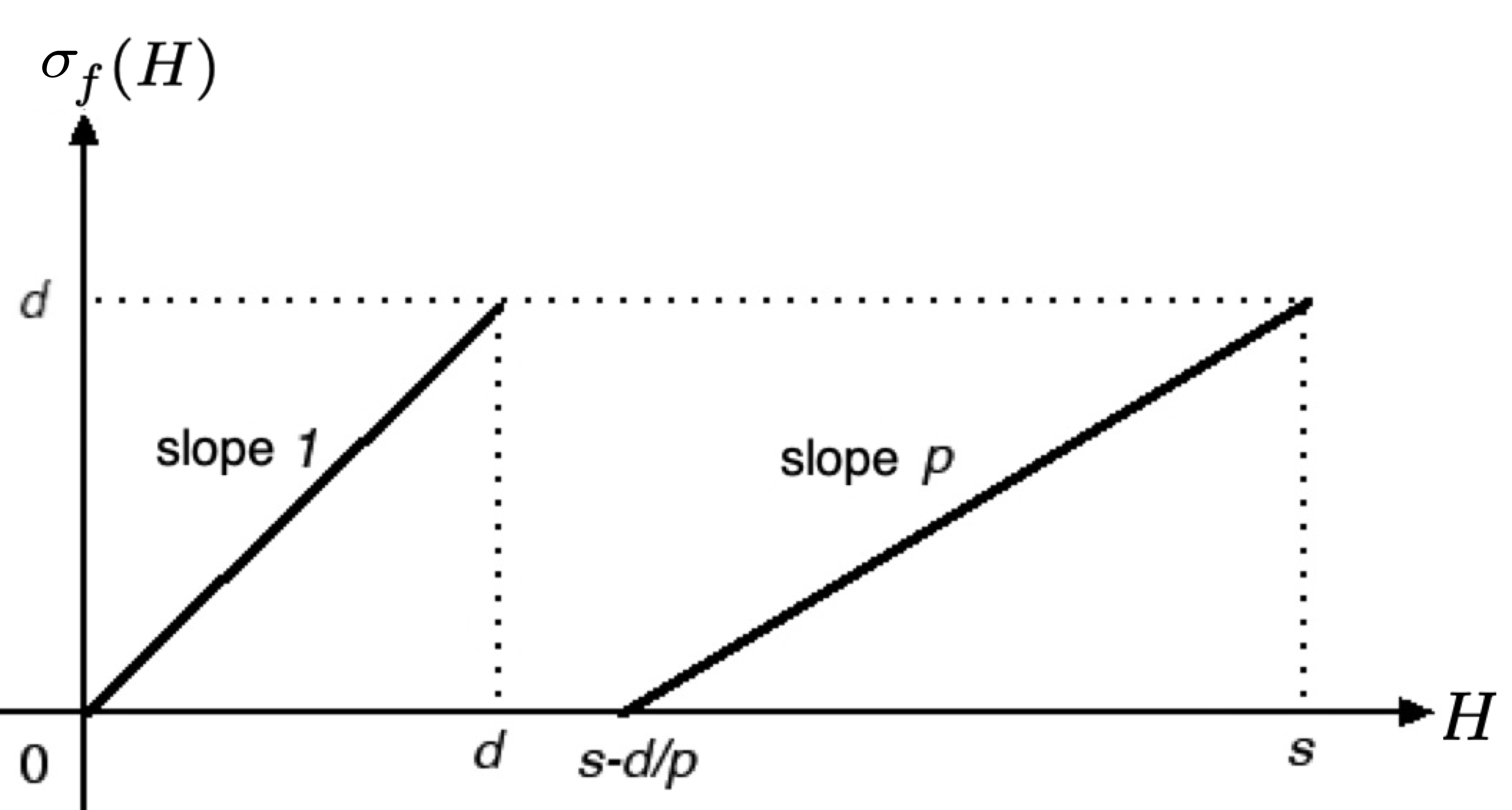}
 \caption{Typical multifractal spectrum of probability measures (left) or functions in $B^{s,p}_q(\R^d)$ when $s>d/p$ (right).}
\label{figure2}\end{figure}

Note that the set $\SDs$ consists of those mappings $\sigma$ which are admissible to be the  singularity spectrum of some H\"older continuous function $f:\R^d\to \R$ whose pointwise H\"older exponents range in a compact subinterval of $(0,+\infty)$, such that $\dim E_f(H)=d$ for at least one exponent $H$,  and which satisfies some multifractal formalism. The   formalism for functions adopted in this paper will be specified in Section~\ref{secMFF}. It is based on the  so-called wavelet leaders multifractal formalism, and developed by Jaffard in particular in \cite{JaffardPSPUM}.

\sk

In the present paper, we introduce natural Baire function spaces in which typical functions have a prescribed  bell-shape  singularity spectrum, and satisfy the multifractal formalism mentioned above. This construction follows from three ingredients   developed in this paper, each of them having its own interest. 
 
\sk
 First we prove the existence of almost-doubling and $\Z^d$-invariant Radon measures fully supported on~$\R^d$ with prescribed  singularity spectrum, and which satisfy the standard multifractal formalisms for measures developed in \cite{BRMICHPEY,Olsen} (Theorem \ref{th-construct-measures} and Corollary \ref{th-construct-measures}). Up to now, such a result was  only known  for measures supported on a totally disconnected set~\cite{Barralinverse} (see also~\cite{BuS3} for results on the prescription of the  singularity spectrum for measures). These measures possess scaling-like properties.
  
Second, we introduce new functional spaces $B^{\mu,p}_q(\R^d)$ that we call Besov spaces in {\em multifractal environment}, whose definition is based on  a modification of the usual notion of $L^p$-moduli of smoothness. These spaces depend  on an  almost-doubling capacity~$\mu$, that we call {\em environment}. Then, we study the wavelet decomposition of functions belonging to $B^{\mu,p}_q(\R^d)$, and prove that  the intersection of  suitable perturbations of the space $B^{\mu,p}_q(\R^d)$ defines a Fr\'echet space $\widetilde B^{\mu,p}_q(\R^d)$ naturally  characterized in terms of  wavelet coefficients   (see Definition \ref{devbseovmu} and Theorem \ref{th_equivnorm_2}). 
 
Finally, thanks to the previous wavelet characterization, we perform the multifractal analysis of typical functions in  $\widetilde B^{\mu,p}_q(\R^d)$,  when the environment $\mu$ is any positive power of  the almost doubling measures   built before (Theorems \ref{main} and \ref{validity}). 

Using the spaces $\widetilde B^{\mu,p}_q(\R^d)$ with suitable parameters $\mu$, $p$ and $q$,   a by-product of   the previous results gives   the   answer to the Frisch-Parisi conjecture:
\begin{theorem}\label{solution}
Conjecture \ref{FP} is true. 
\end{theorem}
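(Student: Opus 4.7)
The plan is to assemble the three main ingredients advertised in the introduction into a single package. Given an admissible spectrum $\sigma\in \SDs$, I would first read off from $\sigma$ the data required to feed Theorem~\ref{th-construct-measures} and its corollary: the compact support $[\alm,\alpha_{\max}]\subset(0,+\infty)$, the maximum value $d$, and the concave profile between these exponents. The theorem then produces an almost-doubling, $\Z^d$-invariant, fully supported Radon measure $\nu$ on $\R^d$ whose singularity spectrum equals $\sigma$ (up to the appropriate rescaling) and which satisfies the classical Brown--Michon--Peyri\`ere/Olsen multifractal formalism. This measure will play the role of the \emph{environment}.

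Next I would calibrate the parameters of the heterogeneous Besov space. Pick a positive exponent $t>0$ and set $\mu=\nu^{t}$, which remains almost-doubling; then choose $p,q\in(0,\infty]$ such that the reference pointwise regularity dictated by $\mu$ through its wavelet coefficients (via Theorem~\ref{th_equivnorm_2}) lies strictly above the top of the support of $\sigma$, while the admissible lower exponents reachable by sparse wavelet perturbations cover $[\alm,\alpha_{\max}]$. Because Theorem~\ref{th_equivnorm_2} characterises $\widetilde B^{\mu,p}_q(\R^d)$ in terms of wavelet coefficients, one verifies that this space is a Fr\'echet space, hence a Baire space — the ambient topological space demanded by Conjecture~\ref{FP}.

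With the space fixed, I would invoke Theorems~\ref{main} and \ref{validity}: they assert, respectively, that the typical $f\in\widetilde B^{\mu,p}_q(\R^d)$ has singularity spectrum given by an explicit formula in terms of $\mu,p,q$, and that such a typical $f$ obeys the wavelet-leaders multifractal formalism recalled in Section~\ref{secMFF}. By construction of $\nu$ and by the choice of $(t,p,q)$, the explicit formula delivered by Theorem~\ref{main} reduces precisely to $\sigma$, and Theorem~\ref{validity} provides the required validity of the formalism. Combining the two statements yields a typical $f$ with $\sigma_f=\sigma$ satisfying the formalism, which is exactly Theorem~\ref{solution}.

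The main obstacle, and the place where all three ingredients must be simultaneously exploited, is the calibration step: for every $\sigma\in\SDs$ — with arbitrary compact support, arbitrary asymmetry, and arbitrary concave shape attaining $d$ — one must realise $\sigma$ \emph{exactly}, not merely up to a concave hull or a truncation. This is where the flexibility of Theorem~\ref{th-construct-measures} in prescribing the spectrum of $\nu$ is essential, where the definition of $\widetilde B^{\mu,p}_q(\R^d)$ as a controlled perturbation of $B^{\mu,p}_q(\R^d)$ leaves the right amount of room to generate all H\"older exponents in $[\alm,\alpha_{\max}]$ but no others, and where the wavelet characterisation is what makes the typical analysis of Theorems~\ref{main} and~\ref{validity} tractable. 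Once these compatibility conditions between $(\sigma,t,p,q)$ are recorded, Theorem~\ref{solution} falls out immediately from the conjunction of the three results.
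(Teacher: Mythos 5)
Your outline follows the paper's architecture (measure with prescribed spectrum $\to$ environment $\to$ typical analysis in $\widetilde B^{\mu,p}_q(\R^d)$), but the step you yourself flag as "the main obstacle" — the calibration of $(t,p,q)$ so that the typical spectrum of Theorem~\ref{main} "reduces precisely to $\sigma$" — is exactly the content of the proof, and you leave it unresolved. Worse, the scheme you hint at (a general $p$, constrained only by conditions on where the "reference regularity" of $\mu$ sits relative to the support of $\sigma$) cannot work for an arbitrary $\sigma\in\SDs$: for $p<+\infty$ the typical spectrum in $\widetilde B^{\mu,p}_q(\R^d)$ is $\zeta_{\mu,p}^*$, which by Proposition~\ref{lemtetap} is $\tau_\mu^*\circ\theta_p^{-1}$ together with a forced affine piece of slope $p$ emanating from a zero at the left endpoint; by Theorem~\ref{exhaustion}(1) this forces $\sigma(H_{\min})=0$ and $\sigma'(H_{\min}^+)\le p$, so spectra in $\SDs$ that are positive at the left end of their domain (or too steep there) are simply not attainable this way with the environment spectrum taken to be $\sigma$ itself (rescaled). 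In other words, with finite $p$ the environment distorts the prescribed spectrum through $\theta_p$, and realizing $\sigma$ exactly requires either pre-composing the environment's spectrum with $\theta_p^{-1}$ (the content of Theorem~\ref{exhaustion}(2), under the extra hypotheses above) or abandoning finite $p$ altogether.

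The paper's resolution is to take $p=+\infty$, for which $\zeta_{\mu,+\infty}=\tau_\mu$ and no distortion occurs, and to fix the single remaining parameter explicitly: let $s>0$ be the unique real with $\sigma^*(s)=0$, so that $\sigma_{\mathcal M}:=\sigma(\cdot/s)$ lies below the identity and touches it, hence $\sigma_{\mathcal M}\in\SD$; Theorem~\ref{th-construct-measures} gives $\mu\in\MD$ with $\sigma_\mu=\sigma_{\mathcal M}$, and the environment is the power $\mu^{s}\in\MDs$ with the \emph{same} exponent $s$, since $\sigma_{\mu^{s}}(H)=\sigma_\mu(H/s)$ undoes the rescaling and returns exactly $\sigma$. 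Then Theorems~\ref{main} and~\ref{validity} applied in $\widetilde B^{\mu^{s},+\infty}_q(\R^d)$ give the typical spectrum $\sigma$ together with the WMF on the increasing part and the WWMF on the whole domain. Your proposal neither ties the power $t$ of the environment to the rescaling needed to enter $\SD$ (your "appropriate rescaling" is not specified; it is pinned down by $\sigma^*(s)=0$), nor commits to $p=+\infty$, so as written it does not yield Theorem~\ref{solution} for all $\sigma\in\SDs$.
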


 It is worth noting that although  typical functions in  $\widetilde B^{\mu,p}_q(\R^d)$ are multifractal and satisfy a multifractal formalism, they do not possess any sort of self-similar structure, consolidating the idea that the multifractal behavior  is generic and not reserved to exceptional situations.
\sk

Our three main results are precisely stated  in the next section.

\section{Statements of the main results}
 \label{statement}

\subsection{Some notations and definitions.} \label{statement-1}  

%

The Lebesgue measure on $\R^d$ is denoted   $\mathcal{L}^d$. 

If $E$ is a Borel subset of $\R^d$, the Borel $\sigma$-algebra of $E$ is denoted $\mathcal B(E)$.  $|E|$ stands for the Euclidean diameter of $E$. 

Given $x\in \R^d$ and $r\in \R_+$, the closed Euclidean ball centered at $x$ with radius $r$ is denoted $B(x,r)$.

\sk

For $j\in\Z$,   $\mathcal D_j$  stands for the collection of closed dyadic cubes of generation $j$, i.e. the cubes $\lambda_{j,k}=2^{-j}k+2^{-j}[0,1]^d$, where $k\in\Z^d$. We also set  $\mathcal D=\bigcup_{j\in\Z} \mathcal D_j$, and if $\lambda=\lambda_{j,k}\in \mathcal D_j$ we denote $x_\la= 2^{-j} k$.

For $x\in\R^d$, $\lambda_j(x)$ stands for the closure of the unique dyadic cube of generation $j$, product of semi-open to the right  dyadic intervals, which contains $x$. 

\sk

For  $j\in\Z$,  $\lambda\in\mathcal D_j$, and $N\in \N^*$, $N \lambda$ denotes  the cube with same center as $\la$ and radius equal to $N\cdot 2^{-j-1}$ in  $(\R^d,\|\,\|_\infty)$. For instance, $3\la$ is  the union of those $\lambda'\in \mathcal D_j$ such that  $\partial \la \cap \partial\lambda'\neq\emptyset$ ($\partial \la $ stands for the frontier of the cube $\la$).
\medskip

\sk

The domain of a function $g:\R\to \R\cup\{-\infty\}$ is defined as $g^{-1}(\R)$, and denoted by $\dom(g)$. 
When $g$ is concave and finite, one sets  $g'(+\infty)=\lim_{t\to+ \infty} g'(t^+)$ and $g'(-\infty)=\lim_{t\to-\infty} g'(t^+)$. 

\sk

The family of H\"older-Zygmund spaces  is denoted $\{\mathscr {C}^s(\R^d)\}_{s>0}$ (see \cite{Meyer_operateur,Triebel} for instance for thorough expositions of classical functional spaces).  

\sk

\begin{definition}
The set of H\"older set  functions on $\mathcal B(\R^d)$ is defined as
\begin{equation}
\label{defSrd}
\mathcal{H}(\R^d) = \big \{\mu:\mathcal{B}(\R^d) \to \R_+\cup\{\infty\}:\, \exists \, C,s>0, \ \forall \, E\subset \R^d, \  \mu(E) \leq C |E|^s  \big\}.
\end{equation}
Then,  the set of H\"older capacities is defined as
\begin{equation}
\label{defCrd}
\mathcal{C}(\R^d) =  \big\{\mu\in\mathcal{H}(\R^d):\, \forall\, E, F\in\mathcal B(\R^d), \ E\subset F \Rightarrow \mu(E)\leq \mu(F) \big\}.
\end{equation}
and the set of   H\"older Radon measures is defined as
\begin{equation}
\label{defMrd}
\mathcal{M}(\R^d) =  \big\{\mu \in \mathcal{C}(\R^d) : \mu \mbox{ is a  Radon measure}\big\}.
\end{equation}
The  topological support $ \supp(\mu)$ of $\mu \in\mathcal{H}(\R^d)$ is the set of points $x\in\R^d$ for which $\mu(B(x,r))>0$ for every $r>0$. A capacity  $\mu$ is {\em fully supported} when $ \supp(\mu) =  \R^d$.

\medskip

Similarly, one defines the sets $\mathcal{H}(\zu^d) $, $\mathcal{C}(\zu^d)$ and $\mathcal{M}(\zu^d)$  by replacing $\R^d$ by $\zu^d$ in the above definitions.
\end{definition}

\begin{definition}\label{defmutothes}
For $s>0$, a set function   $\mu\in\mathcal{H}(\R^d) $ is $s$-H\"older  when there exists $C>0$ such that $\mu(E)\le C|E|^{s}$ for all $E\in\mathcal{B}(\R^d)$.

Then, for $\mu\in\mathcal{H}(\R^d) $,  $s>0$, and $E\in\R^d$, define  
\begin{align*}
\mu^s (E)= \mu(E)^s\quad \text{and}\quad 
\mu^{(+s)} (E)= \mu(E) |E|^s,
\end{align*}
and if $\mu$ is $s_0$-H\"older,  then for all $s\in (0,s_0)$, define 
$$
\mu^{(-s)} (E)=\begin{cases} 0&\text{if } |E|=0,\\
\mu(E) |E|^{-s}&\text{if }0<|E|<+\infty,\\
\infty&\text{otherwise.}
\end{cases}
$$
\end{definition}
\noindent Starting from  $\mu\in\mathcal{H}(\R^d)$, $\mu^s$,  $\mu^{(+s)}$ and  $\mu^{(-s)}$ as defined above still  belong to $\mathcal{H}(\R^d)$.

\subsection{Almost-doubling measures with prescribed multifractal behavior} \label{constr.meas.}
   
Multifractal formalisms for measures find their origin in works by physicists who proposed to  characterize ``strange sets" by considering, for any invariant  probability measure $\mu$ on  such a set $S$, the partition of $S$ into   iso-H\"older sets of $\mu$. They further estimated  the ``fractal'' dimensions of these sets using the Legendre transform of some free energy function, the $L^q$-spectrum, closely related to the Renyi generalized dimensions   \cite{Hentschel,Halsey}. Their ideas were later rigorously formalized by mathematicians~(see, e.g. \cite{BRMICHPEY,LN,Olsen}).  

\mk

The local behavior of elements of $\mathcal{H}([0,1]^d)$ is described via their pointwise H\"older exponents, also called local dimensions in the case of measures.

\begin{definition}\label{formalism1} Let $\mu \in\mathcal{H}(\zu^d)$. For $x\in \supp(\mu)$,    the lower and upper pointwise H\"older exponents  of $\mu$ at $x$  are respectively defined by 
$$
\underline h_\mu(x)=\liminf_{j\to+\infty}\frac{\log_2 \mu(\lambda_j(x))}{-j} \  \text{ and } \  \overline h_\mu(x)=\limsup_{j\to\infty}\frac{\log _2\mu(\lambda_j(x) )}{{-j}}.
$$
 Whenever $\underline h_\mu(x) = \overline h_\mu(x)$,  the common limit is called  $ h_\mu(x)$.
Then, for $\alpha\in\R$,  
\begin{align*} 
&\underline E_\mu( \alpha)=\left \{x\in \supp(\mu): \underline h_\mu(x)= \alpha\right \} \ \ \ \ \ 
 \overline E_\mu( \alpha)=\left \{x\in\supp(\mu): \overline h_\mu(x)= \alpha\right \},\\
 &\ \ \ \ \ \ \ \   \mbox{ and } \ \ \ \ \ \ \ \  E_\mu( \alpha)= \underline E_\mu(\alpha)\cap   \overline E_\mu( \alpha).
 \end{align*}
 The singularity (or multifractal) spectrum of $\mu$ is then the mapping 
$$
\sigma_\mu : \  \alpha\in\R\longmapsto \dim \underline E_\mu( \alpha).
$$
\end{definition}

\begin{definition}  
The $L^q$-spectrum of $\mu \in\mathcal{H}(\zu^d)$ with ${\rm supp}(\mu)\neq\emptyset$ is defined by
\begin{equation*}\label{taumubis}
\tau_{\mu}:\, q\in\R\mapsto \liminf_ {j\to+\infty} -\frac{1}{j }\log_2 \sum_{\substack{\lambda\in\mathcal D_j,\, \lambda\subset [0,1]^d,\\ \mu(\lambda)>0}} \mu(\lambda)^q.
\end{equation*}
\end{definition}

\begin{figure}  \begin{center}\begin{tikzpicture}[xscale=0.9,yscale=0.9]
{\small
\draw [->] (0,-2.8) -- (0,1.5) [radius=0.006] node [above] {$\tau_\mu(t)$};
\draw [->] (-1.,0) -- (3.7,0) node [right] {$t$};
 \draw [thick, domain=1.7:3.5, color=black]  plot ({\x},  {-ln(exp(\x*ln(1/5)) +exp(\x*ln(0.8)))/(ln(2)});
 \draw [thick, domain=0.6:1.7, color=black]  plot ({\x},  {-ln(exp(\x*ln(1/5)) +exp(\x*ln(0.8)))/(ln(2)});
\draw [thick, domain=-0.8:0.6, color=black]  plot ({\x},  {-ln(exp(\x*ln(1/5)) +exp(\x*ln(0.8)))/(ln(2)});
  \draw[dashed] (0,0) -- (2.8,1);
\draw [fill] (-0.1,-0.20)   node [left] {$0$}; 
\draw [fill] (-0,-1) circle [radius=0.03]  node [left] {$-d$ \ }; 

\draw [fill] (1,-0) circle [radius=0.03] [fill] (1,-0.3) node [ ] {$ $}; }
\end{tikzpicture} \hskip .5cm
 \begin{tikzpicture}[xscale=1.9,yscale=2.6]
    {\tiny
\draw [->] (0,-0.2) -- (0,1.25) [radius=0.006] node [above] {$  \sigma_\mu(H)=\tau_\mu^*(H)$};
\draw [->] (-0.2,0) -- (2.8,0) node [right] {$H$};
\draw [thick, domain=0:5]  plot ({-(exp(\x*ln(1/5))*ln(0.2)+exp(\x*ln(0.8))*ln(0.8))/(ln(2)*(exp(\x*ln(1/5))+exp(\x*ln(0.8)) ) )} , {-\x*( exp(\x*ln(1/5))*ln(0.2)+exp(\x*ln(0.8))*ln(0.8))/(ln(2)*(exp(\x*ln(1/5))+exp(\x*ln(0.8))))+ ln((exp(\x*ln(1/5))+exp(\x*ln(0.8))))/ln(2)});
\draw [thick, domain=0:5]  plot ({-( ln(0.2)+ ln(0.8))/(ln(2)) +(exp(\x*ln(1/5))*ln(0.2)+exp(\x*ln(0.8))*ln(0.8))/(ln(2)*(exp(\x*ln(1/5))+exp(\x*ln(0.8)) ) )} , {-\x*( exp(\x*ln(1/5))*ln(0.2)+exp(\x*ln(0.8))*ln(0.8))/(ln(2)*(exp(\x*ln(1/5))+exp(\x*ln(0.8))))+ ln((exp(\x*ln(1/5))+exp(\x*ln(0.8))))/ln(2)});
  \draw[dashed] (0,1) -- (2.6,1);
\draw [fill] (-0.05,0.10)   node [left] {$0$}; 
\draw [fill] (0,1) circle [radius=0.03] node [left] {$d \ $}; 
\draw  [fill] (0.32,0) circle [radius=0.03] node [below] {$\alpha_{\min} = \tau_\mu'(+\infty) $};
\draw  [fill] (2.32,0) circle [radius=0.03] node [below] {$\alpha_{\max} = \tau_\mu'(-\infty)  $};
\draw  [fill] (1.32,1) circle [radius=0.03]  [dashed]   (1.32,1) -- (1.32,0)  [fill] (1.32,0) circle [radius=0.03]  node [below] {$\tau_\mu'(0)$};
}
\end{tikzpicture}
\end{center}
\caption{{\bf Left:} Free energy function of $\mu\in \mathcal{C}(\zu^d)$ satisfying the MF. {\bf Right:} The  singularity spectrum of $\mu$.
 }
\label{figureLegpair}
\end{figure}
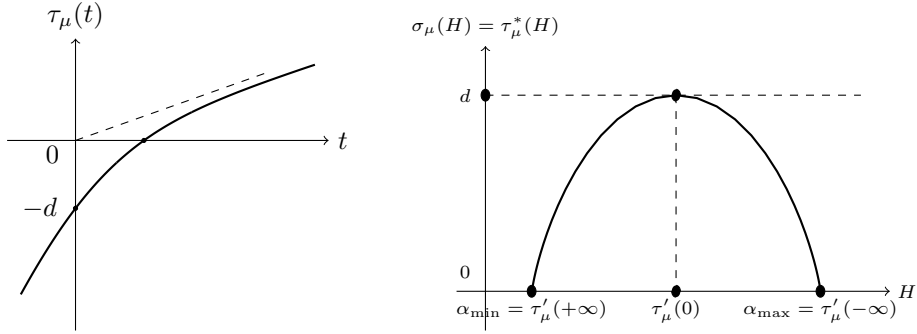
%

Then, one always has (see \cite{BRMICHPEY,JLVVOJAK})
$$
\sigma_\mu(\alpha)\le \tau_\mu^*(\alpha):= \inf_{q\in \R} q\alpha -\tau_\mu(q).
$$
In particular, if $\mu\in\mathcal M([0,1]^d)$, since $\tau_\mu(1)=0$ one has  $\si_\mu(\alpha)\leq \alpha $ for every $\alpha\in\R$.

\begin{definition} \label{def_formalism_measure}
A  set function $\mu \in\mathcal{H}(\zu^d)$ with  ${\rm supp}(\mu)\neq\emptyset$ is said to {\em obey the multifractal formalism (MF)} over an interval $I\subset \R$ when  for all $\alpha\in I $,
\begin{equation}\label{mfforsetfunctions}
\sigma_\mu(\alpha) =\tau_\mu^*(\alpha) .
\end{equation}

The capacity $\mu \in\mathcal{H}(\zu^d)$ is said to   obey the strong multifractal formalism (SMF) over $I$ if  \eqref{mfforsetfunctions}   holds for all $\alpha\in I$ when  $\dim E_\mu(\alpha) =\tau_\mu^*(\alpha) $. 

When $I=\R$, one simply says that the MF or the SMF holds for $\mu$.
\end{definition}

\begin{remark} 
Note that the   H\"older exponents   are sometimes  defined as
\begin{align*}
\underline h_\mu(x)&=\liminf_{r\to 0^+}\frac{\log \mu(B(x,r))}{\log (r)}  \ \text{ and }  \ \overline h_\mu(x)=\limsup_{r\to 0^+}\frac{\log \mu(B(x,r))}{\log (r)}, \\
\mbox{or } \ \ \underline h_\mu(x)& =\liminf_{j\to\infty}\frac{\log_2 \mu(3\lambda_j(x))}{-j}   \ \text{ and }  \  \overline h_\mu(x)=\limsup_{j\to\infty}\frac{\log _2\mu(3\lambda_j(x) )}{{-j}}
\end{align*}
(after defining $\mu (A)=\mu(A\cap [0,1]^d)$ for $A\in \mathcal{B}(\R^d)$). In this case,  $\mu(\lambda)$ is replaced by  $\mu(3\lambda)$     in the definition of the $L^q$-spectrum. However, in this paper we   mainly consider doubling or ``almost doubling'' capacities for which all the previous notions of exponents, level sets, singularity spectrum and $L^q$-spectrum do not depend on whether  dyadic cubes or centered balls are considered. 
\end{remark}

  When $\mu\in \mathcal{M}([0,1]^d)$, it is known  \cite{LN,Barralinverse} that $\tau_\mu'(-\infty)<+\infty$ if and only if $\tau_\mu$ is finite in a neighborhood of $0^-$, and in this case $\tau_\mu:\R\to\R $ is a non-decreasing, concave map with $\tau_\mu(1)=0$. If, in addition, $\mu$ has full support in $\zu^d$, then $\tau_\mu(0)=-d$, and  $\tau_\mu^*$   reaches its maximum, equal to $d$, exactly over the interval $[\tau_\mu'(0)^-,\tau_\mu'(0)^+]$. Moreover,
  $$\mathrm{dom}(\tau_\mu^*)=[\tau_\mu'(+\infty),\tau_\mu'(-\infty)] = \{\alpha\in\R: \tau_\mu^*(\alpha)\ge 0\}.$$

\sk
\begin{definition}\label{def2.5}
Let $\TD$ be the set of concave increasing functions $\tau:\R\to \R$ such that $\tau(1)=0$, $\tau(0)=-d$ and $\mathrm{dom}(\tau^*)$ is a compact subset of $(0,+\infty)$. 

Let $\SD$ be the set of functions $\sd: \R\to [0,d]\cup \{-\infty\}$ such that  $\sd$ is compactly  supported with support included in $(0,+\infty)$, concave,  continuous, $\sd\leq \mbox{Id\,}_{\R}$ and there exist two exponents  $D,D' >0$ such that  $\sd(D)=D$ and $\sd(D')=d$.  
\end{definition}

The set $\TD$ is the class of admissible $L^q$-spectra associated with measures fully supported on $\zu^d$, and $\SD$ is the class  of admissible singularity spectra for measures strongly obeying the  \MF  with an $L^q$-spectrum in $\TD$. One easily checks that these two sets $\SD$ and $\TD$ are Legendre transforms of each other.

Note that $\SD$  is similar to the set  $\SDs$ introduced in Conjecture \ref{FP}, up to two differences.  First, there is an extra  condition    $\sigma\leq Id_{\R}$, which is necessary for $\si$ to be the  singularity spectrum of a Radon measure. Second,  the  existence of two exponents  $D,D' >0$ such that  $\sd(D)=D$ and $\sd(D')=d$  is necessary to be the singularity spectrum of a fully supported measure  obeying the \MF  (see Remark~\ref{justif} in Section~\ref{AddMF} for justifications of these facts). Observe also that, the set $\SDs$    defined in  Conjecture~\ref{FP} is related to $\SD$ by the formula
$$\SDs = \{\si (s\cdot) : \si \in\SD, \  s>0\}.$$

Given $\sigma\in \mathscr{S}_d$, a natural question concerns the existence of a fully supported  $\mu\in \mathcal M([0,1]^d)$ obeying the \SMF and satisfying  $\sigma_f=\sigma$. The answer is positive, and the measures solving the problem may even possess additional properties introduced now.

\begin{definition}\label{defAD} 
Let $\Phi $ be the set of non decreasing functions $\phi:\N\to\R_+$ such that $  \lim_{j\to+ \infty }\frac{ \phi(j)}{ j }=0$ .

A capacity  $\mu\in \mathcal{C}(\R^d)$ is {\em almost doubling} when  there exists  $\phi \in \Phi$  such that 
\begin{equation} 
\label{ad}
\mbox{ for all $x\in\supp(\mu)$ and $j\in \N$, } \  \mu(3\lambda_j(x))\le e^{\phi(j)}\mu(\lambda_j(x)).
\end{equation}
\end{definition}

Equivalently, there is a mapping $\phi:(0,1]\to \R_+$ such that $\lim_{r\to 0^+}\frac{\phi(r)}{\log(r)}=0$ and for all $x\in\supp(\mu)$ and $r\in (0,1]$ one has 
\begin{equation*}
\label{ad2}
 \mu(B(x,2r)) \le e^{\phi(r)}\mu(B(x,r)).
\end{equation*} 
When $\phi$ is constant,  the capacity $\mu$ is  doubling in the usual sense.   

\begin{definition}\label{mildcond1} 
A set function $\mu\in\mathcal{H}(\R^d)$ satisfies  property (P) if there exist  $C,s_1, s_2>0$ and  $\phi\in\Phi$  such that:

\begin{itemize} 
\item[(P$_1$)] for all $j\in\N$ and $\lambda\in \mathcal D_j$,
\begin{equation}
\label{minmaj1}
C^{-1} 2^{-j s_2}\le \mu(\lambda)\le C 2^{-j s_1}.
\end{equation} 

\item[(P$_2$)]   for all $j,j'\in\N$ with $j'\ge j$,  for  all $\lambda, \widetilde \lambda\in\mathcal D_j$  such that $\partial\lambda\cap \partial  \widetilde \lambda\neq\emptyset$,  and $\lambda'\in\mathcal D_{j'}$ such that $\lambda'\subset \lambda$:
\begin{equation}\label{propmu}
C^{-1}2^{-\phi(j)} 2^{(j'-j)s_1}  \mu(\lambda')\le \mu(\widetilde \lambda)\le C2^{\phi(j)} 2^{(j'-j)s_2}  \mu(\lambda').
\end{equation}
\end{itemize} 
\end{definition}

For $\mu\in \mathcal{H}(\R^d)$,   (P$_1$) is a uniform H\"older control, from above and below, of $\mu$, and (P$_2$) is a rescaled version of (P$_1$), which implies the almost doubling property. 
 Our result on prescription of multifractal behavior for measures  is the following.


\begin{theorem}\label{th-construct-measures}
There exists a family of measures $\MD$ in $\mathcal M(\mathbb R^d)$ such that :
\begin{enumerate}
\item  Every $\mu \in \MD$ is $\mathbb Z^d$-invariant, fully supported on $\R^d$, satisfies property (P), and  $\mu_{|\zu^d}$   obeys the  SMF.
\item $\SD=\{\sigma_{\mu_{|\zu^d}}:\,\mu\in \mathcal M_d\}$. 
\end{enumerate}
 \end{theorem}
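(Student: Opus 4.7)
Given $\sigma \in \SD$, set $\tau = \sigma^* \in \TD$. By Legendre duality it suffices to build, for every $\tau \in \TD$, a fully supported Radon probability $\nu \in \mathcal{M}(\zu^d)$ that satisfies property~(P), has $L^q$-spectrum $\tau_\nu = \tau$, and obeys the SMF. One then sets
\[
\mu := \sum_{n \in \Z^d} \nu \circ \theta_n^{-1},
\]
where $\theta_n$ denotes translation by $n$; preserving property~(P) across the seams $\partial \zu^d + \Z^d$ is arranged by making the construction of $\nu$ symmetric under the coordinate reflections of $\zu^d$ near its boundary.

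\textbf{Step 2: Construction of $\nu$.} I would realize $\nu$ as an inhomogeneous dyadic multiplicative cascade. Fix a fast-growing sequence $0=J_0<J_1<\cdots$, partition $\N^*$ into the windows $I_k=(J_{k-1},J_k]$, and for each $k$ choose a probability vector $p^{(k)}=(p^{(k)}_i)_{i\in\{1,\ldots,2^d\}}$ with entries in some $[\eta,1-\eta]\subset(0,1)$. For a dyadic cube $\lambda\in\mathcal{D}_j$ with base-$2^d$ address $(i_\ell)_{\ell\le j}$, set
\[
\nu(\lambda) := \prod_{\ell=1}^{j} p^{(k(\ell))}_{i_\ell}, \qquad k(\ell) := \min\{k:\ell\le J_k\}.
\]
Given a countable dense $\{q_n\}\subset\R$, the vectors $p^{(k)}$ are chosen so that the Bernoulli spectra $\widetilde\tau_k(q):=-\log_2\sum_i (p^{(k)}_i)^q$ are tangent to $\tau$ at $q_n$ for infinitely many $k$, the tangency points being visited with the correct asymptotic frequencies. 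The lengths $|I_k|$ are taken large enough that endpoint effects in the partition functions $\sum_\lambda \nu(\lambda)^q$ are subexponential.

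\textbf{Step 3: Verifications.} Property~(P$_1$) is immediate with $s_1=-\log_2(1-\eta)$ and $s_2=-\log_2\eta$. For the $L^q$-spectrum, at scale $J_k$ one has
\[
\sum_{\lambda\in\mathcal{D}_{J_k}}\nu(\lambda)^q \;\asymp\; \prod_{k'\le k}\Big(\sum_i (p^{(k')}_i)^q\Big)^{|I_{k'}|},
\]
and dividing the logarithm by $J_k$ yields $\tau_\nu(q)=\tau(q)$ along the tangency subsequences. The SMF is obtained by producing, for each $\alpha\in\dom(\tau^*)$, an auxiliary inhomogeneous multinomial measure $\nu_\alpha$ obtained by a window-wise Gibbs transform of the $p^{(k)}$'s at the exponent $q$ dual to $\alpha$, such that $\nu_\alpha$-almost every $x$ satisfies $h_\nu(x)=\alpha$ and $\dim\nu_\alpha=\tau^*(\alpha)$; a Billingsley-type lemma then yields $\sigma_\nu(\alpha)\ge\tau^*(\alpha)$.

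\textbf{Main obstacle.} The central difficulty is property~(P$_2$), i.e., the almost-doubling. A generic multinomial $\nu$ is not almost-doubling, because two adjacent cubes $\lambda,\widetilde\lambda\in\mathcal{D}_j$ may have base-$2^d$ addresses differing at many scales due to carry propagations, yielding a ratio $\nu(\widetilde\lambda)/\nu(\lambda)$ as large as $(\max_i p^{(k)}_i/\min_i p^{(k)}_i)^j$. To resolve this I would impose that each $p^{(k)}$ be invariant under the canonical involutions of $\{1,\ldots,2^d\}$ arising from the coordinate reflections of the sub-cubes, and simultaneously interlace the windows so that the contrast $\max_i p^{(k)}_i/\min_i p^{(k)}_i$ varies slowly in $k$, yielding $\phi(j)=o(j)$. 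The tension between (i) the flexibility needed to realize any $\tau\in\TD$, (ii) the rigidity required for almost-doubling, and (iii) the extension of the SMF to the extremal exponents $\alpha_{\min}=\tau'(+\infty)$ and $\alpha_{\max}=\tau'(-\infty)$, where level sets are thin and auxiliary measures must be delicately calibrated, constitutes the technical core of the theorem.
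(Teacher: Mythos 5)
Your skeleton (Legendre reduction, scale-inhomogeneous multiplicative construction, auxiliary measures for the SMF lower bound, periodization) matches the paper's, but Step 2 contains a genuine gap: with the alphabet frozen at the $2^d$ children of a cube, windowed Bernoulli vectors are too rigid to realize an arbitrary $\tau\in\TD$. First, ``tangency of $\widetilde\tau_k$ to $\tau$ at $q_n$'' is over-determined: for $d=1$ a vector $(p,1-p)$ has one free parameter while tangency imposes two conditions; and even where solvable, tangency does not deliver what the $\liminf$ defining $\tau_\nu$ requires, namely that \emph{all} running Ces\`aro averages of the $\widetilde\tau_k$ stay above $\tau-o(1)$ at \emph{every} $q$ simultaneously (for that one essentially needs $\widetilde\tau_k\ge\tau$, i.e.\ $\widetilde\tau_k^*\le\tau^*$, which a $2^d$-atom Bernoulli spectrum with prescribed touching point and maximum equal to $d$ generally cannot satisfy). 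Second, and more structurally, the spectra attainable by your scheme do not exhaust $\SD$: when the window frequencies $\rho_k$ converge, $\tau_\nu$ is the frequency-average of the $\widetilde\tau_k$ and $\sigma_\nu$ is the corresponding weighted sup-convolution of the $\widetilde\tau_k^*$, whose values at the extremal exponents are $\sum_k\rho_k\log_2\#\{i:p^{(k)}_i=\max\}$ and $\sum_k\rho_k\log_2\#\{i:p^{(k)}_i=\min\}$. For $d=1$ a tie at the maximum of a two-letter vector is the same as a tie at the minimum, so these two values coincide: your construction forces $\sigma_\nu(\alpha_{\min})=\sigma_\nu(\alpha_{\max})$, whereas $\mathscr{S}_{1,\mathcal M}$ contains targets with, say, $\sigma(\alpha_{\min})=0<\sigma(\alpha_{\max})$. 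The reflection-invariance you impose for (P$_2$) does not repair this (and is not what almost-doubling needs anyway: it suffices that the letter-ratio log-sums cancel over long stretches, or that the first and last weights be equal).

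This is precisely where the paper's key device enters: the block over which a single weight vector is prescribed grows. At stage $N$ one vector $p_N$ on $2^N$ symbols (i.e.\ $N$ dyadic generations at once) is chosen with about $2^{N\sigma(\alpha_{N,i})}$ entries of size about $2^{-N\alpha_{N,i}}$ for a $1/N$-dense family $\{\alpha_{N,i}\}$ in $\mathrm{dom}(\sigma)$, so that a single block already satisfies $-\frac1N\log_2\sum_i p_{N,i}^t=\tau(t)+o(1)$ for all $t$, and concatenating blocks of length $\ell_N\ge N^2$ gives $\tau_\mu=\tau$ as a genuine limit; the SMF then follows from auxiliary measures $\mu_\alpha$ much as in your Step 3, with the extremal exponents treated separately. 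Almost doubling is obtained not by symmetry under reflections but by ordering the entries so that adjacent indices carry comparable weights ($p_{N,i}/p_{N,i'}\in[1/2,2]$ for $|i-i'|\le1$) and the first and last entries are equal ($p_{N,0}=p_{N,2^N-1}$), which tames carry propagation across dyadic boundaries and survives periodization; the case $d\ge2$ is then handled by tensoring the one-dimensional measures. If you let your window length (equivalently the alphabet on which each vector lives) tend to infinity so that each block alone approximates the whole target spectrum, you essentially recover the paper's construction.
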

The family $\MD \subset \mathcal{M}(\mathbb R^d) $  is built in Section \ref{wqb}, by   constructing, for every $ \sd\in  \SD$, a fully supported  Borel probability measure $\mu$ on $\zu^d$, which    obeys the SMF, and such that $\sigma_\mu=\sd$. Then  $\MD$ is obtained by periodization of such measures $\mu$. 

\sk 
  Theorem \ref{th-construct-measures}  can be equivalently stated as follows: for every  $\tau\in \TD$, there exists a Borel probability measure $\mu$ with support equal to $\zu^d$, which     obeys the \SMF and such that ${\tau}_\mu=\tau$. This result was established in \cite{Barralinverse}, but  only for measures with totally disconnected support. 
A quite  different method is used in the present article, and the obtained measures possess additional properties.

\sk

\noindent To solve the Frisch-Parisi conjecture \ref{FP},   a class of capacities larger than $\MD$  is~needed.

\begin{definition}\label{defEd}
The   set  $\MDs \subset \mathcal{C}(\R^d)$ is  defined as the set of positive powers of measures $\mu\in \MD$, i.e.
 \begin{equation}
\label{defmd*}
\MDs=\{\mu^s:\mu\in\mathcal M_d,  \ s>0\}.
\end{equation}
An element of $\MDs$ is called a multifractal environment. 
\end{definition}

\begin{remark} 
 (1) A  direct computation  shows that for any $s>0$ and any   $\mu\in\mathcal{H}(\R^d)$,
$$\mbox{for every $t\in \R$, } \ \ \tau_{\mu^s_{|\zu^d}} (t) = \tau_{\mu_{|\zu^d}}(st).$$

\noindent (2) It is immediate to check that as soon as  $\mu\in\mathcal H(\R^d)$ satisfies property $( P)$,  the set functions  $\mu^s$, $\mu^{(+s)}$,  and $\mu^{(-s)}$ of Definition \ref{defmutothes}  satisfy   (P) as well (when $s$ is small enough for  $\mu^{(-s)}$), and that $\mu^s_{|\zu^d}$ has  $H\mapsto \sigma_{\mu_{|\zu^d}}(H/s)$ as  singularity spectrum.
\end{remark}

\subsection{Besov spaces in almost doubling environments and their wavelet characterization}\label{BSME}
Standard Besov spaces can be defined by using $L^p$ moduli of smoothness, and are characterized using decay rate of   wavelet coefficients.   To define   Besov spaces in multifractal environment,    the classical definition of $L^p$ moduli of smoothness is extended as follows.
 
\begin{definition}
For $h\in \R^d$ and $f:\R^d \to\R$, consider   the finite difference operator   $\Delta_h f: x\in\R^d\mapsto  f(x+h)-f(x)$. Then, for $n\geq 2$, set $\Delta ^n_hf = \Delta_h ( \Delta^{n-1}_h f)$.

For every fully supported set function $\mu\in\mathcal{H}(\mathbb R ^d)$, for every $n\in\N^*$, $h\in\R^d\setminus\{0\}$ and $x\in \R^d$, set  
\begin{equation*} 
\Delta ^{\mu,n}_hf(x)  = \frac{\Delta ^n_hf(x)}{\mu(B[x,x+nh])},
\end{equation*}
where for $x,y\in\R^d$, $B[x,y]$ stands for the Euclidean ball of diameter $[x,y]$. 

\medskip

For $p\in [1,+\infty]$, the  $\mu$-adapted $n$-th order $L^p$ modulus of smoothness of $f$  is defined at any $t>0$  by
\begin{align}
\label{defomegat}
  \omegat_n(f,t, \mathbb R^d) _p &= \sup_{t/2\le |h|\leq t} \| \Delta ^{\mu,n}_hf \|_{L^p(\mathbb R^d)}.
\end{align}
  \end{definition}

Observe that when $\mu(E)=1$ for every set $E$, then $ \omegat_n(f,t, \mathbb R^d) _p$ is a modification  of the standard  $n$-th order $L^p$ modulus of smoothness of $f$  defined by
\begin{align}
\label{defomega}
 \omega_n(f,t, \mathbb R^d) _p &= \sup_{0\le  |h|\leq t} \| \Delta ^n_hf \|_{L^p(\mathbb R^d)}.
\end{align}
The relation between \eqref{defomegat} and \eqref{defomega} are investigated in Section \ref{sec-description}, one shall keep in mind that they coincide for regular doubling measures $\mu$.

%
\medskip  
  
Recall that when $s>0$, and $p,q\in [1,+\infty]$, the Besov space $B^{s,p}_q(\R^d)$ is the set of  those functions $f:\R^d\to \R$ such that $\|f\|_{L^p(\R^d)}<+\infty$ and 
\begin{equation}
\label{defbosovosc}
 |f|_{B^{s,p}_q(\mathbb R^d)} = \|(2^{js}  (\omega_n(f, 2^{-j}, \R^d) _p)_{j\in\N}\|_{\ell^q(\N)} <+\infty,
\end{equation} 
where $n\geq s$ is an integer. The dependence on $n$ in   $|f|_{B^{s,p}_q(\mathbb R^d)}$ is voluntarily omitted. Indeed, the norm $ \|f\|_{B^{s,p}_q(\mathbb R^d)} =  |f|_{B^{s,p}_q(\mathbb R^d)} +\|f\| _{L^p(\mathbb R^d)}$ makes $B^{s,p}_q(\R^d)$ a Banach space, and different values of $n>s$ yield equivalent norms (see \cite[Remark 3.2.2]{cohen-book}). 
\begin{definition}[Besov spaces in $\mu$-environment]
\label{devbseovmu}
Let $\mu\in\mathcal{H}(\mathbb R^d)$ satisfy property (P$_1$) of Definition~\ref{mildcond1} with exponents $0< s_1\leq s_2$, and consider an integer $n\ge \lfloor s_2+\frac{d}{p}\rfloor+1$. 

For $1\leq p,q\leq \infty$, the Besov space  in $\mu$-environment  $B^{\mu,p}_q(\mathbb R^d)$ is the set  of those functions $f:\R^d\to \R$ such that $\|f\|_{L^p(\R^d)}<+\infty$   and 
\begin{equation}
\label{defbpqmuosc}
 |f|_{B^{\mu,p}_q(\mathbb R^d) } = \|2^{jd/p}  (\omegat_n(f, 2^{-j}, \mathbb R^d) _p)_{j\in\N}\|_{\ell^q(\N)} <+\infty.
\end{equation}

Finally, let 
\begin{equation*}
\label{defwb}
\wb^{\mu,p}_q(\R^d) = \bigcap_{0<\ep<\min (s_1,1)} B^{\mu^{(-\ep)},p}_q(\R^d). 
\end{equation*}
\end{definition}
 At this stage, both ${B}^{\mu,p}_{q}(\mathbb R^d)$ and $\widetilde {B}^{\mu,p}_{q}(\mathbb R^d)$  depend a priori  on the choice of $n$. However,  the dependence in $n\ge \lfloor s_2+\frac{d}{p}\rfloor+1$ can be dropped for ${B}^{\mu,p}_{q}(\mathbb R^d)$ when~$\mu$ is a doubling capacity, and also for  $\widetilde {B}^{\mu,p}_{q}(\mathbb R^d)$ under the (rather weak)   extra property (P$_2$) of Definition~\ref{mildcond1}  (see Theorem~\ref{th_equivnorm_2} for a precise statement).  Moreover,  endowed with the norm $\|\,\|_{L^p(\R^d)}+ |\, |_{B^{\mu,p}_q(\mathbb R^d) }$, ${B}^{\mu,p}_{q}(\mathbb R^d)$ is a Banach space. Hence, $\widetilde {B}^{\mu,p}_{q}(\mathbb R^d)$ is naturally endowed with a Fr\'echet space structure, as the intersection of a nested family of such spaces.   The Fr\'echet spaces   $\widetilde{B}^{\mu,p}_{q}(\mathbb R^d)$   are the  spaces providing  the solution to the Frisch-Parisi conjecture,  see Section \ref{solution}.

Recall that $\mathcal{L}^d$ stands for the $d$-dimensional Lebesgue measure. For $\mu=(\mathcal{L}^d)^{\frac{s}{d}-\frac{1}{p}}$,  it is proved that  ${B}^{\mu,p}_{q}(\R^d)={B}^{s,p}_{q}(\R^d)$  when $s>d/p$. A multifractal element $\mu\in \mathcal S(\R^d)$ should thus be considered as defining an \textit{heterogeneous} environment imposing local distorsions in the computation of the moduli of smoothness in comparison  to positive powers of $\mathcal{L}^d$, which are   \textit{homogeneous} in space. Like for $B^{s,p}_q(\mathbb R^d)$,  to study the typical multifractal behavior in ${B}^{\mu,p}_{q}(\R^d)$ and $\wb^{\mu,p}_q(\R^d)$, it is essential to establish a wavelet characterization of these spaces. Such a  characterization exists    for $\wb^{\mu,p}_q(\R^d)$ when~$\mu$ is almost doubling, and for ${B}^{\mu,p}_{q}(\R^d)$ when $\mu$ is doubling (see Theorem~\ref{th_equivnorm_2}).

\mk
 
\textbf{Wavelet characterizations}.  It is  standard   that classical Besov spaces are characterized in terms of wavelet coefficients decay.
Let $\Lambda=\bigcup_{j\in\Z} \Lambda_j $, where for $j\in \Z$ 
\begin{equation*}
\label{defLambda}
\Lambda_j=\{(i,j,k): i\in \{1,\ldots,2^d-1\}, \ k\in \Z^d\}.
\end{equation*}

\medskip

Let $\phi$ be a  scaling function and $\{\psi^{(i)}\}_{i=1,...,2^d-1}$ be a family of wavelets associated with $\phi$ so that $(\phi,\{\psi^{(i)}\}_{i=1,...,2^d-1})$ defines a multi-resolution analysis with reconstruction  in $L^2(\R^d)$ (see~\cite[Ch. 2 and 3]{Meyer_operateur} for a general construction). 

For every $\lambda=(i,j,k)\in\Lambda$,   denote by $\psi_\lambda$ the function $x\mapsto \psi^{(i)}(2^j x-k)$. Then, the functions $2^{dj/2}\psi_\lambda$, $j\in\Z$, $\lambda\in\Lambda_j$, form an orthonormal basis of $L^2(\R^d)$, so that  every $f\in L^2(\R^d)$ can be expanded as
$$ 
f=\sum_{j\in\Z}\sum_{\lambda\in\Lambda_j} c_\lambda\psi_\lambda, \quad 
\text{with}\quad  
c_\lambda=\int_{\R^d} 2^{dj}\psi_\lambda(x) f(x)\,\mathrm{d}x
$$
(pay attention to the $L^\infty$ normalisation used to define the wavelet coefficients $(c_\la)_{\la\in \La}$).

\sk 

\begin{definition}\label{fr}
For every $r\in\N$, call  $\mathcal F_r$ the set of those functions $\big \{\phi,\{\psi^{(i)}\}_{i=1,...,2^d-1}\big\}$  which define an $r$-regular  multi-resolution analysis with reconstruction  in $L^2(\R^d)$ and such that $\phi$ and the $\psi^{(i)}$ are compactly supported,  $r$ times continuously differentiable functions, and every  $\psi^{(i)}$ has  $r$ vanishing moments. 
\end{definition}
Recall that a mapping $ \psi:\R^d\to\R $  has $r$ vanishing moments when for every multi-index $\alpha\in \N^d$ of length smaller than or equal to $r$,  $\int_{\R^d}x_1^{\alpha_1}\cdots x_d^{\alpha_d} \psi (x)\mathrm{d}x=0$.

It is standard that $\mathcal{F}_r \neq \emptyset$ (see \cite[Prop. 4, section 3.7]{Meyer_operateur} for instance).
%

\sk

Fix $r\in\N^*$ and $ \Psi\in \mathcal F_r$. For any $f\in L^p(\R^d)$, $1\le p\le \infty$, set
$$
\beta(k)=\int_{\R^d} f(x) \phi (x-k)\, \mathrm{d}x\quad (k\in\Z^d).
$$
Then $f \in L^p(\R^d)$ can also be written
\begin{equation}
\label{decompf}
f=\sum_{k\in\Z^d}\beta(k) \phi(\cdot-k)+\sum_{j\in\N}\sum_{\lambda\in\Lambda_j} c_\lambda\psi_\lambda.
\end{equation}
 Further, for $r>s>d/p$,   Besov spaces are characterized by their wavelet coefficients as follows (see  \cite[Ch. 6]{Meyer_operateur}, \cite{Triebel}, or \cite[Corollary 3.6.2]{cohen-book}): 
\begin{equation}\label{normbesovbeta}
f\in {B}^{s,p}_{q}(\R^d) \Longleftrightarrow \begin{cases}\beta\in\ell^p(\Z^d),\\
(\ep_j)_{j\in\mathbb N}\in\ell^q(\mathbb  N),\ \text{where } \ep_j=\displaystyle \Big\|\Big(2^{j(s-d/p)}{c_\lambda} \Big )_{\lambda\in\Lambda_j}\Big\|_{\ell^p({\Lambda_j})},
\end{cases}
\end{equation}
 Moreover, the norm $\|\beta\|_{\ell^p(\Z^d)}+\|(\ep_j)\|_{\ell^q(\N)}$ is equivalent to the     norm $ \|f\|_{B^{s,p}_q(\R^d)}$ defined in \eqref{defbosovosc}. Note that the functions $\psi^{(i)}$ then belong to ${B}^{s,p}_{q}(\R^d) $. Also,  ${B}^{s,p}_{q}(\R^d)\hookrightarrow {B}^{s-\frac{d}{p},+\infty}_{+\infty}(\R^d)=\mathscr C^{s-\frac{d}{p}}(\R^d)$.

 \mk
  
Let us  write   $\mu(\lambda)=\mu(\lambda_{j,k})$ for every $\lambda=(i,j,k)$, and   introduce the  quantity 
\begin{equation}
\label{defbesovwavelet}
 |f|_{\mu,p,q} = |f|_{\mu,p,q,\Psi}=\| (\ep^\mu_j)_{j\in\N}\|_{\ell^q(\N)}\mbox{,\  where }  \ \ep^\mu_j=\left\|\left (  \frac{c_{\lambda}}{ \mu(\lambda )}\right )_{\lambda \in \Lambda_j}\right\|_{\ell^p({\Lambda_j})}.
 \end{equation}
 In \eqref{defbesovwavelet}, the wavelet coefficients are computed with the given $\Psi \in \mathcal{F}_r$, but   the dependence on $r$ and $\Psi$ is omitted to make the notations lighter. This is justified by the fact that in what follows, $r$   depends only on  $\mu$, and in the cases  relevant to us (i.e when $\mu$ satisfies (P)), the wavelet characterization of  $\widetilde {B}^{\mu,p}_{q}(\mathbb R^d)$ is independent on $\Psi\in\mathcal F_r$.

\medskip

The wavelet characterizations of $B^{\mu,p}_q(\R^d)$ and $\widetilde B^{\mu,p}_q(\R^d)$ are the following.

\begin{theorem}
\label{th_equivnorm_2} 
Let $\mu \in \mathcal{C}(\R^d)$ be  an almost doubling capacity. Let $0<s_1\le s_2$ and $r=  \lfloor s_2+\frac{d}{p}\rfloor+1$. Suppose that property~(P)  holds for $\mu$  with the exponents $ (s_1,s_2)$ and  that ${B}^{\mu,p}_{q}(\mathbb R^d)$ has been constructed by using the $\mu$-adapted $n$-th order  $L^p$ moduli of smoothness, for some integer  $n\ge r$. Let  $\Psi\in\mathcal F_{r}$. 
   
For every $\ep\in (0,1)$, there exists a constant $C_\ep >1$ such that  for all $f\in L^p(\R^d)$, 
\begin{align}
\label{eqnorm} \|f\|_{L^p(\mathbb R^d)}+|f|_{\mu,p,q}& \leq C_\ep(  \|f\|_{L^p(\mathbb R^d)}+ |f|_{{B}^{\mu^{(+\ep)},p}_{q}(\mathbb R^d)  }),\\
\label{eqnorm2} \|f\|_{L^p(\mathbb R^d)}+ |f|_{{B}^{\mu,p}_{q}(\mathbb R^d)} & \leq C_\ep( \|f\|_{L^p(\mathbb R^d)}+|f|_{\mu^{(+\ep)},p,q} )
.
\end{align}
 
 Moreover, when $\mu$ is doubling and satisfies property (P) with $\phi=0$,  the norms $\|\ \|_{L^p}+ |\ |_{\mu,p,q} $ and $ \|\ \|_{L^p}+|\ |_{B^{\mu,p}_q(\R^d)} $ are equivalent.
 \end{theorem}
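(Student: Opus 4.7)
The plan is to follow the classical wavelet-characterization strategy for Besov spaces, adapting it to the heterogeneous environment and carefully tracking the multiplicative loss generated by the almost-doubling property. The scheme is two-sided: \eqref{eqnorm} is obtained by bounding the normalized coefficients $c_\lambda/\mu(\lambda)$ in terms of the $\mu$-adapted moduli of smoothness of $f$, while \eqref{eqnorm2} is obtained by reconstructing $\Delta^n_h f$ from the wavelet expansion \eqref{decompf} and estimating its $\mu$-adapted $L^p$-norm. In both directions the crucial point is to compare $\mu(\lambda)$ with $\mu(B[x,x+nh])$ when $|h|\asymp 2^{-j}$ and $x$ lies near $\lambda\in\mathcal D_j$: by the almost-doubling property these differ by at most a factor $e^{C\phi(j)}$, and since $\phi(j)=o(j)$ this is dominated by $C_\ep 2^{j\ep}$, which the perturbation $\mu\mapsto\mu^{(+\ep)}$ is designed to absorb (note that $\mu^{(+\ep)}(B[x,x+nh])\asymp 2^{-j\ep}\mu(B[x,x+nh])$ and $\mu^{(+\ep)}(\lambda)\asymp 2^{-j\ep}\mu(\lambda)$ for $\lambda\in\mathcal D_j$).

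For \eqref{eqnorm}, fix $\lambda=(i,j,k)\in\Lambda_j$. The $r\ge n$ vanishing moments of $\psi^{(i)}$ together with its compact support yield, via a standard Whitney/Marchaud-type argument (rewriting $\int\psi^{(i)}f$ against the adjoint of the $n$-th order difference operator), the estimate
$$|c_\lambda|\lesssim 2^{-jd}\sup_{|h|\asymp 2^{-j}}\int_{C\lambda}|\Delta^n_h f(x)|\,dx,$$
where $C\lambda$ is a fixed dilate of $\lambda$ depending only on $\supp(\psi^{(i)})$. Dividing by $\mu(\lambda)$, invoking $\mu(\lambda)^{-1}\le e^{C'\phi(j)}\mu(B[x,x+nh])^{-1}$ for $x\in C\lambda$ and $|h|\asymp 2^{-j}$, and then taking the $\ell^p(\Lambda_j)$-norm (with a finite-overlap argument for the dilates $C\lambda$) gives
$$\ep^\mu_j\lesssim e^{C'\phi(j)}\cdot 2^{jd/p}\,\widetilde\omega_n^\mu(f,2^{-j},\R^d)_p\lesssim C_\ep\cdot 2^{jd/p}\,\widetilde\omega_n^{\mu^{(+\ep)}}(f,2^{-j},\R^d)_p.$$
Taking the $\ell^q$-norm in $j$ delivers \eqref{eqnorm}.

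For \eqref{eqnorm2}, expand $f$ via \eqref{decompf} and, for $|h|\asymp 2^{-j_0}$, split $\Delta^n_h f=T_{\mathrm{low}}+T_{\mathrm c}+T_{\mathrm f}$, where $T_{\mathrm{low}}$ gathers the scaling-function terms, $T_{\mathrm c}$ the wavelet scales $j<j_0$, and $T_{\mathrm f}$ the scales $j\ge j_0$. For coarse scales use the $\mathscr C^n$-smoothness of $\psi^{(i)}$ to get $|\Delta^n_h\psi_\lambda(x)|\lesssim (|h|2^j)^n$ on an essentially $\lambda$-localized set; for fine scales use $|\Delta^n_h\psi_\lambda(x)|\lesssim \|\psi^{(i)}\|_\infty$ on the bounded-overlap union of the $n+1$ relevant translates of $\supp\psi_\lambda$. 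Dividing by $\mu(B[x,x+nh])$ and using the almost-doubling comparison to replace this denominator by $\mu(\lambda)$ (at the cost of a factor $e^{C'\phi(j\vee j_0)}$ absorbed by the shift to $\mu^{(+\ep)}$) brings out the normalized coefficients $c_\lambda/\mu^{(+\ep)}(\lambda)$. Minkowski's inequality in $L^p$ over $\lambda\in\Lambda_j$ followed by a discrete Young/Hardy inequality in $\ell^q$, which sums the geometric series $2^{(j-j_0)n}$ for $j<j_0$ and $2^{-(j-j_0)s_1}$ for $j\ge j_0$ (the latter via property (P)), yields
$$\big\|\big(2^{j_0 d/p}\,\widetilde\omega_n^\mu(f,2^{-j_0},\R^d)_p\big)_{j_0\in\N}\big\|_{\ell^q(\N)}\lesssim \|f\|_{L^p(\R^d)}+|f|_{\mu^{(+\ep)},p,q},$$
which is \eqref{eqnorm2}.

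The equivalence in the doubling case $\phi\equiv 0$ follows by rerunning the same arguments with $e^{C'\phi(j)}\equiv 1$, so that no exponent shift is needed. The main technical obstacle lies in the proof of \eqref{eqnorm2}: to convert pointwise bounds on $\Delta^n_h\psi_\lambda(x)$ into $L^p$-estimates weighted by the non-standard factor $\mu(B[x,x+nh])^{-1}$, one has to interchange the sums over $\lambda$ with the $L^p$-norm through a maximal-function-type argument, control the resulting geometric series uniformly across the coarse and fine regimes, and tune $\ep$ so that all the accumulated $e^{C'\phi}$ factors are absorbed into the $2^{j\ep}$ weights provided by $\mu^{(+\ep)}$.
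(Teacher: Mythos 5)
Your two–sided scheme (local polynomial approximation plus vanishing moments for \eqref{eqnorm}; reconstruction from the wavelet expansion split into coarse/fine scales, with the almost-doubling loss $e^{C\phi(j)}=2^{o(j)}$ absorbed by the shift $\mu\mapsto\mu^{(+\ep)}$ and a Hardy-type summation, for \eqref{eqnorm2}) is essentially the paper's. But there is a genuine gap: you write that $\psi^{(i)}$ has ``$r\ge n$ vanishing moments'', whereas the hypotheses say the opposite, namely $\Psi\in\mathcal F_r$ with $r=\lfloor s_2+\frac{d}{p}\rfloor+1$ and $n\ge r$. In your argument for \eqref{eqnorm}, the cancellation step replaces $f$ by $f-P_\lambda$ where $P_\lambda$ is the near-best polynomial of degree $\le n-1$ attached to the order-$n$ modulus $\omegat_n$; this requires $\psi^{(i)}$ to be orthogonal to all polynomials of degree $\le n-1$, which an element of $\mathcal F_r$ need not satisfy once $n>r+1$. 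You cannot repair this by switching to the order-$r$ modulus, since $\omega_n\lesssim\omega_r$ goes the wrong way: the right-hand side of \eqref{eqnorm} is the (smaller) order-$n$ seminorm. The same misreading reappears in \eqref{eqnorm2}, where you invoke ``$\mathscr C^n$-smoothness'' of $\psi^{(i)}$ at coarse scales; only $\mathscr C^r$ (or $B^{s,p}_q$ with $s<r$) is available — this part is harmless because $r>s_2+\frac{d}{p}$ already beats the $2^{(j_0-j)s_2}$ cost coming from (P$_2$), but the bound $(|h|2^j)^n$ you state is not justified.

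The paper closes the $r$ versus $n$ gap by a bootstrap your proposal lacks: prove \eqref{eqnorm} for an auxiliary wavelet $\widetilde\Psi\in\mathcal F_n$ (where the vanishing-moment argument is valid), prove \eqref{eqnorm2} for the order-$r$ modulus and this $\widetilde\Psi$, and then chain the three inequalities with the environments $\mu$, $\mu^{(+\ep/3)}$, $\mu^{(+2\ep/3)}$ to transfer \eqref{eqnorm} to an arbitrary $\Psi\in\mathcal F_r$ together with order-$n$ moduli; the $\ep$-shifts in the statement are precisely what make this chaining legitimate (and with $\phi\equiv0$ no shift is needed, giving the stated equivalence). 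Two smaller points: the fine-scale decay you claim in \eqref{eqnorm2} (``$2^{-(j-j_0)s_1}$ via (P)'') is not where summability comes from when $p<+\infty$ — the paper only has the flat bound $2^{-jd/p}A_j$ there, and the factor $2^{-(j-j_0)d/p}$ plus Hardy's inequality does the work — and the scaling-function block $T_{\mathrm{low}}$ needs its own estimate (the paper compares it to a classical Besov norm via (P$_1$)). These are fixable; the $r$ versus $n$ mismatch, as written, leaves \eqref{eqnorm} unproved in the stated generality.
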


As a consequence, when $\mu$ is doubling and satisfies (P) with $\phi=0$ (this occurs when $\mu$ is a Gibbs measure, see Remark \ref{remgibbs}), the space $B^{\mu,p}_q(\R^d)$ possesses two equivalent definitions based either on $L^p$ moduli of smoothness  or on wavelet coefficients, and this definition is independent of the choice of $n\ge r$ and $\Psi\in\mathcal F_r$. For $\widetilde{B}^{\mu,p}_q(\R^d)$, when $\mu$ satisfies property (P), combining \eqref{eqnorm} and \eqref{eqnorm2} shows that $f\in\widetilde{B}^{\mu,p}_q(\R^d)$ if and only if $ \|f\|_{L^p(\mathbb R^d)}+ |f|_{{B}^{\mu^{(+\ep)},p}_{q}(\mathbb R^d)  }<+\infty$ for every $\ep>0$, hence also giving a wavelet characterization of $\widetilde{B}^{\mu,p}_q(\R^d)$. 

Moreover, given $\Psi\in \mathcal F_r$, the family of Banach spaces 
$$
\Big \{ B_\ep:=({B}^{\mu^{(-\ep)},p}_q(\R^d),  \|\cdot  \|_{L^p(\mathbb R^d)}+ | \cdot |_{\mu^{(-\ep)},p,q,\Psi}\Big \}_{0<\ep<\min (s_1,1)}$$ 
is non decreasing, and $B_\ep\hookrightarrow  B_{\ep'}$ for all $0<\ep<\ep'<\min (s_1,1)$. This implies that the  space $\widetilde{B}^{\mu,p}_q(\R^d)$ can be endowed with a Fr\'echet space structure,  of which a countable  basis of neighborhoods of the origin is given  by
\begin{equation}\label{Nm0}
\left\{\mathcal N_m  = \left\{f\in \widetilde {B}^{\mu,p}_{q}(\R^d): \, \|f\|_{L^p(\mathbb R^d)}+ |f|_{{B}^{\mu^{(-\frac{1}{m})},p}_{q}(\mathbb R^d)}< \frac{1}{m}\right \}\right \}_{\substack{m\in\N,\\ m>\max (1,s^{-1}_1)}}.
\end{equation}

\begin{remark}
\label{remark-inclusions}
(1) When    (P$_1$) is satisfied,     the   embeddings   $
  {B}^{s_2+\frac{d}{p},p}_{q}(\R^d)\hookrightarrow  {B}^{\mu,p}_{q}(\R^d) $ $ \hookrightarrow {B}^{s_1+\frac{d}{p},p}_{q}(\R^d)$ and $ {B}^{\mu^{(+\ep)},p}_{q}(\R^d)\hookrightarrow  {B}^{\mu,p}_{q}(\R^d)$ hold.

 \medskip
 
 \noindent
 (2) It is direct from the proof of Theorem~\ref{th_equivnorm_2} that  under the   weaker assumption that   (P) holds for all $(s'_1, s'_2)$ such that $ 0<s'_1<s_1\le s_2< s'_2$,   the statement remains true. 
 \end{remark}
 
By Remark~\ref{remark-inclusions}\,(2), when $\mu\in\mathscr{E}_d$ (see Definition~\ref{defEd}), since property (P) holds with any $(s_1,s_2)$ such that $0<s_1<\tau_\mu'(+\infty)\le \tau_\mu'(-\infty)<s_2$,   $\widetilde B^{\mu,p}_q(\R^d)$ will always be considered as defined for an integer $n \ge s_\mu$, where
\begin{equation}\label{rmu}
s_\mu=\left \lfloor \tau_\mu'(-\infty)+\frac{d}{p}\right \rfloor+1,
\end{equation} 
and the wavelet characterization of $\widetilde B^{\mu,p}_q(\R^d)$ holds with $\Psi\in \mathcal F_{s_\mu}$. 
%
%

\subsection{Typical  singularity spectrum in   Besov spaces in multifractal environment }\label{jaffres}

Our result on the multifractal nature of the elements of $\widetilde {B}^{\mu,p}_{q}(\R^d)$ when $\mu\in{\MDs}$ (i.e.  powers of  measures $\MD$  defined by \eqref{defmd*}) is the following. The multifractal formalism's validity  is dealt with in the next section.

\begin{theorem} 
\label{main} 
Let $\mu \in \MDs$, let $p,q\in [1,+\infty]$, and consider the mapping
\begin{equation}
\label{deftaumup}
\zeta_{\mu,p}(t)=
\begin{cases}
\displaystyle \frac{p-t}{p}\tau_\mu\left (\frac{p}{p-t} t \right )&\text{if } t\in (-\infty,p) \\
\tau_\mu'(+\infty)t&\text{if } t\in[p,+\infty).
\end{cases}
\end{equation}
\begin{enumerate} 
\item For all $f\in  \widetilde {B}^{\mu,p}_{q}(\R^d) $,
\begin{align}
\label{majodim}
\sigma_f(H)\le 
\begin{cases} \zeta_{\mu,p}^*(H)&\text{ if }H\le \zeta_{\mu,p}'(0^+)\\
\ \ \ d &\text{ if } H> \zeta_{\mu,p}'(0^+) .
\end{cases}
\end{align}
\item For typical  functions  $f\in \widetilde {B}^{\mu,p}_{q}(\R^d)$, 
one has $\sigma_f=\zeta_{\mu,p}^*$. 

\end{enumerate}
\end{theorem}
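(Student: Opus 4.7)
The plan is to use the wavelet characterisation of $\widetilde B^{\mu,p}_q(\R^d)$ from Theorem~\ref{th_equivnorm_2} to reduce both parts to statements on the wavelet coefficients $(c_\la)_{\la\in\Lambda}$ of $f$. For part~(1), membership in $\widetilde B^{\mu,p}_q(\R^d)$ is equivalent to $f\in B^{\mu^{(-\ep)},p}_q(\R^d)$ for every small $\ep>0$, which combined with the wavelet characterisation yields $\sum_{\la\in\Lambda_j}|c_\la/\mu(\la)|^p\le C_\ep\,2^{j\ep p}$ for each $\ep>0$. For $t\in(0,p)$ I apply H\"older's inequality with conjugate exponents $p/t$ and $p/(p-t)$:
\begin{equation*}
\sum_{\la\in\Lambda_j}|c_\la|^t\le \Big(\sum_{\la\in\Lambda_j}|c_\la/\mu(\la)|^p\Big)^{t/p}\Big(\sum_{\la\in\Lambda_j}\mu(\la)^{\frac{tp}{p-t}}\Big)^{\frac{p-t}{p}}\le C_\ep\,2^{-j(\zeta_{\mu,p}(t)-\ep t)},
\end{equation*}
the second factor being controlled by $\tau_\mu(pt/(p-t))$ thanks to the $\Z^d$-invariance of $\mu$. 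For $t\ge p$ one interpolates with the pointwise bound $\sup_\la|c_\la|\le C_\ep\,2^{-j(\tau_\mu'(+\infty)-\ep)}$, itself a by-product of the $\ell^p$ control and of $\mu(\la)\le C\,2^{-j\tau_\mu'(+\infty)}$ provided by $(\mathrm P_1)$. Jaffard's wavelet leader argument then converts these partition function bounds into a covering of $E_f(H)$ by at most $C_\ep\,2^{j(tH-\zeta_{\mu,p}(t)+O(\ep))}$ dyadic cubes at scale $2^{-j}$; optimising over $t\ge 0$ yields $\sigma_f(H)\le\zeta_{\mu,p}^*(H)$ when $H\le\zeta_{\mu,p}'(0^+)$, while the trivial bound $\sigma_f(H)\le d$ handles $H>\zeta_{\mu,p}'(0^+)$.

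For part~(2), I would match this upper bound by a Baire category argument built on a saturating construction. First, I build a reference function $f_0\in\widetilde B^{\mu,p}_q(\R^d)$ with $\sigma_{f_0}=\zeta_{\mu,p}^*$, by distributing wavelet coefficients $c_\la^0$ across generations $j$ so that, for each $\alpha$ in a fine discretisation of $\dom(\tau_\mu^*)$, an appropriate portion of cubes $\la$ with $\mu(\la)\asymp 2^{-j\alpha}$ carries coefficients of magnitude $\asymp 2^{-jH(\alpha)}$, where $H(\alpha)$ is the exponent dual to $\alpha$ under the Legendre correspondence between $\tau_\mu$ and $\zeta_{\mu,p}$; the total count of selected cubes is tuned to saturate the $\ell^p$-constraint $\sum_\la|c_\la^0/\mu^{(-\ep)}(\la)|^p<+\infty$ for every $\ep>0$, while still producing, at each level $\alpha$, a set of points of H\"older exponent $H(\alpha)$ with Hausdorff dimension $\sigma_\mu(\alpha)=\zeta_{\mu,p}^*(H(\alpha))$. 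Next, fix a countable dense set $D\subset\dom(\zeta_{\mu,p}^*)$ and, for $H\in D$, $n\in\N^*$, define
\begin{equation*}
G_{H,n}=\big\{f\in\widetilde B^{\mu,p}_q(\R^d):\,\dim E_f(H)>\zeta_{\mu,p}^*(H)-1/n\big\}.
\end{equation*}
Each $G_{H,n}$ is a $G_\delta$, since $\dim E_f(H)>s$ rewrites as a countable intersection of open wavelet-based conditions at arbitrary scales. Density follows by showing that for any $f\in\widetilde B^{\mu,p}_q(\R^d)$ and any $\mathcal N_m$ from~\eqref{Nm0} there exist small $\eta>0$ and a translate $y\in\R^d$ with $g:=f+\eta f_0(\cdot-y)\in G_{H,n}$ and $g-f\in\mathcal N_m$: the rule $h_{u+v}(x)=\min(h_u(x),h_v(x))$ when the two exponents differ, combined with the part~(1) upper bound applied to $f$, forces $E_g(H)$ to inherit the Hausdorff dimension $\zeta_{\mu,p}^*(H)$ from the corresponding level set of $f_0$. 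Intersecting over $(H,n)\in D\times\N^*$ produces a dense $G_\delta$ on which $\sigma_f(H)\ge\zeta_{\mu,p}^*(H)$ for $H\in D$; continuity of $\zeta_{\mu,p}^*$ and approximation extend the lower bound to all $H\in\dom(\zeta_{\mu,p}^*)$, and combined with part~(1) this yields $\sigma_f=\zeta_{\mu,p}^*$ for typical $f$.

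The main obstacle lies in the multifractal analysis of $f_0$: translating the measure spectrum $\sigma_\mu=\tau_\mu^*$ of $\mu\in\MD$ into the function spectrum $\zeta_{\mu,p}^*$ of the wavelet series $\sum_\la c_\la^0\psi_\la$ requires a wavelet ubiquity theorem adapted to the almost-doubling environment. For each admissible $\alpha$, the set of points $x$ whose dyadic history $(\la_j(x))_{j\ge 0}$ visits cubes of $\mu$-mass comparable to $2^{-j\alpha}$ infinitely often must have Hausdorff dimension exactly $\sigma_\mu(\alpha)$, and at such $x$ the H\"older exponent generated by $f_0$ must equal $H(\alpha)$. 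This demands a careful thinning of the selected cubes reconciling the $\mu$-multifractal structure with the $\ell^p$-summation constraint defining $\widetilde B^{\mu,p}_q(\R^d)$, and relies heavily on property~$(\mathrm P)$, the $\Z^d$-invariance of $\mu$, and the SMF validated in Theorem~\ref{th-construct-measures}.
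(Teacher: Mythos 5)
Your part (1) has the right skeleton (wavelet characterization, partition-function estimates, Legendre transform), but the sentence ``Jaffard's wavelet leader argument then converts these partition function bounds into a covering of $E_f(H)$'' conceals the actual crux, and as stated it does not work. Your H\"older/interpolation bounds control the coefficient sums $\sum_{\lambda\in\Lambda_j}|c_\lambda|^t$ at a single scale, whereas Jaffard's upper bound $\sigma_f\le(\zeta_f^\Psi)^*$ is formulated with wavelet \emph{leaders}, i.e.\ suprema over all coefficients of generations $j'\ge j$ inside $3\lambda$; for $t>0$ the leader partition function dominates the coefficient one, so your estimates do not transfer. One must count how many scale-$j$ cubes acquire a large leader from a coefficient at depth $j'\in[j,Mj]$, and this is exactly where the specific structure of $\mu\in\MDs$ enters: the paper's Proposition~\ref{ubLD} discretizes jointly in the coefficient exponent and the measure exponent and crucially uses \eqref{loulou} (the $\alpha_{\min}$-behaviour of $\mu$ along corners of dyadic cubes, a feature of the constructed measures, not of general capacities satisfying (P)) together with the map $\theta_p$; the linear piece of $\zeta_{\mu,p}^*$ on $[\tau_\mu'(+\infty),\theta_p(\tau_\mu'(+\infty)))$ is produced precisely by this coefficient-to-leader propagation. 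Without an argument of this type the covering count $C_\ep 2^{j(tH-\zeta_{\mu,p}(t)+O(\ep))}$ is not justified.

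Part (2) contains genuine gaps. First, the assertion that $G_{H,n}=\{f:\dim E_f(H)>\zeta_{\mu,p}^*(H)-1/n\}$ is a $G_\delta$ is unsupported: the Hausdorff dimension of a H\"older level set is not a countable intersection of open wavelet conditions, and neither Jaffard's work nor this paper proves residuality that way; the paper instead exhibits an explicit dense $G_\delta$-type set, $\limsup_m(V_m+\mathcal N_{2^{\lceil m\log(m)\rceil}})$, built so that typical $f$ have coefficients comparable to those of the saturation function along entire generations. Second, the density step via $g=f+\eta f_0(\cdot-y)$ and $h_{u+v}=\min(h_u,h_v)$ does not deliver the lower bound: at points of $E_{f_0}(H)$ where $h_f(x)\le H$ the sum can lose the exponent $H$, and since part (1) only gives $\dim\{x:h_f(x)\le H\}\le\zeta_{\mu,p}^*(H)$, which equals $\dim E_{f_0}(H)$, a dimension comparison cannot show that a full-dimensional subset survives; the paper resolves this by carrying measures on the level sets (the auxiliary measures $\mu_\alpha$ from the SMF construction and the ubiquity measure of Proposition~\ref{ubiquity}) and proving $\mu_\alpha(E_\mu(\alpha)\cap E_f^{\le}(H-\eta))=0$ (Lemma~\ref{lem76}), an argument that uses membership of $f$ itself in $\wb^{\mu,p}_q(\R^d)$ and the dyadic-approximation Lemma~\ref{lemdiop}. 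Third, even granting the lower bound at every $H$, ``combined with part (1) this yields $\sigma_f=\zeta_{\mu,p}^*$'' is false on the decreasing part of the spectrum: for $H>\zeta_{\mu,p}'(0^+)$ part (1) only gives $\sigma_f(H)\le d$, so equality there (and $E_f(H)=\emptyset$ beyond $\zeta_{\mu,p}'(-\infty)$) requires a separate upper bound valid for typical $f$, which the paper derives from the comparability of typical coefficients with those of $g^{\mu,p,q}$ (Proposition~\ref{propspecG}) together with Proposition~\ref{fm}(3). Finally, your sketch of $f_0$ omits the paper's key device, the factor $2^{-\overline{j}\,\tau_\mu^*(\alpha_{\overline{\lambda}})/p}$ indexed by the irreducible ancestor $\overline{\lambda}$, which ties the exponents to dyadic approximation and is what makes the affine part of the spectrum attainable; you correctly identify the needed ``wavelet ubiquity theorem'' as the main obstacle, but that is precisely the content that must be proved, not assumed.
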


The possible shapes of $\si_f$ when $f$ is typical in  $\widetilde B^{\mu,p}_q(\R^d)$ are investigated in detail in Section \ref{sec-description} (see Proposition ~\ref{lemtetap}):   depending on the values of $p$ and  $\si_\mu^*(\alpha_{\min})$, various phenomena may occur. See for instance Figures~\ref{typical}  and~\ref{fig_functions2} for a representation of   typical singularity spectrum  in $ \widetilde {B}^{\mu,p}_{q}(\R^d) $, according to whether $\si_\mu(\alpha_{\min})=0$ or $\si_\mu(\alpha_{\min})>0$. Next remark gathers key information, proved in Proposition \ref{lemtetap}.  
\begin{remark}  
\label{rem-spectra}
\begin{enumerate}
\item The map $\zeta_{\mu,p}$ is always concave. Also, it is immediate that
 $\zeta_{\mu,+\infty}=\tau_\mu$, so   typical functions in $\widetilde {B}^{\mu,+\infty}_{q}(\R^d) $ satisfy  $\si_f= \tau_\mu^*$.
\item
 The support of  $\zeta_{\mu,p}^*$ is the compact subinterval $[\zeta_{\mu,p}(+\infty),\zeta'_{\mu,p}(-\infty)] \subset (0,+\infty)$. Moreover,  since $\zeta_{\mu,p}(0)=\tau_\mu(0)=-d$,  the maximum of $\zeta_{\mu,p}^*$  is  $d$, and it is reached at $H$ if and only if $H\in  [\zeta_{\mu,p}'(0^+),\zeta_{\mu,p}(0^-)]$.  

\item One has $\zeta'_{\mu,p}(-\infty)\le \tau'_{\mu}(-\infty)+\frac{d}{p}$ (see the first item of Section~\ref{features}). 

 \end{enumerate}
\end{remark}
\begin{remark}  
\label{rem-spectra2}
 The set of environments $\mathscr E_d$  includes   all the positive powers of the Lebesgue measure $\mathcal L^d$. Taking $s>d/p$ and $\mu =(\mathcal L^d)^{s/d-1/p}$, Theorem~\ref{main} coincides with the celebrated Jaffard's theorem  \cite{JAFF_FRISCH}, which can be stated as follows:
\begin{enumerate} 
\item For all $f\in  B^{s,p}_{q}(\R^d)$,
$\ds 
{\sigma}_f(H)\le 
\begin{cases} \min \big \{p\big (H- (s-\frac{d}{p})\big ),d\big \} &\text{ if }H\ge s-d/p,\\
-\infty&\text{ if }H<s-d/p.
\end{cases}
$ 

\item Typical   $f\in  B^{s,p}_{q}(\R^d)$ satisfy
$\ds 
{\sigma}_f(H)=
\begin{cases}p\big (H- (s-\frac{d}{p})\big )&\text{ if }H\in [s-d/p, s],\\
-\infty&\text{ otherwise.}
\end{cases}
$
\end{enumerate}
 
In this case, $\tau_\mu(t)=(s-d/p) t-d$ so $\tau_\mu'(-\infty)=\tau_\mu'(+\infty)=s-d/p$, $\tau_\mu^*(H)=d$ if $H=s-d/p$ and $-\infty$ otherwise. Hence, $\zeta_{\mu,p}(t)=st-d$ for $t<p$ and $\zeta_{\mu,p}(t)=(s-d/p)t$ for $t\ge p$, whose Legendre transform is   the typical spectrum   in $B^{s,p}_q(\R^d)$.

\end{remark}

\begin{remark} 
\label{remgibbs}
Gibbs measures are a wide class of examples for which Theorem~\ref{main} holds for  $ {B}^{\mu,p}_{q}(\R^d)$ (and not only $\widetilde {B}^{\mu,p}_{q}(\R^d)$), since they are doubling and satisfy (P$_2$) with $\phi\equiv 0$.   Gibbs measure are defined as follows:  let $\varphi:\R^d\to \R$ be a $\Z^d$-invariant real valued H\"older continuous function. The sequence of Radon measures 
$$
\nu_n(\mathrm{d}x)= \frac{\displaystyle\exp\left (S_n\varphi(x)\right)}{\int_{[0,1]^d}  \exp\left (S_n\varphi(t)\right)\mathcal{L}^d(\mathrm{d}t)}\, \mathcal{L}^d(\mathrm{d}x), \quad \text{where }S_n\varphi(x)=\sum_{k=0}^{n-1} \varphi(2^n x),
$$
converges vaguely to a $\Z^d$-invariant Radon measure $ \nu $ fully supported on $\R^d$, called Gibbs measure associated with $\varphi$. Then, $\tau_{\nu_{|[0,1]^d}}(t)=tP(\varphi)-P(t\varphi)$, where $
P( \varphi)=\lim_{n\to+ \infty}\frac{1}{n}\log \int_{[0,1]^d} 2^n \exp\left (S_n \varphi(x)\right)\mathcal{L}^d(\mathrm{d}x)
$ is the topological pressure of $ \varphi $. 
Moreover, $\tau_{\nu_{|[0,1]^d}}$ is analytic (see \cite{ParryPollicott,Pesin}). 

One can show  that,  when $p=+\infty$, or when $\tau_{\nu_{|[0,1]^d}}'(+\infty)=0$,  or when the potential~$\varphi$ reaches its minimum at 0,  the proofs developed hereafter when $\mu\in \mathscr E_d$ remain true (up to slight modifications) when   $\mu=\nu^s $ for some $s>0$,  and   the conclusions of Theorem~\ref{main}    in $ {B}^{\mu,p}_{q}(\R^d)$ and $\widetilde {B}^{\mu,+\infty}_{q}(\R^d)$  still hold. 
 
\end{remark}
%

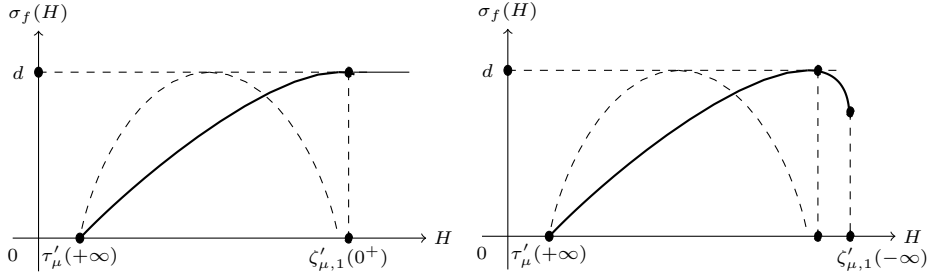
\begin{figure}   \hskip .5cm
 \begin{tikzpicture}[xscale=1.7,yscale=2.2]
    {\tiny
\draw [->] (0,-0.2) -- (0,1.25) [radius=0.006] node [above] {${\sigma}_f(H) $};
\draw [->] (-0.2,0) -- (3.,0) node [right] {$H$};
\draw [dashed, domain=0:5]  plot ({-(exp(\x*ln(1/5))*ln(0.2)+exp(\x*ln(0.8))*ln(0.8))/(ln(2)*(exp(\x*ln(1/5))+exp(\x*ln(0.8)) ) )} , {-\x*( exp(\x*ln(1/5))*ln(0.2)+exp(\x*ln(0.8))*ln(0.8))/(ln(2)*(exp(\x*ln(1/5))+exp(\x*ln(0.8))))+ ln((exp(\x*ln(1/5))+exp(\x*ln(0.8))))/ln(2)});
\draw [dashed, domain=0:5]  plot ({-( ln(0.2)+ ln(0.8))/(ln(2)) +(exp(\x*ln(1/5))*ln(0.2)+exp(\x*ln(0.8))*ln(0.8))/(ln(2)*(exp(\x*ln(1/5))+exp(\x*ln(0.8)) ) )} , {-\x*( exp(\x*ln(1/5))*ln(0.2)+exp(\x*ln(0.8))*ln(0.8))/(ln(2)*(exp(\x*ln(1/5))+exp(\x*ln(0.8))))+ ln((exp(\x*ln(1/5))+exp(\x*ln(0.8))))/ln(2)});
\draw [thick, domain=0:0.2]  plot ({-( ln(0.2)+ ln(0.8))/(ln(2)) +(exp(\x*ln(1/5))*ln(0.2)+exp(\x*ln(0.8))*ln(0.8))/(ln(2)*(exp(\x*ln(1/5))+exp(\x*ln(0.8)) ) )-\x*( exp(\x*ln(1/5))*ln(0.2)+exp(\x*ln(0.8))*ln(0.8))/(ln(2)*(exp(\x*ln(1/5))+exp(\x*ln(0.8))))+ ln((exp(\x*ln(1/5))+exp(\x*ln(0.8))))/ln(2)} , {-\x*( exp(\x*ln(1/5))*ln(0.2)+exp(\x*ln(0.8))*ln(0.8))/(ln(2)*(exp(\x*ln(1/5))+exp(\x*ln(0.8))))+ ln((exp(\x*ln(1/5))+exp(\x*ln(0.8))))/ln(2)});

\draw [thick, domain=0:5]  plot ({-(exp(\x*ln(1/5))*ln(0.2)+exp(\x*ln(0.8))*ln(0.8))/(ln(2)*(exp(\x*ln(1/5))+exp(\x*ln(0.8)) ) ) -\x*( exp(\x*ln(1/5))*ln(0.2)+exp(\x*ln(0.8))*ln(0.8))/(ln(2)*(exp(\x*ln(1/5))+exp(\x*ln(0.8))))+ ln((exp(\x*ln(1/5))+exp(\x*ln(0.8))))/ln(2)} , {-\x*( exp(\x*ln(1/5))*ln(0.2)+exp(\x*ln(0.8))*ln(0.8))/(ln(2)*(exp(\x*ln(1/5))+exp(\x*ln(0.8))))+ ln((exp(\x*ln(1/5))+exp(\x*ln(0.8))))/ln(2)});
 \draw [fill] (-0.1,-0.10)   node [left] {$0$}; 
\draw [fill] (0,1) circle [radius=0.03] node [left] {$d \ $}; 
\draw  [fill] (0.32,0) circle [radius=0.03] node [below] {$\tau_\mu'(+\infty)$}; 
  \draw[dashed] (0,1) -- (2.6,1);
\draw  [fill] (2.4,1) circle [radius=0.03]  [dashed]   (2.4,1) -- (2.4,0)  [fill] (2.4,0) circle [radius=0.03]  node [below] {$\zeta_{\mu,1}'(0^+) $};
\draw  [fill] (2.4,1)-- (2.87,1);

 }
\end{tikzpicture}
 \begin{tikzpicture}[xscale=1.7,yscale=2.2]
    {\tiny
\draw [->] (0,-0.2) -- (0,1.25) [radius=0.006] node [above] {${\sigma}_f(H)$};
\draw [->] (-0.2,0) -- (3.,0) node [right] {$H$};
\draw [thick, domain=0:5]  plot ({-(exp(\x*ln(1/5))*ln(0.2)+exp(\x*ln(0.8))*ln(0.8))/(ln(2)*(exp(\x*ln(1/5))+exp(\x*ln(0.8)) ) ) -\x*( exp(\x*ln(1/5))*ln(0.2)+exp(\x*ln(0.8))*ln(0.8))/(ln(2)*(exp(\x*ln(1/5))+exp(\x*ln(0.8))))+ ln((exp(\x*ln(1/5))+exp(\x*ln(0.8))))/ln(2)} , {-\x*( exp(\x*ln(1/5))*ln(0.2)+exp(\x*ln(0.8))*ln(0.8))/(ln(2)*(exp(\x*ln(1/5))+exp(\x*ln(0.8))))+ ln((exp(\x*ln(1/5))+exp(\x*ln(0.8))))/ln(2)});

\draw [thick, domain=0:1]  plot ({-( ln(0.2)+ ln(0.8))/(ln(2)) +(exp(\x*ln(1/5))*ln(0.2)+exp(\x*ln(0.8))*ln(0.8))/(ln(2)*(exp(\x*ln(1/5))+exp(\x*ln(0.8)) ) )-\x*( exp(\x*ln(1/5))*ln(0.2)+exp(\x*ln(0.8))*ln(0.8))/(ln(2)*(exp(\x*ln(1/5))+exp(\x*ln(0.8))))+ ln((exp(\x*ln(1/5))+exp(\x*ln(0.8))))/ln(2)} , {-\x*( exp(\x*ln(1/5))*ln(0.2)+exp(\x*ln(0.8))*ln(0.8))/(ln(2)*(exp(\x*ln(1/5))+exp(\x*ln(0.8))))+ ln((exp(\x*ln(1/5))+exp(\x*ln(0.8))))/ln(2)});

\draw [dashed, domain=0:5]  plot ({-(exp(\x*ln(1/5))*ln(0.2)+exp(\x*ln(0.8))*ln(0.8))/(ln(2)*(exp(\x*ln(1/5))+exp(\x*ln(0.8)) ) )} , {-\x*( exp(\x*ln(1/5))*ln(0.2)+exp(\x*ln(0.8))*ln(0.8))/(ln(2)*(exp(\x*ln(1/5))+exp(\x*ln(0.8))))+ ln((exp(\x*ln(1/5))+exp(\x*ln(0.8))))/ln(2)});
\draw [dashed, domain=0:5]  plot ({-( ln(0.2)+ ln(0.8))/(ln(2)) +(exp(\x*ln(1/5))*ln(0.2)+exp(\x*ln(0.8))*ln(0.8))/(ln(2)*(exp(\x*ln(1/5))+exp(\x*ln(0.8)) ) )} , {-\x*( exp(\x*ln(1/5))*ln(0.2)+exp(\x*ln(0.8))*ln(0.8))/(ln(2)*(exp(\x*ln(1/5))+exp(\x*ln(0.8))))+ ln((exp(\x*ln(1/5))+exp(\x*ln(0.8))))/ln(2)});
  \draw[dashed] (0,1) -- (2.6,1);
\draw [fill] (-0.1,-0.10)   node [left] {$0$}; 
\draw [fill] (0,1) circle [radius=0.03] node [left] {$d \ $}; 
\draw  [fill] (0.32,0) circle [radius=0.03] node [below] {$\tau_\mu'(+\infty)$};
\draw  [fill] (2.4,1) circle [radius=0.03]  [dashed]   (2.4,1) -- (2.4,0)  [fill] (2.4,0) circle [radius=0.03]  ;
\draw  [fill] (2.65,0.75) circle [radius=0.03]  [dashed]   (2.65,0.75) -- (2.65,0)  [fill] (2.65,0) circle [radius=0.03]  (2.90,-0.02)  node [below] {$\zeta_{\mu,1}'(-\infty)$};
 }
\end{tikzpicture}
\caption{{\bf Left:} Upper bound for the singularity spectrum of  every $f \in \widetilde {B}^{\mu,1}_q(\R^d)$. {\bf Right:} Singularity spectrum of a typical  $ f\in \widetilde {B}^{\mu,1}_q(\R^d)$. The dashed graph represents the (initial)  singularity  spectrum of $\mu$. When $p=+\infty$ and  $ f$ is typical in $\widetilde {B}^{\mu,+\infty }_q(\R^d)$, $\si_f=\si_\mu$.  }
\label{typical}
\end{figure}

%
%
%

\subsection{Multifractal formalism for functions   in $\widetilde {B}^{\mu,p}_{q}(\R^d)$}\label{secMFF}  
 The formalism used in this paper is based on the one developed by Jaffard in~\cite{JaffardPSPUM}, Let us begin with the definition of wavelet leaders.
\begin{definition}[Wavelet leaders]
Given $ \Psi\in \bigcup_{r\in\N}\mathcal F_r$ and $f\in L^p_{{\rm loc}}(\R^d)$ for   $p\in [1,+\infty]$, denoting the wavelet coefficients  of $f$ associates with  $\Psi$ by $(c_\lambda)_{\lambda\in \Lambda}$, the wavelet leader of $f$ associated with $\lambda\in \mathcal D $ (see Section \ref{statement-1} for the notations)  is defined as: 
\begin{equation}
\label{defleaders}
\ld^f_\lambda=\sup\{|c_{\lambda'}|:\, \lambda'=(i,j,k)\in \Lambda, \,\lambda'_{j,k}\subset 3\lambda\}.
\end{equation}
\end{definition}
Pointwise H\"older exponents of 
H\"older continuous functions  (recall  Definition~\ref{defexp}) are related to the wavelet leaders as follows (see \cite[Corollary 1]{JaffardPSPUM}). 

\begin{proposition} Let $r\in\mathbb \N^*$ and $\Psi\in\mathcal F_r$.  If  $f\in {\mathscr C}^\ep(\R^d)$ for some $\ep>0$,  then  for every $x_0\in \R^d$, $h_f(x_0)<r$ if and only $\liminf_{j\to \infty} \frac{\log \ld^f_{\lambda_{j}(x)}}{\log(2^{-j})}<r$, and in this case 
\begin{equation}
\label{defleaders2}
h_f(x_0)=\liminf_{j\to \infty} \frac{\log \ld^f_{\lambda_{j}(x)}}{\log(2^{-j})}.
\end{equation}
\end{proposition}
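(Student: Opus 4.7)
The plan is to establish, following Jaffard \cite{JaffardPSPUM}, two one-sided estimates: (i) for every $H' < \min(h_f(x_0), r)$, $\liminf_{j \to \infty} \log \ld^f_{\lambda_j(x_0)}/\log 2^{-j} \geq H'$; and (ii) writing $H_0 := \liminf_{j \to \infty} \log \ld^f_{\lambda_j(x_0)}/\log 2^{-j}$, for every $H' < \min(H_0, r)$, $h_f(x_0) \geq H'$. Together with a contradiction argument when either member is $\geq r$, these two inequalities force both the equivalence $h_f(x_0) < r \Leftrightarrow H_0 < r$ and the identity $h_f(x_0) = H_0$ in that case.

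For (i), fix $H' < \min(h_f(x_0), r)$ and let $P$ be a polynomial of degree $\leq \lfloor H' \rfloor < r$ with $|f(x) - P(x - x_0)| \leq C|x - x_0|^{H'}$ on a neighborhood of $x_0$. For $j$ large and every $\lambda' = (i, j', k)$ with $\lambda'_{j', k} \subset 3\lambda_j(x_0)$, the support of $\psi_{\lambda'}$ lies in this neighborhood within $O(2^{-j})$ of $x_0$. Since $\psi^{(i)}$ has $r > \lfloor H' \rfloor$ vanishing moments (which annihilate $P$),
\[ |c_{\lambda'}| = \left| 2^{dj'} \int \psi_{\lambda'}(x)(f(x) - P(x - x_0))\, dx \right| \leq C' 2^{-jH'}. \]
Taking the supremum yields $\ld^f_{\lambda_j(x_0)} \leq C' 2^{-jH'}$, and hence (i).

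For (ii), fix $H' < H < \min(H_0, r)$, so that $\ld^f_{\lambda_{j'}(x_0)} \leq 2^{-j'H}$ for all $j' \geq j_0$. The aim is to produce a polynomial $P$ of degree $\leq \lfloor H \rfloor < r$ such that $|f(x) - P(x - x_0)| \leq C|x - x_0|^{H'}$ near $x_0$. Using \eqref{decompf}, one defines $P$ as a suitable degree-$\lfloor H \rfloor$ Taylor polynomial at $x_0$ built from the wavelet data; convergence of the coefficients of $P$ relies on combining the local leader bound $\ld^f_{\lambda_{j'}(x_0)} \leq 2^{-j'H}$ for $j' \geq j_0$ with the global bound $|c_\lambda| \leq C 2^{-j'\ep}$ granted by the standing assumption $f \in \mathscr C^\ep(\R^d)$. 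For $x$ with $|x - x_0| \sim 2^{-j}$ ($j \geq j_0$), one then bounds $f(x) - P(x - x_0)$ by splitting the wavelet series at three thresholds: coarse scales ($j' \leq j$) are controlled by Taylor remainders of the smooth partial sums; intermediate scales ($j < j' \lesssim jH/\ep$) use $|c_{\lambda'}| \leq \ld^f_{\lambda_j(x_0)} \leq 2^{-jH}$ for $\lambda'$ near $x$ (which is contained in $3\lambda_j(x_0)$), together with the Taylor-polynomial cancellation near $x_0$; and very fine scales ($j' \gtrsim jH/\ep$) use the global bound $|c_\lambda| \leq C 2^{-j'\ep}$, which becomes sharper than the leader bound in that range. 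Summing, the total remainder is $O(j \cdot 2^{-jH})$, which is $O(|x-x_0|^{H'})$ for any $H' < H$, hence $h_f(x_0) \geq H'$.

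The truly delicate point is (ii): the construction of $P$ and the three-regime scale-by-scale bookkeeping. The hypothesis $H_0 < r$ is essential throughout, ensuring that the Taylor degree $\lfloor H \rfloor$ remains strictly below the regularity order $r$ of $\Psi \in \mathcal F_r$ and thus that the $r$ vanishing moments and $r$ continuous derivatives of the wavelets can be used freely. Crucially too, the uniform H\"older assumption $f \in \mathscr C^\ep(\R^d)$ is indispensable in (ii): it provides global wavelet decay that tames the very-fine-scale contributions that the local leader bound alone cannot control.
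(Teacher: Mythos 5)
The paper does not actually prove this proposition---it simply cites \cite[Corollary 1]{JaffardPSPUM}---and your argument is precisely the standard Jaffard wavelet-leader proof that reference contains: vanishing moments annihilating the local Taylor polynomial give the upper bound on the leaders, and the reconstruction of a degree-$\lfloor H\rfloor$ polynomial from the wavelet series with the three-regime splitting (coarse scales via Taylor remainders of the smooth blocks, intermediate scales $j<j'\lesssim jH/\ep$ via the leader bound $|c_{\lambda'}|\le \ld^f_{\lambda_j(x_0)}$, very fine scales via the uniform $\mathscr C^\ep$ decay) gives the converse, and your concluding contradiction argument correctly yields both the equivalence with $r$ and the equality of exponents. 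Your outline is correct; in a full write-up one should only make explicit the truncation of $P$ to degree $\lfloor H'\rfloor$ in part (i), the reduction to non-integer $H$ when summing the Taylor coefficients $|\partial^m f_{j'}(x_0)|\lesssim 2^{j'(m-H)}$, and the harmless factor $j\,2^{-jH}$ in the final remainder, none of which affects the liminf exponent.
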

Hence, as observed by Jaffard, and rephrased in the language of the present paper, if the support of $\sigma_f$ is bounded and sufficiently smooth wavelets $\Psi$ are used,  then the singularity spectrum $\sigma_f$ of $f$ coincides with the  singularity spectrum  of the capacity $\nu\in \mathcal{C}(\R^d)$ defined by $\nu(B) =  \sup\left \{\ld^f_{\lambda}: \lambda\in\mathcal D,\, \lambda\subset B\right \}$ for all  $B\in\mathcal B(\R^d)$. 

\sk

In order to estimate from above the  singularity spectrum $\sigma_f$ of   $f\in\widetilde {B}^{\mu,p}_{q}(\R^d)$, it is then natural to consider, exactly as it was done for the elements of $\mathcal H([0,1]^d)$,  the $L^q$-spectrum of $f$ relative to $\Psi$ defined as follows:  For any  $N\in \N^*$,    set
\begin{equation}\label{zetaNPsi}
\zeta^{N,\Psi}_f = \liminf_{j\to+ \infty}\zeta^{N,\Psi} _{f,j} ,\text{ where } \  \zeta^{N,\Psi}_{f,j} : t\in\R\mapsto -\frac{1}{j}\log_2 \Big( \!\!  \sum_{ {\lambda\in\mathcal D_j, \  \lambda\subset N[0,1]^d,\,  \ld^f_\lambda>0}} \!\!\!\!\!\!  (\ld_\lambda^f)^t\Big). 
\end{equation}
Recalling the notations of Section~\ref{statement}, $(N[0,1]^d)_{N\in\N^*}$ is the increasing sequence of boxes $[- (N-1)/2,(N+1)/2]^d$, whose union  covers $\R^d$. 
\begin{definition}The  $L^q$-spectrum of $f$ relative to $\Psi$ is the concave function  
\begin{equation}\label{zetaPsif}
\zeta^\Psi_f  = \inf\{ \zeta^{N,\Psi}_f : N\in\N^*\} = \lim_{N\to +\infty}  \zeta^{N,\Psi}_f .
\end{equation}
\end{definition}
 The concavity of $\zeta^\Psi_f$ follows from the fact that  $(\zeta^{N,\Psi}_f)_{N\geq 1}$ is a non-increasing sequence of functions, 

It is  remarkable   that  ${\zeta^\Psi_f}_{|\R_+}$ does not depend on~$\Psi$~\cite[Th. 3]{JaffardPSPUM}. This would be the case over   $\R$ if  $\Psi$  belonged to the Schwarz class~\cite[Th. 4]{JaffardPSPUM}. However, the wavelet characterization of ${B}^{\mu,p}_{q}(\R^d)$ makes it necessary to use compactly supported wavelets, which never belong to $\mathscr C^\infty(\R^d)$ \cite{Daubechies1988}.   
For simplicity,   ${\zeta^\Psi_f}_{|\R_+}$ is simply denoted  by ${\zeta_f}_{|\R_+}$.

\smallskip

 Also, when $H<r$, the Legendre transform $(\zeta^\Psi_f )^*(H)$ of $\zeta^\Psi_f $ at $H$ provides an upper bound for $\dim E_f(H) $, i.e. one has 
\begin{equation}\label{UPF}
\sigma_f(H)\le (\zeta^\Psi_f)^*(H).
\end{equation}

\sk 

Let us now define  the multifractal formalism for functions. It combines Jaffard's  formalism  with wavelet leaders, and a variant of it, mainly used to control the decreasing part of the $\si_f$  whenever it exists. This variant is necessary since   when $\mu\in\mathscr{E}_d$, $q<+\infty$  and the elements of $\Psi$ are smooth, it is generic in  $\widetilde {B}^{\mu,p}_{q}(\R^d)$ that ${\zeta^\Psi_f}_{|\R_-^*}$ equals $-\infty$. Hence,  for $H\ge (\zeta_f)'(0^+)$,  $(\zeta^\Psi_f)^*(H)$ only provides the trivial upper bound $\sigma_f(H)\le d$. 

\begin{definition} 
\label{Multifractal-formalism} Let $r\in\N^*$ and  $f \in \bigcup_{s>0} \mathcal{C}^s(\R^d)$. Suppose that ${\sigma}_f$ has a compact domain included in $(0,r)$.  Let $I \subset \dom(\si_f)$ be a compact interval.  
\begin{enumerate}
\item The wavelet leaders  multifractal formalism (WMF)  holds for $f$ on $I$ when there exists $\widetilde r\ge r$ such that  for all $H\in I$ and all $\Psi\in \mathcal{F}_{\widetilde r}$, , ${\sigma}_f(H)=(\zeta_f^\Psi)^*(H)$.

\smallskip

\item  The weak wavelet leaders multifractal formalism (WWMF)  holds for $f$ on $I$ relatively to $\Psi\in\mathcal F_r$ when the following property holds:   there exists an increasing sequence $(j_k)_{k\in \mathbb N}$ such that for all $N\in \N$, $\lim_{k\to\infty}  \zeta^{ {N },\Psi}_{f,j_k} =\zeta^{(N),\Psi}_{f,{\rm w}} $ exists, and setting $\zeta_{f,{\rm w}}^\Psi=\lim_{N\to+\infty} \zeta^{(N),\Psi}_{f,{\rm w}}  $, one has   ${\sigma}_f(H)=(\zeta_{f,{\rm w}}^\Psi)^*(H)$ for all $H\in I$. 
\end{enumerate}
\end{definition}

\begin{remark}\label{remMFF}
(1) In the increasing part of $\sigma_f$, i.e. when  $H\le (\zeta_f)'(0^-)$, item (1) of the previous definition coincides with the multifractal formalism associated with wavelet leaders considered by Jaffard (see~\cite{JaffardPSPUM} for instance).

%

\medskip
\noindent
(2) Contrarily to   \eqref{UPF}, in general, even if there exists such a subsequence $(j_k)_{k\in\N}$ making it possible to define $\zeta_{f,{\rm w}}^\Psi$, one cannot get the a priori inequality $\sigma_f\le (\zeta_{f,{\rm w}}^\Psi)^*$.  This justifies  the terminology ``weak''. Nevertheless, the existence of $\zeta_{f,{\rm w}}^\Psi$ emphasizes  that  the sequences $(\zeta^{ {N },\Psi}_{f,j}(t))_{j\in\N}$ converge along the same subsequence for all $N$ and $t$.  This property is typical in $\widetilde {B}^{\mu,p}_{q}(\R^d)$, and holds simultaneously for countably many $\Psi$'s.

\medskip
\noindent
(3) If the WWMF holds on $I$ relatively to   $\Psi$ and $\widetilde \Psi$ in $\mathcal F_r$, then $\zeta_{f,{\rm w}}^\Psi = \sigma_f^*=\zeta_{f,{\rm w}}^{\widetilde\Psi} $ on the interval $\bigcup_{H\in I}   \partial {\sigma_f}_{|I}(H)$ ($\partial {\sigma_f}_{|I}$ is  the subdifferential of the concave function ${\sigma_f}_{|I}$). 
 
\end{remark}

  Theorem~\ref{main}  can now be completed by the following result on the validity of the MF. Recall \eqref{deftaumup} and \eqref{rmu} for the definitions of $\zeta_{\mu,p}$ and $s_\mu$ respectively, as well as Remarks~\ref{remark-inclusions}\,(2) and~\ref{rem-spectra}.
\begin{theorem}[Validity of the multifractal formalism]
\label{validity}
Let $\mu\in \MDs $. 
\begin{enumerate} 
\item For all $f\in \widetilde {B}^{\mu,p}_{q}(\R^d)$, one has ${\zeta_f}_{|\R_+}\ge {\zeta_{\mu,p}}_{|\R_+}$. 

\item Typical functions  $f\in \widetilde {B}^{\mu,p}_{q}(\R^d)$ satisfy the WMF  on  $[\zeta_{\mu,p}'(+\infty),\zeta_{\mu,p}'(0^+)]$ (i.e. in the increasing part of $\sigma_f$), and ${\zeta_f}_{|\R_+}={\zeta_{\mu,p}}_{|\R_+}$.  

\item 
\begin{itemize} \item [(i)]Let $\Psi\in\mathcal F_{s_\mu}$. Typical functions  $f\in \widetilde {B}^{\mu,p}_{q}(\R^d)$ satisfy  the WWMF  on 
${\rm dom}(\sigma_f)=[\zeta_{\mu,p}'(+\infty),\zeta_{\mu,p}'(-\infty)]$ relatively to~$\Psi$, with $\zeta_{f,{\rm w}}^\Psi={\sigma}_f^*=\zeta_{\mu,p}$. Moreover,  if $q<+\infty$, the property ${\zeta^\Psi_f}_{|\R^*_-}=-\infty$ is typical as well. 

\item [(ii)] Given a countable subset $\mathcal F$ of $\mathcal F_{s_\mu}$, typical functions  $f\in \widetilde {B}^{\mu,p}_{q}(\R^d)$ satisfy the  WWMF  on the  interval ${\rm dom}(\sigma_f)$ 
relatively to any $\Psi\in \mathcal F$, with $\zeta_{f,{\rm w}}^\Psi={\sigma}_f^*=\zeta_{\mu,p}$, and ${\zeta^\Psi_f}_{|\R^*_-}=-\infty$ if $q<+\infty$. 
\end{itemize}
\end{enumerate}
\end{theorem}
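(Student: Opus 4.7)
The plan is to exploit the wavelet characterization of Theorem~\ref{th_equivnorm_2} to transfer every statement to wavelet coefficients $(c_\lambda)_{\lambda}$, then pass to wavelet leaders $(\ld^f_\lambda)_\lambda$ via almost-doubling and property~(P), and finally to compute the $L^q$-spectrum $\zeta_f^\Psi$. For item~(1), the key observation is that $f\in\widetilde B^{\mu,p}_q(\R^d)$ yields, for every $\ep>0$, the uniform bound
$$
\Big\|\Big(\frac{c_\lambda}{\mu(\lambda)\,2^{j\ep}}\Big)_{\lambda\in\Lambda_j}\Big\|_{\ell^p(\Lambda_j)}\lesssim 1.
$$
For $t\in(0,p)$ I would write $|c_\lambda|^t=(|c_\lambda|/\mu(\lambda))^t\mu(\lambda)^t$ and apply Hölder's inequality with exponents $p/t$ and $p/(p-t)$: the first factor contributes $2^{j\ep t}$, while the $\mu$-sum contributes $2^{-j\,\frac{p-t}{p}\tau_\mu(pt/(p-t))}$ by definition of $\tau_\mu$. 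For $t\ge p$, an $\ell^\infty$-interpolation between the value at $t=p$ and the minimal Hölder exponent of the coefficients gives the linear tail $\tau_\mu'(+\infty)t$. Going from coefficients to leaders costs only the factor $2^{\phi(j)}=2^{o(j)}$ coming from (P$_2$) and almost-doubling, which is absorbed by letting $\ep\to 0^+$. This yields ${\zeta_f}_{|\R_+}\ge {\zeta_{\mu,p}}_{|\R_+}$.

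For item~(2), the universal upper bound~\eqref{UPF} combined with item~(1) gives $\sigma_f(H)\le\zeta_{\mu,p}^*(H)$ for every $f$ and every $H\in[\zeta_{\mu,p}'(+\infty),\zeta_{\mu,p}'(0^+)]$, and the matching lower bound for typical $f$ is precisely Theorem~\ref{main}~(2), so the \WMF holds on the increasing part. The identity ${\zeta_f}_{|\R_+}={\zeta_{\mu,p}}_{|\R_+}$ for typical $f$ is then obtained by exhibiting a dense family of test functions whose wavelet coefficients $c_\lambda$ are of exact order $\mu(\lambda)$ at infinitely many scales; a natural choice is the lacunary wavelet sum $f_\mu=\sum_\lambda \eta_\lambda\,\mu(\lambda)\,\psi_\lambda$ with $\eta_\lambda\in\{0,1\}$ selected through a pattern inherited from the structure of $\mu\in\MDs$. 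Property~(P) ensures both $f_\mu\in\widetilde B^{\mu,p}_q(\R^d)$ and that the scaling of $\tau_\mu$ is transferred to $\zeta_{f_\mu}=\zeta_{\mu,p}$ on $\R_+$; a standard Baire argument upgrades this density into a dense $G_\delta$.

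For item~(3), the obstruction is that when $q<+\infty$, typical $f\in\widetilde B^{\mu,p}_q(\R^d)$ have wavelet coefficients vanishing on most $\lambda\in\Lambda_j$ for most $j$, forcing ${\zeta_f^\Psi}_{|\R_-^*}=-\infty$ and making the ordinary \WMF inaccessible on the decreasing part. My strategy is to work along a subsequence $(j_k)_k$ of ``saturated'' scales at which essentially every $\lambda\in\mathcal D_{j_k}$ carries a coefficient of order $\mu(\lambda)$. Typical functions will be built to possess such a subsequence by a variant of the test functions from item~(2), now requiring that on a sparsely chosen infinite subset of generations the full layer of dyadic cubes be active. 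Along these scales $\zeta^{N,\Psi}_{f,j_k}(t)\to\zeta_{\mu,p}(t)$ for every $N$ and every $t\in\R$, which yields $\zeta^\Psi_{f,{\rm w}}=\zeta_{\mu,p}$; combined with $\sigma_f=\zeta_{\mu,p}^*$ from Theorem~\ref{main}, this establishes the \WWMF on $\dom(\sigma_f)$. For item~(3)(ii) a diagonal extraction suffices, since for every $\Psi\in\mathcal F$ the \WWMF defines a dense $G_\delta$, and the countable intersection remains residual.

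The hard part will be the construction in item~(3): the typical $f$ must have wavelet coefficients dense enough on the subsequence $(j_k)$ to reconstruct the entire Legendre transform $\zeta_{\mu,p}^*$, including its decreasing part, yet sparse enough on the complementary scales to preserve membership in the Fréchet space $\widetilde B^{\mu,p}_q(\R^d)$. Reconciling these opposite constraints is exactly what the $\Z^d$-invariance, almost-doubling property and property~(P) of the environments $\mu\in\MDs$ provided by Theorem~\ref{th-construct-measures} are tailored for; without these fine scaling properties, producing a dense $G_\delta$ whose typical elements satisfy the \WWMF on the full spectrum would seem out of reach.
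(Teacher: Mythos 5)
Your overall architecture (wavelet characterization $\to$ control of leaders $\to$ scaling function, then Theorem~\ref{main} plus a saturation-type construction for the typical statements) is the same as the paper's, but two of your key steps do not hold as stated. The first gap is in item (1): the H\"older computation only controls the single-scale coefficient sums $\sum_{\lambda\in\Lambda_j}|c_\lambda|^t$, whereas $\zeta_f$ is defined through wavelet \emph{leaders}, and $\ld^f_\lambda$ is a supremum over all $\lambda'\subset 3\lambda$ of arbitrary generation $j'\ge j$. Your claim that passing from coefficients to leaders ``costs only a factor $2^{\phi(j)}=2^{o(j)}$'' is unjustified and false in the relevant regime: a leader at scale $j$ may be of the order of a coefficient at a much deeper scale $j'\approx jH/\alpha_{\min}$, so the leader sum at scale $j$ mixes deep-scale coefficient statistics with a different normalization. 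When $\zeta_{\mu,p}(t)<0$ (small $t>0$) the naive bound $\sum_\lambda(\ld^f_\lambda)^t\le C\sum_{j'\ge j}\sum_{\lambda'\in\Lambda_{j'}}|c_{\lambda'}|^t$ does not give $2^{-j(\zeta_{\mu,p}(t)-o(1))}$, and the two crude alternatives (the pointwise bound $\ld^f_\lambda\lesssim 2^{o(j)}\mu(\lambda)$, or the trivial count $2^{jd}$ times $2^{-jt(\alpha_{\min}-\ep)}$) only yield $\tau_\mu(t)$, respectively $t\alpha_{\min}-d$, both strictly smaller than $\zeta_{\mu,p}(t)$ already when $\mu$ is a power of Lebesgue measure. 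What is actually needed is a joint large-deviations count of cubes according to the pair $(\mu(\lambda),|c_\lambda|)$ (Lemma~\ref{propmajnb}), combined with the specific scaling of $\mu\in\MDs$ along nested and neighboring cubes (inequality \eqref{loulou}) and a case analysis relating a boosted leader at scale $j$ to the irreducible cube structure; this is the bulk of Section~\ref{sec-upperbound} and is entirely absent from your argument.

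The second gap is the construction in item (3), which you yourself flag as ``the hard part'' and leave open: requiring that at the saturated scales ``essentially every $\lambda\in\mathcal D_{j_k}$ carries a coefficient of order $\mu(\lambda)$'' is incompatible with membership in $\widetilde B^{\mu,p}_q(\R^d)$ when $p<+\infty$, since for such a layer $\|(c_\lambda/\mu^{(-\ep)}(\lambda))_{\lambda\in\Lambda_{j_k}}\|_{\ell^p}\approx 2^{j_k(d/p-\ep)}$, which is unbounded for $\ep<d/p$; the same objection applies to your test functions $c_\lambda=\eta_\lambda\mu(\lambda)$ in item (2) unless the selection pattern is very carefully designed. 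The paper resolves precisely this tension by damping each coefficient by the factor $2^{-\overline{j}\,\tau_\mu^*(\alpha_{\overline\lambda})/p}$ attached to the irreducible cube $\overline\lambda$ (the saturation function $g^{\mu,p,q}$), and then proves membership (Lemma~\ref{gdansB}), comparability of leaders with same-scale coefficients (Lemma~\ref{lem-maj2}), the convergence $\zeta^{N,\Psi}_{g^{\mu,p,q},j}\to\zeta_{\mu,p}$ (Proposition~\ref{MFgmupq}), and the stability of these properties under the typical perturbations, using the auxiliary measures $\mu_\alpha$, the dyadic-approximation Lemma~\ref{lemdiop} and the conditioned ubiquity Proposition~\ref{ubiquity}; none of these ingredients appear in your sketch. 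Finally, for item (2) note that no new construction is needed for $\zeta_f|_{\R_+}=\zeta_{\mu,p}|_{\R_+}$: since $\sigma_f\le\zeta_f^*$ always and $\sigma_f=\zeta_{\mu,p}^*$ typically by Theorem~\ref{main}(2), Legendre duality gives $\zeta_f\le\zeta_{\mu,p}$, and the reverse inequality on $\R_+$ is item (1).
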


In other words, when $\mu\in\mathscr E_d$, for typical functions in $  \widetilde {B}^{\mu,p}_{q}(\R^d)$, the   WMF  holds in the increasing part of the spectrum, while the WWMF  holds in the stronger form stated in Theorem~\ref{validity}(3)(ii) on the whole domain of the spectrum. Also, it is not possible to substitute ${\zeta^\Psi_f}$ to $\zeta_{f,{\rm w}}$, at least when $q<+\infty$, since  ${\zeta^\Psi_f}_{|\R^*_-}=-\infty$. 

\begin{remark}\label{BesMF}  For   standard Besov spaces (when viewed as  ${B}^{\mu,p}_{q}(\R^d)$ with $\mu$ is a positive power of  Lebesgue measure), Theorem \ref{validity} shows that if $q<+\infty$, generically a function $f$ in such a space satisfies   ${\zeta_f}_{|\R_+}={\zeta_{\mu,p}}_{|\R_+}$ and ${\zeta_f}_{|\R_-^*}=-\infty$. 
\end{remark}

\subsection{Solutions to the Frisch-Parisi conjecture. Proof of Theorem~\ref{solution}} \label{fpc} 
  
It is worth recalling the results  by  Jaffard in \cite{JAFF_FRISCH}. Consider an increasing continuous and concave  function $\eta:\R_+ \to\R^+$, with positive slope $\eta'(+\infty)$ at $\infty$, such that $\eta (0)\in [0,d]$, and  $\eta^*$ takes values in $[-d,0]$ over its domain. Setting $\zeta=\eta-d$, Jaffard seeks for a Baire space in which the increasing part of the typical  singularity spectrum is given by $\zeta^*$. He works with the so-called homogeneous Besov spaces $\dot{B}^{s,p}_q(\R^d)$, introduced   the Baire space $V=\bigcap_{\epsilon>0}\bigcap_{t>0} \dot{B}^{(\eta(t)-\epsilon)/t,t}_{t,{\rm loc}}(\R^d)$  \cite{JAFF_FRISCH}
and proved that for  typical functions $f \in V$,  $\sigma_f = \widetilde\zeta^*$ , where
$$
\widetilde \zeta(t)=\begin{cases}
d(t/t_c-1)&\text{if } t<t_c\\
 \zeta(t)&\text{if } t\ge t_c
 \end{cases},
 $$
$t_c$ being the unique solution of $\zeta (t_c)=0$. In particular, $\sigma_f$ is necessarily increasing, with domain $[\zeta'(+\infty),d/t_c]$, and with an affine part over the interval $[\zeta'(t_c+),d/t_c]$. Also, $\sigma_f$ coincides with $\zeta^*$  over $[\zeta'(+\infty), \zeta'(t_c+)]$. 

In addition,  in the multifractal formalism used in \cite{JAFF_FRISCH}, the scaling function $\zeta_f (t)$ is defined as $\sup\{s\ge 0: f\in \dot B^{s/t,t}_{\infty,\mathrm{loc}}(\R^d)\} -d$ for $t>0$, and with this definition typical functions in $V$  satisfy $\zeta_f=\zeta$. Thus the associated multifractal formalism holds on $[\zeta'(+\infty), \zeta'(t_c+)]$ only. However, it can be checked that the  WMF does hold for $f$ with $\zeta_f=\widetilde\zeta$ on $[\zeta'(+\infty), d/t_c]$.  

Hence, although this approach was a substantial progress, it allowed to reach only increasing singularity spectra, necessarily  composed   by  an affine part followed by a concave part. Up to now, no better solution  to Conjecture~\ref{FP} has been proposed.
 
\medskip

Combining the previously stated   results (Theorems \ref{th-construct-measures}, \ref{main} and \ref{validity}), we are now able to prove Theorem~\ref{solution}, and bring a positive answer to Conjecture \ref{FP}. The solutions provided in this proof are of the form $\widetilde {B}^{\mu,+\infty}_{q}(\R^d)$. Solutions of the form $\widetilde {B}^{\mu,p}_{q}(\R^d)$ with $1\le p<+\infty$ are considered in Theorem~\ref{exhaustion} below. 



\begin{proof}[Proof of Theorem~\ref{solution}] {\bf (Solutions of the form $\widetilde {B}^{\mu,+\infty}_{q}(\R^d)$)}
Let $\sigma \in \SDs$. Let  $\sigma_{\mathcal M}=\sigma(\cdot/s)$, where $s$ is the unique positive real number such that $\sigma(\cdot/s) \leq \text{Id}_{\R}$ and there exists at least one $H$ such that   $\sigma  (H/s)=H$. In other words,  $s$ is the unique  real number such that $\sigma^*(s)=0$. In particular,   $\sigma_{\mathcal M} \in  \SD$. By Theorem \ref{th-construct-measures}, there exists  $\mu\in \mathcal M_d$  such that $\tau_\mu=\sigma_{\mathcal M}^*$.
 
Now, we apply Theorems \ref{main} and \ref {validity} with the capacity $\mu^{s}\in\mathscr{E}_d$: in the Baire space  $\widetilde {B}^{\mu^{s},+\infty}_{q}(\R^d)$, typical functions have $\sigma$ as  singularity spectrum,   satisfy the  WMF  in the increasing part, and  also satisfy the WWMF  over ${\rm dom}(\sigma)$ relatively to any $\Psi$ in a countable family of elements of $ \mathcal F_{s_{\mu^s}}$. 

 Hence, for any $q\in[1,+\infty]$, the space $\widetilde {B}^{\mu^{s},+\infty}_{q}(\R^d)$  provides  a solution to the Conjecture~\ref{FP} with initial data $\sigma$.  
\end{proof}

It is natural to seek for other solutions using the spaces $\widetilde {B}^{\mu,p}_{q}(\R^d)$ with  $1\le p<+\infty$, $q\in[1,+\infty)$ and $\mu\in\mathscr{E}_d$. 

A first observation is the following: Suppose $\nu\in\mathscr{E}_d$, $1\le p<+\infty$ and $1\le q\le \infty$. Let $\sigma\in \SDs$ be the typical  singularity spectrum in $\widetilde {B}^{\nu,p}_{q}(\R^d)$ given by~\ref{main}. Considering $\mu\in\mathcal M_d$ as in the previous proof yields for all $q'\in[1,+\infty]$ the space $\widetilde {B}^{\mu^{s},+\infty}_{q'}(\R^d)$ in which the typical multifractal structure is the same as in $\widetilde {B}^{\nu,p}_{q}(\R^d)$. 
However, much more can be said (the proof of the following result is given in Section~\ref{proof of exhaustion}).

\begin{theorem}[{\bf Solutions of the form $\widetilde {B}^{\mu,p}_{q}(\R^d)$ with $p<+\infty$}]\label{exhaustion}
Let $\sigma\in\SDs$ and denote its domain by $[H_{\min},H_{\max}]$.  
\begin{enumerate}
\item If  $\sigma$ is the typical singularity spectrum in $\widetilde {B}^{\mu,p}_{q}(\R^d)$ for some $1\le p<+\infty$, $q\in[1,+\infty]$ and $\mu\in\mathscr{E}_d$,  then $\sigma(H_{\min})=0$ and $\sigma'(H_{\min}^+)\le p$. 

\smallskip

\item  Suppose $\sigma(H_{\min})=0$ and $\sigma'(H_{\min}^+)<+\infty$. For all $p\in \big [\max (1,\sigma'(H_{\min}^+)),+\infty\big )$ there exists  $\mu\in\mathscr{E}_d$ such that for all $q\in[1,+\infty]$, $\sigma$ is the singularity spectrum of the typical elements of $\widetilde {B}^{\mu,p}_{q}(\R^d)$. Moreover, if $\sigma(H_{\max})>0$ and $\sigma'(H_{\max}^-)=-\infty$, there are infinitely many such $\mu\in\mathscr{E}_d$, with distinct singularity spectra. 

Also, for each such $\mu$, typical functions in $\widetilde {B}^{\mu,p}_{q}(\R^d)$  satisfy the  WMF in the increasing part of $\sigma$ and   the WWMF  over $[H_{\min},H_{\max}]$ relatively to any $\Psi$ in a countable family of elements of $ \mathcal F_{s_{\mu}}$. 

 \end{enumerate}
\end{theorem}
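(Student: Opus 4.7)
The plan is to exploit an inversion of the Legendre-type formula~\eqref{deftaumup} linking $\tau_\mu$ and $\zeta_{\mu,p}$, feed the inverted $\tau_\mu$ into Theorem~\ref{th-construct-measures}, and then invoke Theorems~\ref{main} and~\ref{validity}. Part~(1) is a direct Legendre-duality computation: by Theorem~\ref{main}(2), $\sigma=\zeta_{\mu,p}^*$, and formula~\eqref{deftaumup} shows that $\zeta_{\mu,p}$ is affine with slope $\tau_\mu'(+\infty)=H_{\min}$ on $[p,+\infty)$, so $tH_{\min}-\zeta_{\mu,p}(t)\equiv 0$ there. Consequently $\sigma(H_{\min})=\inf_t(tH_{\min}-\zeta_{\mu,p}(t))=0$, and the half-line $[p,+\infty)$ lies inside the argmin set, which coincides with the superdifferential $\partial\sigma(H_{\min})$. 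Since $\sigma'(H_{\min}^+)=\inf\partial\sigma(H_{\min})$ for the concave $\sigma$, we obtain $\sigma'(H_{\min}^+)\le p$.

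For Part~(2), fix $\sigma\in\SDs$ with $\sigma(H_{\min})=0$ and $p_0:=\sigma'(H_{\min}^+)<+\infty$, and take $p\in[\max(1,p_0),+\infty)$. First, set $\zeta:=\sigma^*$; it is concave on $\R$ with $\zeta(0)=-d$, $\zeta'(+\infty)=H_{\min}$, $\zeta'(-\infty)=H_{\max}$, and affine with slope $H_{\min}$ on $[p_0,+\infty)$. Second, invert~\eqref{deftaumup} on $u>-p$ by
\begin{equation*}
\tau(u):=\frac{p+u}{p}\,\zeta\!\left(\frac{pu}{p+u}\right).
\end{equation*}
A direct computation yields the concavity-preserving identity $\tau''(u)=(p/(p-t))^3\,\zeta''(t)$ with $t=pu/(p+u)$, and one checks $\tau(0)=-d$, $\tau(u)=H_{\min}u$ on a neighborhood of $+\infty$, and, using $\zeta(t)-H_{\max}t\to-\sigma(H_{\max})$ as $t\to-\infty$, that $\tau(u)\to -pH_{\max}$ with $\tau'((-p)^+)=H_{\max}-\sigma(H_{\max})/p$ as $u\to -p^+$. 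Third, extend $\tau$ concavely to $(-\infty,-p]$ with any finite slope $H'\ge\tau'((-p)^+)$ (affinely, say); the resulting $\tau$ is increasing with slopes in $[H_{\min},H']$, so $\mathrm{dom}(\tau^*)=[H_{\min},H']$ is a compact subset of $(0,+\infty)$, i.e.\ $\tau\in\TDs$ after the normalization $s:=1/t_0$ with $t_0$ the unique root of $\tau$. Finally, Theorem~\ref{th-construct-measures} applied to $\tau_\nu:=\tau(\cdot/s)$ yields $\nu\in\MDs$, and $\mu:=\nu^s\in\mathscr{E}_d$ has $\tau_\mu=\tau$, hence $\zeta_{\mu,p}=\zeta$; Theorems~\ref{main} and~\ref{validity} then deliver the typical spectrum $\zeta_{\mu,p}^*=\sigma$ together with the announced WMF and WWMF statements.

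The infinitely-many-$\mu$ claim will follow from the freedom in the extension step: under $\sigma(H_{\max})>0$ and $\sigma'(H_{\max}^-)=-\infty$, the endpoint formula forces $\tau'((-p)^+)=H_{\max}-\sigma(H_{\max})/p<H_{\max}$, so the range $[H_{\max}-\sigma(H_{\max})/p,+\infty)$ of admissible values $H'=\tau'(-\infty)$ is nondegenerate; since $H'$ is exactly the right endpoint of $\mathrm{dom}(\sigma_\mu)$, distinct choices of $H'$ produce measures $\mu$ with distinct singularity spectra $\sigma_\mu$, while all of them yield the same typical spectrum $\sigma$ in $\widetilde B^{\mu,p}_q(\R^d)$. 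The hardest step will be the careful bookkeeping at the junction $u=-p$: verifying concavity across it, confirming the asymptotic identity for $\tau'((-p)^+)$ from the behavior of $\zeta$ at $-\infty$, and checking that the rescaling by $s=1/t_0$ preserves every property needed to land in $\TDs$ so that Theorem~\ref{th-construct-measures} genuinely applies.
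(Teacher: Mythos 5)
Your part (1) and the existence half of part (2) are correct and follow a route genuinely dual to the paper's: the paper works on the spectrum side, defining $A(H)=H-\sigma(H)/p$ (the inverse of $\theta_p$) and prescribing the environment spectrum $\widetilde\sigma(\alpha)=p(A^{-1}(\alpha)-\alpha)$, with a separate case when $p\in\overset{\,_\smallfrown}{\partial}\sigma$ (i.e.\ when $\sigma$ starts with an affine segment of slope $p$, where $A$ fails to be injective); you instead invert \eqref{deftaumup} on the $L^q$-spectrum side, setting $\tau(u)=\frac{p+u}{p}\zeta\big(\frac{pu}{p+u}\big)$ with $\zeta=\sigma^*$. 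Your inversion is algebraically exact, $\zeta_{\mu,p}$ only sees $\tau$ on $(-p,+\infty)$ together with $\tau'(+\infty)$, and one pleasant by-product is that the paper's delicate case of an initial slope-$p$ affine segment is absorbed automatically (your $\tau^*$ then has $\tau^*(H_{\min})=\sigma(\widetilde H_{\min})>0$ at the left endpoint, which is exactly the paper's second case). Two small slips: the identity should read $\tau''(u)=\big(\tfrac{p-t}{p}\big)^3\zeta''(t)$ (your factor is the reciprocal; concavity is unaffected, and for non-smooth $\zeta$ it is cleaner to note that $\tau$ is a perspective-type transform of $\zeta$, hence concave), and $\tau$ is exactly linear near $+\infty$ only when $p>\sigma'(H_{\min}^+)$; when $p=\sigma'(H_{\min}^+)$ one only gets $\tau'(+\infty)=H_{\min}$, which is all that is needed.

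The genuine gap is in the ``infinitely many $\mu$'' step. You claim the admissible extension slopes are $H'\in\big[H_{\max}-\sigma(H_{\max})/p,+\infty\big)$ and deduce nondegeneracy of this range from the hypotheses. But a half-line is never degenerate, so as written your argument never actually uses $\sigma(H_{\max})>0$ and would produce infinitely many environments even when $\sigma(H_{\max})=0$ --- a sign that the constraint set is wrong. Indeed, for the affine extension of slope $H'$, the conjugate satisfies $\tau^*(\alpha)=p(H_{\max}-\alpha)$ for $\alpha\in\big(H_{\max}-\sigma(H_{\max})/p,\,H'\big]$, so any choice $H'>H_{\max}$ makes $\tau^*$ strictly negative near the right end of its domain. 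Such a $\tau$ is not the $L^q$-spectrum of any fully supported measure: for measures one has $\tau_\mu^*\ge 0$ on $[\tau_\mu'(+\infty),\tau_\mu'(-\infty)]$, and Theorem~\ref{th-construct-measures} prescribes spectra in $\SD$, whose elements take values in $[0,d]$. Hence the normalization step does \emph{not} land in the admissible class for $H'>H_{\max}$, and Theorem~\ref{th-construct-measures} cannot be invoked there.

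The repair is exactly where the hypothesis enters: the extension of $\tau^*$ beyond $\alpha_{\max}=H_{\max}-\sigma(H_{\max})/p$ proceeds with slopes $\le -p$ and must stay nonnegative, so it can only reach up to $\alpha_{\max}+\sigma(H_{\max})/p=H_{\max}$; the admissible range is $H'\in\big[H_{\max}-\sigma(H_{\max})/p,\,H_{\max}\big]$, which is nondegenerate precisely when $\sigma(H_{\max})>0$ (and reduces to a point when $\sigma(H_{\max})=0$, where no nontrivial extension exists). With this correction your conclusion follows: all these extensions agree on $(-p,+\infty)$, hence give the same $\zeta_{\mu,p}=\sigma^*$ and the same typical spectrum, while having distinct $\tau_\mu'(-\infty)=H'$, hence distinct $\sigma_\mu$. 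You should also spell out the deferred checks that the minimal extension's conjugate lies in $\SD$ after rescaling (nonnegativity, continuity at the endpoints, the fixed point $D$ and the value $d$, $\sigma_\nu\le \mathrm{Id}_\R$), and justify $\tau'((-p)^+)=H_{\max}-\sigma(H_{\max})/p$ when $\zeta$ is only asymptotically affine at $-\infty$ (e.g.\ via integrability of $H_{\max}-\zeta'$ near $-\infty$, or by identifying $\tau'((-p)^+)$ with the right endpoint of the domain of the conjugate of $\tau|_{(-p,+\infty)}$); these are routine but they are exactly the points where the nonnegativity cap appears.
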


Let us make a final remark.  Like for Besov spaces, one can let $p$ or $q$ take values in $(0,+\infty]$ in the definition of Besov spaces in multifractal environment, and all our results are valid, the only change to make being to take $p\in \big [\sigma'(H_{\min}^+)),+\infty\big )$ in Theorem~\ref{exhaustion}(2). This provides a larger set of solutions to the inverse problem~\ref{FP}.

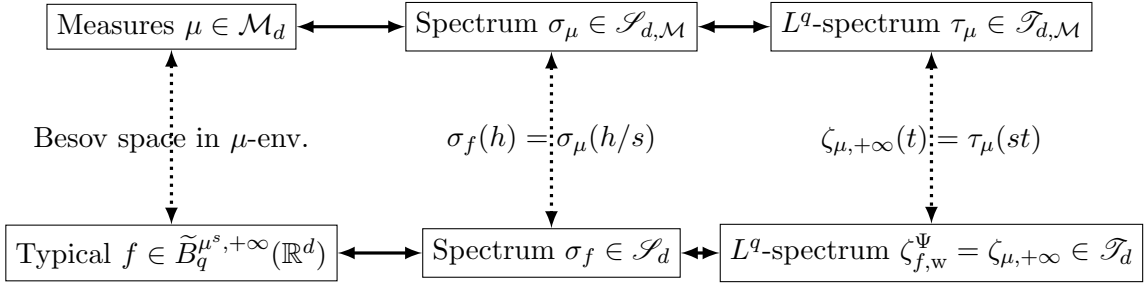
\begin{figure}
\hspace{-7mm}
\begin{tikzpicture}
\tikzstyle{quadri}=[rectangle,draw, ,text=black]
\tikzstyle{cir}=[ellipse,draw,fill=red!10,text=blue]
\tikzstyle{fleche}=[<->,very thick,>=latex]
\tikzstyle{fleche3}=[->,dotted,very thick,>=latex]
\tikzstyle{fleche2}=[<->,dotted,very thick,>=latex]

\node[quadri] (L) at (-4,3) {Measures $\mu\in\mathcal{M}_d$};
\node[quadri] (F) at (-4 ,0) {Typical $f \in \wb^{\mu^s,+\infty}_q(\R^d) $}; 
\draw[fleche2] (L)--(F);

\node[quadri]  (SP) at (1,3) {Spectrum $\sigma_\mu \in \SD$}; \draw[fleche] (L)--(SP);
 \node[quadri] (SPF) at (1 ,0) {Spectrum  $\sigma_f \in \SDs$}; \draw[fleche] (F)--(SPF);
\draw[fleche2] (SP)--(SPF);

\node (Q) at  (-4,1.5) {Besov space in $\mu$-env.};
\node (Q) at  (1,1.5) { $\sigma_f(h) = \sigma_\mu(h/s)$};

\node (Q) at  (6,1.5) {$\zeta_{\mu,+\infty}(t) = \tau_\mu(st)$ };

\node[quadri] (SF) at (6,3) {$L^q$-spectrum   $\tau_\mu \in \TD$}; \draw[fleche] (SP)--(SF); 
\node[quadri] (SFF) at (6,0) {$L^q$-spectrum  $\zeta^\Psi_{f,{\rm w}}=\zeta_{\mu,+\infty}\in \TDs$}; \draw[fleche] (SPF)--(SFF); 
\draw[fleche2] (SF)--(SFF);
 
\end{tikzpicture}
\label{fig-correspondence}
\caption{Scheme of the correspondence between the various  objects and sets  in the case where $p=+\infty$, for some parameter $s>0$.}
\end{figure}

  
\subsection{Organization of the rest of the paper} 
Section \ref{wqb} is dedicated to the construction of the class of measures $\MD$ (Definitions \ref{defM1} and \ref{defMd}) with prescribed multifractal behavior as described in Theorem \ref{th-construct-measures}. There,    some properties relating these measures (and associated auxiliary measures) with  the dyadic approximation   in $\R^d$ are proved.  In Section \ref{sec_integral2},  the wavelet characterization of the space $\widetilde B^{\mu,p}_q$ is established when $\mu$ is an almost doubling capacity satisfying property (P) (Theorem \ref{th_equivnorm_2}).
The various shapes of $\zeta_{\mu,p}$ and $\zeta^*_{\mu,p}$ are investigated in Section~\ref{sec-description}, where $\zeta^*_{\mu,p}$ is explicitly expressed in terms of $\tau_\mu^*$; this  expression   turns out to be very useful in the proof of the   WMF's validity  for typical functions. Next, in section \ref{sec-upperbound}, the upper bound for the singularity spectrum of all functions in $\widetilde  B^{\mu,p}_q(\R^d)$ is obtained (part (1) of Theorem \ref{main}), as a consequence of part (1) of Theorem~\ref{validity} which is also proved there. Part (2) of Theorem~\ref{main} is shown in Section \ref{sec-typical}. It consists first in building a specific function whose singularity spectrum  is    typical, and then tin building a dense $G_\delta$-set included in $   \widetilde B^{\mu,p}_q(\R^d)$ in which all functions share the same \ml spectrum. Parts (2) and (3) of Theorem~\ref{validity} are established in Section~\ref{sec-formalism}. Finally, the proof of Theorem~\ref{exhaustion} is given in Section~\ref{proof of exhaustion}.


\section{Measures with prescribed multifractal behavior}
\label{wqb}
In Section~\ref{AddMF}, additional general properties associated with multifractal formalism for capacities are recalled. Section~\ref{constr.meas.} is a preparation to the construction of the measures satisfying the requirements of Theorem~\ref{th-construct-measures}. The construction is achieved when $d=1$ in Section~\ref{constrmu}. Then, in Sections~\ref{musatP} to~\ref{muspectrum}  the conclusions of Theorem~\ref{th-construct-measures} are obtained. The construction is extended to the case $d\ge 2$ in Section~\ref{dimensiond}. This yields the desired family of measures $\mathcal M_d$. Finally, in Sections~\ref{ubi} and ~\ref{secdiop} we investigate some connections between the elements of $\mathcal{M}_d$ and metric number theory:   a ubiquity theorem associated with $\mu\in \mathcal{M}_d$ and the family of dyadic vectors is established, and it is proved that auxiliary measures associated with $\mu\in \mathcal{M}_d$   are supported on the set of points which are badly approximated by dyadic vectors. All these properties are key for  the multifractal analysis of typical elements of   $\widetilde B^{\mu,p}_q(\R^d)$.  
\subsection{Additional notions related to the multifractal formalism for capacities}\label{AddMF}

Let us introduce the notations for $\ep>0$, $\alpha\in \R$, and $I=[a,b]$ an interval:
\begin{align}
\label{defpmep}
 \alpha\pm \ep  =[ \alpha-\ep, \alpha+\ep] \ \ \ \mbox{ and } \ \ \ 
 I\pm\ep  = [a-\ep,b+\ep].
\end{align}

Also, the   convention  $\log(0)=-\infty$ is adopted.

Next propositions complete the properties mentioned in Section~\ref{constr.meas.} about multifractal analysis of capacities.  Recall that the   Legendre  spectrum $\alpha\mapsto {\tau_\mu^*}(\alpha)$ is increasing on  the interval   $ \alpha  \le \tau_\mu'(0^-)$, and is decreasing on $\alpha\geq  \tau_\mu'(0^+)$.  The following Propositions \ref{fm00}  and \ref {fm}  are  easily deduced from any of the following sources \cite{Barralinverse,BRMICHPEY,Olsen,JLVVOJAK,BBP}.  

\begin{proposition}
\label{fm00}   
Let $\mu \in \mathcal{C}(\zu^d)$ such that $\supp(\mu)\neq\emptyset$.  
For $\alpha\in \R$,   let 
\begin{equation*} 
 \underline E^{\leq}_\mu( \alpha)  =\{x \in\supp(\mu): \underline h_\mu(x) \leq  \alpha\}   \ \ \mbox{ and } \ \ 
\overline E^{\geq}_\mu( \alpha)  =\{x \in \supp(\mu): \overline h_\mu(x) \geq  \alpha \} . 
 \end{equation*}
 One has:
\begin{enumerate}
\item  For every $ \alpha  \le \tau_\mu'(0^-)$,    $\dim\, \underline{
E}^{\leq}_\mu( \alpha) \le {\tau_\mu^*}( \alpha).$

\smallskip \item  For every $ \alpha\ge \tau_\mu'(0^+)$,  $\dim\,
\overline{E}^{\geq}_\mu( \alpha) \le   {\tau_\mu^*}( \alpha).$
\end{enumerate}  
\end{proposition}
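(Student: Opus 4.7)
My approach is the classical Brown--Michon--Peyri\`ere covering argument, adapted to capacities. For each level set I will build a natural dyadic cover whose cubes have controlled $\mu$-mass, then estimate the Hausdorff $s$-sum by a Tchebychev-type inequality against a well-chosen moment of order $q$, using the $\liminf$ definition of $\tau_\mu$.

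For item~(1), fix $q\ge 0$ and $\ep,\eta>0$. Since $\underline h_\mu(x)\le \alpha$ for $x\in \underline E^{\le}_\mu(\alpha)$, there exist infinitely many $j$ with $\mu(\lambda_j(x))\ge 2^{-j(\alpha+\ep)}$; hence for every $J\in\N$,
$$
\underline E^{\le}_\mu(\alpha) \subset \bigcup_{j\ge J} \bigcup_{\lambda\in \mathcal C_j} \lambda, \qquad \mathcal C_j := \big\{\lambda\in\mathcal D_j:\, \lambda\cap \zu^d\ne \emptyset,\ \mu(\lambda)\ge 2^{-j(\alpha+\ep)}\big\}.
$$
The $\liminf$ definition of $\tau_\mu(q)$ provides $J_0$ such that $\sum_{\lambda\in \mathcal D_j,\,\mu(\lambda)>0}\mu(\lambda)^q\le 2^{-j(\tau_\mu(q)-\eta)}$ for every $j\ge J_0$. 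On $\mathcal C_j$, since $q\ge 0$, the lower bound on $\mu(\lambda)$ yields $1\le \mu(\lambda)^q\cdot 2^{jq(\alpha+\ep)}$, so
$$
\sum_{\lambda\in \mathcal C_j}|\lambda|^s \le (\sqrt d)^{\,s}\, 2^{-js}\!\!\sum_{\lambda\in\mathcal D_j,\,\mu(\lambda)>0}\!\! \mu(\lambda)^q\, 2^{jq(\alpha+\ep)} \le (\sqrt d)^{\,s}\, 2^{\,j(q\alpha-\tau_\mu(q)-s+q\ep+\eta)}.
$$
When $s>q\alpha-\tau_\mu(q)+q\ep+\eta$, summing over $j\ge J\ge J_0$ yields a geometric tail vanishing as $J\to \infty$, so $\calH^s(\underline E^{\le}_\mu(\alpha))=0$ and $\dim \underline E^{\le}_\mu(\alpha)\le s$. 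The condition $\alpha\le \tau'_\mu(0^-)$ guarantees that the infimum defining $\tau_\mu^*(\alpha)=\inf_{q\in\R}(q\alpha-\tau_\mu(q))$ is approached over $q\ge 0$; letting $\ep,\eta\to 0$ and optimising on $q\ge 0$ gives $\dim \underline E^{\le}_\mu(\alpha)\le \tau_\mu^*(\alpha)$.

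Item~(2) is handled symmetrically using $q\le 0$ and the cover
$$
\mathcal C'_j := \big\{\lambda\in\mathcal D_j:\, \lambda\cap \zu^d\ne \emptyset,\ 0<\mu(\lambda)\le 2^{-j(\alpha-\ep)}\big\},
$$
which contains $\overline E^{\ge}_\mu(\alpha)$ because $\overline h_\mu(x)\ge \alpha$ forces $\mu(\lambda_j(x))\le 2^{-j(\alpha-\ep)}$ for infinitely many $j$. For $q\le 0$, raising to the power $q$ reverses the inequality and again gives $1\le \mu(\lambda)^q\cdot 2^{jq(\alpha-\ep)}$; the same computation then yields $\dim \overline E^{\ge}_\mu(\alpha)\le q\alpha-\tau_\mu(q)$ for every $q\le 0$. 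Taking the infimum over $q\le 0$, which coincides with $\tau_\mu^*(\alpha)$ when $\alpha\ge \tau'_\mu(0^+)$, completes the proof. The only mild obstacle is the boundary case $\alpha=\tau'_\mu(\pm\infty)$, where the infimum is attained only as $|q|\to+\infty$: I would handle it by first deriving, for each finite $q$ of the appropriate sign, the bound $\dim\le q\alpha-\tau_\mu(q)$, and then letting $|q|\to+\infty$.
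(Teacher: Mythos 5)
Your proof is correct, and it is essentially the argument the paper itself relies on: the proposition is stated without proof and deferred to the standard references (Brown--Michon--Peyri\`ere, Olsen, etc.), whose proofs are exactly this covering/Tchebychev argument — cover the level set by the dyadic cubes of controlled $\mu$-mass, bound the Hausdorff $s$-sums by the $q$-th moment using the $\liminf$ definition of $\tau_\mu$, and then use the sign restriction on $q$ (enforced by $\alpha\le\tau_\mu'(0^-)$, resp.\ $\alpha\ge\tau_\mu'(0^+)$) so that the one-sided infimum coincides with $\tau_\mu^*(\alpha)$. Your handling of the endpoint exponents by letting $|q|\to+\infty$ after the finite-$q$ bound is also the standard resolution, so nothing essential is missing.
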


The distribution of a capacity at small scales can be described through its large deviations spectrum.

\begin{definition}  
Let $\mu\in \mathcal{C}(\zu^d)$ with ${\rm supp}(\mu)\neq\emptyset$. 
For $I\subset \R$ and  $j\in\N^*$ define
$$
\mathcal{D}_\mu(j,I) =
  \left\{ \lambda\subset [0,1]^d,\, \lambda\in\mathcal D_j: \frac{\log_2 \mu(\lambda)}{{-j}} \in
I \right\}.
$$

The lower and upper large deviations spectra of $\mu$ are defined  respectively as
\begin{eqnarray*} 
\underline {\sigma} ^{\mathrm{LD}}_\mu:\alpha\in\R&\mapsto &  \lim_{\ep\to 0}\liminf_{j\to \infty} \frac{\log_2 \#\mathcal{D}_\mu(j, \alpha\pm\ep) }{j}  \\
\mbox{ and } \ \  \   \overline{\sigma} ^{\mathrm{LD}}_\mu:\alpha\in\R & \mapsto& \lim_{\ep\to 0}\limsup_{j\to + \infty} \frac{\log_2 \#\mathcal{D}_\mu(j, \alpha\pm\ep)}{j}.
\end{eqnarray*}
  \end{definition}

\begin{proposition}
\label{fm}   
Let $\mu \in \mathcal{C}(\zu^d)$ with $\supp(\mu)\neq\emptyset$, such that  $\mu$   obeys the SMF (Definition \ref{def_formalism_measure}). One has  $\mathrm{dom}(\tau_\mu^*) = \{\alpha\in\R : \tau_\mu^*(\alpha)\geq 0\}$, and:
\begin{enumerate}
\smallskip\item  For every $ \alpha \in\R$, one has 
$$\sigma_\mu(\alpha)=\dim  E_\mu(\alpha)=\dim \underline E_\mu(\alpha)=\dim \overline E_\mu(\alpha)= \underline {\sigma} ^{\mathrm{LD}}_\mu( \alpha)=\overline {\sigma} ^{\mathrm{LD}}_\mu( \alpha)=\tau_\mu^*(\alpha).$$ 
 
\item  For every $ \alpha  \le \tau_\mu'(0^-)$,    $\dim\, \underline{
E}^{\leq}_\mu( \alpha) = {\tau_\mu^*}( \alpha).$

\smallskip \item  For every $ \alpha\ge \tau_\mu'(0^+)$,  $\dim\,
\overline{E}^{\geq}_\mu( \alpha) =  {\tau_\mu^*}( \alpha).$

 \sk\item
For every $\eta>0$ and every interval $ I \subset \mathrm{dom}(\tau_\mu^*)$,  there exists $\ep_0>0$ and $J_0\in\N$ such that for every $\ep\in (0,\ep_0)$ and $j\geq J_0$,   for $\widetilde I\in\{I,I\pm\ep\}$, 
$$\left| \frac{ \log_2  \#\mathcal{D}_\mu(j,\widetilde I) }{ j } - 
 \sup_{ \alpha\in I} 
 {\tau_\mu^*}( \alpha) \right| \leq  \eta.  $$
 
 \sk\item
If $\mathrm{dom}(\tau_\mu^*)$ is compact, then $\mathrm{dom}(\tau_\mu^*)=[\tau_\mu'(+\infty),\tau_\mu'(-\infty)]$ and there exists a positive decreasing sequence  $(\ep_j)_{j\geq 0}$ tending to 0 when $j \to +\infty$, such that for all $j\in\N$ and $\lambda\in \mathcal D_j$,
$$ \tau_\mu'(+\infty)- \ep_j \leq \frac{\log_2 \mu(\lambda)}{-j}\leq \tau_\mu'(-\infty)+\ep_j .  $$
\end{enumerate}
\end{proposition}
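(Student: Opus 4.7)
The plan is to organize everything around the large-deviations estimate~(4), since parts~(1)--(3) and most of~(5) follow from it once SMF is in hand. The identity $\dom(\tau_\mu^*) = \{\alpha : \tau_\mu^*(\alpha) \ge 0\}$ is immediate from Legendre--Fenchel duality: $\tau_\mu^*$ is concave upper semicontinuous with $\dom(\tau_\mu^*) = [\tau_\mu'(+\infty), \tau_\mu'(-\infty)]$, and SMF combined with $\sigma_\mu \ge 0$ wherever $\underline E_\mu(\alpha) \neq \emptyset$ forces $\tau_\mu^*$ to take nonnegative values on its entire domain.

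For (4), the upper bound $\limsup_j j^{-1}\log_2 \#\mathcal D_\mu(j, I\pm\ep) \le \sup_{\alpha \in I\pm\ep}\tau_\mu^*(\alpha)$ is the standard Chebyshev-type estimate: any $\lambda \in \mathcal D_\mu(j, I\pm\ep)$ satisfies $\mu(\lambda)^q \ge 2^{-jq\alpha}$ for a suitable $\alpha \in I\pm\ep$ depending on the sign of $q$, so $\#\mathcal D_\mu(j, I\pm\ep) \le 2^{j \sup_{\alpha\in I\pm\ep}(q\alpha)} \sum_\lambda \mu(\lambda)^q$; taking $j^{-1}\log_2$, the $\liminf$, and infimizing over $q$ gives the bound, which converges to $\sup_I \tau_\mu^*$ as $\ep \to 0$ by continuity of $\tau_\mu^*$ on the interior of its domain. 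For the lower bound, I would invoke SMF: for $\alpha$ achieving the supremum of $\tau_\mu^*$ on $I$, one has $\dim E_\mu(\alpha) = \tau_\mu^*(\alpha)$, and a Billingsley-type covering argument forces $\#\mathcal D_\mu(j, \alpha \pm \ep) \ge 2^{j(\tau_\mu^*(\alpha) - o_\ep(1))}$ for all $j$ large enough. The uniform constants $\ep_0, J_0$ valid for all $\widetilde I \in \{I, I\pm\ep\}$ come from a compactness argument on the compact interval $I \subset \dom(\tau_\mu^*)$, using continuity of $\tau_\mu^*$ there.

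Parts (1)--(3) then follow quickly. The upper bounds $\dim \overline E_\mu(\alpha), \dim \underline E_\mu^{\le}(\alpha), \dim \overline E_\mu^{\ge}(\alpha) \le \tau_\mu^*(\alpha)$ are either part of Proposition~\ref{fm00} or follow from (4) by covering the relevant sets at each dyadic scale with cubes in $\mathcal D_\mu(j,\cdot)$ and applying the definition of Hausdorff dimension. The matching lower bounds come from SMF via the chain $E_\mu(\alpha) \subset \underline E_\mu(\alpha) \cap \overline E_\mu(\alpha)$, together with $E_\mu(\alpha) \subset \underline E_\mu^{\le}(\alpha)$ in the increasing range of $\tau_\mu^*$ (resp.\ $E_\mu(\alpha) \subset \overline E_\mu^{\ge}(\alpha)$ in the decreasing range), so that $\dim E_\mu(\alpha) = \tau_\mu^*(\alpha)$ propagates to all six sets. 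The identities $\underline \sigma^{\mathrm{LD}}_\mu(\alpha) = \overline \sigma^{\mathrm{LD}}_\mu(\alpha) = \tau_\mu^*(\alpha)$ are the special case $I = \{\alpha\}$ of~(4). For~(5), compactness of $\dom(\tau_\mu^*)$ is equivalent to $\tau_\mu'(\pm\infty)$ being finite; the uniform bound on $\log_2 \mu(\lambda)/(-j)$ follows because otherwise, some $\lambda \in \mathcal D_j$ would violate the asymptotic rate $\tau_\mu(q)/q \to \tau_\mu'(\pm\infty)$ as $q \to \pm\infty$, by contributing a dominant term of order $2^{-jq\alpha}$ with $\alpha$ outside the slope interval.

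The main obstacle is the lower bound in (4), where one must convert the Hausdorff-dimensional equality $\dim E_\mu(\alpha) = \tau_\mu^*(\alpha)$ (a statement about fractal covers) into a quantitative counting estimate on $\#\mathcal D_\mu(j, \alpha\pm\ep)$ at every large scale, and uniformly in $\alpha \in I$. This is precisely where SMF (which controls $E_\mu(\alpha)$, not only $\underline E_\mu(\alpha)$) is essential: it ensures that enough dyadic cubes simultaneously carry mass consistent with an exponent in $\alpha\pm\ep$ for $j$ large, yielding both $\underline \sigma^{\mathrm{LD}}_\mu = \tau_\mu^*$ and $\overline \sigma^{\mathrm{LD}}_\mu = \tau_\mu^*$ simultaneously and uniformly over $I$.
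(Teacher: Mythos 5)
The paper itself gives no proof of Proposition~\ref{fm}: it is quoted as ``easily deduced'' from the cited literature on the multifractal formalism for measures and capacities. Your self-contained derivation from the SMF hypothesis is essentially the standard argument behind those references, and it is sound; you also correctly isolate the crux, namely turning $\dim E_\mu(\alpha)=\tau_\mu^*(\alpha)$ into a counting bound valid at \emph{every} large scale. To make that step airtight, record the mechanism behind your ``Billingsley-type covering argument'': for fixed $\ep>0$ write $E_\mu(\alpha)=\bigcup_{J\ge 1}E_J$ with $E_J=\{x:\ \forall\, j\ge J,\ \lambda_j(x)\in\mathcal D_\mu(j,\alpha\pm\ep)\}$; by countable stability of Hausdorff dimension some $E_J$ has dimension $\ge \tau_\mu^*(\alpha)-\delta$, and since the lower box-counting dimension dominates the Hausdorff dimension, the cubes $\lambda_j(x)$, $x\in E_J$, give $\#\mathcal D_\mu(j,\alpha\pm\ep)\ge 2^{j(\tau_\mu^*(\alpha)-2\delta)}$ for all large $j$ --- this is exactly where SMF (full dimension of $E_\mu(\alpha)$ itself, not only $\underline E_\mu(\alpha)$) enters. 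Two bookkeeping points you leave implicit: (i) to cover the case $\widetilde I=I$ in (4) and to get $J_0$ independent of $\ep$, run this lower bound with a single $\alpha^*$ chosen in the interior of the (non-degenerate) interval $I$ with $\tau_\mu^*(\alpha^*)\ge \sup_I\tau_\mu^*-\eta/2$ and $\alpha^*\pm\ep'\subset I$ (possible since $\tau_\mu^*$, being concave and upper semicontinuous, is continuous on its closed domain); the resulting bound concerns $\mathcal D_\mu(j,I)$ and hence all $\mathcal D_\mu(j,I\pm\ep)$ simultaneously --- accordingly, your ``special case $I=\{\alpha\}$'' for the large-deviations spectra should be read as the $I\pm\ep$ half of (4), which is all you need; (ii) in the upper bound, it is because $\tau_\mu$ is a liminf that $\sum_\lambda\mu(\lambda)^q\le 2^{-j(\tau_\mu(q)-\ep')}$ holds for all large $j$, which is what makes your Chebyshev estimate a statement at every large scale, and the passage from the pointwise bound $\overline\sigma^{\mathrm{LD}}_\mu\le\tau_\mu^*$ to the supremum over $I$ is cleanest via a finite subdivision of $I$ and continuity of $\tau_\mu^*$. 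With these details, your deductions of (1)--(3), of $\dom(\tau_\mu^*)=\{\tau_\mu^*\ge 0\}$, and of (5) (the single-cube contribution contradicting the asymptotic slopes $\tau_\mu'(\pm\infty)$) go through as you describe.
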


The   properties listed  in the following remarks will be also used.

\begin{remark}\label{Linearisation} Properties of the   Legendre transform  also  imply that:\begin{itemize}
\item
 if $t_\infty:=(\tau_\mu^*)'(\tau_\mu'(+\infty)^+ )<+\infty$, then $t_\infty=\inf\{t: \tau_\mu'(t)=\tau_\mu'(+\infty)\}$, 
 \item for all $t\ge t_\infty$ one has $\tau_\mu(t)=\tau_\mu'(+\infty)t-\tau_\mu^*(\tau_\mu'(+\infty))$,
 \item
  if $\tau_\mu$ is linear over the interval $[p,+\infty)$, then $\tau_\mu'(+\infty) \le p$. 
\end{itemize}
Similarly, \begin{itemize}
\item
if $t_{-\infty}:=(\tau_\mu^*)'(\tau_\mu'(-\infty)^-)>-\infty$, then $t_{-\infty}=\sup\{t: \tau_\mu'(t)=\tau_\mu'(-\infty)\}$, 
\item
for all $t\le t_{-\infty}$ one has $\tau_\mu(t)=\tau_\mu'(-\infty)t-\tau_\mu^*(\tau_\mu'(-\infty))$,
\item
 if $\tau_\mu$ is linear over the interval $(-\infty,p)$, then $\tau_\mu'(-\infty)\ge p$. 
\end{itemize}
The previous properties hold when   $\mu$ is replaced by the capacity $\mu^s$, for any $s>0$. 
\end{remark}
 
\begin{remark}\label{justif} When $\mu$ is a positive measure,   $\tau_\mu^*(\alpha)=\alpha$ if and only if $\alpha\in[\tau_\mu'(1^+),\tau_\mu'(1^-)]$ \cite{Ngai}. This justifies that when $\mu$ obeys the MF, there must exist $D$ such that $\sigma_\mu(D)=D$. Moreover, it is also clear that if $\mu$ obeys the MF and $\mu$ is fully supported, any $D'\in [\tau_\mu'(0^+),\tau_\mu'(0^-)]$ is such that $\sigma_\mu(D')=\tau_\mu^*(D')=-\tau_\mu(0)=d$. 

\end{remark} 
 
We now prove Theorem~\ref{th-construct-measures} in the case $d=1$.

\subsection{A family of probability vectors associated with $\si\in \mathscr{S}_{1,\mathcal M}$} 
\label{sec-proof-step1}

Fix $ \si \in \mathscr{S}_{1,\mathcal M}$ (recall Definition~\ref{def2.5}). In this section,  a sequence of probability vectors $(p_N)_{N\geq 1}$ (where $p_N \in \zu^{2^N}$),which constitutes the core of the construction of a measure $\mu$   satisfying both $(P)$ and the MF   with $\tau_\mu^*= \si$, is defined.  For this, write $\dom(\si)=[\alpha_{\min},\alpha_{\max}]$. When  $\alpha_{\min}=1=\alpha_{\max}$,    the Lebesgue measure on ${[0,1]}$ yields a solution to the inverse problem studied in this Section~\ref{wqb}. So it is  assumed  that $\alpha_{\min}< \alpha_{\max}$.

Let us start by introducing two parameters $D,D'$ defined as follows:
\begin{itemize}
\item
if  $\sigma(1)=1$, set $D=D'=1$.
\item
if   $\sigma(1)\neq 1$, let $0<D<1<D'$ be such that  $\sigma(D)=D$ and   $\sigma(D')=1$.
\end{itemize}
The presence of      the exponents $D$ and $D' $ is justified  in Remark \ref{justif} above.

\smallskip

Then, fix an integer $N_0 $ large enough so that for all $N\ge N_0$, setting $\ep_N= 2\log_2(N)/N$, there exists a  finite subset $A_N=\{\alpha_{N,i}:i=1,...,2m_N\}$ of $[\alpha_{\min},\alpha_{\max}]$ satisfying:
 \begin{itemize}
\item
$\ep_{N_0} \leq \alpha_{\min}/4$;
\item
$m_N \le  2 N (\alpha_{\max}-\alpha_{\min})$;
 \item  $D,D'\in A_N$;
 \item
 for every $i\in \{1,...,m_N-1\}$, $(4N)^{-1} < \alpha_{N,i+1} - \alpha_{N,i}< N^{-1}$;
 \item the following inclusions hold:
 \begin{equation}\label{AN}
A_N \subset \si ^{-1}\Big (\Big [\frac{1 }{N}+\ep_N ,1\Big ]\Big )\subset \bigcup_{i=1}^{m_N} \Big[\alpha_{N,i}-\frac{1}{N} ,\alpha_{N,i}+\frac{1}{N}\Big];
\end{equation}
\item
for every $i\in \{m_N+1,..., 2m_N\}$, $\alpha_{N,i}=\alpha_{N,2m_N-i+1}$;
\item if $\si (\alpha_{\min})>0$, then   $\alpha_{N,1}=\alpha_{\min}$.
\end{itemize}
The continuity of $\sigma$ is used to get \eqref{AN}, and   when $D\neq D'$ the above conditions impose that $|D-D'| \geq (4N)^{-1}$.


\sk

Denote by $i_N$ (resp. $i_N'$) the index  in $[1,m_N]$   such that $D=\alpha_{N,i_N}$ (resp. $D'=\alpha_{N,i'_N}$). Note that $i_N=i'_N$ if and only if $D=D'=1$.

For each $1\le i\le m_N$  such that $i\not\in\{i_N,i'_N\}$, set 
\begin{equation}\label{RNi}
R_{N,i}=\left\lfloor 2^{N(\si (\alpha_{N,i})-\ep_N)-1}\right \rfloor,
\end{equation}
which implies that     for every $i$, $1\leq R_{N,i}  \leq 2^{N-1} N^{-2} $.

 \sk
 
 When $D=D'$,    $i_N=i'_N$ and   set 
 $$
 R_{N,i_N}=2^{N-1}-\sum_{i=1,\, i\neq i_N}^{m_N} R_{N,i}.
 $$  
 
 \sk 
 
 When  $D<D'$,    $i_N < i'_N$ and in this cas one sets 
 \begin{equation}\label{RNibis}
 R_{N,i_N}=\lfloor 2^{N \si(\alpha_{N,i_N}) -1}\rfloor=\lfloor 2^{ND-1}\rfloor\quad\text{and}\quad  R_{N,i'_N}=2^{N-1}- \sum_{i=1,\, i\neq i'_N}^{m_N} R_{N,i}.
 \end{equation}
 
  In all cases, by construction 
$$
\sum_{i=1,\, i\neq i'_N}^{m_N} R_{N,i}\le m_N 2^{N-1} N^{-2} + \mathbf{1}_{\{D\neq D'\}} 2^{ND-1}=o(2^{N-1}) \quad\text{as $N\to\infty$},
$$ 
since the term $\mathbf{1}_{\{D\neq D'\}} 2^{ND-1} $ appears if and only if $D<1$. This also implies that 
\begin{equation}
\label{minoration1}
R_{N,i'_N} =   2^{N-1 } (1+o(1)) .
\end{equation}
Without restriction, we choose  $N_0$ large enough so  that  
\begin{equation}
\label{defN0}
\mbox{for all $N\ge N_0$, } \ \ \sum_{i=1,\, i\neq i'_N}^{m_N} R_{N,i}\le 2^{N-2}.
\end{equation}
Finally, for  $N\ge N_0$ and $m_N<i\le 2m_N$, set $R_{N,i}=R_{N,2m_N-i+1}$, so that 
$$
\sum_{i=1}^{2m_N} R_{N,i} = 2^N.
$$

\begin{definition}  The collection of exponents $(\beta_{N,i})_{0\le i\le 2^N-1}$ is defined  as follows:  
\begin{equation}
\label{defbeta}
\mbox{for all   $1\le j\le 2m_n$, } \ \ \ \ \beta_{N,i}=\alpha_{N,j}\quad \text{if } \sum_{k=1}^{j-1}R_{N,k}\le  i < \sum_{k=1}^{j}R_{N,k}.
\end{equation}
\end{definition}
In other words,  $(\beta_{N,i})_{0\le i\le 2^N-1}$ is obtained by repeating  $R_{N,1}$ times the value $\alpha_{N,1}$, $R_{N,2}$ times   $\alpha_{N,2}$, and so on, until repeating $R_{N,2m_N} $ times  $\alpha_{N,2m_N}=\alpha_{N,1}$.

\begin{lemma}
Let  $p_N=(p_{N,i}) _{0\le i \le 2^N-1}$ be the probability vector defined by
$$
p_{N,i}= \frac{2^{-N\beta_{N,i}}}{\sum_{j=0}^{2^N-1} 2^{-N\beta_{N,j}}}.
$$
One has $p_{N,0}=p_{N,2^N-1}$, and if $|i-i ' | \leq 1$, then 
\begin{equation}\label{control pNvoisin}
 \frac{p_{N,i}}{p_{N,i'}}\in [2^{-1},2].
 \end{equation}
In addition,  for  $N$ large enough,  
\begin{equation}\label{controlpN}
p_{N,i} 2^{N\beta_{N,i}} = 1+\varepsilon_{N,i},
\end{equation}
where $\varepsilon_{N,i}=O(N^{-1})$ uniformly in $0\le i<2^N$.   \end{lemma}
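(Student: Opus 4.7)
The plan is to observe that $p_{N,i}\,2^{N\beta_{N,i}} = Z_N^{-1}$ does not depend on $i$, where $Z_N := \sum_{j=0}^{2^N-1}2^{-N\beta_{N,j}}$, so that the third assertion reduces to the single estimate $Z_N = 1 + O(N^{-1})$.

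I would first dispatch (1) and (2) directly from \eqref{defbeta}. For (1), the index $i = 2^N - 1$ lies in the last block $j = 2m_N$, hence $\beta_{N,2^N-1} = \alpha_{N,2m_N} = \alpha_{N,1} = \beta_{N,0}$, and so $p_{N,0} = p_{N,2^N-1}$. For (2), write $p_{N,i}/p_{N,i'} = 2^{N(\beta_{N,i'} - \beta_{N,i})}$; when $i$ and $i'$ lie in the same block the exponent vanishes, and when they lie in two adjacent blocks labeled $j$ and $j+1$ the exponent equals $\pm N(\alpha_{N,j+1}-\alpha_{N,j})$, which has absolute value at most $1$ by the setup of $A_N$ (directly for $1 \le j \le m_N-1$, trivially at $j = m_N$ since $\alpha_{N,m_N+1} = \alpha_{N,m_N}$, and via the mirror symmetry $\alpha_{N,j} = \alpha_{N,2m_N-j+1}$ for $j \ge m_N+1$). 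Hence the ratio belongs to $[2^{-1},2]$.

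For (3), the symmetries $R_{N,j} = R_{N,2m_N-j+1}$ and $\alpha_{N,j} = \alpha_{N,2m_N-j+1}$ give
$$Z_N = 2\sum_{j=1}^{m_N} R_{N,j}\,2^{-N\alpha_{N,j}},$$
which I would split into three pieces. For $j \notin \{i_N, i'_N\}$, the formula \eqref{RNi} combined with the inequality $\sigma \le \mathrm{Id}_\R$ (built into $\mathscr{S}_{1,\mathcal{M}}$) yields
$$R_{N,j}\,2^{-N\alpha_{N,j}} \le 2^{N(\sigma(\alpha_{N,j}) - \alpha_{N,j} - \varepsilon_N) - 1} \le 2^{-N\varepsilon_N - 1} = \frac{1}{2N^2},$$
so that summing over at most $m_N \le 2N(\alpha_{\max} - \alpha_{\min})$ indices contributes only $O(N^{-1})$. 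The index $j = i_N$ carries the dominant mass: when $D < D'$, the formula \eqref{RNibis} gives $R_{N,i_N}\,2^{-ND} = \tfrac{1}{2} + O(2^{-ND})$; when $D = D' = 1$ the residual definition of $R_{N,i_N}$ together with the previous bound yields $R_{N,i_N}\,2^{-N} = \tfrac{1}{2} + O(N^{-1})$. Finally, when $D<D'$, the remaining term $j = i'_N$ satisfies $R_{N,i'_N}\,2^{-ND'} \le 2^{N(1-D')-1}$, which is exponentially small since $D' > 1$. Doubling and summing the three estimates yields $Z_N = 1 + O(N^{-1})$, so that $\varepsilon_{N,i} := Z_N^{-1} - 1$ is the same for every $i$ and is $O(N^{-1})$, as required.

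The argument is essentially a controlled computation; the only real bookkeeping issue is treating the two regimes ($D = D' = 1$ versus $D < 1 < D'$) separately, because the residual definition of $R_{N,i_N}$ in the first case is replaced in the second case by the floor formula \eqref{RNibis}, with $R_{N,i'_N}$ taking over the role of absorbing the residue in the constraint $\sum_i R_{N,i} = 2^N$. The one essential analytic ingredient is the inequality $\sigma \le \mathrm{Id}_\R$ together with the choice $\varepsilon_N = 2\log_2(N)/N$, which ensures that each non-extremal term is damped by $2^{-N\varepsilon_N} = N^{-2}$, beating the $O(N)$ count of such indices.
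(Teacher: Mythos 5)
Your proof is correct and follows essentially the same route as the paper's: both reduce \eqref{controlpN} to estimating the $i$-independent normalizing sum $2\sum_{j=1}^{m_N}R_{N,j}2^{-N\alpha_{N,j}}$, using $\sigma\le \mathrm{Id}_\R$ and $\ep_N=2\log_2(N)/N$ to damp the $O(N)$ non-extremal terms by $N^{-2}$, isolating the dominant contribution $\lfloor 2^{ND-1}\rfloor 2^{-ND}=\tfrac12+o(1)$ at $i_N$ and the exponentially small term at $i'_N$ when $D<D'$, and deriving \eqref{control pNvoisin} from $|\beta_{N,i}-\beta_{N,i'}|\le N^{-1}$. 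Your explicit case split ($D=D'=1$ versus $D<1<D'$) is only a slightly more careful bookkeeping of what the paper compresses into one chain of inequalities.
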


\begin{proof} By definition,
$$
p_{N,i}2^{N\beta_{N,i}}=  \frac{1}{2\sum_{j=1}^{m_N} 2^{-N\alpha_{N,j}} R_{N,j}}
.$$
In order to estimate $p_{N,i}2^{N\beta_{N,i}}$ uniformly in $i$, recall that $\si\leq \mbox{Id}_\R$, so that using the definition of $R_{N,i}$ and $\ep_N$, one gets
\begin{align*}
\lfloor 2^{ND-1}\rfloor     2^{-ND}&  =  R_{N,i_N} 2^ {- N\alpha_{N,i_N} }   \le  \sum_{i=1}^{m_N} 2^{-N\alpha_{N,i} } R_{N,i}\\
&\le  \sum_{1\le i\neq i_N,i\neq i'_N\le m_N}  \!\!\!\! 2^{N( \si(\alpha_{N,i})-\alpha_{N,i} -\ep_N)}    +  R_{N,i_N} 2^ {- ND} +   \mathbf{1}_{D\neq D'}  R_{N,i_N '} 2^ {- ND' } \\
&\le m_N N^{-2} +2^{-ND}\lfloor 2^{ND-1}\rfloor  +\mathbf{1}_{D\neq D'} 2^{N(1-D')}.
\end{align*}
Also, recall that when $D \neq D'$, $D<1$ and $D'>1$. Consequently,  since $\lfloor 2^{ND-1}\rfloor     2^{-ND} = 1/2+o(1)$,  m the sequence of  inequalities just above give  \eqref{controlpN}. 

The fact that  \eqref{control pNvoisin} holds  when $0\le  i,i' \le 2^N-1$ and $|i-i ' | \leq 1$ follows from  $p_{N,i}/p_{N,i'} = 2^{-N(\beta_{N,i}-\beta_{N,i'})} $ and the fact that $|\beta_{N, i'}-\beta_{N,i}|\le N^{-1}$.

Finally,  $p_{N,0}=p_{N,2^N-1}$ by definition of these parameters.
\end{proof}
 
\subsection{Construction of  the  measure $\mu_\si $ associated with $\si\in \mathscr{S}_{1,\mathcal M}$} \label{constrmu}

A Moran measure $\mu_\si $ is iteratively constructed as      concatenation of pieces of   Bernoulli product measures associated with the probability vectors $(p_N)_{N\ge N_0}$. The sequence $(p_N)_{N\ge N_0}$ has been built so that when $N \to +\infty$,  the singularity spectrum of the Bernoulli product measure associated with $p_N$ pointwise converges   to $\si$. Indeed,  each $p_N$ has been chosen so that, heuristically, there are $2^{N\si(\alpha_{N,i})}$ weights of order $2^{-N\alpha_{N,i}}$ and the $\alpha_{N,i}$ tend to be more or less uniformly distributed in the domain of $\si$. 

\medskip

Further ingredients are introduced:
\begin{itemize}
\item
 For $N\ge N_0$,      an integer $\ell_N\ge N^2$ is fixed, such that $(\ell_N)_{N\geq N_0}$ forms an increasing sequence;
\item   consider the product space 
$$\Sigma=\prod_{N=N_0}^\infty   \{0,\cdots ,2^{N}-1\}^{\ell_N};$$

\item
for $N\geq N_0$, if  $g= \ell+ \sum_{n=N_0}^{N-1} \ell_n  $ with $1\le \ell\le\ell_N$,  a word  of generation (or length) $g$
$$(J_{N_0}, J_{N_0+1},\ldots, J_{N-1},J_{N})\in \Sigma_g:=\Big (\prod_{n=N_0}^{N-1}   \{0,\ldots 2^{n}-1\}^{\ell_n}\Big) \times \{0,\ldots 2^{N}-1\}^{\ell}$$
is also denoted $J_{N_0}\cdot  J_{N_0+1}\cdots J_{N}$; then  the cylinder consisting of those elements in $\Sigma$ with common prefix   $J_{N_0}\cdot  J_{N_0+1}\cdots J_{N}$ is denoted $[J_{N_0}\cdot  J_{N_0+1}\cdots J_{N}]$, and the set of such cylinders  of generation $g$ is denoted $\mathcal C_g$;
\smallskip

 \item
the space $\Sigma$ is endowed with the $\sigma$-field $\mathcal B$ generated by the cylinders.

\end{itemize}

\begin{definition}
The probability measure $\nu_\si $ on $(\Sigma,\mathcal B)$ is defined as follows. For all  $N\geq N_0$, for  all $1\le \ell\le \ell_N$, for $g=\ell+ \sum_{n=N_0}^{N-1} \ell_n$   and  $[J_{N_0}\cdot  J_{N_0+1}\cdots J_{N}]\in \mathcal C_g$,    set
\begin{align}
\label{defnu}
\nu_\si  ([J_{N_0}\cdot  J_{N_0+1}\cdots J_{N}])
=\Big (\prod_{n=N_0}^{N-1} \prod_{k=1}^{\ell_n} p_{n, j_{n,k}}\Big) \prod_{k=1}^{\ell} p_{N,k},
\end{align}
where  :
\begin{itemize} 
\item
for every $n\geq N_0$, for every $i \in \{1, \ldots, \ell_n\}$, $j_{n,i} \in  \{0,\ldots, 2^n-1\}$,
\item 
for $N_0\le n\le N-1$, $J_n=j_{n,1}\cdots j_{n,\ell_n} \in \{0,\cdots ,2^{n}-1\}^{\ell_n}$,
\item
 $J_N=j_{N,1}\cdots j_{N,\ell}\in \{0,\cdots 2^{N}-1\}^{\ell}$.
\end{itemize}
\end{definition}

\begin{remark}
\label{rk35}
Formula \eqref{defnu} could be written 
$$\nu_\si  ([J_{N_0}\cdot  J_{N_0+1}\cdots J_{N}])=\prod_{n=N_0}^N \mu_n(J_n),
$$
where $\mu_n$ is the Bernoulli measure associated with the parameters $p_n=(p_{n,i})_{i=0,...,2^n-1}$.  \end{remark}

It is immediate to check that   \eqref{defnu} is consistent, in the sense that$\nu_\si  (\Sigma)=1$ and  for every integers $g' > g\geq 1$, for every cylinder $J\in \mathcal{C}_g$, $\nu_\si (J) = \sum_{J' \in \mathcal{C}_{g'}, J'\subset J} \nu_\si  (J')$. 


Using \eqref{controlpN}, one sees  that there exists  $C>0$ such that for each $N\ge N_0$ and $(J_n)_{N_0\le n\le N}\in \prod_{n=N_0}^N   \{0,\ldots, 2^{n}-1\}^{\ell_n}$, 
$$
\nu_\si  ([J_{N_0}\cdot  J_{N_0+1}\cdots J_{N}]) \le \prod_{n=N_0}^{N} \big ((1+C/n)2^{-n\alpha_{\min}}\big )^{\ell_n}.
$$
Hence $\nu_\si $ is atomless since the right-hand side tends to $0$ as $N$ tends to infinity. 

\medskip

Every $g\in\N^*$ is decomposed  in a unique way under the form $g=\ell+ \sum_{n=N_0}^{N-1} \ell_n$ with $N\ge N_0$ and $1\le\ell\le\ell_N$ (when $N=N_0$, the sum $\sum_{n=N_0}^{N-1} \ell_n$ is 0). Using this decomposition, one can define the     mapping 
\begin{align}
\label{defgammag}
\gamma :  \begin{cases}  \N^*    \to    \N^*\\
  g   \mapsto    \gamma(g) :=  N \ell   + \sum_{n=N_0}^{N-1} n \ell _n. \end{cases}
\end{align}

The space $\Sigma$ provides a natural coding of $[0,1]$. Indeed, considering the coding map 
\begin{equation}
\label{defpi}
\pi: x=\left((x_{N,k})_{k=1}^{\ell_N}\right )_{N\ge N_0}\in\Sigma\mapsto \sum_{N=N_0}^\infty  2^{ -\sum_{n=N_0}^{N-1}n\ell_n} \sum_{k=1}^{\ell_N} x_{N,k}2^{ -k N} \in \zu,
\end{equation}
   for each $g\in \N^*$, $\pi$ maps bijectively the elements of $\mathcal C_g$ onto the set of closed dyadic subintervals of generation $\gamma(g)$ of $[0,1]$.

\begin{definition}
\label{defM1}
For  every $\si \in \mathscr{S}_{1,\mathcal M}$, consider   the Borel probability  measure    on $\zu $   
$$
\widetilde \mu_\si =\nu_\si \circ\pi^{-1},
$$ 
where $\nu_\si $ is the measure constructed above in \eqref{defnu}. 
Then,   $\mu_\si$  is defined as the natural periodized version of $\widetilde \mu_\si $, i.e. the $\mathbb Z$-invariant measure  
$$
\mu_\sigma:B\in\mathcal B(\R)\mapsto \sum_{k\in\mathbb Z} \mu ((B\cap [k,k+1)) - k).
$$
Finally, set   
$$
\mathcal{M}_1=\{\mu_\sigma:\si \in \mathscr{S}_{1,\mathcal M}\}\subset \mathcal M(\R).
$$
The measures $\mu_\si $ and $\widetilde \mu_\si$ are said to be  associated with $\si \in \mathscr{S}_{1,\mathcal M}$.
\end{definition}

\begin{proposition}\label{propmu-p}
Every $\mu\in \mathcal{M}_1$  satisfies the property (P) of  Definition~\ref{mildcond1}.

Moreover, if $\mu$ is associated with $\si\in \mathscr{S}_{1,\mathcal M}$, then $\mu_{|[0,1]}$ has $\si$ as multifractal spectrum, and it   obeys the SMF on $\R_+$.
\end{proposition}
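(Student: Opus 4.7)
The plan is to exploit the product-like structure of $\mu_\si$ at the preferred scales $2^{-\gamma(g)}$ and interpolate to arbitrary dyadic scales. Let $\lambda \in \mathcal D_j$ with $\lambda \subset [0,1)$. If $j = \gamma(g)$ for $g = \ell + \sum_{N_0 \le n < N} \ell_n$, then $\mu_\si(\lambda) = \nu_\si([J])$ for $[J] = \pi^{-1}(\lambda)$, and by \eqref{defnu} and \eqref{controlpN} this product equals $\prod_{n,k}(1 + O(1/n))\, 2^{-n \beta_{n,j_{n,k}}}$ with $\beta_{n,i} \in [\alpha_{\min},\alpha_{\max}]$; the compounded multiplicative error is $\exp(O(\sum_n \ell_n/n))$, which is $2^{o(\gamma(g))}$ since $\ell_n \ge n^2$ forces $\sum \ell_n / n = O(N^2)$ while $\gamma(g) \ge \sum n \ell_n \gtrsim N^4$. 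Hence (P$_1$) holds with any $s_1 < \alpha_{\min}$ and $s_2 > \alpha_{\max}$. For an intermediate $j$ satisfying $\gamma(g) < j < \gamma(g) + N$, $\lambda$ amounts to fixing the first $j - \gamma(g)$ bits of the next digit, so $\mu_\si(\lambda) = \nu_\si([J]) \sum_{i \in S} p_{N,i}$ for a consecutive range $S$ of size $2^{N - (j-\gamma(g))}$; the piecewise-constant monotone-then-antimonotone shape of $i \mapsto \beta_{N,i}$ from \eqref{defbeta} combined with \eqref{controlpN} bounds this sum between constant multiples of $2^{N - (j-\gamma(g))}\, 2^{-N\beta}$ for some $\beta \in [\alpha_{\min}, \alpha_{\max}]$, settling (P$_1$) in general.

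Property (P$_2$) reduces to controlling $\mu_\si(\widetilde\lambda)/\mu_\si(\lambda)$ for neighbors $\lambda, \widetilde\lambda \in \mathcal D_j$. The cylinders coding such neighbors differ by a carry in the last block where they diverge, possibly propagating through the first index of subsequent blocks. A single-digit change at level $n$ contributes a bounded factor by \eqref{control pNvoisin}, while any carry across the boundary of a block of level $n'$ is absorbed since the construction enforces $p_{n',0} = p_{n',2^{n'}-1}$. The residual slack coming from the $(1 + O(1/n))$ factors in \eqref{controlpN} yields $\mu_\si(\widetilde\lambda)/\mu_\si(\lambda) \le e^{\phi(j)}$ for some explicit $\phi \in \Phi$. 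The rescaled comparison with $\lambda' \subset \lambda$ of level $j' \ge j$ then follows by factoring along the added digits, each of which contributes a factor in $[C^{-1} 2^{-n\alpha_{\max}}, C\, 2^{-n\alpha_{\min}}]$, which is exactly the content of \eqref{propmu}.

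To compute $\tau_{\mu_\si}$ on $\R_+$, I evaluate
\[
\sum_{\lambda \in \mathcal D_{\gamma(g)},\, \lambda \subset [0,1)} \mu_\si(\lambda)^q = \prod_{n=N_0}^{N-1} \Bigl(\sum_{i=0}^{2^n-1} p_{n,i}^q \Bigr)^{\ell_n} \cdot \Bigl( \sum_{i=0}^{2^N - 1} p_{N,i}^q \Bigr)^\ell.
\]
By \eqref{defbeta}, \eqref{RNi}, \eqref{RNibis} and \eqref{controlpN}, $\sum_i p_{n,i}^q \asymp \sum_j R_{n,j}\, 2^{-n q \alpha_{n,j}} = 2^{n\max_{\alpha \in A_n}(\sigma(\alpha) - q\alpha) + o(n)}$, which by continuity of $\sigma$ and the $1/n$-density of $A_n$ in $\dom(\sigma)$ equals $2^{-n \sigma^*(q) + o(n)}$. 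Dividing $-\log_2$ by $\gamma(g)$ yields $\tau_{\mu_\si}(q) = \sigma^*(q)$ on $\R_+$, and intermediate generations contribute only an $O(N) = o(\gamma(g))$ error. Concavity of $\sigma$ then gives $\tau_{\mu_\si}^* = \sigma$. For the SMF at $\alpha \in \dom(\sigma)$, I pick $q \in \partial \sigma(\alpha)$, i.e.\ $\alpha \in \partial \sigma^*(q)$, and introduce the tilted Moran probability measure $\widetilde\mu_q$ built from the vectors $p_n^{(q)} = (p_{n,i}^q / Z_n(q))_i$ with $Z_n(q) = \sum_i p_{n,i}^q$. A Borel--Cantelli argument along the scales $\gamma(g)$, exploiting block-wise independence under $\widetilde\mu_q$, shows that for $\widetilde\mu_q$-a.e.\ $x$ one has $h_{\mu_\si}(x) = \alpha$ and $h_{\widetilde\mu_q}(x) = \sigma(\alpha)$, so the mass distribution principle yields $\dim E_{\mu_\si}(\alpha) \ge \sigma(\alpha)$. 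Combined with the reverse inequality from Proposition~\ref{fm00}, this produces the SMF on $\R_+$.

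The main obstacle is (P$_2$) across block boundaries: the construction is rigged precisely so that carries are harmless thanks to $p_{n,0} = p_{n,2^n - 1}$, but verifying that $\phi(j)$ can be chosen in $\Phi$ uniformly across all depths of block crossings requires careful bookkeeping of the accumulated $(1 + O(1/n))$ corrections. A secondary difficulty is the uniform Riemann-type approximation $\sum_i p_{n,i}^q \approx 2^{-n \sigma^*(q)}$ for $q$ in compact subsets of $\R_+$, which rests on the $1/n$-density of $A_n$ in $[\alpha_{\min},\alpha_{\max}]$ and the uniform continuity of $\sigma$ on that compact set.
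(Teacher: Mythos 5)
Your handling of property (P) and of the $L^q$-spectrum at the preferred scales $\gamma(g)$ follows essentially the paper's own route (interpolation between the scales $\gamma(g)$, the symmetry $p_{n,0}=p_{n,2^n-1}$ to absorb carries between neighbouring cylinders, and the product formula together with a Riemann-sum estimate for $\sum_i p_{n,i}^q$), so I will focus on the second half, where there are two genuine gaps. First, you compute $\tau_{\mu_\sigma}$ only for $q\in\R_+$ and then assert $\tau_{\mu_\sigma}^*=\sigma$. The Legendre transform is an infimum over all $q\in\R$, and the decreasing branch of $\sigma$ is governed by negative moments: with $q\ge0$ alone one only gets $\inf_{q\ge0}(q\alpha-\tau(q))=1$ for $\alpha$ to the right of the maximum, so neither $\tau_{\mu_\sigma}^*=\sigma$ nor the upper bound $\dim E_{\mu}(\alpha)\le\sigma(\alpha)$ on that branch follows; the ``on $\R_+$'' in the statement refers to the exponents $\alpha$, not to the moments $q$. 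Your product estimate does extend to $q<0$ (this is exactly what the paper does), but note also that your appeal to ``$1/n$-density of $A_n$ in $[\alpha_{\min},\alpha_{\max}]$'' is not what \eqref{AN} gives: $A_n$ is only $1/n$-dense in $\sigma^{-1}([1/n+\ep_n,1])$, so when $\sigma$ vanishes at an endpoint the extremal slopes need a separate argument (the paper uses subadditivity of $x\mapsto x^{t/(t_0-\ep)}$ plus the single extreme interval of mass $2^{-\gamma(g)(\alpha_{\min}+o(1))}$).

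Second, and more seriously, the lower bound via the globally tilted measure $\widetilde\mu_q$ with $q\in\partial\sigma(\alpha)$ does not reach every $\alpha$. Elements of $\mathscr{S}_{1,\mathcal M}$ are concave but not strictly concave, so $\sigma$ may contain affine segments; if $\alpha$ lies in the interior of a segment of slope $q$, the maximizing set of $\sigma(\cdot)-q\,\cdot$ is the whole segment, the tilted weights are (up to subexponential factors) spread uniformly over the $\asymp n$ digit classes of that segment, and the law of large numbers forces $h_{\mu_\sigma}(x)$ to equal the corresponding average exponent for $\widetilde\mu_q$-a.e.\ $x$, not the prescribed $\alpha$. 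Likewise, at an endpoint with $\sigma(\alpha_{\min})>0$ and infinite right derivative the subdifferential is empty and no admissible finite $q$ exists. So your construction yields $\dim E_{\mu}(\alpha)\ge\sigma(\alpha)$ only at exposed points of $\sigma$. This is precisely why the paper does not tilt: for each $\alpha$ it builds the auxiliary measure $\nu_\alpha$ uniformly distributed over the digit classes whose exponent lies within $1/n$ of $\alpha$ (Section~\ref{muspectrum}), which targets the level set $E_\mu(\alpha)$ for every $\alpha$ in the domain, including non-exposed exponents and endpoints. Replacing your tilted measure by such a restricted-alphabet measure (or tilting within the restricted alphabet) repairs the argument.
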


Observe that since $\nu_\si $ is atomless and $\pi$ is $1$-to-$1$ outside a countable set of points, for any closed dyadic subinterval $\la$ of $[0,1]$ of generation $n\in\gamma(\N^*)$, one has  $\mu_\si (\la)=\nu_\si  ([w])$, where $[w]$ is the unique  cylinder of generation $\gamma^{-1}(n)$ such that $\pi([w])=\la$.  

 Next sections are devoted to the proofs of the various properties of $\mu_\si $, which, in particular, yield  Proposition~\ref{propmu-p}.

\mk

For the rest of this section,   $\si \in  \mathscr{S}_{1,\mathcal M} $ is fixed,  and we simply call $\mu $ and $\nu$ the measure  $\mu_\si$ and $\nu_\si $ associated with $\si$.

\subsection{The measure $\mu $ satisfies property (P)} \label{musatP}

\begin{lemma} 
\label{lemdoubling}
The measure $\mu$ is almost doubling.
\end{lemma}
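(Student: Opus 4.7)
The argument rests on the carry-cascade structure of the coding map $\pi$ and a palindrome symmetry built into the vectors $p_N$. By $\Z$-invariance of $\mu$, it suffices to bound $\mu(3\lambda_n(x))/\mu(\lambda_n(x))$ for $x\in[0,1)$ and large $n$; working with $\widetilde\mu_\sigma=\nu_\sigma\circ\pi^{-1}$ and its $\Z$-translates at the boundary reduces the task to comparing masses of adjacent dyadic intervals in $[0,1]$. A preliminary observation is the palindrome identity $p_{N,i}=p_{N,2^N-1-i}$, which follows from $\alpha_{N,i}=\alpha_{N,2m_N-i+1}$, $R_{N,i}=R_{N,2m_N-i+1}$, and the definition of $\beta_{N,i}$; it in turn makes $\widetilde\mu_\sigma$ invariant under $x\mapsto 1-x$, which handles all boundary crossings.

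I would first treat the special depths $n=\gamma(g)$, at which dyadic intervals in $[0,1]$ correspond bijectively to cylinders $[J]\in\mathcal{C}_g$ via $\pi$. Going from $[J]$ to its right-neighbor $[J']\in\mathcal{C}_g$ amounts to a $+1$ increment at the rightmost digit position, possibly triggering a cascade of wrap-around carries traveling leftward and across scales. Each wrap-around replaces a digit equal to $2^n-1$ by $0$ and hence contributes a factor $p_{n,0}/p_{n,2^n-1}=1$ to the ratio $\nu_\sigma([J'])/\nu_\sigma([J])$; the cascade terminates with a single non-wrap-around increment contributing a factor in $[1/2,2]$ by \eqref{control pNvoisin}. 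Hence adjacent cylinders at generation $g$ satisfy $\nu_\sigma([J'])/\nu_\sigma([J])\in[1/2,2]$.

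For intermediate depths $n=\gamma(g)+b$ with $1\le b<M$ (where $M\in\{N,N+1\}$ is the scale of position $g{+}1$), a dyadic interval $\lambda$ is specified by a cylinder $[J]\in\mathcal{C}_g$ together with the top $b$ bits of the next $M$-bit digit, whose value runs over a range $I_b$ of $2^{M-b}$ consecutive indices, yielding
\begin{equation*}
\widetilde\mu_\sigma(\lambda)=\nu_\sigma([J])\sum_{i\in I_b}p_{M,i}.
\end{equation*}
For two adjacent such intervals either the cylinder is shared (with $I_b$ shifted by $2^{M-b}$) or they straddle a cylinder boundary, in which case $I_b=\{2^M-2^{M-b},\ldots,2^M-1\}$ and $I_b'=\{0,\ldots,2^{M-b}-1\}$ in the next cylinder. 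The straddling case is handled by combining step one with the palindrome identity $\sum_{i\in I_b}p_{M,i}=\sum_{i\in I_b'}p_{M,i}$. In the same-cylinder case, the uniform bounds $\frac12 2^{-M\alpha_{\max}}\le p_{M,i}\le 2\cdot 2^{-M\alpha_{\min}}$ extracted from \eqref{controlpN} give the crude ratio bound $4\cdot 2^{M\Delta}$, where $\Delta:=\alpha_{\max}-\alpha_{\min}$.

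Combining these bounds and setting $\phi(n):=C+M_n\Delta\log 2$, with $M_n$ the scale associated with depth $n$ and $C$ absorbing the universal constants (including the factor $3$ coming from $3\lambda_n(x)$), one obtains $\mu(3\lambda_n(x))\le e^{\phi(n)}\mu(\lambda_n(x))$. The main obstacle is the same-cylinder intermediate-depth case, where no symmetry is available and the loss $e^{O(M)}$ is unavoidable. This is acceptable precisely because the freedom in choosing $\ell_N\ge N^2$ forces $\gamma(g)\gtrsim N^3$ for $g$ at scale $N$, so $M_n=O(n^{1/3})=o(n)$, which ensures $\phi\in\Phi$ and completes the proof.
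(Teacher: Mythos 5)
Your proof is correct and follows essentially the same route as the paper: comparison of adjacent cylinders at the special generations $\gamma(g)$ via the carry cascade together with the symmetry of the vectors $p_N$, and then an $e^{O(N)}=e^{o(j)}$ loss at intermediate depths, harmless because $\ell_n\ge n^2$ forces $N=o(\gamma(g))$. The only cosmetic differences are that you invoke the full palindrome $p_{N,i}=p_{N,2^N-1-i}$ (hence reflection invariance of $\widetilde\mu_\sigma$) where the paper only uses $p_{n,0}=p_{n,2^n-1}$, and that you compare the block sums $\sum_{i\in I_b}p_{M,i}$ directly rather than passing through the generation-$\gamma(g)$ ancestors as in \eqref{encadre1}--\eqref{comparison}.
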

 
 \begin{proof}
 
Let $g\in\N^*$ and write it under the form  $g=\ell+ \sum_{n=N_0}^{N-1} \ell_n\in \N$ with $N\geq N_0$  and $1\le\ell\le\ell_N$.

First,    note that if $g$, hence $N$, is   large enough, the term $1+\ep_{N,i}$ in \eqref{controlpN} is greater than 1/2  and smaller than 3/2. Hence, for any $1\le i\le 2m_N$,
\begin{equation}
\label{controlpN2}
2^{-N(\alpha_{\max}+ \widetilde \ep_N )} \leq p_{N,i} \leq 2^{-N(\alpha_{\min}-\widetilde\ep_N)},
\end{equation} 
where $(\widetilde\ep_N)_{N\geq 1}$ is a non-increasing sequence (independent of $i$)  converging to 0.

\mk

We start by dealing with the dyadic intervals of generation~$\gamma(g)$.

Consider two   closed dyadic subintervals $\la $ and $ \lah$ of $[0,1]$ of generation $\gamma (g)$   such that $\la$ is the left neighbor of $\lah$. By construction,  $\la$ and $\lah $ are the images by $\pi$ of two cylinders $[J]$ and $[\widetilde J]$ in $\mathcal C_g$ such that, denoting by $u$ the longest common prefix of the words $J$ and $\widetilde J$, there exist $N_1\ge N_0$ and $0\le j<2^{N_1}-2$ such that $J=u\cdot j \cdot v$ and $\widetilde J = u\cdot (j+1)\cdot  \widetilde v$, where:
\begin{enumerate}
\item
there is $ 1\leq \tilde \ell_1 \leq \ell_{N_1}$ such that 
$u\in  \prod_{N=N_0}^{N_1-1}  \{0,\cdots ,2^{n}-1\}^{\ell_n} \cdot \{0,\cdots ,2^{N_1}-1\}^{\tilde \ell_{1}}$; 
\item
$v$ and $\tilde v$ belong to $   \{0,\cdots ,2^{N_1}-1\}^{\ell_{N_1}-1-\tilde \ell_{1}} \cdot  \prod_{N=N_1}^{N -1}  \{0,\cdots ,2^{n}-1\}^{\ell_n} \cdot     \{0,\cdots ,2^{N}-1\}^{\ell }$, and :
\begin{enumerate}
\item
 either $v$ and $\widetilde  v$ are empty words, 
 \item
 or   all letters of $\widetilde v$ are 0 and all letters of $v$ are as large as possible.
  \end{enumerate} \end{enumerate}
  
  From \eqref{control pNvoisin}, \eqref{defnu} and  the fact that  $p_{n,0}=p_{n,2^n-1}$ for every $n\geq N_0$,  one deduces that 
\begin{equation} 
\label{doubling}
2^{-1}\le \frac{\mu (\la)}{\mu(\lah )}\le 2.
\end{equation}

Consider now two neighboring  intervals $\la$ and $\lah$   of generation $j$, where   $\gamma(g)<  j\le\gamma(g+1)$. Let $  \la'$ and $  \lah'$ be the elements of $\mathcal D_{\gamma (g)}$ which contain $\la$ and $\lah$ respectively. These intervals are either equal or neighbors. By construction, when  $N$ is large enough, one has by \eqref{control pNvoisin} and \eqref{defnu}
\begin{equation}
\label{encadre1}
 2^{- N(\alpha_{\max}+\ep )} \leq \frac{\mu(\la)}{\mu(  \la' )} \leq 1,
 \end{equation}
 and if $j=\gamma(g+1)$
 \begin{equation}
\label{encadre1'}
 2^{- N(\alpha_{\max}+\ep )} \leq \frac{\mu(\la)}{\mu(  \la' )} \leq 2^{- N(\alpha_{\min}-\ep )},
 \end{equation}
 where $\ep \leq \alpha_{\min}/2$ for instance (this is due to the choice of  $N_0$ with $\ep_{N_0}\leq \alpha_{\min}/4$).
 
 \begin{remark}
 \label{remg}
 Observe that by construction, when $g$ gets larger, the $\ep$ in \eqref{encadre1'} can be taken very small and converges to zero when $g\to+\infty$, again because of the construction of $p_N$. This remark is used in Remark~\ref{rem-philambda} below.
 \end{remark}
 
 The same property as \eqref{encadre1} holds true for $\lah$ and $\lah'$, hence 
\begin{equation}\label{comparison}
 \frac{\mu(  \la')}{\mu(\lah')} 2^{-(\alpha_{\max}+\ep )  N}\le \frac{\mu(\la)}{\mu(\lah)}= \frac{\mu(\la)}{\mu(  \la')} \frac{\mu(  \la')}{\mu(\lah')}  \frac{\mu(\lah')}{\mu(\lah)}\le  \frac{\mu(  \la' )}{\mu(\lah')} 2^{ (\alpha_{\max}+\ep) N}.
\end{equation}
Let 
\begin{equation}
\label{theta1}
\phi (j)=  \begin{cases} 0 \ \mbox{  if }0\le j<N_0\ell_{N_0}\\ (1+(\alpha_{\max} +\ep)  )N \ \mbox{ if }\gamma(g)\le j<\gamma(g+1). \end{cases}
\end{equation}
Note that  $\phi (j)/j\le (1+(\alpha_{\max} +\ep) )N/\gamma(g)$ which tends to 0 as $j\to\infty$.  This follows from the fact that 
\begin{equation}
\label{gtoinfty}
\gamma(g) \geq \sum_{n=N_0}^{N-1}n\ell _{n}    >\!\!>   N^2
\end{equation}
  as $N\to\infty$ since $\ell_n \ge  n^2$ for all $n\ge N_0$.   Hence $\phi\in \Phi$.

Applying \eqref{doubling} 
 to $  \la'$ and $\lah'$, and using \eqref{comparison}, there exists $\tilde J$ such that for $j\geq \tilde J$, 
  \begin{equation}
 \label{doubling2}
  2^{- \phi (j)} \le \frac{\mu(\la)}{\mu(\lah)} \leq 2^{ \phi (j)},
  \end{equation}

Upon adding a constant to $\phi$ (to take into account the small generations $j \leq \tilde J$), one  concludes that $\mu_{|\zu}$ is almost doubling in the sense of Definition~\ref{defAD}.

To prove that $\mu$ is almost doubling on $\R$,  it is enough to observe that   by symmetry of the coefficients ($p_{n,0}=p_{n,2^n-1}$), for any $g\in \N$   $\mu_{|[1-2^{-\gamma(g)},1]}(\cdot +1-2^{-\gamma(g)})=\mu_{|[0,2^{-\gamma(g)}]}$, and then to use   the periodicity of $\mu$.
  \end{proof}

\begin{lemma}
\label{propertyP}
The measure $\mu$  satisfies (P).
\end{lemma}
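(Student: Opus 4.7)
The plan is to verify properties (P$_1$) and (P$_2$) separately, taking $0<s_1<\alpha_{\min}$ and $s_2>\alpha_{\max}$ (with $N_0$ chosen large enough that the errors $\widetilde\varepsilon_{N_0}$ appearing in \eqref{controlpN2} fit in the gap), and taking $\phi\in\Phi$ to be the almost-doubling function produced by Lemma~\ref{lemdoubling} augmented by a bounded constant. By the $\mathbb Z$-invariance of $\mu$, it suffices throughout to consider $\lambda\subset[0,1]$.

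For (P$_1$), I would write any generation $j$ uniquely as $j=\gamma(g)+k$ with $0\le k<\gamma(g+1)-\gamma(g)\le N+1$, where $N$ is the level associated with $g$ via \eqref{defgammag}. Any $\lambda\in\mathcal D_j$ is contained in a unique cylinder-aligned cube $\lambda_0\in\mathcal D_{\gamma(g)}$, for which $\mu(\lambda_0)=\nu([J])=\prod_{n,i}p_{n,j_{n,i}}$ is an explicit product of weights. The uniform control \eqref{controlpN2} gives
\[
2^{-\gamma(g)(\alpha_{\max}+\widetilde\varepsilon_{N_0})}\ \le\ \mu(\lambda_0)\ \le\ 2^{-\gamma(g)(\alpha_{\min}-\widetilde\varepsilon_{N_0})}.
\]
When $k=0$ this already yields (P$_1$). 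For $k>0$, $\lambda$ is a union of $2^{\gamma(g+1)-j}$ consecutive sub-cubes of generation $\gamma(g+1)$ whose relative masses in $\lambda_0$ are weights $p_{N,i}$ over a contiguous block of indices, so summing \eqref{controlpN2} over that block multiplies $\mu(\lambda_0)$ by a factor of the form $2^{(\gamma(g+1)-j)-N\alpha}$ with $\alpha\in[\alpha_{\min}-\widetilde\varepsilon_N,\alpha_{\max}+\widetilde\varepsilon_N]$. The resulting $O(N)$ correction in the exponent of $\mu(\lambda)$ is absorbed by the positive gap $(\alpha_{\min}-\widetilde\varepsilon_{N_0}-s_1)\gamma(g)$, thanks to $\gamma(g)\ge\sum_{n=N_0}^{N-1}n\ell_n\gtrsim N^2$ (see \eqref{gtoinfty}); the lower bound is symmetric with $s_2>\alpha_{\max}+\widetilde\varepsilon_{N_0}$.

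For (P$_2$), given neighboring $\lambda,\widetilde\lambda\in\mathcal D_j$ and $\lambda'\in\mathcal D_{j'}$ with $\lambda'\subset\lambda$, I would factor
\[
\frac{\mu(\widetilde\lambda)}{\mu(\lambda')}\ =\ \frac{\mu(\widetilde\lambda)}{\mu(\lambda)}\cdot\frac{\mu(\lambda)}{\mu(\lambda')}.
\]
Lemma~\ref{lemdoubling} controls the first factor by $2^{\pm\phi(j)}$. For the second, the key point is that the cylinder-aligned cube $\lambda_0\in\mathcal D_{\gamma(g)}$ containing both $\lambda$ and $\lambda'$ carries a conditional probability measure $\mu(\cdot\cap\lambda_0)/\mu(\lambda_0)$ that is structurally identical to a suffix of the construction of $\nu$ (shifted to begin with the remaining $\ell_N-\ell$ letters at level $N$, then the blocks at levels $N+1, N+2,\ldots$). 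Running the (P$_1$) argument on this local measure yields both $\mu(\lambda)/\mu(\lambda_0)\in[C^{-1}2^{-(j-\gamma(g))s_2-O(N)},\,C\,2^{-(j-\gamma(g))s_1+O(N)}]$ and the analogous enclosure for $\mu(\lambda')/\mu(\lambda_0)$. Taking the quotient produces the desired bound on $\mu(\lambda)/\mu(\lambda')$ between $C^{-1}2^{(j'-j)s_1-O(N)}$ and $C\,2^{(j'-j)s_2+O(N)}$; the $O(N)$ slack is absorbed by enlarging $\phi$ into an element of $\Phi$ (still $o(j)$ as $j\to\infty$).

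The main obstacle is the treatment of intermediate generations $\gamma(g)<j,j'<\gamma(g+1)$, where neither $\lambda$ nor $\lambda'$ is cylinder-aligned, so their masses appear only as partial sums of Bernoulli weights and produce simultaneous $O(N)$ corrections in the upper and lower estimates. These corrections must all fit into the fixed exponents $s_1,s_2$ and into a single admissible $\phi\in\Phi$; this is possible precisely because of the growth condition $\ell_n\ge n^2$, which forces $\gamma(g)\gg N$ and ensures that $2^{O(N)}$ is negligible on the natural scale $2^{-js_1}$ at which properties (P$_1$) and (P$_2$) are formulated.
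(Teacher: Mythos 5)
Your overall architecture is the same as the paper's: (P$_1$) from the product structure and the uniform weight bounds \eqref{controlpN2} together with $N=o(\gamma(g))$ from \eqref{gtoinfty}, and (P$_2$) by first removing the neighbor $\widetilde\lambda$ via Lemma~\ref{lemdoubling} and then comparing $\mu(\lambda)$ and $\mu(\lambda')$ through the cylinder (multiplicative) structure. The (P$_1$) part is fine (your aside about ``choosing $N_0$ large enough'' is not available, since $\mu$ is already built, but this is harmless: the errors $\widetilde\ep_N$ tend to $0$, so large generations are controlled and the finitely many small ones are absorbed in the constant $C$).

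There is, however, a real gap in your error bookkeeping for (P$_2$). The non-aligned ``partial letter'' correction occurs at \emph{both} ends: at generation $j$ it is of size $O(N)$, but at generation $j'$ it is of size $O(N')$, where $N'$ is the block level of $j'$ (your ``analogous enclosure'' for $\mu(\lambda')/\mu(\lambda_0)$ carries $O(N')$, not $O(N)$). Since $j'$ can be arbitrarily larger than $j$, one may have $N'\gg j$, so the claim that the total slack is $O(N)$ and can be ``absorbed by enlarging $\phi$ into an element of $\Phi$ (still $o(j)$)'' fails as stated: no function of $j$ alone dominates $2^{cN'}$ uniformly in $j'\ge j$. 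The fix is exactly the observation the paper makes explicitly: $N+N'=o(j)+o(j'-j)$ (because $N=o(\gamma(g))\le o(j)$ and $N'=o(j')$ with $j'\le 2\max(j,j'-j)$), so the $o(j)$ part goes into $\phi(j)$ while the $o(j'-j)$ part is absorbed into the strict gap between $(\alpha_{\min},\alpha_{\max})$ and your chosen $(s_1,s_2)$ in the factor $2^{(j'-j)s_1}$, resp.\ $2^{(j'-j)s_2}$, of \eqref{propmu}, at the cost of enlarging $C$. Your closing paragraph invokes only $\gamma(g)\gg N$, which addresses the generation-$j$ end but not the generation-$j'$ end; with the splitting $N'=o(j)+o(j'-j)$ added, your argument coincides with the paper's proof.
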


 \begin{proof} First, consider subintervals of   $[0,1]$. 
  
Let $\ep>0$.  For $N\geq N_0$ and $g=\ell+ \sum_{n=N_0}^{N-1} \ell_n$ with $1\le\ell\le\ell_N$, any dyadic interval $\la \in \mathcal D_j$ with $\gamma(g)\le j<\gamma(g+1)$ satisfies, if $N$ is large enough 
$$
2^{-(\gamma(g)+N)(\alpha_{\max}+\ep/2) }\le  \mu(\la )\le 2^{-\gamma(g)(\alpha_{\min}-\ep/2)},
$$
(use \eqref{controlpN2} for instance). By our choice for $\ell_N$ and \eqref{gtoinfty},  for $\gamma(g)\le j<\gamma(g+1)$, $N/j$ converges to 0 as $j\to+ \infty$. Hence, for $j$ large enough
\begin{equation}
\label{encadr2}
2^{-j(\alpha_{\max}+\ep) }\le  \mu(\la )\le 2^{-j(\alpha_{\min}-\ep)}.
\end{equation}
So, \eqref{minmaj1} is satisfied with $s_2= \alpha_{\max}+\ep$ and   $s_1 = \alpha_{\min}-\ep$, and some constant $C>0$. This yields property (P$_1$).

Let us move to (P$_2$).

Let $g,g'\in\N^*$, $j,j'\in\N^*$ with $j'> j$ and  $N'\geq N\ge N_0$ such that:
\begin{itemize}
\item
  $g=\ell+\sum_{n=N_0}^{N-1}\ell_n$ with $1\le \ell\le \ell_N$, and  
  $\gamma(g)\le j<\gamma(g+1)$,
\item
   $g'=\ell'+\sum_{n=N_0}^{N'-1}\ell_n$  with     $1\le \ell'\le \ell_{N'}$, and     $\gamma(g')\le j'<\gamma(g'+1)$.
\end{itemize}

Consider two neighboring dyadic intervals $\la, \lah \in\mathcal D_j$,  and  an interval $\la'\in\mathcal D_{j'}$ such that $\la'\subset \la$.

Due to the doubling property of $\mu$ applied to $\la$ and $\lah$, we have 
\begin{equation}\label{aaa}
2^{-\phi(j)}\frac{\mu(\la)}{\mu(\la')} \le \frac{\mu(\lah)}{\mu(\la')}  = \frac{\mu(\lah)}{\mu(\la)}   \frac{\mu(  \la)}{\mu(\la')}     \le 2^{\phi(j)} \frac{\mu(\la)}{\mu(\la')}.
\end{equation}
For $J\le j$, denote by $\lambda_{|J}$ the unique element of $\mathcal D_J$ which contains $\lambda$, and for $j<J\le j'$ denote by $\lambda_{|J}$ the unique element $\widetilde \lambda$ of $\mathcal D_J$ such that $\lambda'\subset \widetilde \lambda\subset \lambda$. We have 
$$
 \frac{\mu(\la_{|\gamma(g)+N})}{\mu(\la'_{|\gamma(g')})} \le \frac{\mu(\la)}{\mu(\la')}\le  \frac{\mu(\la_{|\gamma(g)})}{\mu(\la'_{|\gamma(g')+N'})}.
 $$
It is easily seen that $N+N'=o(j)+o(j'-j)$ as $j,j'\to + \infty$. Consequently, using the multiplicative structure of $\mu$ and \eqref{controlpN2} yields  a function $\widetilde\phi\in\Phi$, as well as a constant $C\ge 1$,  depending on $\mu$ only, such that 
\begin{align}\label{bbb}
C^{-1}2^{-j\widetilde \phi(j)} 2^{(j'-j) (\alpha_{\min} -\ep) } \le \frac{\mu(\la)}{\mu(\la')}\le C 2^{j\widetilde \phi(j)}2^{(j'-j) (\alpha_{\max}+\ep)}.
\end{align}
Incorporating \eqref{bbb} in  \eqref{aaa} shows that (P$_2$) holds  with the same exponents $s_1$ and $s_2$ as in (P$_1$). 

\medskip

Finally, the same arguments as  those developed at the end of the proof of Lemma \ref{lemdoubling} ensure that the property  true on $\zu$ extends to $\R$. 
\end{proof}

\begin{remark}
\label{rem-philambda}
 
The previous estimates  and Remark \ref{remg}  show that for every $\ep>0$, there exists $j_\ep\in\N$ such that  for all $j'\ge j\ge j_\ep $, for all  $\lambda, \widetilde \lambda\in\mathcal D_j$  such that $\partial\lambda\cap \partial  \widetilde \lambda\neq\emptyset$,  and all $\lambda'\in\mathcal D_{j'}$ such that $\lambda'\subset \lambda$, one has 
\begin{equation}\label{loulou}
\mu(\lambda')\le \mu(\widetilde \lambda) 2^{j\ep}2^{-(j'-j)(\alpha_{\min}-\ep)}.
\end{equation} 
 
Also, from the  construction of $\mu$, for all integers $j,j'\ge 0$ and $\lambda\in\mathcal D_j$, one has  
\begin{equation}\label{lala}
\mu( \lambda\cdot [0,2^{-j'}]^d )=\mu(\la) 2^{-\phi_\lambda } 2^{-j'\alpha_{\min}+ \tilde\phi_{\la}( j')},
\end{equation}
where:
\begin{itemize}
\item
   $\lambda \cdot [0,2^{-j'}]^d $ is the concatenation of $\la$ and $[0,2^{-j'}]^d $, meaning that $\lambda \cdot [0,2^{-j'}]^d$ is  the image of  $[0,2^{- j'}]^d $ by the canonical similarity which maps $[0,1]^d$ onto  $ \la$, 

\item
  $\phi_\la \in \R$ and  $ \tilde\phi_\la\in \Phi$ are uniform $o(j) $  in the sense that  
\begin{equation}
\label{majphiphitilde}
\lim_{j\to+\infty} \sup \left\{ \frac{ | \phi_\la  |}{j} :  \lambda\in \mathcal D_j\ \right\} =  \lim_{j'\to+\infty} \sup \left\{  \frac{|\tilde\phi_{ \la}(j')| }{j'}:  \lambda\in \bigcup_{j\in\N}\mathcal D_j \right\}=0 .
\end{equation}
\end{itemize}

These inequalities are key to prove the optimal  upper bound  for the   singularity spectrum  of typical functions in $\widetilde B^{\mu,p}_q(\R^d)$. 
\end{remark}


\subsection{The $L^q$-spectrum  of $\mu_{|[0,1]}$ equals $\si^*$.}\label{sectaumu} Let $\tau=\si^*$. Since $\si\in \mathscr{S}_{1,\mathcal{M}} $,   $\tau \in \mathscr{T}_{1,\mathcal{M}} $. 

For simplification, denote $\mu_{|[0,1]}$ by $\mu$. For all $j\in\N$, let 
$$
\mathcal D_j^0=\{\lambda\in\mathcal D_j: \lambda\subset [0,1]^d\}.
$$ 

Fix  $t\in \R$ and $g=\ell+ \sum_{n=N_0}^{N-1} \ell_n$  with $N\ge N_0$ and  $1\le \ell\le \ell_{N}$.   Assume that $g$ is so large that \eqref{encadre1} holds for every $j\geq \gamma(g)$.

\medskip

First,  remark that, for the integers $j$ such that  $ \gamma(g) \le  j < \gamma(g+1)$,  for every $ \la \in \mathcal D^0_{\gamma(g)}$, by \eqref{encadre1} one has
$$ 2^{(j-\gamma(g))}2^{- N|t|(\alpha_{\max}+\ep)} \leq \sum_{ \la '\in \mathcal D_{j} ,  \la ' \subset  \la } \frac{\mu( \la ')^t}{\mu(\lambda)^{t}} \leq 2^{(j-\gamma(g))}2^{ N|t|(\alpha_{\max}+\ep)}.$$
Since  $N+(j-\gamma (g))=o(\gamma (g))$ as $g\to+\infty$, one deduces that  
\begin{equation}\label{fromgammagton}
 { \sum_{ \la \in\mathcal D^0_{j}}\mu( \la )^t} = 2^{o(\gamma(g))} {\sum_{ \la \in\mathcal D^0_{\gamma (g)}}\mu(\la)^t}  . 
 \end{equation}

This shows that it is enough to study  $  \liminf_{g\to+\infty} \frac{1}{-\gamma(g)} \log_2 \sum_{I\in\mathcal D^0_{\gamma (g)}}\mu(I)^t$ to find the value  $\tau_\mu(t)$ (actually, $\tau_\mu$ will be proved to be a limit, not only a liminf).

\medskip
  
 $\bullet$ Let us start with the lower bound for $\tau_\mu(t)$.

The multiplicative structure defining $\nu$ and $\mu$ using  concatenation of pieces of Bernoulli product measures yields   
\begin{equation}
\label{decomp1}
\sum_{ \la \in\mathcal D^0_{\gamma (g)}}\mu( \la )^t =\Big (\prod_{n=N_0}^{N-1}\Big (\sum_{i=0}^{2^n-1}p_{n,i}^ t \Big )^{\ell_n}\Big )\cdot \Big (\sum_{i=0}^{2^N-1}p_{N,i}^t \Big )^\ell.
\end{equation}
For each $n\ge N_0$, using \eqref{controlpN}, one has 
\begin{equation}
\label{defcnt}
C_{n,t}^{-1} 2^{-nt\beta_{n,i}} \le p_{n,i} ^t \leq  2^{-nt\beta_{n,i}} C_{n,t},
\end{equation}
 where $C_{n,t}$ tends to 1   when $n\to +\infty$ (and does not depend on $i\in \{0,...,2^n-1\}$). Hence, using \eqref{defbeta}, the definition of the $R_{n,i}$ and the inequality $2R_{n,i}\le 2^{n\sigma(\alpha_{n,i})}$ which follows from \eqref{RNi},  one gets   
\begin{align*}
\sum_{i=0}^{2^n-1}p_{n,i}^t & \le     C_{n,t}\sum_{i=0}^{2^n-1}2^{-tn \beta_{n,i}}  \le     C_{n,t} \sum_{i=0}^{m_n} 2 R_{n,i} 2^{-tn \alpha_{n,i}}   \le   C_{n,t}\sum_{i=1}^{m_n} 2^{n(\si(\alpha_{n,i})-t\alpha_{n,i})}\\
& \le  C_{n,t}m_n 2^{-n\inf\{t\alpha-\si (\alpha):\, \alpha\in \mathrm{dom}(\sigma)\}} =  C_{n,t}m_n 2^{-n\tau(t) } .
\end{align*}
 Consequently,  
$$
\sum_{ \la  \in\mathcal D^0_{\gamma (g)}}\mu( \la )^t \le 2^{-\gamma (g)  \tau (t)  } \cdot \Big (\prod_{n=N_0}^{N-1} (C_{n,t}m_n)^{\ell_n}\Big )\cdot (C_{N,t}m_N)^\ell.
$$

Since $\log(m_n)= o(n)$, one sees that  $\ell\log(m_N)+\sum_{n=N_0}^{N-1}  \ell_n \log m_n  = o(\gamma(g))$. Combining this with the fact that $(C_{n,t})_{n\ge N_0}$ converges to $1$ when $n$ tends to infinity, one deduces that   $\Big (\prod_{n=N_0}^{N-1} (C_{n,t}m_n)^{\ell_n}\Big )\cdot (C_{N,t}m_N)^\ell = 2^{o(\gamma(g))}$ and 
$$ \tau_\mu(t) =  \liminf_{g\to + \infty} \frac{-1}{\gamma(g)} \log_2\sum_{ \la  \in\mathcal D^0_{\gamma (g)}}\mu( \la  )^t  \geq \tau(t) .$$

 $\bullet$  Let us now estimate $\limsup_{g\to + \infty} \frac{-1}{\gamma(g)} \log_2\sum_{ \la  \in\mathcal D^0_{\gamma (g)}}$. 

Suppose first  that $\si(\tau'(t^+))>0$. By construction, one can fix    $N'_0\ge N_0$ such that for all $n\ge N'_0$, there exists  an integer $1\le i_{n,t} \le m_n$,  such that   $|\alpha_{n,i_{n,t}} -\tau'(t^+)|\le 1/n$,  $ i_{n,t}\neq i_n$ and $ i_{n,t}\neq i'_n$. The  Legendre transform   $\sigma=\tau^*$ implies then    that  $t\tau'(t^+)-\tau(t)=\sigma(\tau'(t^+))$. 

In addition, by  continuity of $\sigma$,    $\lim_{n\to + \infty}\eta_n=0$, where  $\eta_n= \si (\alpha_{n,i_{n,t}})-t\alpha_{n,i_{n,t}}+\tau(t)$. Bounding  from below the sums in \eqref{decomp1} by the sum only over those integers $j$ such that $\beta_{n,j}=\alpha_{n,i_{n,t}}$ (see \eqref{defbeta}), and recalling \eqref{defcnt} and  the definition  \eqref{RNi} of $R_{n,i}$,  
\begin{align*}
\sum_{ \la  \in\mathcal D^0_{\gamma (g)}}\mu( \la )^t&\ge \Big (\prod_{n=N_0}^{N_0'-1}\Big (\sum_{i=0}^{2^n-1}p_{n,i}^ t \Big )^{\ell_n}\Big )\cdot \Big (\prod_{n=N'_0}^{N-1} \Big (C_{n,t}^{-1}\lfloor 2^{n(\si (\alpha_{n,i_{n,t}}) -\ep_n)}\rfloor 2^{- t n \alpha_{n,i_{n,t}} }\Big )^{\ell_n} \Big ) \\
&\qquad\qquad\qquad \cdot \Big (C_{N,t}^{-1}\lfloor 2^{N(\si (\alpha_{N,i_{N,t}} )-\ep_N)}\rfloor 2^{-tN \alpha_{N,i_{N,t}}}\Big )^{\ell}.
\end{align*}
Recalling  that $\ep_n=\frac{2\log _2 n}{n}$,  and setting $C_t=\prod_{n=N_0}^{N_0'-1}\Big (\sum_{i=0}^{2^n-1}p_{n,i}^ t \Big )^{\ell_n}$, one obtains \begin{align*}
\sum_{ \la \in\mathcal D^0_{\gamma (g)}}\mu( \la )^t&\ge C_t \Big (\prod_{n=N'_0}^{N-1} \Big (C_{n,t}^{-1}\frac{2^{n(\si (\alpha_{n,i_{n,t}})  -t  \alpha_{n,i_{n,t}} )  }}{4n^2}\Big )^{\ell_n} \Big)  \cdot \Big (C_{N,t}^{-1}\frac{2^{N  (\si (\alpha_{N,i_{N,t}})  -t  \alpha_{N,i_{N,t}} )    }}{4N^2}\Big )^{\ell}\\
&= C_t 2^{-\gamma (g) \tau (t)}  \Big (\prod_{n=N'_0}^{N-1} \Big (C_{n,t}^{-1}\frac{2^{n\eta_n}}{4n^2}\Big )^{\ell_n} \Big)  \cdot \Big (C_{N,t}^{-1}\frac{2^{N\eta_N}}{4N^2}\Big )^{\ell}\\
&= 2^{-\gamma (g) (\tau (t)+o(1))}
\end{align*}
as $g\to+ \infty$, where we used that  $\log(C_{n,t})+n\eta_n+\log (4n^2)=o(n)$ (recall that $C_{n,t}\to 1$ when $n\to +\infty$ uniformly in $t$).
The  last lines imply that 
$$
\limsup_{g\to + \infty} \frac{-1}{\gamma(g)}\log_2 \sum_{ \la \in\mathcal D^0_{\gamma(g)}}\mu( \la )^t \le  \tau(t)   .
$$

This equation and the lower bound already obtained  for $\tau_\mu(t)$  show that $\tau_\mu(t) = \tau(t)$.

\medskip

It remains us to consider the extremal case $\si(\tau'(t^+))=0$, which may happen only if $\tau'(t^+)\in\{\alpha_{\min},\alpha_{\max}\}$. 

Suppose that $\tau'(t^+)=\alpha_{\min}$ and $\si(\alpha_{\min})=0$. One has $0=\sigma(\alpha_{\min})=\tau^*(\alpha_{\min})=t^+\tau'(t^+)-\tau(t)$, so  $\tau(t) = t\alpha_{\min}$, and $t_0=\min\{t\in\R: \tau(t)=\alpha_{\min} t\}<+\infty$. In addition, $t_0>0$ since $\tau(0) <0$. Also,   for $t\in [0,t_0)$,   $\si (\tau'(t^+))\in (0,1]$ and we know from the first part of this proof that $\tau_\mu(t)=\tau(t)$ on this interval $[0,t_0)$. To conclude, it is thus enough to show that this last  equality holds over the whole interval $[t_0,+\infty)$ as well. 

At first, for all $t\ge t_0$, $\ep\in (0,t_0)$ and $n\in\N$, by subadditivity of $x\ge 0\mapsto x^{t/(t_0-\ep)}$,  
$$
\sum_{ \la \in\mathcal D^0_{ \gamma(g) }}\mu( \la )^t \le\Big ( \sum_{ \la \in\mathcal D^0_{\gamma(g)}}\mu( \la )^{t_0-\ep}\Big )^{t/(t_0-\ep)},
$$
so
\begin{equation}\label{tauqplus}
\tau _\mu(t) = \liminf_{g\to\infty} -\frac{1}{\gamma(g)}\log_2 \sum_{ \la \in\mathcal D^0_{\gamma(g)}}\mu( \la )^t \ge \frac{t}{t_0-\ep}\tau(t_0-\ep).
\end{equation}
On the other hand, consider   the interval $ [0,2^{-\gamma(g)}]$ in $\mathcal D_{\gamma (g)}$. Its $\mu$-mass is by construction $2^{-\gamma(g)(\alpha_{\min}+o(1))}$ as $g\to +\infty$, so 
$$
\limsup_{g\to + \infty} -\frac{1}{\gamma(g)}\log_2 \sum_{ \la \in\mathcal D^0_{\gamma(g)}}\mu( \la )^t \le \limsup_{g\to + \infty} -\frac{1}{\gamma(g)  }\log_2 2^{-t\gamma(g)(\alpha_{\min}+o(1))}  =  \alpha_{\min }t. 
$$
Letting $\ep\to 0$  in \eqref{tauqplus}, and using that $\alpha_{\min}= \tau(t_0)/ t_0$, one concludes that  $
\tau_\mu(t)=\alpha_{\min }t=\tau(t).$

\medskip

The case $\tau'(t^+)=\alpha_{\max}$ and $\si(\alpha_{\max})=0$ works similarly by considering $t_0=\max\{t\in\R: \tau(t)=\alpha_{\max} t\}\in (-\infty,0)$, and the element of $\mathcal D^0_{\gamma (g)}$ whose $\mu$-mass is minimal, i.e. equal to $2^{-\gamma(g)(\alpha_{\max}+o(1))}$.

\subsection{The  SMF holds  for $\mu$ with $\sigma_\mu=\si$.} 
\label{muspectrum}

The facts that $E_\mu(\alpha)=\emptyset$ for   $\alpha\not\in [\alpha_{\min},\alpha_{\max}]$ and    $\dim  E_\mu(\alpha)\le \si(\alpha)$ for $\alpha \in [\alpha_{\min},\alpha_{\max}]$, follow  from Proposition \ref{fm00}   and  Section \ref{sectaumu}  where it is  proved that $\tau_\mu^*=\si$ (so $\tau_\mu^* (\alpha)=-\infty$ if $\alpha \notin [\alpha_{\min}, \alpha_{\max}]$).

\medskip

 Further, it follows from the construction and the choice of the weights $p_{n,i}$ that there exist  real numbers  $x$ at which $h_\mu(x)=\alpha_{\min} $,  and other 
real numbers  $x$ at which $h_\mu(x)=\alpha_{\max} $. Hence, $ \sigma_\mu(\alpha_{\min}) \ge 0$ and $ \sigma_\mu(\alpha_{\max}) \ge 0$.
  
  In particular, if $\si(\alpha_{\min}) =0$ ({\em resp.} $\si(\alpha_{\max}) =0$), then $ \sigma_\mu(\alpha_{\min}) = 0$ ({\em resp.} $ \sigma_\mu(\alpha_{\max}) =0$) and  the SMF holds   at $\alpha_{\min}$  ({\em resp.} $\alpha_{\max}$).

\medskip

Now, fix $\alpha\in [\alpha_{\min},\alpha_{\max}]$ such that $\sigma(\alpha)>0$.  For each $N\ge N_0$, let
\begin{equation}
\label{defjalpha}
\mathcal J_{N,\alpha} = \left\{ j\in\{0,\ldots 2^N-1\} :  \mbox{ $j$ is odd and } \ |\beta_{N,j} -\alpha|\leq N^{-1}\right \}.
\end{equation}

Let $\ep>0$. Recalling the definitions of Section \ref{sec-proof-step1} we first observe that  the exponents  $\beta_{N,j}$ considered in the definition of $\mathcal J_{N,\alpha}$ correspond to at most nine distinct exponents $\alpha_{N,i}$ (since $\alpha_{N,i}-\alpha_{N,i-1}\leq (4N)^{-1}$). This observation, together with the continuity of $\sigma$ and the definition of the numbers $R_{N,i}$ imply that for $N$ large enough,
  \begin{equation}
\label{cardinalj}
 2^{N(\si (\alpha)-\ep)}   \leq \# \mathcal J_{N,\alpha}  \leq   2^{N(\si (\alpha)+\ep)} .
\end{equation}

Consider the measure $\nu_\alpha$ supported on 
\begin{equation*}
\label{defsigmaalpha}
\Sigma_\alpha=\prod_{n=N_0}^\infty\mathcal  J_{n,\alpha}^{\ell_n}  \ \ \subset \  \Sigma 
\end{equation*}
 defined by setting, for each $N\ge N_0$, $0\le \ell<\ell_N$ and  for every word $J_{N_0}\cdot  J_{N_0+1}\cdots J_{N}\in \Big (\prod_{n=N_0}^{N-1}   \{0,\cdots 2^{n}-1\}^{\ell_n}\Big) \times \{0,\cdots 2^{N}-1\}^{\ell}$:
$$
\nu_\alpha([J_{N_0}\cdots J_{N}])= \begin{cases}  ( \# \mathcal J_{N,\alpha} )^{-\ell}\prod_{n=N_0}^{N-1}( \# \mathcal J_{n,\alpha} )^{-\ell_n}&\text{if } [J_{N_0}\cdots J_{N}]\cap \Sigma_\alpha\neq\emptyset,\\
0&\text{otherwise}.
\end{cases}
$$
 One  easily checks that this last formula is consistent, and the measure $\nu_\alpha$  is well-defined and  atomless.  

\begin{proposition}
\label{propmualpha}
The measure $\mu_\alpha=\nu_\alpha\circ\pi^{-1}$ is  defined as the push-forward measure of $\nu_\alpha$ on the interval $\zu$ (recall \eqref{defpi}). This measure  is supported by $\pi(\Sigma_\alpha)$, and for every $x\in \pi(\Sigma_\alpha)$, $h_\mu(x) = \alpha$ and $h_{\mu_\alpha}(x) = \sigma(\alpha)$.
\end{proposition}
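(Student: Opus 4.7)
The plan is to check each of the three assertions, all of which reduce to a direct computation at the distinguished generations $j=\gamma(g)$, followed by a comparison argument handling the intermediate generations. The support claim is immediate: by construction $\nu_\alpha$ assigns mass zero to every cylinder disjoint from $\Sigma_\alpha$, so $\nu_\alpha(\Sigma\setminus\Sigma_\alpha)=0$ and $\mu_\alpha=\nu_\alpha\circ\pi^{-1}$ is supported on $\pi(\Sigma_\alpha)$.

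Fix $x=\pi(\omega)\in\pi(\Sigma_\alpha)$ with $\omega=(j_{n,k})$, $j_{n,k}\in\mathcal J_{n,\alpha}$, so that $|\beta_{n,j_{n,k}}-\alpha|\le 1/n$. For the $\mu$-exponent, write $g=\ell+\sum_{n=N_0}^{N-1}\ell_n$ and combine the multiplicative formula \eqref{defnu} with \eqref{controlpN}, i.e. $p_{n,i}=2^{-n\beta_{n,i}}(1+O(1/n))$ uniformly in $i$, to obtain
\[-\log_2\mu(\lambda_{\gamma(g)}(x))=\sum_{n=N_0}^{N-1}\sum_{k=1}^{\ell_n}n\beta_{n,j_{n,k}}+\sum_{k=1}^{\ell}N\beta_{N,j_{N,k}}+O\Bigl(\sum_{n=N_0}^{N-1}\ell_n/n+\ell/N\Bigr).\]
Using $|\beta_{n,j_{n,k}}-\alpha|\le 1/n$, the leading double sum equals $\alpha\gamma(g)+O(\sum_{n<N}\ell_n+\ell)$, and the choice $\ell_n\ge n^2$ ensures that both $\sum_{n<N}\ell_n$ and $\sum_{n<N}\ell_n/n$ are $o(\gamma(g))=o(N\ell+\sum_{n<N}n\ell_n)$. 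Hence $-\log_2\mu(\lambda_{\gamma(g)}(x))=\alpha\gamma(g)(1+o(1))$. For arbitrary $j\in[\gamma(g),\gamma(g+1))$, I invoke \eqref{encadre1} to bound $\mu(\lambda_j(x))$ against $\mu(\lambda_{\gamma(g)}(x))$ up to a multiplicative factor $2^{-O(N)}$, and note that $N=o(j)$ because $j\ge\gamma(g)\ge\sum_{n=N_0}^{N-1}n\ell_n$ grows at least like $N^4$. This yields $h_\mu(x)=\alpha$.

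For the $\mu_\alpha$-exponent, the fact that $x\in\pi(\Sigma_\alpha)$ guarantees that the cylinder $\pi^{-1}(\lambda_{\gamma(g)}(x))$ meets $\Sigma_\alpha$, hence the defining formula of $\nu_\alpha$ gives
\[\mu_\alpha(\lambda_{\gamma(g)}(x))=\prod_{n=N_0}^{N-1}(\#\mathcal J_{n,\alpha})^{-\ell_n}\cdot(\#\mathcal J_{N,\alpha})^{-\ell}.\]
By \eqref{cardinalj}, $\#\mathcal J_{n,\alpha}=2^{n(\sigma(\alpha)+o(1))}$ as $n\to\infty$, which yields $-\log_2\mu_\alpha(\lambda_{\gamma(g)}(x))=\sigma(\alpha)\gamma(g)(1+o(1))$. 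For an intermediate $j\in[\gamma(g),\gamma(g+1))$, the inclusions $\lambda_{\gamma(g+1)}(x)\subset\lambda_j(x)\subset\lambda_{\gamma(g)}(x)$ together with the exact identity $\mu_\alpha(\lambda_{\gamma(g+1)}(x))=\mu_\alpha(\lambda_{\gamma(g)}(x))/\#\mathcal J_{N,\alpha}$ (which follows because $\nu_\alpha$ equidistributes among the $\#\mathcal J_{N,\alpha}$ admissible continuations) sandwich $\mu_\alpha(\lambda_j(x))$ between two quantities differing by a factor $2^{-N\sigma(\alpha)(1+o(1))}$; since $N=o(j)$, one concludes $h_{\mu_\alpha}(x)=\sigma(\alpha)$.

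The main obstacle is the careful bookkeeping of the error terms coming from \eqref{controlpN} and \eqref{cardinalj}, and verifying that the assumption $\ell_n\ge n^2$ is strong enough to render every cumulative error of the form $\sum_{n<N}\ell_n$, $\sum_{n<N}\ell_n/n$, as well as the boundary correction $N$ arising at intermediate generations, negligible compared to $\gamma(g)$. Once these estimates are in place, the identities follow directly from the multiplicative definitions of $\nu$ and $\nu_\alpha$; moreover, the bounds are uniform in $x\in\pi(\Sigma_\alpha)$, which is what delivers the pointwise, not merely almost-everywhere, conclusion.
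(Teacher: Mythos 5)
Your proof is correct and follows essentially the same route as the paper's: compute $\mu(\lambda_{\gamma(g)}(x))$ and $\mu_\alpha(\lambda_{\gamma(g)}(x))$ from the product structure via \eqref{controlpN} and \eqref{cardinalj}, then pass to intermediate generations using the fact that the jump between $\gamma(g)$ and $\gamma(g+1)$ contributes only $O(N)=o(\gamma(g))$ to the exponents (the paper phrases this as $\gamma(g+1)/\gamma(g)\to 1$ together with the almost doubling property). Your bookkeeping of the error terms is just a more explicit version of the paper's $\ep$-argument, so nothing essential differs.
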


\begin{proof}
For all $\omega\in\Sigma$, denote by $[\omega_{|g} ]$ the cylinder of generation $g\in\N$ which contains $\omega$. From the definition of $ \mathcal J_{N,\alpha}$, for every $\omega \in \Sigma_\alpha$ one has 
 $$
\alpha-\ep \leq  \liminf_{g\to + \infty} -\frac{1}{\gamma(g) }\log\big ( \mu(\pi([\omega_{|g}]))\big ) \leq  \limsup_{g\to +\infty} -\frac{1}{\gamma(g)}\log\big ( \mu(\pi([\omega_{|g}]))\big )\leq  \alpha+\ep.$$

 Since this holds for every choice of $\ep>0$,  
 $$
 \lim_{g\to+ \infty} -\frac{1}{\gamma(g) } \log\big ( \mu(\pi([\omega_{|g}]))\big )=\alpha. $$

Moreover, $\lim_{g\to= \infty} \frac{\gamma(g+1)}{\gamma (g)}=1$ and $\mu$ is almost doubling, so $\pi(\Sigma_\alpha)\subset E_\mu(\alpha)$.

On the other hand,  from \eqref{cardinalj} one deduces that for every $\omega \in \Sigma_\alpha$
$$ \si(\alpha)- \ep\leq  \liminf_{g\to+ \infty}  \frac{-1}{\gamma(g)}\log\big ( \mu_\alpha(\pi([\omega_{|g}]))\big ) \leq  \limsup_{g\to + \infty} \frac{-1}{\gamma(g)}\log\big ( \mu_\alpha(\pi([\omega_{|g}]))\big ) \leq \si(\alpha)+\ep.
 $$
 
 Again,  this holds for every choice of $\ep>0$, hence 
 $$\lim_{g\to +\infty} -\frac{1}{\gamma(g) } \log\big ( \mu_\alpha(\pi([\omega_{|g}]))\big )=\si(\alpha).
 $$
 Since $\lim_{g\to+  \infty} \frac{\gamma(g+1)}{\gamma (g)}=1$, the measure $\mu_\alpha$, which is supported by $\pi(\Sigma_\alpha)$, is exact dimensional with dimension $\si(\alpha)$, so $\dim( \Sigma_\alpha) \geq \si(\alpha)$.
 
 The combination of the last two facts imply that $\sigma_\mu(\alpha) = \dim E_\mu(\alpha)\ge \si(\alpha)$. Since the converse inequality holds true by the \ml formalism, the proof is complete.  
\end{proof}

\subsection{The case $d\ge 2$} 
\label{dimensiond}
If $\si\in \SD $, then the map  $  \widetilde \si  : \alpha\in\R \mapsto d^{-1} \si  (d\cdot \alpha)$  belongs to $\mathscr{S}_{1,\mathcal M}$. Let   $ \widetilde\mu_{\widetilde\sigma}$ be the measure associated with $\widetilde\si$  as built  in the previous sections in dimension 1. Then, it is easily checked that the tensor product measure  $\mu= ( \widetilde\mu _{\widetilde\sigma} ) ^{\otimes d}$ possesses all the required properties.

In addition, for all $\alpha\in\mathrm{dom}(\si )$, if $\widetilde\nu_{d^{-1}\alpha}$  and $\widetilde\mu_{d^{-1}\alpha}$  are  the measures built in Section \ref{muspectrum} associated with the exponent $d^{-1}\alpha$, then the measure $\mu_\alpha:=  ((\widetilde\mu_{d^{-1}\alpha} )^{\otimes d})  $ satisfies the same properties as   $\mu_\alpha$  (described in Proposition \ref{propmualpha}).
 
\begin{definition}
\label{defMd}
Set $\mathcal{M}_d=\{\mu^{\otimes d}:\mu\in  \mathcal{M}_1\}$.
\end{definition}

By construction, for any $\mu \in \mathcal{M}_d$ and its associated auxiliary measures $\nu_\alpha$, the inequalities \eqref{encadr2}, \eqref{loulou} and \eqref{cardinalj} and all those of Section \ref{muspectrum} still hold true.

\subsection{A conditioned ubiquity property associated with the elements of $\mathcal {M}_d $}\label{ubi}\ The property  established in this section  plays a key role in determining the singularity spectrum of typical elements in $\widetilde B^{\mu,p}_q(\R^d)$ when $p<+\infty$ and $\sigma_\mu(\alpha_{\min})>0$.

Let  $\mu \in \mathcal{M}_d$. In this section, we measure the size of the set of    those points $x\in \R^d$ which are infinitely often close to dyadic vectors $2^{-j}k \in \R^d$ such that the order of magnitude of $\mu(\lambda_{j,k})$ is $2^{-j\alpha_{\min}}$. 

\begin{definition}
\label{defirreducible}
A dyadic vector $k 2^{-j}$, $k\in \Z^d$, $j\in \N$ is  {\em  irreducible} when $k\in \Z^d\setminus (2\Z)^d$.

The  irreducible representation of  a dyadic element  $ k 2^{-j}$ with $k\in \Z^d$, $j\in \N$,  is  the unique irreducible dyadic number $\ki 2^{-\ji }$ such that $k2^{-j}=\ki 2^{-\ji }$.

If $\lambda=2^{-j}(k+[0,1]^d) \in \mathcal{D}_j$, then its associated irreducible cube   is $ \lai := 2^{-\ji}(\ki +[0,1]^d) \in \mathcal{D}_{\ji}$, where $\ki 2^{-\ji} $ is the irreducible representation of $ k 2^{-j}$.
\end{definition}
Observe that  $\la$ is the  dyadic cube of generation $j$  located at the ``bottom-left" corner of   $\lai$. We  can write  $\lambda= \lai \cdot [0,2^{-(j-\ji)}]^d $,  with the notations defined in Remark \ref{rem-philambda}.  

\begin{definition}
For $\delta>1$  and $j\geq 1$,  let $(j)_\delta$ be the largest integer in $\gamma(\N)\cap [0,j/\delta]$ (recall the definition \eqref{defgammag} of the mapping $\gamma$).

For any positive sequence  $ \eta=(\eta_j)_{j\geq 1}$, let us define the set
$$
X_j(\delta,\eta)=\left\{k2^{-(j)_\delta}\in[0,1]^d: 
\begin{cases} 
k\in \Z^d\setminus 2\Z^d,\\
\mu\big (2^{-(j)_\delta}(k+[0,1]^d\big )\ge 2^{-(j)_\delta(\alpha_{\min}+\eta_{(j)_\delta}), \ }\\
\mu\big (2^{-(j)_\delta}k+2^{- j}[0,1]^d\big )\ge 2^{-j (\alpha_{\min}+ \eta_{j})}
\end{cases}\!\!\!\!\!\!\!\!
\right \}.
$$
\end{definition}

Recall that by construction and \eqref{encadr2}, 
\begin{align*}
\mu\big (2^{-(j)_\delta}(k+[0,1]^d ) \big ) & \le 2^{-(j)_\delta(\alpha_{\min}-\ep)} \\
\mu\big (2^{-(j)_\delta}k+2^{- j}[0,1]^d\big ) & \le 2^{-j (\alpha_{\min}-\ep)},
\end{align*} 
which are complementary to the inequalities used to defined $X_j(\delta,\eta)$. Hence,  $X_j(\delta,\eta)$ contains irreducible dyadic vectors of generation $(j)_\delta$ whose $\mu$-mass is controlled both at generation $(j)_\delta$ and at generation $j$ by the exponent $\alpha_{\min}$ (note that $(j)_\delta\sim j/\delta$).

\begin{definition}
For  any positive sequence $\eta=(\eta_j)_{j\ge 1} $ and any increasing sequence of integers $(j_n)_{n\ge 1}$, set 
$$
S(\delta,\eta,(j_n)_{n\ge 1})=\bigcap_{N\ge 1} \ \bigcup_{n\ge N} \  \bigcup_{k2^{-(j_n)_\delta} \in X_{j_n}(\delta,\eta)} (k2^{-(j_n)_\delta}+2^{-j_n}[0,1]^d).
$$ 
\end{definition}

An element   $y\in S(\delta,\eta,(j_n)_{n\ge 1}) $ satisfies $|y-k2^{-(j_n)_\delta}| \leq  2^{-j_n}  \sim   2^{- \delta  \cdot (j_n)_\delta} $ for infinitely many  dyadic vectors of the form $k2^{-(j_n)_\delta } \in X_{j_n}(\delta,\eta)$: we say that $y$ is approximated at rate $\delta$  by the elements of the sets $X_{j_n}(\delta,\eta) $, $n\ge 1$ (around which $\mu$-mass is locally controlled by $\alpha_{\min}$ at generations $j_n$ and $j_n(\delta)$).

\mk Recall that the  lower Hausdorff dimension of  a  Borel probability measure $\nu$ on $\R^d$ is the infimum of the Hausdorff dimension of the Borel sets of positive $\nu$-measure (see~\cite{Fan1994} for instance).  

\begin{proposition}\label{ubiquity}
Suppose that $\si_\mu(\alpha_{\min})>0$.  

There is a positive sequence $\eta=(\eta_j)_{j\geq 1} $ converging to 0 when  $j \to + \infty$ such that for any $\delta>1$, for any  increasing sequence of integers $(j_n)_{n\ge 1}$, there exists a Borel probability measure $\nu$ on $\R^d$ of lower Hausdorff dimension larger than or equal to $\si_\mu(\alpha_{\min})/\delta$, and  such that $\nu (S(\delta,\eta,(j_n)_{n\ge 1}))=1$. 

In particular, $\dim S(\delta,\eta,(j_n)_{n\ge 1})) \geq \si_\mu(\alpha_{\min})/\delta$. 
\end{proposition}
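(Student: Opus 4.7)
The plan is to construct $\nu$ as a Cantor-like measure supported on a subset of $S(\delta,\eta,(j_n))$, drawing on the auxiliary measure $\mu_{\alpha_{\min}}$ of Proposition~\ref{propmualpha} (in its $d$-dimensional version from Section~\ref{dimensiond}) as a reservoir of dyadic cubes where $\mu$ is as close to $\alpha_{\min}$-Hölder as possible. The sequence $\eta=(\eta_j)_{j\ge 1}$ is chosen first, independently of $\delta$ and $(j_n)$, slowly decreasing to $0$: by the uniform control \eqref{majphiphitilde} of the multiplicative structure \eqref{lala}, one can arrange that for $j$ large enough and every $\lambda\in\mathcal D_{(j)_\delta}$ with $\mu(\lambda)\ge 2^{-(j)_\delta(\alpha_{\min}+\eta_{(j)_\delta}/2)}$, the bottom-left sub-cube $\lambda\cdot[0,2^{-(j-(j)_\delta)}]^d$ automatically satisfies $\mu(\lambda\cdot[0,2^{-(j-(j)_\delta)}]^d)\ge 2^{-j(\alpha_{\min}+\eta_j)}$. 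Hence the second mass condition defining $X_j(\delta,\eta)$ is entailed by the first.

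The second ingredient is that $\mu_{\alpha_{\min}}$ is exact-dimensional with dimension $\sigma_\mu(\alpha_{\min})>0$ and is supported on $\pi(\Sigma_{\alpha_{\min}})$. By the odd-index constraint in \eqref{defjalpha} (applied coordinate-wise through the tensor-product construction of Section~\ref{dimensiond}), for every $n\in\gamma(\N)$ and every cube $\lambda_{n,k}$ meeting $\supp(\mu_{\alpha_{\min}})$, each coordinate of $k$ is odd, so $k\in\Z^d\setminus(2\Z)^d$ and $k 2^{-n}$ is irreducible. Moreover, on this support, $\mu(\lambda_{n,k})\ge 2^{-n(\alpha_{\min}+\eta_n/2)}$ for all $n$ large, so by the previous paragraph every such corner $k 2^{-n}$ belongs to $X_{\widehat\jmath}(\delta,\eta)$ for any $\widehat\jmath$ with $(\widehat\jmath)_\delta=n$. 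Exact-dimensionality then provides at least $2^{n(\sigma_\mu(\alpha_{\min})-o(1))}$ such corners at every large $n\in\gamma(\N)$.

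Given $\delta>1$ and $(j_n)_{n\ge 1}$, I would then extract a sparse subsequence $(j_{n_k})_{k\ge 1}$ with $n_k':=(j_{n_k})_\delta\in\gamma(\N)$ and $n_{k+1}'\gg j_{n_k}$, and build inductively a Cantor set $K\subset S(\delta,\eta,(j_n))$ as follows: at step $k$, for each surviving cube of generation $n_{k-1}'$, select every sub-cube $\lambda'\in\mathcal D_{n_k'}$ intersecting $\supp(\mu_{\alpha_{\min}})$ and satisfying the mass bound above, then replace $\lambda'$ by its bottom-left sub-cube $\lambda'\cdot[0,2^{-(j_{n_k}-n_k')}]^d$ of side $2^{-j_{n_k}}$. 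Define $\nu$ as the weak-$*$ limit of probability measures placing, at each step $k$, the total $\mu_{\alpha_{\min}}$-mass of each selected $\lambda'$ inside its replacement sub-cube, renormalized to have total mass $1$.

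To lower-bound the local dimension of $\nu$, for $y\in K$ and $2^{-j_{n_{k+1}}}\le r<2^{-j_{n_k}}$, I would bound $\nu(B(y,r))$ above, up to a uniform multiplicative constant, by the $\mu_{\alpha_{\min}}$-mass of a single cube of generation $n_k'\sim -\log_2(r)/\delta$ containing $y$; exact-dimensionality of $\mu_{\alpha_{\min}}$ gives $\nu(B(y,r))\le r^{\sigma_\mu(\alpha_{\min})/\delta-o(1)}$, and the mass distribution principle concludes that $\dim \nu$, hence $\dim S(\delta,\eta,(j_n))$, is at least $\sigma_\mu(\alpha_{\min})/\delta$. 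The main difficulty is to coordinate the two-scale mass constraint in $X_j(\delta,\eta)$ throughout the ``gap'' $[n_k',j_{n_k}]$: the choice of $\eta$ in the first paragraph is tailored precisely so that the deterministic identity \eqref{lala} absorbs this gap, and the transfer from $\lambda'$ to its bottom-left sub-cube entails no additional loss of dimension.
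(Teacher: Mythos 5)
Your overall strategy (fix $\eta$ first via \eqref{lala}--\eqref{majphiphitilde}, get irreducibility from the odd-digit constraint, build a Cantor set alternating the scales $(j_{n_k})_\delta$ and $j_{n_k}$, conclude by the mass distribution principle) is the right one and matches the paper in spirit, but there is a genuine gap at the induction step: you cannot keep using $\mu_{\alpha_{\min}}$ as the selection measure after the first zoom. Once you replace a selected cube $\lambda'\in\mathcal D_{(j_{n_k})_\delta}$ by its bottom-left sub-cube $\lambda'\cdot[0,2^{-(j_{n_k}-(j_{n_k})_\delta)}]^d$, that sub-cube is disjoint from $\supp(\mu_{\alpha_{\min}})$ for large $k$: points of $\supp(\mu_{\alpha_{\min}})$ have all coding digits odd (hence nonzero), so their distance to the corner $x_{\lambda'}$ is at least $2^{-(j_{n_k})_\delta-N}$ with $N=o((j_{n_k})_\delta)$, which exceeds the side $2^{-j_{n_k}}\le 2^{-\delta (j_{n_k})_\delta+O(1)}$ of the bottom-left cube. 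Consequently, at step $k+1$ there are no cubes of generation $(j_{n_{k+1}})_\delta$ inside the surviving cube that intersect $\supp(\mu_{\alpha_{\min}})$, and your transfer of $\mu_{\alpha_{\min}}$-mass has nothing to continue with. This is not a bookkeeping slip but a structural obstruction: Lemma~\ref{lemdiop} shows that $\mu_{\alpha}$-almost every point satisfies $\overline{j(x)}/j\to 1$, i.e.\ is badly approximated by dyadic vectors, so $\mu_{\alpha_{\min}}\big(S(\delta,\eta,(j_n)_{n\ge 1})\big)=0$ and no measure obtained by (re)distributing $\mu_{\alpha_{\min}}$-mass cube-by-cube along its own support can charge $S(\delta,\eta,(j_n)_{n\ge 1})$.

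The paper's proof fixes exactly this point: its Step~1 constructs, for \emph{every} dyadic cube $\lambda$, a localized measure $\nu^\lambda$ that restarts the $\alpha_{\min}$-selection (digits in $\mathcal J_{n,\alpha_{\min}}$, with exact equality $\beta_{n,j}=\alpha_{\min}$) from the prefix coding $\lambda$, together with the uniform decay estimate \eqref{nulambda}; the Cantor construction of Step~2 then alternates a selection step governed by $\nu^{\lau}$ (giving irreducibility via Remark~\ref{irreduc/alphamin} and the mass bounds via property $P(\alpha_{\min},g)$, which also covers the zero digits filling the gap since $\beta_{N,0}=\alpha_{\min}$ when $\sigma_\mu(\alpha_{\min})>0$) with the zoom to the bottom-left sub-cube, re-seeding a fresh $\nu^{\lambda}$ inside that sub-cube at the next step. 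Your final dimension estimate is also too quick at intermediate scales $2^{-j_{n_{k+1}}}\le r<2^{-j_{n_k}}$: one needs the interpolation bound \eqref{nulambda} between generations $j_{n_k}$ and $\log_2(1/r)$ (Step~3 of the paper), not just the mass of a single cube of generation $\sim \log_2(1/r)/\delta$, although this part would be repairable once the re-seeding device is in place.
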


\begin{remark} When $\mu$ is the Lebesgue measure, Proposition \ref{ubiquity} is proved in \cite{JAFF_FRISCH}.

\end{remark}

\begin{remark} 
\label{RemED}
Proposition \ref{ubiquity} is proved for a measure $\mu \in \mathcal{M}_d$, but it is extended without any difficulty to powers of $\mu$, i.e. to environments $\mu\in \mathcal{E}_d$ (recall  \eqref{defmd*}).  
\end{remark}

\begin{proof} 

We first deal with the case $d=1$. For simplicity,   $\sigma_\mu$  is denoted by  $\sigma$.

\medskip

{\bf Preliminary  observation.}  Recall the construction of the measure $\mu$ and the notations of Section \ref{sec-proof-step1}.  
\begin{definition}
\label{pag} Let $g=\ell+\sum_{n=N_0}^{N-1} \ell_n \in \N^*$, with $N\ge N_0$ and $1\le \ell\le \ell_N$. A real number   $x\in [0,1]$ satisfies property $P(\alpha_{\min},g)$ when there exists a word $w\in\Sigma_g$ such that $x\in \pi([w])$ and   writing  $w = J_{N_0}\cdot J_{N_0+1}\cdots J_{N-1} \cdot J$ with $J_n = j_{n,1}\cdots j_{n,\ell_n} \in \{0,...,2^{n}-1\} ^{\ell_n}$ for $n \in \{N_0,\ldots, N-1\} $ and $J_N = j_{N,1}\cdots j_{N,\ell} \in \{0,...,2^{N}-1\} ^\ell $, then all the $j_{n,i}$ are such that $\beta_{n,j_{n,i}} =  \alpha_{\min}$.
\end{definition}

It is direct to see  that  there exists a sequence $(\eta_j)_{j\ge 1}$ such that for all $x\in [0,1]$, for all $g\ge 1$, if $x$ satisfies property $P(\alpha_{\min},g)$, then for all $1\le j\le \gamma (g)$, one has 
$$
\mu(\lambda_j(x))\ge 2^{-j(\alpha_{\min}+\eta_j)}.
$$
Fix such a sequence $\eta=(\eta_j)_{j\ge 1}$. 

\mk

Fix $\delta> 1$ and    an increasing sequence of integers $(j_n)_{n\ge 1}$. We are going to construct a Cantor subset $\mathcal K$ included in  $S(\delta,\eta,(j_n)_{n\ge 1})$ and a Borel probability measure $\nu$ supported on $\mathcal K$ such that for all closed dyadic subcubes $\lambda$ of $[0,1]^d$ of generation $j\ge 0$,  one has $\nu(\lambda)\le 2^{-j(\delta^{-1}\si(\alpha_{\min})-\psi(j))}$, where the function $\psi: \N\to (0,+\infty)$ tends to 0 as $n\to \infty$. The mass distribution principle (see \cite{Fa1}) allows then to conclude that  $\dim S(\delta,\eta,(j_n)_{n\ge 1})) \geq \si(\alpha_{\min})/\delta$. 

\medskip

We   proceed in three steps. Notations and definitions of Section~\ref{muspectrum} are adopted.
 
  \mk

{\bf Step 1}: Construction of a family of measures $(\nu^\la)_{\la \in \mathcal{D}}$.

\mk

A family of auxiliary measures indexed by the closed dyadic subintervals of $[0,1]$ is built  in a very similar way  as $\mu_{\alpha_{\min}} $   in~Section~\ref{muspectrum}. 

Let us introduce a notation: for $j\in\N^*$, set 
$$
N(j)=\begin{cases}N_0&\text{ if $1\le j\leq \ell_{N_0} N_0$},\\
N&\text{ if $j>\ell_{N_0} N_0$ and $\gamma (\sum_{n=N_0}^{N-1} \ell_n)<j\le \gamma(\sum_{n=N_0}^{N} \ell_n)$}.
 \end{cases}
 $$
 Observe that 
 \begin{equation}
 \label{nj}
 \lim_{j\to +\infty} \frac{N(j)}{j} =0.
 \end{equation}

 Let  $N\geq N_0+1$, $1\le \ell\le \ell_{N}$, and  $g =\ell+ \sum_{n=N_0}^{N-1} \ell_n$. Let  $J$ be an integer such that
$\gamma (g-1)<  J\le   \gamma(g)$. Note that $J\ge j_0:= \ell_{N_0}N_0 +1$.

Fix $\lambda \in \mathcal D_J$,  and construct a measure $\nu^\lambda$ supported on $\lambda$ as follows.

 For each $n\ge N=N(J)$,  consider 
\begin{equation}
\label{defjalpha2}
\mathcal J_{n,\alpha_{\min}} = \{ j\in\{0,\ldots 2^n-1\} :  \mbox{ $j$ is odd and } \ \beta_{n,j} = \alpha_{\min} \}.
\end{equation}
Using \eqref{RNi} and \eqref{RNibis},  one sees  that  for an $n\geq N$, 
\begin{equation}\label{RNiter}
\#\mathcal J_{n,\alpha_{\min}} \ge 2^{n (\sigma(\alpha_{\min})-2\ep_n)}.
\end{equation}   
Writing $\lambda=K2^{-J}+2^{-J}[0,1]$,   denote by $\lambda_g \subset \la $ the   dyadic subinterval $K2^{-J}+2^{-\gamma(g)}[0,1]$ and $[w_{\lambda_g}]$ the unique cylinder such that $\pi ([w_{\lambda_g}])=\lambda_g$. Observe that $[w_{\lambda_g}]\in \mathcal{C}_g$,  the set of cylinders  of generation $g$ in $\Sigma$.  Then,    consider the set 
$$
\Sigma^\lambda= \{w_{\lambda_g}\} \times (\mathcal J_{N,\alpha_{\min}})^{\ell_N-\ell}\times\prod_{n=N+1}^\infty (\mathcal J_{n,\alpha_{\min}})^{\ell_n} \ \subset \Sigma,
$$ 
and for each $n\ge N$ and  $w\in \Sigma_g\times \{0,\ldots,2^N-1\}^{\ell_N-\ell}\times\prod_{k=N+1}^n \{0,\ldots,2^k-1\}^{\ell_k}$ set 
$$
\rho^\lambda([w])
=\begin{cases}
(\# \mathcal J_{N,\alpha_{\min}})^{-\ell_N+\ell}\prod_{k=N+1}^n (\#  \mathcal J_{k,\alpha_{\min}} )^{-\ell_k}&\text{if } [w]\cap \Sigma^\lambda\neq\emptyset\\
0&\text{otherwise.}
\end{cases}
$$
This yields an atomless  measure $\rho ^\la $ whose   support is $\Sigma ^\lambda$. Finally, the measure $\nu^\lambda=\rho^\lambda\circ\pi^{-1}$ is  a probability measure   supported on $ \la_g \subset \zu $.

\medskip

By construction of $\nu^\lambda$, using \eqref{RNiter}, for $g'\ge g$ and $\lambda'\in\mathcal D_{\gamma(g')}$, one has either $\nu^\lambda(\lambda')=0$, or $\lambda'\cap \pi(\Sigma^\lambda)\neq\emptyset$ and 
\begin{align*}
\nu^\lambda(\lambda')\le 2^{-(\gamma(g')-\gamma (g))( \si (\alpha_{\min})-2\ep_{N(J)})} & \le 2^{-(\gamma(g')-J)( \si(\alpha_{\min})-2\ep_{N(J)})} 2^{N(J)\si(\alpha_{\min})}.
\end{align*}
Consequently, for every $g'\geq g$ and   $\gamma(g')< j\le \gamma(g'+1)$, for  $\lambda'\in \mathcal D_j$ one has
\begin{equation}\label{nulambda}
\nu^\lambda(\lambda')\le 2^{-(j-J)(\si (\alpha_{\min})-2\ep_{N(J)})} 2^{2N(j)\si(\alpha_{\min})}.
\end{equation}
This inequality extends easily to all integers $j$ such that $J\le j\le \gamma(g)$ and $\lambda'\in \mathcal D_j$. 

\begin{remark}\label{irreduc/alphamin}By construction, since only odd integers $j$ are considered in the definition of the sets $\mathcal{J}_{n,\alpha_{\min}}$, if $ \widehat \lambda\varsubsetneq \lambda$ and $\nu^\lambda(\widehat \lambda)>0$, then $\widehat \lambda=\lambda_{\widehat j,\widehat k}$ with $\widehat k 2^{-\widehat j}$ irreducible. Moreover, writing $\gamma(\widehat g)<\widehat j\le \gamma(\widehat g+1)$, if property $P(\alpha_{\min},g)$ of Definition~\ref{pag}  holds for all $x\in\lambda$, then $P(\alpha_{\min},\widehat g)$ holds for all $x\in \widehat \lambda$. 
\end{remark}

\medskip

We finally set $\nu^{\lambda}=\nu^{[0,2^{-j_0}]}$ if $\lambda\in\bigcup_{j=1}^{j_0-1}\mathcal D_j$ and $\lambda\subset [0,1]$.

 \mk

{\bf Step 2}: Construction of a Cantor set $\mathcal K\subset S(\delta,(\eta_{j})_{j\geq 1} ,(j_n)_{n\ge 1})$ and a Borel probability measure $\nu $ supported on $\mathcal K$.

\medskip

Recall that $j_0=N_0\ell_{N_0}+1$. Define  $n_1=0$, $\mathcal G_1=\{[0,2^{-j_0}]\}$ and  a set function $\nu$ on $\mathcal G_1$ by $\nu([0,2^{-j_0}])=1$. Note that $\gamma(\ell_{N_0})<j_0\le \gamma(\ell_{N_0}+1)$, and that for all $x\in [0,2^{-j_0}]$, property $P(\alpha_{\min},\ell_{N_0})$ holds (recall Definition \ref{pag}).

Let $p$ be a positive integer. Suppose that   $p$ families $\mathcal G_1,\ldots, \mathcal G_p$ of closed dyadic intervals, as well as $p$ integers $0=n_1<n_2 < \cdots <n_p  $ are constructed such that:

\begin{enumerate}

\item[(a)] for every $k\in \{2,...,p\}$, 
  $(j_{n_k})_\delta \ge j_0$;
   
  \item[(b)]
  for every $k\in \{2,...,p\}$, $\mathcal G_k \subset \{x+2^{-j_{n_k}}[0,1]^d: x\in X_{j_{n_k}}(\delta,\eta)\} \subset \mathcal{D}_{j_{n_k}}$;
  
  \item[(c)] for every $k\in \{1,...,p\}$,  writing $\gamma(g_k )<j_{n_k}\le \gamma(g_k+1)$ for some integer $g_k$, every $x\in\mathcal{G}_k $  satisfies property $P(\alpha_{\min},g_k)$;

  \item[(d)]    for every $k\in \{2,...,p\}$,  the   irreducible intervals $\{ \lai : \lambda\in \mathcal G_k\} $ are pairwise disjoint;
  
  \item[(e)]  for every $k\in \{2,...,p\}$ and every element of $\lambda\in \mathcal G_k$, there is a unique $\lau  \in \mathcal{G}_{k-1}$ such that $\lambda\subset \lai \subset \lau$;

  \item[(f)]  the measure $\nu$ is defined on the $\sigma$-algebra generated by the elements of $\bigcup_{k=1}^p\mathcal G_k$ by the following formula: for all $2\le  k\le p$ and $\lambda\in \mathcal G_k$, 
$$
\nu (\lambda):=\nu (\lau) \nu^{\lau} (\lai);
$$
  \item[(g)]
  for all $2\le  k\le p$ and $\lambda\in \mathcal G_k$, 
\begin{equation}
\label{eqnula}
\nu (\lambda)\le 2^{-j_{n_k} (\delta^{-1}\sigma(\alpha_{\min})-3\ep_{N(j_{n_{k-1}})}}.
\end{equation}
\end{enumerate}

Let us explain how to build $n_{p+1} $ and $\mathcal{G}_{p+1}$.

Write  $\gamma(g_k )<j_{n_p}\le \gamma(g_k+1)$, where  $g_k =\ell+ \sum_{n=N_0}^{N-1} \ell_n\in \N$ with  $N\geq N_0$ and  $1\le \ell\le \ell_{N}$.

 Fix $n_{p+1}$ so that $\gamma(g_k+1)\le     (j_{n_{p+1}})_\delta$ (other constraints on $n_{p+1}$ will be given a few lines below). 
  
 Consider  $ \lau \in \mathcal G_{p}$.  For every $\lah\in \mathcal{D}_{  (j_{n_{p+1}})_\delta}$ with $\lah\subset \lau$ and   $\nu^{\lau}(\lah)>0$,  \eqref{nulambda} gives
$$
\nu(\lau) \nu^{\lau}(\lah) \le \nu(\lau) 2^{-(  (j_{n_{p+1}})_\delta -j_{n_p})(\sigma(\alpha_{\min})-2\ep_{N(j_{n_p})})} 2^{2N(  (j_{n_{p+1}})_\delta)\sigma(\alpha_{\min})}.
$$
By \eqref{eqnula} applied to $\nu(\lau)$,  and then   \eqref{nj},  choosing $n_{p+1}  $  large enough  yields that 
$\nu(\lau) \nu^{\lau}(\lah) \le 2^{-j_{n_{p+1}} (\delta^{-1}\sigma(\alpha_{\min}) -3\ep_{N(j_{n_p})})}$  (the equivalence $(j_{n_{p+1}})_\delta \sim j_{n_{p+1}}/\delta$ was used). 

Further, one    sets
\begin{equation}
\label{defgpp1}
\mathcal{G}_{p+1}=\bigcup_{\lau\in \mathcal G_p} \left\{k2^{-(j_{n_{p+1}})_\delta}+2^{-j_{n_{p+1}}}[0,1]:\begin{cases}  \lah =k2^{- (j_{n_{p+1}})_\delta}+2^{- (j_{n_{p+1}})_\delta}[0,1] \subset \lau\\ \nu^{\lau}(\lah)>0 \end{cases} \vspace{-3mm} \right\}.
\end{equation}

By construction, $\mathcal{G}_{p+1} \subset \mathcal{D}_{j_{n_{p+1}}}$, and each interval  $\la \in \mathcal{G}_{p+1}$ is the left-most interval inside the corresponding interval $\lah \in \mathcal{D}_{(j_{n_{p+1}})_\delta}$. It follows from this, (c) and Remark~\ref{irreduc/alphamin} that property (c) holds at generation $p+1$ as well. 

Next, for every  $\la \in \mathcal{G}_{p+1}$ associated with  $\lah \in \mathcal{D}_{(j_{n_{p+1}})_\delta}$ and $\lau\in \mathcal{G}_{p}$, one finally sets  $\nu(\lambda)=\nu(\lau) \nu^{\lau}(\lah) $.

\sk

The previous construction and the above remarks show that  all the items (a)-(g) above hold with $p+1$ as well.

\sk

Finally, we define 
$$\mathcal K=\bigcap_{p\ge 1}\bigcup_{\lambda\in \mathcal G_p} \lambda,$$
 and the set function $\nu$ defined on the elements of $\bigcup_{p\ge 1}\mathcal G_p$ extends to a Borel probability measure on $[0,1]$, whose topological support is $\mathcal K $. It is direct to check that $\nu$ is atomless, and that due to property (d) and the preliminary observation, $\mathcal K\subset S(\delta,\eta,(j_n)_{n\ge 1})$. 

\mk

{\bf Step 3}: Let us study the H\"older properties of  $\nu$ to get a lower bound for its  lower Hausdorff dimension. 

\mk

Fix a closed dyadic subinterval $\lambda$  in  $[0,1]$ of generation $j\ge j_{n_2}$ such that the interior of $\lambda$ intersects $\mathcal K$. 
Let $p\geq 2 $ be the smallest integer  such that  the interior of $\lambda $ intersects at least two elements of $\mathcal G_p$. Necessarily,  $j\le j_{n_p}$. 

 Let $  \lau $ be  the unique element of $\mathcal G_{p-1}$ such that the interior of $\lambda$ intersects $  \lau$. Since $\nu$ is atomless,   $\nu(\lambda)\le \nu  (\lau) $. In addition, $\nu(\lambda)=\nu( \lau)\nu^{  \lau}(\lah)$ where $\lah$ is associated with $\la$ as in \eqref{defgpp1}.

 Consequently, for every $p$, denoting $\ep_{N(j_{n_p})}$ simply by $\widetilde \epsilon_p$, if $j\le j_{n_{p-1} }$ then
$$
\nu (\lambda)\le  \nu (\lau )\le 2^{-j_{n_{p-1}} (\delta^{-1}\sigma(\alpha_{\min}) -3\widetilde\epsilon_{p-2})}\le2^{-j (\delta^{-1}\sigma(\alpha_{\min}) -3\widetilde\epsilon_{p-2})},
$$
and if $j>j_{n_{p-1}}$, then by \eqref{nulambda} and \eqref{eqnula}, one has
\begin{align*}
\nu(\lambda)&=\nu(\lau )\nu^{\lau}(\lah)\\
&\le  2^{-j_{n_{p-1}} (\delta^{-1}\sigma(\alpha_{\min}) -3\widetilde\epsilon_{p-2})} 2^{-(j-j_{n_{p-1}})(\sigma(\alpha_{\min})-2\widetilde\epsilon_{p-1})} 2^{2N(j)\sigma(\alpha_{\min})}\\
&=2^{-j(\delta^{-1}\sigma(\alpha_{\min})-\varphi(\lambda))},
\end{align*}
where
$$
\varphi(\lambda)= 3\widetilde\epsilon_{p-2}+ \frac{(j-j_{n_{p-1}})(\sigma(\alpha_{\min})(\delta^{-1}-1)+3\widetilde\epsilon_{p-2}-2\widetilde \epsilon_{p-1})+ 2 N(j)\sigma(\alpha_{\min})}{j}.
$$

Pay attention to the fact that in the formula above, $p$ depends  a priori on $\lambda$ and $j$. However, this dependence can be uniformly controlled. Indeed, observe that $\varphi(\lambda)\le 6\, \widetilde\epsilon_{p-2}+\frac{2 N(j)\sigma(\alpha_{\min})}{j}$, and that when $j$ tends to $+\infty$, 
$$ \min\{p\geq 2: \mbox{$ \exists \,\lambda\in\mathcal{D}_j$ such that  the interior of $\lambda $ intersects at least 2 elements of $\mathcal G_p$ }\}$$ also  tends to $+\infty$. Consequently, $\widetilde\epsilon_{p -2}$ converges uniformly to 0 over $\{\lambda \in \mathcal D_j,\ \mathrm{Int}(\lambda)\cap \mathcal K\neq\emptyset\}$ as $j\to + \infty$. Thus, remembering that \eqref{nj} holds as well,  one concludes that there exists a function $\psi: \N\to(0,+\infty)$ such that $\lim_{j\to + \infty} \psi(j)=0$ and for every $\lambda \in \mathcal{D}^0_j$,
$$
\nu(\lambda)\le 2^{-j(\delta^{-1}\sigma(\alpha_{\min})-\psi(j))}.
$$
In particular the lower Hausdorff dimension of $\nu$ is greater than $\sigma(\alpha_{\min})/\delta$. 
Since   $\mathcal K\subset S(\delta,\eta,(j_n)_{n\ge 1})$,  $\nu ( \mathcal K)=1$, we get $\dim S(\delta,\eta,(j_n)_{n\ge 1}) \geq \delta^{-1}\sigma(\alpha_{\min})$, and the conclusions of Proposition \ref{ubiquity} holds in dimension 1.

\mk \smallskip

For the case $d\ge 1$, we know  by Section \ref{dimensiond} that a measure $\mu\in\mathcal{M}_d$ is equal to $\mu_1^{\otimes d}$  for some $\mu \in \mathcal{M}_1$. Hence, with the definitions and notations introduced earlier in this section,  the tensor product measure  $\nu^{\otimes d}$ of the measure $\nu$ associated above  with the measure $\mu_1$ satisfies the conclusions of Proposition \ref{ubiquity} in any dimension $d$.
\end{proof}

 \subsection{The set of badly approximated points supports the auxiliary measures $\mu_\alpha$}
\label{secdiop}

The measures $\mu_\alpha  $ described in Proposition \ref{propmualpha} are supported on the set of points which are badly approximated by dyadic vectors, as stated by the following lemma. This property is  key for the study of typical singularity spectra, in Section \ref{sec_saturation2}.

 \begin{lemma}
 \label{lemdiop}
 For every $x$,  call   $ \overline{\lambda_{j}(x) } \in \mathcal{D}_{ \overline{j(x)} }$   the  irreducible representation of $\lambda_{j}(x)$.  For every $\alpha\in [\alm,\alpha_{\max}]$ such that $\tau_\mu^*(\alpha)>0$, for $\mu_\alpha$-almost every $x$, one has $\lim_{n\to+ \infty} \frac{\overline{j_{n(x) }}}{j_n}=1$. 
 \end{lemma}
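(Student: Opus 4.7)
The plan is to establish the conclusion for every $x \in \pi(\Sigma_\alpha)$ that is not a dyadic rational, which suffices because $\mu_\alpha$ is atomless and supported on $\pi(\Sigma_\alpha)$ by Proposition~\ref{propmualpha}, and the set of dyadic rationals is countable. The key observation is purely combinatorial: by~\eqref{defjalpha}, every letter $x_{n,k}$ of a word $\omega = ((x_{n,k})_{k=1}^{\ell_n})_{n\geq N_0} \in \Sigma_\alpha = \prod_{n\geq N_0} \mathcal{J}_{n,\alpha}^{\ell_n}$ is an \emph{odd} integer in $\{0,\ldots,2^n-1\}$.

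For $d=1$, I would unravel~\eqref{defpi}: the contributions $x_{N,k}2^{-kN}$ in the double series fall into disjoint groups of $N$ consecutive binary bits, so no carries occur and the binary expansion of $\pi(\omega)$ is exactly the concatenation, in order, of the $N$-bit binary representations of $x_{N,k}$ (padded with leading zeros if necessary), taken for $N = N_0, N_0+1, \ldots$ and within each $N$ for $k=1,\ldots,\ell_N$. Setting $\gamma_{n,k} = kn + \sum_{n'=N_0}^{n-1} n'\ell_{n'}$, the least significant bit of $x_{n,k}$ --- which equals $1$ by oddness --- sits at position $\gamma_{n,k}$ in the binary expansion of $\pi(\omega)$.

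Next, fix $j \geq N_0$ and let $(n,k)$ be the unique pair with $\gamma_{n,k-1} < j \leq \gamma_{n,k}$, with the convention $\gamma_{n,0} = \gamma_{n-1,\ell_{n-1}}$ (which remains the position of a bit equal to $1$). The integer $K$ such that $\lambda_j(x) = K 2^{-j} + 2^{-j}[0,1]$ is precisely the integer whose binary representation coincides with the first $j$ bits of $\pi(\omega)$; hence bit $\gamma_{n,k-1}$ of this representation equals $1$, which gives $\overline{j(x)} \geq \gamma_{n,k-1}$ and
$$j - \overline{j(x)} \leq j - \gamma_{n,k-1} \leq n = N(j),$$
where $N(j)$ is the super-block index defined just before~\eqref{nj}. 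Since $N(j) = o(j)$ by~\eqref{nj}, this yields $\overline{j(x)}/j \to 1$ as $j\to\infty$, and a fortiori $\overline{j_n(x)}/j_n \to 1$ along any increasing sequence $(j_n)$.

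For $d\geq 2$, by Section~\ref{dimensiond}, $\mu_\alpha = (\widetilde\mu_{\alpha/d})^{\otimes d}$, so $\mu_\alpha$-almost every $x = (x^{(1)},\ldots,x^{(d)})$ has each coordinate in the one-dimensional support $\pi(\Sigma_{\alpha/d})$. Writing $\lambda_j(x) = k 2^{-j} + 2^{-j}[0,1]^d$ with $k = (k_1,\ldots,k_d)\in\Z^d$, Definition~\ref{defirreducible} yields $\overline{j(x)} = \max_{i=1,\ldots,d} \overline{j(x^{(i)})}$, where $\overline{j(x^{(i)})}$ denotes the one-dimensional irreducible generation of the $i$-th coordinate. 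Each ratio $\overline{j(x^{(i)})}/j$ tends to $1$ by the one-dimensional argument, and since $\overline{j(x)}/j \leq 1$ always, the conclusion follows. The only delicate point is the book-keeping of bit positions through the nested block structure of $\pi$; no quantitative multifractal estimate is needed, the whole argument resting on the parity constraint built into $\mathcal{J}_{n,\alpha}$.
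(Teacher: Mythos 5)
Your proof is correct, but it takes a genuinely different route from the paper. The paper's proof is a Jarník-type covering/dimension argument: for $\delta>1$ it considers the set $E_\mu(\alpha,\delta)$ of points of $E_\mu(\alpha)$ with $\liminf_j \overline{j(x)}/j\le \delta^{-1}$, covers it by balls $B(k2^{-j_\delta},2^{-j})$ centered at dyadic points whose generation-$j_\delta$ cube meets the set where $\mu$ scales with exponent close to $\alpha$, bounds the number of such cubes by $2^{j_\delta(\tau_\mu^*(\alpha)+\ep)}$ via the large-deviations estimate of Proposition~\ref{fm}, concludes $\dim E_\mu(\alpha,\delta)\le \tau_\mu^*(\alpha)/\delta$, and then uses that $\mu_\alpha$ charges no set of dimension smaller than $\tau_\mu^*(\alpha)$ to get $\mu_\alpha(E_\mu(\alpha,\delta))=0$; this is where the hypothesis $\tau_\mu^*(\alpha)>0$ is really used, and the argument is robust in that it would apply to any measure of dimension $\ge\tau_\mu^*(\alpha)$ carried by $E_\mu(\alpha)$, independently of the coding of its support. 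You instead exploit the parity constraint built into \eqref{defjalpha}: since every letter of an element of $\Sigma_\alpha$ is odd, the block structure of $\pi$ in \eqref{defpi} (disjoint bit ranges, no carries) forces a digit $1$ at the end of each letter block, whence $j-\overline{j(x)}\le N(j)=o(j)$ by \eqref{nj} for every non-dyadic $x\in\pi(\Sigma_\alpha)$, and the tensor-product structure together with $\overline{j(x)}=\max_i\overline{j(x^{(i)})}$ handles $d\ge 2$. This is more elementary and gives a strictly stronger, deterministic and quantitative conclusion (the full limit over all $j$, uniformly on the topological support minus a null set), at the price of being tied to the specific ``odd letters'' convention of the construction — the same device the paper itself uses in Remark~\ref{irreduc/alphamin}, but not in its proof of this lemma. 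Two small points to tidy up: in dimension $d\ge 2$ you should note that almost every point also has all coordinates non-dyadic (each hyperplane of dyadic abscissa is $\mu_\alpha$-null because the one-dimensional marginal is atomless), and for the very first super-block the convention $\gamma_{N_0,0}=0$ gives no $1$-bit, which is harmless since then $j\le N_0$.
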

 \begin{proof}
 
Fix $\alpha\in [\alm,\alpha_{\max}]$ and $\delta>1$. For $j\in\N^*$, let $E_\mu(\alpha,\delta,j)=\{x\in E_\mu(\alpha): \frac{\overline{j(x)}}{j}\le \delta^{-1}\}$   
and
$$E_\mu(\alpha,\delta):= \Big\{x\in E_\mu(\alpha): \liminf_{j\to + \infty} \frac{\overline{j(x)}}{j}\le \delta^{-1}\Big\}= \limsup_{j\to+ \infty}E_\mu(\alpha,\delta,j).$$
For $\ep>0$,  let 
$$F_\mu(\alpha, j,\ep)=\{x\in [0,1]^d: \forall \ j'\ge j, \ 2^{-j'(\alpha+\ep)}\le \mu(\lambda_{j'}(x))\le 2^{-j'(\alpha-\ep)}\}.$$ 
Setting $j_\delta=\lfloor j/\delta\rfloor$, the following inclusion holds : 
 $$
E_\mu(\alpha,\delta)\subset \bigcap_{\ep>0} \bigcap_{J\ge 1} \bigcup_{j\ge J} \bigcup_{\substack{\lambda_{j_\delta,k}  \in \mathcal D_{j_\delta} :\\ \lambda_{j_\delta,k}\cap F_\mu(\alpha, j_\delta ,\ep)\neq\emptyset}} B(k2^{-j_\delta}, 2^{-j}).
 $$
 Using Proposition~\ref{fm}(1) or (4), for every fixed $\ep>0$, one sees that  the cardinality of $\{ \lambda_{j_\delta,k}  \in \mathcal D_{j_\delta} : \lambda_{j_\delta,k}\cap F_\mu(\alpha, j_\delta ,\ep)\neq\emptyset  \}$ is less than $2^{j_\delta (\tau_\mu^*(\alpha)+\ep)}$ when $j$ is large.
 
 Combining this with the previous embedding,   coverings of $E_\mu(\alpha,\delta)$ are obtained using  sets of the form  $\bigcup_{j\ge J} \bigcup_{\substack{\lambda_{j_\delta,k}\in \mathcal D_{j_\delta} :\\ \lambda_{j_\delta,k}\cap F_\mu(\alpha, j_\delta ,\ep)\neq\emptyset}} B(2^{-j_\delta}k, 2^{-j})$, and it is easily seen that $\dim E_\mu(\alpha,\delta)\le \tau_\mu^*(\alpha)/\delta$. This implies that  $\mu_\alpha(E_\mu(\alpha,\delta))=0$, again because   $\mu_\alpha$ may give a positive mass to a set $E$ only when $\dim E\geq \tau_\mu^*(\alpha)$.

 Since this holds for all $\delta>1$,  $\liminf_{j\to\infty} \frac{\overline{j(x)}}{j}=1$ for $\mu_\alpha$-almost every $x$, and in particular $\lim_{n\to\infty} \frac{\overline{j_n(x)}}{j_n}=1$.
 \end{proof}

\section{Wavelet characterization of $B^{\mu,p}_{q}(\R^d)$ and $\widetilde B^{\mu,p}_{q}(\R^d)$}
\label{sec_integral2}

After some definitions  and  two basic lemmas in Section~\ref{implication0},   Theorem~\ref{th_equivnorm_2} is proved when $ p \in [1,+\infty)$ in Section~\ref{caspfini}. The much simpler case  $p=+\infty$ is left to the reader who can easily adapt the lines used to treat the case $p<+\infty$.

\subsection{Preliminary definitions and observations}
\label{implication0}

We start by  extending the definition of the moduli of smoothness \eqref{defomegat} and \eqref{defomega}     to all  Borel sets $\Omega\subset \R^d$.

\begin{definition}
Let $\Omega \subset \R^d$. For $h\in \R^d$, let 
\begin{equation}
\label{defomegahn}
\Omega_{h,n} = \{x\in \Omega: x+kh\in\Omega,\, k=1,\ldots, n \}.
\end{equation}
 Then, for $f:\R^d \to\R$, $\mu\in \mathcal{H}(\R^d)$, $t>0$ and  $n\geq 1$ set
\begin{align}
\label{defomegat'}
  \omegat_n(f,t, \Omega) _p &= \sup_{t/2\leq |h|\leq t} \| \Delta ^{\mu,n}_hf \|_{L^p(\Omega_{h,n})}\\
  \mbox{ and } \ \ \ \ \label{defomega'}
  \omega_n(f,t, \Omega) _p &= \sup_{0\leq |h|\leq t} \| \Delta^n_hf \|_{L^p(\Omega_{h,n})}.
\end{align}
  \end{definition}

Let $\mu \in \mathcal{C}(\R^d)$ be  an almost doubling capacity   such that property (P) holds  with exponents $0<s_1 \leq s_2$.
Let $n\geq r=\lfloor s_2+ \frac{d}{p}\rfloor+1$ and $\Psi = (\phi,\{\psi^{(i)}\}_{i=1,...,2^d-1}) \in \mathcal F_r$ (see Definition~\ref{fr}). 

Also, recall that for $\lambda= (i,j,k) \in \Lambda_j$, $\psi_{\lambda}(x) =   \psi^{(i)}(2^jx-k)$. It follows from the construction of $\Psi$ (see \cite[Section 3.8]{Meyer_operateur}) that there exists an integer $N_\Psi\in\N^*$ such that ${\rm supp}(\phi)$ and ${\rm supp}(\psi^{(i)})$ are included in $N_\Psi[0,1]^d$. Our proofs will use some estimates established in \cite{cohen-book}. These estimates require to associate with each $\lambda= (i,j,k) \in \Lambda_j$ a larger cube $\widetilde \lambda$ described in the following definition. 
%
\begin{definition}
\label{defsupport} 
For each $\lambda= (i,j,k) \in \Lambda_j$, set 
$$\widetilde {\lambda}= \lambda_{j,k}+2^{-j}({\rm supp}(\phi)-{\rm supp}(\phi)).$$
\end{definition}
 Note that $\lambda_{j,k}\subset \supp(\psi_\lambda)\subset \widetilde {\lambda}\subset 3N_\Psi\lambda_{j,k}$, the second embedding coming from the construction of compactly supported wavelets (see \cite[Section 3.8]{Meyer_operateur}). 

\medskip

For every $j\in\N$, the cubes $(\widetilde\la)_{\la\in \Lambda_j}$ do not overlap too much, in the sense that \begin{equation}
 \label{overlap}
 K_\Psi := \sup_{j\in\N}\sup_{\lambda\in\Lambda_j}\#\{\la'\in \Lambda_j : \mbox{$\widetilde \lambda\cap\widetilde \lambda'\neq\emptyset$} \}<+\infty.
\end{equation}

\begin{lemma}\label{integral estimate} Let $p\in [1,+\infty)$ and $n\in\N^*$. There exists a constant $C_{d,n,p}$ (depending on $p$, $n$, and $d$ only) such that  for all $f\in L^p_{\rm{loc}}(\R^d)$, $t>0$ and $\lambda\in\Lambda$, the following inequality holds:
$$
\omega_n(f,t, \widetilde \lambda)^p _p\le C_{d,n,p}\, t^{-d}  \int_{t\le |y|\le 4nt} \int_{{\widetilde\lambda}+B(0,2nt)}|\Delta^n_yf(x)|^p\,\mathrm{d}x\mathrm{d}y.
$$
\end{lemma}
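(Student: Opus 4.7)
The inequality is a local, integrated version of a Marchaud-type bound: a small-step finite difference on $\widetilde\lambda$ is controlled by an $L^p$-average of large-step finite differences over a slightly enlarged region. The plan has two parts: (a) produce an algebraic identity expressing $\Delta_h^n f(x)$ (for $|h|\le t$) as a finite linear combination of differences $\Delta_y^n f(x+z)$ with $y$ in the annulus $A_t:=\{y\in\R^d:t\le |y|\le 4nt\}$ and $|z|\le 2nt$; then (b) take the $p$-th power, average over $y$, integrate in $x$, and apply Fubini.

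For (a), the key input, which is a standard ingredient used in Cohen's book in the proof of the wavelet characterization of classical Besov spaces, is the following: for every $h$ with $|h|\le t$ and every $y\in A_t$,
$$\Delta_h^n f(x)=\sum_{k=1}^{K_n} c_{k,n}\,\Delta_{y_k(h,y)}^n f\bigl(x+z_k(h,y)\bigr),$$
with $K_n$ and $c_{k,n}$ depending only on $n$, and with $y_k(h,y)\in A_t$ and $|z_k(h,y)|\le 2nt$. Such an identity is built by iterating the elementary telescoping
$$\Delta_h^n f(x)-\Delta_y^n f(x)=\sum_{\nu=1}^{n}\binom{n}{\nu}(-1)^{n-\nu}\bigl[f(x+\nu h)-f(x+\nu y)\bigr],$$
and absorbing each first-order correction $f(x+\nu h)-f(x+\nu y)$ back into genuine $n$-th order differences whose steps remain comparable to $y$. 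The amplitude $4n$ in the definition of $A_t$ provides precisely the slack needed so that every new step vector produced in the iteration stays inside $A_t$, and every cumulative shift is of size at most $2nt$.

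For (b), I raise the identity to the $p$-th power, apply $\bigl|\sum_{k=1}^{K_n} a_k\bigr|^p\le K_n^{p-1}\sum_{k=1}^{K_n}|a_k|^p$, then average in $y$ over $A_t$ (whose Lebesgue measure is $\asymp t^d$), integrate in $x$ over $\widetilde\lambda_{h,n}\subset\widetilde\lambda$, swap the integrals by Fubini, and, for each fixed $(y,k)$, perform the translation $x\mapsto x+z_k(h,y)$ and the affine change of variable $y\mapsto y_k(h,y)$, both of unit Jacobian for fixed $h$. The translations enlarge the $x$-domain by at most $B(0,2nt)$, producing the integration set $\widetilde\lambda+B(0,2nt)$, while the $y$-change of variable keeps the $y$-integration inside $A_t$ up to a bounded overlap. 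Finally, taking the supremum over $|h|\le t$ of the resulting uniform bound yields $\omega_n(f,t,\widetilde\lambda)_p^p$ on the left-hand side, with a constant $C_{d,n,p}$ depending only on $d$, $n$, $p$.

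The main obstacle is step (a): the combinatorial bookkeeping needed to iteratively reduce the first-order corrections $f(x+\nu h)-f(x+\nu y)$ into genuine $n$-th order differences with steps confined to the annulus $A_t$, while simultaneously controlling the cumulative translations by $2nt$. Once this algebraic identity is in hand, the rest of the argument is Fubini and translation invariance of Lebesgue measure.
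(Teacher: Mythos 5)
Your step (b) and your overall strategy (rewrite a small-step $n$-th difference via large-step $n$-th differences, average over an annulus in $y$ of measure $\asymp t^d$, then Fubini and translate) are exactly the paper's; the problem is that you never actually secure the algebraic identity of step (a), and the route you sketch for it does not work. You propose to iterate the telescoping
$$\Delta_h^n f(x)-\Delta_y^n f(x)=\sum_{\nu=1}^{n}\binom{n}{\nu}(-1)^{n-\nu}\bigl[f(x+\nu h)-f(x+\nu y)\bigr]$$
and then ``absorb each first-order correction $f(x+\nu h)-f(x+\nu y)$ back into genuine $n$-th order differences.'' This cannot be done term by term: take $f$ a nonconstant polynomial of degree $n-1$ (with $n\ge 2$); every $n$-th order difference of $f$ vanishes identically, yet the individual corrections $f(x+\nu h)-f(x+\nu y)$ do not, so no single such correction is a finite combination of $n$-th order differences. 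Only the full alternating sum can be reorganized, and that is precisely what the explicit identity used in the paper (formula (3.3.19) of Cohen's book) accomplishes:
$$\Delta^n_h f(x)=\sum_{k=1}^n(-1)^k\binom{n}{k}\Bigl[\Delta^n_{ky}f(x+kh)-\Delta^n_{h+ky}f(x)\Bigr].$$
Since you yourself flag the ``combinatorial bookkeeping'' as the main obstacle, the key step of your argument is left open, and the hinted derivation would fail; this is a genuine gap.

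Once that identity is written down, everything else in your proposal runs as in the paper: for $|h|\le t$ and $y$ in the annulus $2t\le|y|\le 3t$, the steps $ky$ and $h+ky$ lie in $\{t\le|\cdot|\le 4nt\}$ and the translations $kh$ have length at most $nt\le 2nt$, so after raising to the power $p$, integrating $x$ over $\widetilde\lambda_{h,n}$, averaging $y$ over the annulus, changing variables $y\mapsto ky$ (and using that $h$ is fixed for the $h+ky$ terms), and enlarging the $x$-domain to $\widetilde\lambda+B(0,2nt)$, one takes the supremum over $|h|\le t$ and obtains the stated bound with a constant depending only on $d$, $n$, $p$. So the fix is simply to replace your vague ``there exist $y_k(h,y)$, $z_k(h,y)$'' and the flawed absorption argument by this explicit identity (or a complete proof of an equivalent one).
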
 
\begin{proof} The approach  follows the lines of the proof of \cite[inequality (3.3.17)]{cohen-book}, where a similar inequality is proved.

Fix $f$, $t$ and $\lambda$ as in the statement. For any $h,y\in\mathbb{R}^d$, recall  the following   equality (see (3.3.19) in \cite{cohen-book}):
$$
\Delta^n_h f(x)=\sum_{k=1}^n(-1)^k\binom{n}{k}\big [\Delta^n_{ky}f(x+kh)-\Delta^n_{h+ky}f(x)\big ].
$$
Integrating $|\Delta^n_h f|^p$ over $\widetilde \lambda_{h,n}$ (recall formula \eqref{defomegahn}),     one sees that for some constant $C_{n,p}>0$, when  $|h|\leq t$,
\begin{align*}
\|\Delta^n_h f\|^p_{L^p(\widetilde \lambda_{h,n})}&\le C_{n,p}\sum_{k=1}^n\|\Delta_{ky}^n f(\cdot+kh)\|^p_{L^p(\widetilde \lambda_{h,n})}+\|\Delta_{h+ky}^n f\|^p_{L^p(\widetilde \lambda_{h,n})}\\ 
&\le C_{n,p}\sum_{k=1}^n\|\Delta_{ky}^n f\|^p_{L^p({\widetilde\lambda}+B(0,2nt))}+\|\Delta_{h+ky}^n f\|^p_{L^p({\widetilde\lambda}+B(0,2nt))}.\end{align*}
Then, defining $C_d=\mathcal L^d (B(0,3)\setminus B(0,2))$,  an integration with respect to  $y$ over $B(0,3t)\setminus B(0,2t)$ yields
$$
C_d t^{d}\|\Delta^n_h f\|^p_{L^p(\widetilde \lambda_{h,n})} \le C_{n,p}\sum_{k=1}^n \int_{2t\le |y|\le 3t} \int_{{\widetilde\lambda}+B(0,2nt)}|\Delta_{ky}^n f(x)|^p+|\Delta_{h+ky}^n f(x)|^p\, \mathrm{d}x\mathrm{d}y.
$$
Further,   operating the change of variable $y'=ky$ in each term of the sum yields
\begin{align*}
t^d \|\Delta^n_h f\|^p_{L^p(\widetilde \lambda_{h,n})}&\le C_d^{-1}C_{n,p}  \sum_{k=1}^n \int_{2kt\le |y|\le 3kt}\int_{{\widetilde\lambda}+B(0,2nt)}|\Delta_{y}^n f(x)|^p+|\Delta_{h+y}^n f(x)|^p\, \mathrm{d}x\mathrm{d}y\\
&\le  2nC_d^{-1}C_{n,p}  \int_{t\le |y|\le 4nt}\int_{{\widetilde\lambda}+B(0,2nt)}|\Delta_{y}^n f(x)|^p\, \mathrm{d}x\mathrm{d}y.
\end{align*}
where one used that $t\le |h+y |\leq 4nt$ when $|h|\leq t $ and $|y|\geq 2t$.
The previous upper bound being independent of $h\in B(0,t)$, one concludes that 
$$ \omega_n(f,t, \widetilde\lambda) ^p_p = \sup_{0\le  |h|\leq t} \| \Delta ^n_hf \|^p_{L^p(\widetilde\lambda_{h,n})} \leq  \frac{2nC_d^{-1}C_{n,p}}{t^d}  \int_{t\le |y|\le 4nt}\int_{{\widetilde\lambda}+B(0,2nt)}|\Delta_{y}^n f(x)|^p\, \mathrm{d}x\mathrm{d}y,$$
as desired.
\end{proof}

\begin{lemma}
\label{lemdouble} 
Let $\ep>0$ and $\mu\in \mathcal{C}(\R^d)$ that satisfies Property (P) with exponents $s_1$ and $s_2$.

There exists a constant $C = C(\ep,n,\mu)  \geq 1$ such that for every $j\in\N$ and  $\lambda\in \Lambda_j$,   for every  $x\in \widetilde {\lambda}+B(0,2n2^{-j})$ and $y\in\R^d$ such that $ 2^{-j} \leq |h| \leq  4n 2^{-j}$, for every $f:\widetilde\lambda\to \R$,   one has 
$$  \frac{|\Delta_h^nf(x)|} {\mu(\lambda)}   \leq C   \frac{|\Delta_h^nf(x)|} {\mu^{(+\ep)}(B[x,x+nh])}.$$
\end{lemma}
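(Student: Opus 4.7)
\textbf{Proof plan for Lemma \ref{lemdouble}.}

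Since the factor $|\Delta_h^n f(x)|$ appears on both sides, the inequality is either trivial (when $\Delta_h^n f(x)=0$) or equivalent to the geometric estimate
\begin{equation}\label{plan-target}
\mu^{(+\ep)}(B[x,x+nh]) = \mu(B[x,x+nh])\cdot |B[x,x+nh]|^{\ep} \le C\,\mu(\lambda),
\end{equation}
where the constant $C$ depends only on $\ep$, $n$, $d$, $\Psi$ and the data $(s_1,s_2,\phi)$ attached to~$\mu$ through property $(P)$. The plan is to prove \eqref{plan-target} by localizing $B[x,x+nh]$ inside a small dilate of $\lambda$, then comparing the $\mu$-masses of neighbouring cubes at generation $j$ via property $(P_2)$.

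\emph{Step 1 (diameter bound).} Since $|h|\le 4n\,2^{-j}$, one has $|B[x,x+nh]| = n|h|\le 4n^{2}2^{-j}$, so $|B[x,x+nh]|^{\ep}\le C_{n,\ep}\,2^{-j\ep}$.

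\emph{Step 2 (spatial localization).} Recall $\widetilde\lambda\subset 3N_\Psi\lambda$, and $x\in \widetilde\lambda+B(0,2n2^{-j})$. Combined with the diameter bound from Step 1, there exists an integer $M=M(n,N_\Psi,d)$ such that $B[x,x+nh]\subset M\lambda$. In particular $M\lambda$ is covered by a bounded number $K=K(M,d)$ of dyadic cubes of $\mathcal D_j$, each of which can be joined to $\lambda$ by a chain of at most $K$ dyadic cubes of $\mathcal D_j$ that are successive neighbours (in the sense of property $(P_2)$).

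\emph{Step 3 (mass comparison via $(P_2)$).} Property $(P_2)$ applied with $j'=j$ gives, for any two adjacent $\lambda,\widetilde\lambda\in\mathcal D_j$, the estimate $\mu(\widetilde\lambda)\le C\,2^{\phi(j)}\mu(\lambda)$. Iterating along the chain of length at most $K$ yields $\mu(\lambda')\le C^{K}\,2^{K\phi(j)}\mu(\lambda)$ for every $\lambda'\in\mathcal D_j$ intersecting $M\lambda$. Summing over the (at most $K$) such cubes gives
$$
\mu(B[x,x+nh])\le \mu(M\lambda)\le K\cdot C^{K}\,2^{K\phi(j)}\mu(\lambda).
$$

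\emph{Step 4 (conclusion).} Combining Steps 1 and 3,
$$
\mu^{(+\ep)}(B[x,x+nh]) \le K C^{K} C_{n,\ep}\,2^{K\phi(j)-j\ep}\,\mu(\lambda).
$$
Since $\phi\in\Phi$, i.e.\ $\phi(j)/j\to 0$, there exists $j_0=j_0(\ep,K,\phi)$ such that $K\phi(j)\le j\ep$ for all $j\ge j_0$; for $j<j_0$ the factor $2^{K\phi(j)-j\ep}$ is bounded by a constant depending only on $\ep,n,\mu$. Taking the supremum over $j$ delivers a constant $C=C(\ep,n,\mu)$ for which \eqref{plan-target} holds, which proves the lemma.

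\emph{Expected main obstacle.} Nothing here is genuinely deep: the only point requiring care is keeping the number $K$ of neighbour-hops (which drives the $2^{K\phi(j)}$ blow-up) independent of $j$, so that the decay $2^{-j\ep}$ from the diameter factor dominates thanks to $\phi(j)=o(j)$. Being clean in Step 2 about how $M$ and $K$ depend only on $n$, $\Psi$ and $d$ is what makes the argument work uniformly in $j$ and $\lambda$.
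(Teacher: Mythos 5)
Your reduction to the purely geometric estimate $\mu^{(+\ep)}(B[x,x+nh])\le C\,\mu(\lambda)$ and your overall mechanism (localize the ball at scale $2^{-j}$ near $\lambda$, compare masses through property (P), and let the factor $|B[x,x+nh]|^{\ep}\le C_{n,\ep}2^{-j\ep}$ absorb the almost-doubling loss $2^{O(\phi(j))}$ because $\phi(j)=o(j)$) is exactly the route the paper takes. However, Step 3 contains a genuine gap: the inequality $\mu(M\lambda)\le\sum_{\lambda'}\mu(\lambda')\le K\,C^{K}2^{K\phi(j)}\mu(\lambda)$ uses finite subadditivity of $\mu$, and $\mu$ is only assumed to be a capacity in $\mathcal{C}(\R^d)$, i.e.\ monotone; no additivity is available. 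This is not a cosmetic point, because the environments to which the lemma is ultimately applied are the elements of $\mathscr{E}_d$, i.e.\ powers $\nu^{s}$ of measures, and for $s>1$ such set functions are not subadditive, so the "cover by $K$ generation-$j$ cubes and sum" step fails precisely in the intended generality. (Note also that $M\lambda$, and more generally any $3$-type dilate of a dyadic cube, need not be contained in a single dyadic cube of comparable scale, so you cannot rescue the step by monotonicity into one coarser cube.)

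The repair is the one-line argument the paper gives: choose a bounded integer $m=m(n,N_\Psi,d)$ so that $B[x,x+nh]\subset 3\lambda_{j-m}(x)$, use monotonicity and the almost-doubling hypothesis \eqref{ad} (which is a standing assumption on $\mu$ in Section~\ref{sec_integral2} and in Theorem~\ref{th_equivnorm_2}, and is exactly designed to bound $\mu(3\lambda_{j'}(x))$ by $e^{\phi(j')}\mu(\lambda_{j'}(x))$ with no additivity) to get $\mu(B[x,x+nh])\le e^{\phi(j-m)}\mu(\lambda_{j-m}(x))$, and then apply (P$_2$), with $\lambda'=\lambda$ sitting inside its generation-$(j-m)$ ancestor and $\widetilde\lambda=\lambda_{j-m}(x)$ (equal to, or within a chain of boundedly many neighbours of, that ancestor, after possibly enlarging $m$), to obtain $\mu(\lambda_{j-m}(x))\le C\,2^{\phi(j-m)}2^{ms_2}\mu(\lambda)$. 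Since $m$ is bounded and $\phi(j)=o(j)$, multiplying by $|B[x,x+nh]|^{\ep}\le(4n^2)^{\ep}2^{-j\ep}$ gives the claimed constant, uniformly in $j$ and $\lambda$; your Steps 1, 2 and 4 then go through unchanged.
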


\begin{proof}
Observe first  that under the assumptions of the Lemma, the inequality 
$$\frac{\mu(B[x,x+ny] )} {\mu( {\lambda}) }  \leq  C  (n|y|)^{-\ep},$$ 
  follows easily from the definition of the almost doubling property \eqref{ad}. Then,   Lemma \ref{lemdouble} is   deduced from  last inequality and  the definition of~$\mu^{(+\ep)}$. 
\end{proof}


\subsection{Proof of Theorem~\ref{th_equivnorm_2} when $1\le p<+\infty$}\label{caspfini}

Let us now explain our approach to get Theorem~\ref{th_equivnorm_2} when $p\in[1,+\infty)$.  Recall that ${B^{\mu,p}_q(\R^d)}$  is   defined via  $L^p$ moduli of smoothness of order $n\geq r=\lfloor s_2+d/p+1\rfloor$, and that $\Psi$ belongs to $\mathcal{F}_{r}$. The purpose  of this theorem is to establish relations between this definition \eqref{defbpqmuosc} and the wavelet-based one \eqref{defbesovwavelet}.

\sk

In Section~\ref{implication1}, it is shown   that,   for any $\ep \in (0,1) $,  when  ${B^{\mu^{(+\ep)},p}_q(\R^d)}$  is   defined via the  $L^p$ modulus of smoothness of order $n$, then    \eqref{eqnorm} holds for  any $\Psi\in\mathcal F_n$.
It is only a partial proof of the statement, since one wants to obtain \eqref{eqnorm}  for any $\Psi\in\mathcal F_{r}$.

\sk

Then, in Section~\ref{implication2},   \eqref{eqnorm2} is completely proved to hold for any $\ep\in(0,1)$ and any $\Psi\in\mathcal F_{r}$ when ${B^{\mu,p}_q(\R^d)}$  is   defined via the  $L^p$ modulus of smoothness of order exactly equal to $r$.
 Since   $\mathcal F_{n}\subset \mathcal F_{r}$, the statement also holds for $\Psi\in  \mathcal F_{n}$.

\sk

Finally, from the two preceding observations, we conclude  that \eqref{eqnorm} holds  for any $\ep\in(0,1)$ and any $\Psi\in\mathcal F_{r}$,  by applying:\begin{itemize}
\item
 first  \eqref{eqnorm} with  the environment $\mu$, the $n$-th order difference operator, $\ep/3$ and any wavelet $  \widetilde \Psi\in\mathcal F_n$, \
 \item
  then  \eqref{eqnorm2} with the environment $\mu^{(+\ep/3)}$, the $r$-th order difference operator, $\ep/3$ and the same $\widetilde \Psi\in\mathcal F_n$, 
  \item
   finally   \eqref{eqnorm} with the environment $\mu^{(+2\ep/3)}$, the $r$-th order difference operator, $\ep/3$ and $  \Psi\in\mathcal F_r$.
 \end{itemize}
 

\subsubsection{Proof of inequality  \eqref{eqnorm} in Theorem \ref{th_equivnorm_2}}
\label{implication1}

Assume that $\Psi\in\mathcal F_n\subset\mathcal F_r$. Fix $\ep>0$, $f\in L^p(\R^d)$ and $j\in\N$. 

For every $\la = (i,j,k) \in \Lambda_j$, since   $\psi_\lambda$ is orthogonal to any polynomial $P$ of degree  $\leq n$, the wavelet coefficient $c_\la$ can be written     
\begin{equation*} { c_{\lambda}  } 
 =    {\ds 2^{jd}\int_{\R^d} (f(x)-P (x))\psi_{\lambda} (x) dx }.
\end{equation*} 

Due to the local approximation of $f$ by polynomials (equation (3.3.13) in \cite{cohen-book}), there exists  a polynomial $P_\lambda$ of degree $\leq n$ such that 
$$
 \|f-P_\lambda\|_{L^p(\widetilde {\lambda})} \leq  C \omega_n(f,2^{-j},  \widetilde {\lambda}) _p,
$$
where $C$ depends on $n$ and $p$ only.   Recall that $\supp(\psi_\lambda)\subset \widetilde {\lambda}$.

The last   inequalities, together with   H\"older's inequality,  yield  
 \begin{align}
\nonumber
\frac{|c_{\lambda}| }{\mu({\lambda})} 
& \leq  2^{jd }   \frac{\|\psi _{\lambda}\|_{L^{p'}(\R^d)}  \|f-P_\lambda\|_{L^p(\widetilde {\lambda})} }{\mu( {\lambda}) }   \leq   C  2^{jd }   \frac{    2^{-jd/p'} \|\psi^{(i)} \| _{L^{p'}(\R^d)} \omega_n(f,2^{-j},\widetilde {\lambda}) _p   }{\mu( {\lambda}) }       \\
  & \le \widetilde C 2^{jd/p}  \frac{  \omega_n(f,2^{-j},\widetilde {\lambda}) _p  }{\mu( {\lambda})} 
 \label{besov_ineg1},
 \end{align}
where $\widetilde C=C \sup\left\{ \|\psi^{(i)}\|_{L^{p'}(\R^d)}: \, 1\le i\le 2^d-1\right \}$.

Then, Lemma~\ref{integral estimate}   gives
$$
\Big (\frac{|c_{\lambda}| }{\mu({\lambda})}\Big )^p\le C_{d,n,p} \widetilde C^p 2^{2dj}  \int_{2^{-j}\le |y|\le 4n2^{-j}} \int_{{\widetilde\lambda}+B(0,2n2^{-j})}\frac{|\Delta^n_yf(x)|^p}{\mu(\lambda)^p}\,\mathrm{d}x\mathrm{d}y,
$$
and by  Lemma~\ref{lemdouble}, there exists $ C' $ depending on $(\ep,n,p,\Psi)$ such that 
\begin{align*}
\Big (\frac{|c_{\lambda}| }{\mu({\lambda})}\Big )^p&\le C_{d,n,p} (C')^p 2^{2dj}  \int_{2^{-j}\le |y|\le 4n2^{-j}} \int_{{\widetilde\lambda}+B(0,2n2^{-j})}|\Delta^{\mu^{(+\ep)},n}_yf(x)|^p\,\mathrm{d}x\mathrm{d}y\\
&\le C_{d,n,p} (C')^p  \sum_{k=0}^{j_n} 2^{2dj}  \int_{2^{-j+k}\le |y|\le 2^{-j+k+1}} \int_{{\widetilde\lambda}+B(0,2n2^{-j})}|\Delta^{\mu^{(+\ep)},n}_yf(x)|^p\,\mathrm{d}x\mathrm{d}y, 
\end{align*}
where  $j_n=\lfloor \log_2(4n)\rfloor$. By  \eqref{overlap},  there exists a constant  $K_{\Psi,n}>0 $ depending on  $(\Psi,n)$ only  such that any $\lambda\in\Lambda_j$ is covered by at most $K_{\Psi,n} $  sets of the form   $\widetilde \lambda'+B(0,2n2^{-j})$ with $\lambda'\in \Lambda_j$. It follows that  
\begin{align*}
\sum_{\lambda\in\Lambda_j}   \Big (\frac{|c_{\lambda}| }{\mu({\lambda})}\Big )^p&\le K_{\Psi,n}C_{d,n,p} (C')^p  \sum_{j' =0}^{j_n} 2^{2dj}  \int_{2^{-j+j'}\le |y|\le 2^{-j+j'+1}} \int_{\R^d}|\Delta^{\mu^{(+\ep)},n}_yf(x)|^p\,\mathrm{d}x\mathrm{d}y.\end{align*}
Recalling the definition \eqref{defomegat} of $\omega^{\mu^{(+\ep)}}_n(f,t,\R^d)$, for every $j'$ the  double integral above is bounded by $2^{d(-j+j'+1)} \omega^{\mu^{(+\ep)}}_n(f,2^{-j+j'+1},\R^d)^p_p$. Since $2^{d(j'+1)} \leq 2^{d(j_n+1)} \leq (8n)^d$, one has 
\begin{align*}
\sum_{\lambda\in\Lambda_j}   \Big (\frac{|c_{\lambda}| }{\mu({\lambda})}\Big )^p&\le  C_1^p  \sum_{j'=0}^{j_n} 2^{dj}\omega^{\mu^{(+\ep)}}_n(f,2^{-j+j'+1},\R^d)^p_p,
\end{align*}
where   $C_1=((8n)^dK_{\Psi,n}C_{d,n,p})^{1/p} C'$ does not depend on $f$ or $j$.

\medskip

Suppose  now that $q\in[1,+\infty)$ (the case $q=+\infty$ is obvious).  The previous estimates together with the subadditivity of $t\ge 0\mapsto t^{1/p}$ and  the convexity of $t\ge 0\mapsto t^q$ yield
$$
\Big \|\Big (\frac{c_{\lambda} }{\mu({\lambda})}\Big )_{\lambda\in\Lambda_j}\Big \|^q_{\ell^p(\Lambda_j)}\le C^q_1 (j_n+1)^{q-1} \sum_{j'=0}^{j_n} \big (2^{dj/p}\omega^{\mu^{(+\ep)}}_n(f,2^{-j+j'+1},\R^d)_p\big )^q.
$$

Summing the last inequality over $j \in \N$ gives
$$
 \sum_{j\geq 0} \Big \|\Big (\frac{c_{\lambda} }{\mu({\lambda})}\Big )_{\lambda\in\Lambda_j}\Big \|^q_{\ell^p(\Lambda_j)}\le C^q_1 (j_n+1)^{q-1} \sum_{j=-j_n-1}^ {+\infty}  K_j  \big ( \omega^{\mu^{(+\ep)}}_n(f,2^{-j },\R^d)_p\big )^q,
$$
where  $K_j =\begin{cases} \sum_{ j'= j+1}^{j +j_n +1}  2^{qdj'/p}  & \mbox{when } j\geq -1 \\  \sum_{ j'= 0}^{j +j_n +1} 2^{qdj'/p}  & \mbox{when }  -j_n-1 \leq j  \leq -2   \end{cases}$. 
It is easily seen that there is a constant $C_2=C_2(n,q,d)$ such that $C_1^q(j_n+1)^{q-1}K_j \leq C_2 2^{qdj/p}$, so 
$$
 \sum_{j\geq 0} \Big \|\Big (\frac{c_{\lambda} }{\mu({\lambda})}\Big )_{\lambda\in\Lambda_j}\Big \|^q_{\ell^p(\Lambda_j)}\le C_2\sum_{j=-j_n-1}^ {+\infty}    \big ( 2^{dj/p}  \omega^{\mu^{(+\ep)}}_n(f,2^{-j },\R^d)_p\big )^q.
$$

Observe that there is $C_3 \ge 1 $ depending on $n$  such that for $ -j_n-1\le j \le 0 $, $2^{dj/p}\omega^{\mu^{(+\ep)}}_n(f,2^{-j},\R^d)_p\le C_3 \|f\|_{L^p(\R^d)}$. This follows from the fact that for such a $j$: 
\begin{itemize}
\item $ 2^{dj/p} \leq 1$;
\item
by periodicity of $\mu$, $\mu(B[x,x+n2^{-j}] ) \geq   \mu(\zu^d)=1 $, so $   \frac{|\Delta_y^nf(x)|} {\mu(\lambda)}  \leq |\Delta_y^nf(x)|$, and thus for some constant $C'''$ 
$$\omega^{\mu^{(+\ep)}}_n(f,2^{-j},\R^d)_p \leq  2^{j\ep} \omega _n(f,2^{-j},\R^d)_p \leq  ( \omega _n(f, 8n ,\R^d)_p  \leq C_3 \|f\|_{L^p(\R^d)}.$$ 
\end{itemize}

Consequently, for some constant $C$ independent of $f$, 
$$
\sum_{j\ge 0} \Big \|\Big (\frac{c_{\lambda} }{\mu({\lambda})}\Big )_{\lambda\in\Lambda_j}\Big \|^q_{\ell^p(\Lambda_j)}\le C  \Big (\|f\|_{L^p(\R^d)}^q+\sum_{j\ge 0}\big (2^{dj/p}\omega^{\mu^{(+\ep)}}_n(f,2^{-j},\R^d)_p\big )^q\Big ),
$$
which implies that $ \|f\|_{L^p(\mathbb R^d)}+|f|_{\mu,p,q} \leq C(  \|f\|_{L^p(\mathbb R^d)}+ |f|_{{B}^{\mu^{(+\ep)},p}_{q}(\mathbb R^d)  })$. Hence,  \eqref{eqnorm} holds when   $\Psi\in\mathcal F_n$.

\subsubsection{Proof of inequality \eqref{eqnorm2} in Theorem \ref{th_equivnorm_2}} 
\label{implication2}

Fix $\ep>0$ and  $f\in L^p(\R^d)$. 

Define the partial sums    $f_j=\sum _{\lambda\in \Lambda_j} c_\lambda \psi_\lambda$, for all $j\geq 0$.

The following lemma is needed.

\begin{lemma}
\label{besov_lemma2} Let $s\in \left (s_2+\frac{d}{p}, s_2+\frac{d}{p}+1\right )$. 
There exist a constant $C>0$  and a sequence $(\widetilde\ep_m)_{
m\in\N}\in \ell^q(\mathbb N)$ bounded by 1, independent of $f$,  such that  for all $j,J\ge 0$, 
\begin{equation}\label{ineq4.5}
  \omegat_n(f_j, 2^{-J} ,\R^d)_p    \leq   C 2^{-jd/p} \min\big ( 1,2^{(j-J)(s-s_2)}\widetilde \ep_{J-j}\big ) \left(\sum_{\lambda\in \Lambda_j}  \left(\frac{  |c_{\lambda} | }{\mu ^{(+\ep)}(\lambda)}  \right) ^p \right) ^{1/p},
\end{equation}
with the convention that $\widetilde\ep_m =1$ when $m< 0$.
\end{lemma}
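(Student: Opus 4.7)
My plan is to estimate $\widetilde\omega_n(f_j,2^{-J},\R^d)_p=\sup_{2^{-J-1}\le|h|\le 2^{-J}}\|\Delta_h^{\mu^{(+\epsilon)},n}f_j\|_{L^p(\R^d)}$ by expanding $f_j=\sum_{\lambda\in\Lambda_j}c_\lambda\psi_\lambda$ and treating separately the two regimes $J\le j$ (the scale $2^{-J}$ is coarser than the wavelet scale) and $J>j$ (finer). The proof will rest on three independent ingredients: a pointwise smoothness estimate for $\Delta_h^n\psi_\lambda$, an almost-disjointness bound, and a comparison of $\mu^{(+\epsilon)}(B[x,x+nh])$ with $\mu^{(+\epsilon)}(\lambda)$ via property (P$_2$).

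\textbf{Single-wavelet moduli.} Since every $\psi^{(i)}$ is compactly supported and in $\mathcal C^r$ with $r>s_2+d/p$, a change of variables and Taylor expansion of $\Delta_h^n\psi_\lambda$ yield, for every $\bar s\in(0,r]\cap(0,n]$,
\[
\|\Delta_h^n\psi_\lambda\|_{L^p(\R^d)}\le C\,\min\!\bigl(1,(2^j|h|)^{\bar s}\bigr)\,2^{-jd/p}.
\]
The trivial bound (case $|h|\ge 2^{-j}$) comes from $\|\Delta_h^n\psi_\lambda\|_{L^p}\le 2^n\|\psi_\lambda\|_{L^p}$ and $\|\psi_\lambda\|_{L^p}\le C\,2^{-jd/p}$; the smooth case uses $\partial^{\bar s}\psi^{(i)}\in L^\infty$.

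\textbf{Almost-disjoint supports.} By construction $\supp(\Delta_h^n\psi_\lambda)\subset\widetilde\lambda+B(0,n|h|)$ (Definition~\ref{defsupport}), and for $|h|\lesssim 2^{-j}$ these supports have bounded overlap by~\eqref{overlap}. A standard argument (H\"older when $p>1$, triangle inequality when $p=1$) then gives
\[
\|\Delta_h^n f_j\|_{L^p(\R^d)}^p\le K\sum_{\lambda\in\Lambda_j}|c_\lambda|^p\,\|\Delta_h^n\psi_\lambda\|_{L^p(\R^d)}^p,
\]
with $K$ depending only on $K_\Psi$ and $p$.

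\textbf{Measure comparison via property (P$_2$).} For $x\in\supp(\Delta_h^n\psi_\lambda)$ and $|h|\sim 2^{-J}$, I split into two cases. If $J\le j$, the ball $B[x,x+nh]$ (of diameter $\sim 2^{-J}\ge 2^{-j}$) contains a dyadic neighbour of $\lambda$, so monotonicity of the capacity gives $\mu(B[x,x+nh])\ge c\,\mu(\lambda)$, and since $2^{-J\epsilon}\ge 2^{-j\epsilon}$ also $\mu^{(+\epsilon)}(B[x,x+nh])\ge c\,\mu^{(+\epsilon)}(\lambda)$. If $J>j$, the ball is contained in a fixed dilate of $\lambda$, and (P$_2$) gives $\mu(B[x,x+nh])\ge C^{-1}2^{-\phi(j)}2^{-(J-j)s_2}\mu(\lambda)$, hence
\[
\frac{1}{\mu^{(+\epsilon)}(B[x,x+nh])}\le C\,2^{\phi(j)}\,2^{(J-j)(s_2+\epsilon)}\,\frac{1}{\mu^{(+\epsilon)}(\lambda)}.
\]

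\textbf{Conclusion.} Combining the three ingredients and summing over $\lambda$:
\begin{itemize}
\item For $J\le j$ one directly obtains $\widetilde\omega_n(f_j,2^{-J},\R^d)_p\le C\,2^{-jd/p}\bigl(\sum_\lambda(|c_\lambda|/\mu^{(+\epsilon)}(\lambda))^p\bigr)^{1/p}$, which is exactly \eqref{ineq4.5} with the $\min$ equal to $1$ and $\widetilde\epsilon_{J-j}=1$.
\item For $J>j$, choosing $\bar s\in(s,r]$ (which is possible: $s<s_2+d/p+1$ and $r>s_2+d/p$ give room, by shrinking $\epsilon$ if necessary so that $s+\epsilon<r$), one gets
\[
\widetilde\omega_n(f_j,2^{-J},\R^d)_p\le C\,2^{-jd/p}\,2^{-(J-j)(s-s_2)}\,\bigl[2^{\phi(j)}\,2^{-(J-j)(\bar s-s-\epsilon)}\bigr]\,\Bigl(\sum_\lambda(|c_\lambda|/\mu^{(+\epsilon)}(\lambda))^p\Bigr)^{1/p},
\]
and the bracketed factor provides the prescribed sequence $\widetilde\epsilon_{J-j}$: it is bounded by $1$ (up to enlarging $C$) and decays geometrically in $J-j$ at rate $\bar s-s-\epsilon>0$, hence belongs to $\ell^q(\N)$.
\end{itemize}

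\textbf{Main obstacle.} The delicate step is the quantitative measure-comparison in the fine-scale case, which introduces the almost-doubling factor $2^{\phi(j)}$ from (P$_2$). Taming this term while keeping a $j$-uniform $\ell^q$-summable sequence $(\widetilde\epsilon_m)$ crucially requires exploiting the \emph{strict} inequality $r>s_2+d/p$ (so that $\bar s$ can be chosen strictly larger than $s+\epsilon$), combined with the sub-linear growth of $\phi$; these two ingredients together yield the extra exponential decay in $J-j$ that both absorbs $2^{\phi(j)}$ and supplies an $\ell^q$-sequence bounded by $1$. The other routine but non-trivial task is the bookkeeping required when the wavelet-support neighbourhoods fail to be dyadic (handled by replacing $\lambda$ with a slightly enlarged cube on which (P$_2$) still applies).
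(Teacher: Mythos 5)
Your proposal starts from a misreading of the left-hand side: in the lemma, $\omegat_n$ is the modulus adapted to $\mu$ itself (the weight in $\Delta^{\mu,n}_h$ is $1/\mu(B[x,x+nh])$), not to $\mu^{(+\ep)}$; the perturbation $\ep$ enters only through $\mu^{(+\ep)}(\lambda)$ on the right-hand side. This is not cosmetic: the factor $|\lambda|^{\ep}\sim 2^{-j\ep}$ hidden in $\mu^{(+\ep)}(\lambda)$ is exactly the slack the paper uses to absorb the almost-doubling defects $2^{O(\phi(j))}$ produced by property (P$_2$) (see the step ``$2^{\phi(j)(N^d+1)}\le|\lambda|^{-\ep}$ since $\phi\in\Phi$'' in the paper's Case 2). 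By placing $\mu^{(+\ep)}$ on both sides you only retain the relative slack $2^{(J-j)\ep}$, and your argument then breaks in both regimes.

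Concretely: in the coarse regime $J\le j$, the claim that ``monotonicity gives $\mu(B[x,x+nh])\ge c\,\mu(\lambda)$'' is false in general — the ball need not contain $\lambda$ (take $x\in\widetilde\lambda$ with $h$ pointing away from $\lambda$), so comparing it with $\lambda$ unavoidably invokes almost doubling and produces a factor $2^{O(\phi(J))}$, unbounded in $J$, which your remaining slack $2^{(j-J)\ep}$ cannot absorb when $j-J$ stays bounded; since the target bound in this regime carries no decay factor, no uniform constant $C$ can come out of this route. In the fine regime $J>j$ you keep the raw factor $2^{\phi(j)}$ and propose to absorb it by the extra geometric decay $2^{-(J-j)(\bar s-s-\ep)}$; but $2^{\phi(j)}$ depends on $j$, not on $J-j$, so for fixed $J-j$ and $j\to\infty$ the bracketed quantity blows up — it is neither bounded by $1$ nor a sequence indexed by $J-j$ alone, as the statement requires. (Two further slips: $\bar s\in(s,r]$ need not exist, since $s\in(s_2+\frac dp,s_2+\frac dp+1)$ may exceed $r=\lfloor s_2+\frac dp\rfloor+1$; and $\ep$ is fixed before the lemma, so you cannot shrink it, even if this could be repaired at the level of the theorem by monotonicity in $\ep$.) The paper avoids all of this by keeping the plain $\mu$ in the modulus, comparing $\mu^{(+\ep)}(\lambda)$ directly with $\mu(B[x,x+nh])$ so that the full $2^{-j\ep}$ kills the $2^{O(\phi(j))}$ terms uniformly, and by extracting the sequence $(\widetilde\ep_m)_m$ not from a geometric gain $\bar s>s$ but from the wavelets' own membership in $B^{s,p}_q$, namely $\omega_n(\psi^{(i)},2^{j-J},\R^d)_p\le 2^{(j-J)s}\widetilde\ep^{(i)}_{J-j}$ with $(\widetilde\ep^{(i)}_m)_m\in\ell^q(\N)$. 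Your scheme would essentially work when $\mu$ is doubling with $\phi\equiv0$, but not under the mere almost-doubling hypothesis of the lemma.
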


\begin{proof}
Inspired by the proof of  \cite[Theorem 3.4.3]{cohen-book},   two cases are separated:

\medskip

\noindent{\bf Case 1:} 
$J<j$.  
In order to prove \eqref{ineq4.5},  let us begin by   writing that 
\begin{align}
 \omegat_n(f_j, 2^{-J} ,\R^d)_p^p  
\label{intermomegamu}  = \sup_{2^{-J-1}\le |h|\le 2^{-J}}  \sum_{\lambda'\in \mathcal D_J}   \int_{\lambda'} \frac{\left|\sum_{\lambda\in \Lambda_j}  c_\lambda\, \Delta_h^n\psi_\lambda (x)\right |^p}{\mu(B(x,x+nh))^p}\,{\rm d}x.
  \end{align}
  
Consider $\lambda\in\Lambda_j$, $x\in \R^d$,  and $h\in \R^d$   such that $2^{-J-1}\le |h|\le 2^{-J}$. Then: 
  
  \begin{itemize}
  \item [(i)]
If $x\not\in \bigcup_{k=0}^n\supp(\psi_\lambda)-kh$, then $\Delta_h^n\psi_\lambda (x)=0$;

\item[(ii)]
Let $\la' = \la_J(x)$ the unique cube of generation $J$ that contains $x$. 

\noindent
There exists an integer $N =N(n,\Psi)$   such that if $x\in \bigcup_{k=0}^n\supp(\psi_\lambda)-kh$, then necessarily   $\lambda\subset N\lambda'$. 

\item [(iii)] By the almost doubling property of $\mu$, there exists a constant $C=C(\mu,n,\Psi,\ep)$    such that for every   $x\in \bigcup_{k=0}^n\supp(\psi_\lambda) - kh$, 
\begin{equation}\label{muepla}
\mu^{(+\ep)}(\lambda)  =2^{-j\ep} \mu(\la) \leq   2^{-j\ep}  \mu(N\lambda') \leq C  {\mu(B(x,x+nh))}. 
\end{equation}
\end{itemize}

From the equality $\Delta_h^n\psi_\lambda=\sum_{k=0}^n (-1)^k\binom{n}{k} \psi_\lambda(\cdot+(n-k)h)$, the three items (i)-(iii) and \eqref{intermomegamu},  one obtains  that
\begin{align*} 
 \omegat_n(f_j, 2^{-J} ,\R^d)_p^p 
   &\le C^p\sup_{2^{-J-1}\le |h|\le 2^{-J}}  \sum_{\lambda'\in \mathcal D_J} 
  \frac{2^{j\ep p}}{ \mu(N\lambda')^p}\int_{\lambda'} \Big |\sum_{\lambda\in \Lambda_j,\, \lambda\subset N\lambda'}  c_\lambda\, \Delta_h^n\psi_\lambda (x)\Big |^p\,{\rm d}x\\
  &\le C^p  \sum_{\lambda'\in \mathcal D_J}    \frac{2^{j\ep p}}{ \mu(N\lambda')^p}  T_{j,J,\la',\la,\psi}, 
  \end{align*}
  where 
 \begin{align*}    T_{j,J,\la',\la,\psi} =  \sup_{2^{-J-1}\le |h|\le 2^{-J}}  \int_{\R^d} \Big |\sum_{k=0}^n (-1)^k\binom{n}{k} \sum_{\lambda\in \Lambda_j,\, \lambda\subset N\lambda'} c_\lambda\, \psi_\lambda (x+(n-k)h)\Big |^p\,{\rm d}x.
 \end{align*}
The  convexity inequality $(\sum_{k=0}^n|z_k|)^p\le (n+1)^{p-1} \sum_{k=0}^n|z_k|^p$  and       $\binom{n}{k} \leq 2^n$  give 
 \begin{align*} 
   T_{j,J,\la',\la,\psi}     &\le 2^{np}   (n+1)^{p-1} \sup_{2^{-J-1}\le |h|\le 2^{-J}}      \sum_{k=0}^n    \int_{\R^d} \Big |\sum_{\lambda\in \Lambda_j,\, \lambda\subset N\lambda'}  c_\lambda\, \psi_\lambda (x+(n-k)h)\Big |^p\,{\rm d}x.
   \end{align*}
 Observe that  the property (ii) above allows to  bound each integral by the same term   $\int_{\R^d} \Big |\sum_{\lambda\in \Lambda_j,\, \lambda\subset N\lambda'}  c_\lambda\, \psi_\lambda (x)\Big |^p\,{\rm d}x$. Moreover, according to \cite[Ch. 6, Prop. 7]{Meyer_operateur}, there exists $C'>0$ depending on $\Psi$ only such that 
 $$
 \int_{\R^d} \Big |\sum_{\lambda\in \Lambda_j,\, \lambda\subset N\lambda'}  c_\lambda\, \psi_\lambda (x)\Big |^p\,{\rm d}x\le C'^p2^{-jd} \sum_{\lambda\in \Lambda_j,\, \lambda\subset N\lambda'}  |c_\lambda|^p.
 $$
Consequently, 
%
 using the first inequality of \eqref{muepla},  
$$
 \omegat_n(f_j, 2^{-J} ,\R^d)_p^p \le  (CC)'^p  (n+1)^{p}2^{np}  \sum_{\lambda'\in \mathcal D_J} 2^{-jd} \sum_{\lambda\in \Lambda_j,\, \lambda\subset N\lambda'}  \Big (\frac{|c_\lambda|}{\mu^{(+\ep)}(\lambda)}\Big )^p.
 $$
Finally,  the number of dyadic cubes  $\lambda'\in\mathcal D_J$  such that $N\la'$ intersects  a given $\la \in \La_j$ is bounded uniformly with respect of  $j$ and $J$, so
$$  \omegat_n(f_j, 2^{-J} ,\R^d)_p    \leq   C 2^{-jd/p}  \left(\sum_{\lambda\in \Lambda_j}  \left(\frac{  |c_{\lambda} | }{\mu ^{(+\ep)}(\lambda)}  \right) ^p \right) ^{1/p}$$
for some  constant $C$ that depends on $n$, $p$ and other constants. This yields  \eqref{ineq4.5}.
 
\medskip

\noindent{\bf Case 2:} $J\geq j$. Let us start with a few observations. First, by assumption, $\psi^{(i)} \in B^{s,p}_{q}(\R^d)$, hence  $$
  \omega_n(  \psi^{(i)} ,2^{j-J} , \R^d) _p \leq  2^{(j-J)s} \widetilde \ep^{(i)}_{J-j},
$$
 where $(\widetilde \ep^{(i)}_{m})_{m\geq 1} \in \ell^q(\mathbb N^*)$ and  $\|\widetilde \ep^{(i)}\|_{\ell^q(\mathbb N^*)}\leq \|\psi^{(i)}\|_{B^{s,p}_q}$.
Consequently,  for all $\lambda\in\Lambda_j$  
\begin{align}\label{omeganpsila}
 \omega_n(  \psi_\lambda ,2^{-J} , \R) _p \leq   2^{(j-J)s}2^{-jd/p}\widetilde\ep_{J-j},  
  \end{align}
where $\widetilde\ep_{J-j}=\sup_i\widetilde\ep^{(i)}_{J-j}$. 
 
Next,  observe  the following facts: 
\begin{itemize}
\item [(i)]
There exists an integer $N$ independent of $j$ and $J$ such that for all $x\in \R^d$ and $h\in\R^d$ such that  $2^{-J-1}\le |h|\le 2^{-J}$,  $B[x,x+nh]\subset  N\lambda_j(x)$. Also,  $\Delta_h^n\psi_\lambda(x)\neq 0 $   only if $\lambda \subset N\lambda_j(x)$ (recall that  $\la=(i,j,k)  \subset E$ means    $\lambda_{j,k}\subset E$). 
\sk \item [(ii)]
There exist two  dyadic cubes $\lambda' \in \mathcal{D}_{J+3}$  and $\la''\in  \mathcal{D}_j$ such that $\la' \subset B(x,x+nh)$  and $\lambda'\subset\lambda'' \subset N\lambda_j(x)$. By construction, for all $\La_j\ni \lambda \subset N\lambda_j(x)$, one has 
$$
\mu(B[x,x+nh])^{-1}\le \mu(\lambda')^{-1}=  \frac{\mu(\lambda'')}{\mu(\lambda')}\frac{\mu(\lambda )}{\mu(\lambda'')}
\mu(\lambda )^{-1}.$$
Hence, using property (P$_2$) to control from above  $\frac{\mu(\lambda )}{\mu(\lambda'')}$ by $O(2^{N^d\phi(j)})$ and $\frac{\mu(\lambda'')}{\mu(\lambda')}$ by $O(2^{\phi(j)} 2^{(J-j)s_2})$, as well as the fact that  $2^{\phi(j) (N^d+1)}  \leq |\lambda |^{-\ep}$ since $\phi\in \Phi$, there exists  a constant $C$ depending on $(\mu,n,\ep)$ only such that 
$$
\mu(B[x,x+nh])^{-1}\le C2^{(J-j)s_2}(\mu^{(+\ep)}(\lambda )^{-1}.
$$
\end{itemize}

The two previous observations  yield    
\begin{align*}
  \omegat_n(f_j, 2^{-J} ,\R^d)_p^p &  = \sup_{2^{-J-1}\le |h|\le 2^{-J}}  \sum_{\lambda'\in \mathcal D_J}   \int_{\lambda'} \frac{\left|\sum_{\lambda\in \Lambda_j}  c_\lambda\, \Delta_h^n\psi_\lambda (x)\right |^p}{\mu(B(x,x+nh))^p}\,{\rm d}x \\
  &  \leq  C^p2^{(J-j)s_2 p}\sup_{2^{-J-1}\le |h|\le 2^{-J}}  \int_{\R^d}\Big (\sum_{\lambda\in\Lambda_{j},\lambda\subset N\lambda_j(x)}\frac{|c_\lambda|}{\mu^{(+\ep)}(\lambda)} |\Delta_h^n\psi_\lambda(x)|\Big )^p\, {\rm d}x.
  \end{align*}
Since  $\#\{\lambda\in\Lambda_{j}:\, \lambda\subset N\lambda_j(x)\} \leq (2Nd)^d$,   for each $x\in\R^d$ one has
\begin{align*} 
\Big (\sum_{\lambda\in\Lambda_{j},\lambda\subset N\lambda_j(x)}\frac{|c_\lambda|}{\mu^{(+\ep)}(\lambda)} |\Delta_h^n\psi_\lambda(x)| \Big )^p 
\le (2Nd)^{d(p-1)} \sum_{\lambda\in\Lambda_{j},\lambda\subset N\lambda_j(x)}\Big (\frac{|c_\lambda|}{\mu^{(+\ep)}(\lambda)}\Big )^p |\Delta_h^n\psi_\lambda(x)|^p. 
\end{align*}
Also, each $\la \in \mathcal D_j$ intersects at most $(2N)^d$ cubes  $N\lambda'$ with $\lambda'\in\mathcal D_j$, so
\begin{align*}
   \int_{\R^d}   \sum_{\lambda\in\Lambda_{j},\lambda\subset N\lambda_j(x)}\Big (\frac{|c_\lambda|}{\mu^{(+\ep)}(\lambda)}\Big )^p |\Delta_h^n\psi_\lambda(x)| ^p \, {\rm d}x   & \le   (2N)^{dp}  \sum_{\lambda\in\Lambda_j} \Big (\frac{|c_\lambda|}{\mu^{(+\ep)}(\lambda)}\Big )^p \int_{\R^d}  |\Delta_h^n\psi_\lambda(x)| ^p \, {\rm d}x .
   \end{align*}
Finally,  taking the supremum over $h \in [2^{-J-1},2^{-J}]$ in the last inequalities gives 
\begin{align*}
 \omegat_n(f_j, 2^{-J} ,\R^d)_p^p  & \le  C^p (2N)^{dp} 2^{(J-j)s_2p}\sum_{\lambda\in\Lambda_j} \Big (\frac{|c_\lambda|}{\mu^{(+\ep)}(\lambda)}\Big )^p  \omega_n(\psi_\lambda, 2^{-J} ,\R^d)_p^p,
\end{align*}
hence the conclusion by  \eqref{omeganpsila}.

\sk

By changing the constant $C$ in  \eqref{omeganpsila} into $C \|(\widetilde\ep_j)_{j\geq 0}\|_{\ell^\infty}$,   one gets   $\widetilde \ep_j \leq 1$. 
 \end{proof}

We are now in position to prove \eqref{eqnorm2}. Fix $\ep\in(0,1)$. Setting  $\widetilde f = f-\sum_{j=0}^{\infty} f_j$,  the triangle inequality yields
\begin{align}\label{triangle}
 \omegat_n(f, 2^{-J},\R^d)_p  \leq \omegat_n(\widetilde f , 2^{-J} ,\R^d)_p + \sum_{j=0}^{\infty} \omegat_n(f_j, 2^{-J} ,\R^d)_p  ,
\end{align}
and our goal is to control the $\ell_q$ norms of the   sequences $(u_J:=2^{Jd/p} \omegat_n(\widetilde f , 2^{-J} ,\R^d)_p)_{J\in \N}$ and $( v_J:=2^{Jd/p}  \sum_{j=0}^{\infty} \omegat_n(f_j, 2^{-J} ,\R^d)_p)_{J\in \N}$.

\medskip

$\bullet$ 
The terms $(u_J)_{J\geq 1}$  correspond  to low frequencies, and can be controlled as follows. Using property (P), one has $\mu(B(x,2^{-J})) \geq 2^{-J (s_2+\ep)}$ for every $x\in \R^d$, and so 
 \begin{equation}\label{trianglebis}
 u_J \le 2^{J(s_2+\ep+d/p)} \omega_n(\widetilde f , 2^{-J} ,\R^d)_p.
 \end{equation}
Observe that, since $\widetilde f$ is obtained by removing from $f$  the high frequency terms,  $\widetilde f \in B^{s',p}_{q}(\R^d)$ for all $s'\in(d/p,r)$ and $q\in [1,+\infty]$, as can be checked using \eqref{normbesovbeta}. In addition,   $|\widetilde f |_{(\mathcal L^d)^{\frac{s'+\ep+d/p}{d}-\frac{1}{p}},p,q}=|\widetilde f |_{(\mathcal L^d)^{\frac{s_1+\ep+d/p}{d}-\frac{1}{p}},p,q}=0$ since   the wavelet coefficients $c_\la(\widetilde f)$ of $\widetilde f$ vanish for all  $\la\in \La_j$, $j\geq 1$.

 Recalling  the decomposition  \eqref{decompf},   one notes  that the wavelet coefficients $( \beta(k))_{k\in \Z^d}$   in the wavelet expansions of  $f$ and $\widetilde f$ are identical. Hence, using  the equivalence of norms recalled after \eqref{normbesovbeta}, there is a constant $\widetilde C$ depending on $(d,\ep,\mu,p,q)$  (that may change from line to line) such that 
\begin{align*}
\|(u_J)_{J\in \N}\|_{\ell^q(\N)} \le \|\widetilde f \|_{B^{s_2+\ep+d/p,p}_q(\R^d)} &\le \widetilde C (\|\widetilde f\|_{L^p(\R^d)}+ |\widetilde f|_{(\mathcal L^d)^{\frac{s_2+\ep+d/p }{d}-\frac{1}{p}},p,q})\\
&=\widetilde C(\| \widetilde f\|_{L^p(\R^d)}+ |\widetilde f|_{(\mathcal L^d)^{\frac{s_1+\ep+d/p }{d}-\frac{1}{p}},p,q})\\
&\le \widetilde C(\| \beta(k)\|_{\ell_p(\Z^d)}+ |f|_{(\mathcal L^d)^{\frac{s_1+\ep+d/p }{d}-\frac{1}{p}},p,q})\\
&\le \widetilde C(\|f \|_{L^p(\R^d)}+ |f|_{(\mathcal L^d)^{\frac{s_1+\ep+d/p }{d}-\frac{1}{p}},p,q})\\
&\le  \widetilde C  (\|f\|_{L^p(\R^d)}+ |f|_{\mu^{(+\ep)},p,q}),
\end{align*}
where the last inequality is a consequence of property (P$_1$) (which implies that $\mu(\la) \leq C 2^{-js_1} = C\mathcal{L}^d(\la)^{ \frac{s_1 +d/p}{d}-1/p} $ for all $j\in\N$ and $\lambda\in \mathcal D_j$).

\medskip

$\bullet$ Next  the $\ell^q$ norm of $ (v_J)_{J\geq 1}$ is controlled.   Set $A_j =  \left\|  \left( \frac{  |c_{\lambda} | }{\mu^{(+\ep)} ({\lambda} )}  \right)_{\lambda\in \Lambda_j}\right\|_{\ell^p(\Lambda_j)}$. 

By Lemma \ref{besov_lemma2},   when $j\leq J$ one has $\omegat_n(f_j, 2^{-J} ,\R^d)_p  
 \leq C 2^{-jd/p} 2^{(j-J)(s-s_2)p }   A_j$, while   when $j> J$,  one has $\omegat_n(f_j, 2^{-J} ,\R^d)_p  
 \leq C 2^{-j d/p}    A_j $. Consequently,
 \begin{align*}
v_J   & \leq  C2^{Jd/p}  \sum_{j=0}^{J} 2^{-jd/p+(j-J)(s-s_2)} A_j  +  C 2^{Jd/p}\sum_{j=J+1}^{\infty} 2^{-jd/p}  A_j ,
\end{align*}
which implies that  $ \| ( v_J )_{J\geq 0} \| _{\ell^q(\N)}   \leq C  (\| (\alpha_J)_{J\geq 0}   \|_{\ell^q(\N)} +\| (\beta_J)_{J\geq 0}   \|_{\ell^q(\N)} )$, 
where 
$$\alpha_J:= 2^{Jd/p}  \sum_{j=0}^{J} 2^{-jd/p+(j-J)(s-s_2)} A_j  \ \ \mbox{ and } \ \ \beta_J:=    2^{Jd/p}\sum_{j=J+1}^{\infty} 2^{-jd/p} A_j .$$

Recall now the two following  Hardy's inequalities (see, e.g. (3.5.27) and (3.5.36) in \cite{cohen-book}): let $q\in [1,+\infty]$ as well as $0<\gamma<\delta$. There exists a constant $K>0$ such that :
\begin{itemize}
\item 
if  $(a_j)_{j\in\N}$ is a non negative sequence and for $J\in\N$ one sets $b_J= 2^{-\delta J}\sum_{ j=0}^{J} 2^{j\delta} a_j$, then  $\| (2^{\gamma J}b_J)_{J\geq 1}\|_{\ell^q(\N)} \le K \| (2^{\gamma J}a_J)_{J\geq 0}\|_{\ell^q(\N)}$.

\item
if $(a_j)_{j\in\N}$ is a non negative sequence and for $J\in\N$ one sets $b_J=\sum_{j\geq J} a_j$, then $\| (2^{\gamma J}b_J)_{J\in\N}\|_{\ell^q(\N)} \le K \| (2^{\gamma J}a_J)_{J\geq 0}\|_{\ell^q(\N)}$.  
\end{itemize} 

Let $\delta=s-s_2$ and $\gamma=d/p$. The first Hardy's inequality with $a_j=2^{-jd/p}  A_j$  yields 
$$\| (\alpha_j)_{j\in\N}   \|_{\ell^q(\N)} \leq K \| (A_j)_{j\in\N}\|_{\ell^q(\N)},$$ 
while   the second one with $a_j=2^{-jd/p}  A_j  $ and $\gamma =d/p$ gives
$$\| (\beta_J)_{J\in\N}   \|_{\ell^q(\N)} \leq K \| (A_j)_{J\in\N}\|_{\ell^q(\N)}.$$

Since $\| (A_J)_{J\in\N}\|_{\ell^q(\N) } =|f|_{\mu^{(+\ep)},p,q}  $, one concludes that 
\begin{align*}
 \|   (v_J)_{J\geq 1}  \|_{\ell^q(\N)}   \le   2CK (\|f\|_{L^p(\R^d)}+ |f|_{\mu^{(+\ep)},p,q}),
 \end{align*}
which, together with the control of  $\|   (u_J)_{J\geq 1}  \|_{\ell^q(\N)}$,  implies \eqref{eqnorm2}.
    
    \medskip
  
  Although we do not elaborate on this in this paper, it is certainly worth investigating the relationship between the Besov spaces in multifractal environment and the following analog of Sobolev space in multifractal environment.
 
\begin{definition}
\label{defWspmu} Let $\mu$ be a probability measure on $\R^d$, $s>0$, $p\geq 1$.
A function $f$ belongs to $ W^{\mu,s}_{p}(\R^d)$ if and only if $\|f\|_{W^{\mu,s}_{p}(\R^d)}<+\infty$, where   
$\|f\|_{W^{\mu,s}_{p}(\R^d)} = \|f\|_{L^{p}(\R^d)}+ |f|_{W^{\mu,s}_{p}(\R^d)}$ and 
$$ |f|_{W^{\mu,s}_{p}(\R^d)}:=\iint _{(\zud)^2} \frac{|f(x)-f(y)|^{p}}{\mu(B[x,y])^{sp} |x-y|^{2d}}\mathrm{d}x\mathrm{d}y <+\infty.$$
\end{definition}
  

\section{Main features of  the typical  singularity spectrum in $\widetilde B^{\mu,p}_{q}(\R^d)$}
\label{sec-description}

Given $\mu\in\mathscr E_d$, Theorem~\ref{main}(2) claims that the singularity spectrum of  typical functions  in $\widetilde B^{\mu,p}_{q}(\R^d)$ equals the Legendre transform $\zeta_{\mu,p}^*$ of   $\zeta_{\mu,p}$, which is explicitly given by \eqref{deftaumup} in terms of $\tau_\mu$.  In this section, we find an explicit formula for $\zeta_{\mu,p}^*$ in terms of $\tau_\mu^*$ ($=\sigma_\mu$) (Proposition~\ref{lemtetap}), and we discuss the possible shapes and features of $\zeta_{\mu,p}^*$ and  $\zeta_{\mu,p}$ (Sections~\ref{features} and~\ref{proofof5.1}).  
 
\subsection{Preliminaries and statements}

To express $\zeta_{\mu,p}^*$ in terms of $\tau_\mu^*$,   the following continuous and concave mapping  $\theta_p$ is introduced: 
\begin{equation}
\label{thetap}
\theta_p:\alpha\in [\tau_\mu'(+\infty),\tau_\mu'(-\infty)]\longmapsto \alpha+\frac{\tau_\mu^*(\alpha)}{p},
\end{equation}
see Figure \ref{fig-theta}. Notice that $\theta_\infty$ is just the identity map.  

\medskip

The concave sub-differential of a continuous concave function $g$ whose domain is a non trivial interval is well defined as the opposite $-\partial (-g)$ of the sub-differential $\partial (-g)$ of the convex function $-g$, and  is  denoted by $\overset{\,{_\smallfrown}}{\partial}g$.  

\medskip 

Let us briefly describe the variations of $\theta_p$, see Figure \ref{fig-theta} for an illustration.

\medskip
If $\tau_\mu'(+\infty)=\tau_\mu'(-\infty)$, then $\theta_p$ is constant and we set $\alpha_p=\tau_\mu'(-\infty)$. 

If $[\tau_\mu'(+\infty),\tau_\mu'(-\infty)]$ is non trivial, using the concavity of $\tau_\mu^*$, it is easily seen that the mapping $\theta_p$ is concave and reaches is maximum at $\alpha_p$, where 
$$
\alpha_p=
\begin{cases}
\min\big\{\alpha\in [\tau_\mu'(+\infty),\tau_\mu'(-\infty)] :\, -p\in \overset{\, _\smallfrown}{\partial}(\tau_\mu^*)(\alpha)\big\}&\text{if }-p \in \overset{\, _\smallfrown}{\partial}(\tau_\mu^*)\\
\tau_\mu'(-\infty)&\text{otherwise} 
\end{cases}
$$
(when  $\tau_\mu^*$ is differentiable and strictly concave,  $\alpha_p$ is the unique exponent $\alpha$ at which $(\tau_\mu^*)'(\alpha)=-p$ whenever it exists). Moreover,  $\theta_p$ is increasing on $[\tau_\mu'(+\infty),\alpha_p]$ and if $\alpha_p<\tau_\mu'(-\infty)$, then $\theta_p$ is constant over $[\alpha_p,\alpha'_p]$ and decreasing on $[\alpha'_p, \tau_\mu'(-\infty)]$, where $\alpha'_p=\max\{\alpha\in [\tau_\mu'(+\infty),\tau_\mu'(-\infty)] :\, -p\in \overset{\, _\smallfrown}{\partial}(\tau_\mu^*)(\alpha)\}$. Also, necessarily  $ \alpha_p \ge \tau_\mu'(0^+)$ since $\tau_\mu^*$ is increasing over the interval $[\tau_\mu'(+\infty),\tau_\mu'(0^+))$, and by Legendre duality, if $-p \in \overset{\, _\smallfrown}{\partial}(\tau_\mu^*)$, then  $\tau_\mu(-p)=(\tau_\mu^*)^*(-p)=-\alpha_p p-\tau_\mu^*(\alpha_p)=-p\theta_p(\alpha_p)$.

\medskip

Thus, in any case, the range of $\theta_p$ restricted to the interval $[\tau_\mu'(+\infty),\alpha_p]$ is the interval $ [\theta_p(\tau_\mu'(+\infty)),\theta_p(\alpha_p)]$, where

$$
\theta_p(\alpha_p)=\begin{cases}
\displaystyle \frac{\tau_\mu(-p)}{-p}&\text{if } -p \in \overset{\, _\smallfrown}{\partial}(\tau_\mu^*),\\
\displaystyle\tau_\mu'(-\infty)+\frac{\tau_\mu^*(\tau_\mu'(-\infty))}{p}&\text{otherwise}.
\end{cases}
$$

Note that according to Remark~\ref{Linearisation}, if  $-p \notin \overset{\, _\smallfrown}{\partial}(\tau_\mu^*)$, then $(\tau_\mu^*)'(\tau_\mu'(-\infty))>-\infty$ so that $\tau_\mu$ is linear near $-\infty$. This is also the case for $\zeta_{\mu,p}$, with the formula $\zeta_{\mu,p}(t)=(\tau_\mu'(-\infty)+\frac{\tau_\mu^*(\tau_\mu'(-\infty))}{p})t- \tau_\mu^*(\tau_\mu'(-\infty))
$.

%
\begin{figure}     \begin{tikzpicture}[xscale=1.7,yscale=1.4]
    {\tiny
\draw [->] (0,-0.2) -- (0,3.1) [radius=0.006] node [above] {$\theta_p(H)$};
\draw [->] (-0.2,0) -- (3.,0) node [right] {$H$};

\draw [thick, domain=0:5]  plot ({-(exp(\x*ln(1/5))*ln(0.2)+exp(\x*ln(0.8))*ln(0.8))/(ln(2)*(exp(\x*ln(1/5))+exp(\x*ln(0.8)) ) )} , {(-\x*( exp(\x*ln(1/5))*ln(0.2)+exp(\x*ln(0.8))*ln(0.8))/(ln(2)*(exp(\x*ln(1/5))+exp(\x*ln(0.8))))+ ln((exp(\x*ln(1/5))+exp(\x*ln(0.8))))/ln(2))/1-(exp(\x*ln(1/5))*ln(0.2)+exp(\x*ln(0.8))*ln(0.8))/(ln(2)*(exp(\x*ln(1/5))+exp(\x*ln(0.8)) ) )});

\draw [thick, domain=0:5]  plot ({-( ln(0.2)+ ln(0.8))/(ln(2)) +(exp(\x*ln(1/5))*ln(0.2)+exp(\x*ln(0.8))*ln(0.8))/(ln(2)*(exp(\x*ln(1/5))+exp(\x*ln(0.8)) ) )} , {-( ln(0.2)+ ln(0.8))/(ln(2)) +(exp(\x*ln(1/5))*ln(0.2)+exp(\x*ln(0.8))*ln(0.8))/(ln(2)*(exp(\x*ln(1/5))+exp(\x*ln(0.8)) ) )+(-\x*( exp(\x*ln(1/5))*ln(0.2)+exp(\x*ln(0.8))*ln(0.8))/(ln(2)*(exp(\x*ln(1/5))+exp(\x*ln(0.8))))+ ln((exp(\x*ln(1/5))+exp(\x*ln(0.8))))/ln(2))/1});  
\draw [fill] (-0.1,-0.10)   node [left] {$0$}; 

\draw [fill] (0,2.62) circle [radius=0.03] node [left] {$\theta_p(\alpha_p) \ $} [dashed]   (0,2.62) -- (1.9,2.62)  [fill] (1.9,2.62) circle [radius=0.03]   (1.9,2.62) -- (1.9,0)  [fill] (1.9,0) circle [radius=0.03]  node [below] {$\alpha_p $} ;

\draw [fill] (0,0.32)  circle [radius=0.03] node [left] {$\tau_\mu'(+\infty) \ $} [dashed]   (0,0.32 )  -- (0.32,0.32)   [fill] (0.32,0.32)  circle [radius=0.03]   (0.32,0.32)  -- ( 0.32,0)  [fill]  (0.32,0)  circle [radius=0.03]  node [below] {$\tau_\mu'(+\infty)$};

\draw [fill] (0,2.3)  circle [radius=0.03] node [left] {$\tau_\mu'(-\infty) \ $} [dashed]   (0,2.3 )  -- (2.3,2.3)   [fill] (2.3,2.3) circle [radius=0.03]   (2.3,2.3)  -- (2.3,0) [fill]  (2.3,0)  circle [radius=0.03]  node [below] {$ \ \ \tau_\mu'(-\infty)$};

  }
\end{tikzpicture}
\caption{The mapping $\theta_p$ when $\sigma_\mu(\tau_\mu'(+\infty)) = \sigma_\mu(\tau_\mu'(-\infty))=0$.}
\label{fig-theta}
\end{figure}
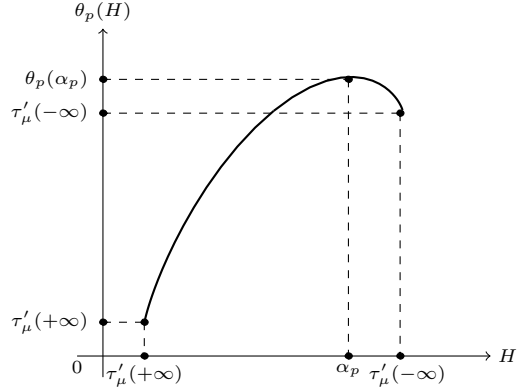

\medskip

Let $\theta_p^{-1}$ be the inverse branch of $\theta_p$ over $[\theta_p(\tau_\mu'(+\infty)),\theta_p(\alpha_p)]$, see Figure \ref{fig-theta}. The Legendre transform of $\zeta_{\mu,p}$  can be written as follows.

\begin{proposition}
\label{lemtetap}
Let $\mu\in\mathscr E_d$. One has
\begin{equation}\label{taumupstar}
\zeta_{\mu,p}^*(H)=
\begin{cases} p(H-\tau_\mu'(+\infty)) &\text{ if }H\in\big  [\tau_\mu'(+\infty), \theta_p(\tau_\mu'(+\infty))\big )\\
\tau_\mu^*(\theta_p^{-1}(H))&\text{ if }H\in [\theta_p(\tau_\mu'(+\infty)), \theta_p(\alpha_p)]\\
-\infty&\text{ if }H\not\in [\tau_\mu'(+\infty), \theta_p(\alpha_p)].
\end{cases}
\end{equation} 
\end{proposition}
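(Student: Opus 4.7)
\medskip

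\noindent\textbf{Proof plan for Proposition~\ref{lemtetap}.}

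\medskip

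The strategy is to compute $\zeta_{\mu,p}^*(H)=\inf_{t\in\R}(Ht-\zeta_{\mu,p}(t))$ by splitting the infimum at $t=p$, relying on the fact (Remark~\ref{rem-spectra}(1)) that $\zeta_{\mu,p}$ is concave. On $[p,+\infty)$ the formula is immediate: $Ht-\zeta_{\mu,p}(t)=(H-\tau_\mu'(+\infty))t$, so the infimum over this half-line equals $p(H-\tau_\mu'(+\infty))$ when $H\ge\tau_\mu'(+\infty)$ and $-\infty$ when $H<\tau_\mu'(+\infty)$. Hence the only work lies on $(-\infty,p)$.

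\medskip

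The key identity is a change of variable. For $t<p$, set $s=pt/(p-t)$, so that $\frac{p-t}{p}\,s=t$ and $\zeta_{\mu,p}(t)=\frac{p-t}{p}\tau_\mu(s)$. Concavity of $\tau_\mu$ on $\R$ (which holds because $\tau_\mu\in\TD$ after absorbing the power of $\mu$) gives, for every $\alpha\in[\tau_\mu'(+\infty),\tau_\mu'(-\infty)]$,
\[
\tau_\mu(s)\le \alpha s-\tau_\mu^*(\alpha),
\]
with equality iff $\alpha\in\overset{_\smallfrown}{\partial}\tau_\mu(s)$. Multiplying by $(p-t)/p>0$ and using $\frac{p-t}{p}\,s=t$, one deduces the crucial linear upper bound
\begin{equation}\label{supportingline}
\zeta_{\mu,p}(t)\le \theta_p(\alpha)\,t-\tau_\mu^*(\alpha),
\end{equation}
with equality iff $\alpha\in\overset{_\smallfrown}{\partial}\tau_\mu\bigl(pt/(p-t)\bigr)$. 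As $s=pt/(p-t)$ runs over $(-p,+\infty)$ when $t$ runs over $(-\infty,p)$, and as $\overset{_\smallfrown}{\partial}\tau_\mu(s)$ decreases from $\tau_\mu'(-p^{+})=\alpha_p$ down to $\tau_\mu'(+\infty)$, the line \eqref{supportingline} is a supporting line of $\zeta_{\mu,p}$ at $t$ whose slope $\theta_p(\alpha)$ takes, as $t$ varies on $(-\infty,p)$, every value in $[\theta_p(\tau_\mu'(+\infty)),\theta_p(\alpha_p)]$. Combined with the slope $\tau_\mu'(+\infty)$ on $[p,+\infty)$, this determines the whole concave sub-differential of $\zeta_{\mu,p}$.

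\medskip

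These slope computations give the domain of $\zeta_{\mu,p}^*$. As $t\to-\infty$, $\zeta_{\mu,p}(t)\sim\theta_p(\alpha_p)t$, so $Ht-\zeta_{\mu,p}(t)\to-\infty$ whenever $H>\theta_p(\alpha_p)$; as $t\to+\infty$, $Ht-\zeta_{\mu,p}(t)=(H-\tau_\mu'(+\infty))t\to-\infty$ when $H<\tau_\mu'(+\infty)$. Thus $\zeta_{\mu,p}^*(H)=-\infty$ outside $[\tau_\mu'(+\infty),\theta_p(\alpha_p)]$. For $H$ in this interval, since $t\mapsto H-\zeta_{\mu,p}'(t)$ is non-decreasing by concavity, the infimum of $g(t):=Ht-\zeta_{\mu,p}(t)$ is attained at any $t_0$ where $H\in\overset{_\smallfrown}{\partial}\zeta_{\mu,p}(t_0)$.

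\medskip

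Finally, one distinguishes two sub-cases. If $H\in[\theta_p(\tau_\mu'(+\infty)),\theta_p(\alpha_p)]$, choose $\alpha_0=\theta_p^{-1}(H)\in[\tau_\mu'(+\infty),\alpha_p]$ (using the increasing branch of $\theta_p$) and any $t_0<p$ with $\alpha_0\in\overset{_\smallfrown}{\partial}\tau_\mu(pt_0/(p-t_0))$; then \eqref{supportingline} is an equality at $t_0$ and
\[
\zeta_{\mu,p}^*(H)=Ht_0-\zeta_{\mu,p}(t_0)=Ht_0-\bigl(\theta_p(\alpha_0)t_0-\tau_\mu^*(\alpha_0)\bigr)=\tau_\mu^*(\theta_p^{-1}(H)).
\]
If $H\in[\tau_\mu'(+\infty),\theta_p(\tau_\mu'(+\infty)))$, then $H$ is strictly smaller than every slope of $\zeta_{\mu,p}$ on $(-\infty,p)$, so $g$ is strictly decreasing there, while $g$ is non-decreasing on $[p,+\infty)$; the infimum is therefore attained at $t=p$, giving $\zeta_{\mu,p}^*(H)=p(H-\tau_\mu'(+\infty))$. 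Consistency at $H=\theta_p(\tau_\mu'(+\infty))$ is automatic since $p(H-\tau_\mu'(+\infty))=\tau_\mu^*(\tau_\mu'(+\infty))=\tau_\mu^*(\theta_p^{-1}(H))$. The main delicate point is item~(3), namely treating $\tau_\mu$ that are merely concave (non smooth, possibly with affine portions), which is why sub-differentials rather than derivatives must be used throughout, and why the definition of $\alpha_p$ is tailored to handle the case where $-p\notin\overset{_\smallfrown}{\partial}\tau_\mu^*$.
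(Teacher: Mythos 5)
Your route is genuinely different from the paper's, and its core is sound. You compute $\zeta_{\mu,p}^*$ directly: the change of variable $s=pt/(p-t)$ together with $\tau_\mu(s)\le \alpha s-\tau_\mu^*(\alpha)$ gives the supporting-line bound $\zeta_{\mu,p}(t)\le \theta_p(\alpha)t-\tau_\mu^*(\alpha)$ for all $t<p$ and all $\alpha$, with equality exactly when $\alpha\in\overset{\,{_\smallfrown}}{\partial}\tau_\mu\big(pt/(p-t)\big)$, and you then read off the conjugate branch by branch after splitting the infimum at $t=p$. The paper goes the other way: it names $\chi$ the right-hand side of \eqref{taumupstar}, computes $\chi^*=\zeta_{\mu,p}$ through a case analysis ($t>p$, $t<p$, subcases according to the position of $pt/(p-t)$ with respect to $t_\infty$), proves that $\chi$ is continuous and concave, and concludes $\zeta_{\mu,p}^*=\chi$ by Legendre biduality. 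Your direct computation dispenses with the concavity proof for the candidate and with the duality step; the identity $\theta_p(\alpha)t-\tau_\mu^*(\alpha)=\alpha t-\frac{p-t}{p}\tau_\mu^*(\alpha)$ you use is exactly the mechanism hidden in the paper's auxiliary function $\widetilde\chi_2$, and I checked it is correct, as are your determination of the domain and the evaluation on $[\tau_\mu'(+\infty),\theta_p(\tau_\mu'(+\infty)))$ and at interior points of $[\theta_p(\tau_\mu'(+\infty)),\theta_p(\alpha_p)]$.

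Two points need repair. First, in the second branch you ``choose any $t_0<p$ with $\theta_p^{-1}(H)\in\overset{\,{_\smallfrown}}{\partial}\tau_\mu\big(pt_0/(p-t_0)\big)$''; such a $t_0$ exists for $H$ in the open interval (there $\theta_p^{-1}(H)$ lies strictly between $\tau_\mu'(+\infty)$ and $\alpha_p$), but it can fail at both endpoints: at $H=\theta_p(\tau_\mu'(+\infty))$ when $t_\infty=+\infty$ (the slope $\tau_\mu'(+\infty)$ of $\tau_\mu$ is never attained at finite argument), and at $H=\theta_p(\alpha_p)$ when $\alpha_p$ belongs to $\overset{\,{_\smallfrown}}{\partial}\tau_\mu(s)$ only for $s=-p$, which is outside the range of $s(t)$, $t<p$ (e.g.\ $\tau_\mu$ strictly concave at $-p$). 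The stated values remain correct and the fix stays inside your framework: your inequality, valid for every $\alpha$ and every $t<p$, together with the elementary bound $(H-\tau_\mu'(+\infty))t\ge (H-\tau_\mu'(+\infty))p$ on $[p,+\infty)$, gives the lower bound $\tau_\mu^*(\theta_p^{-1}(H))$ for the infimum at these two points; for the upper bound, evaluate at $t=p$ when $H=\theta_p(\tau_\mu'(+\infty))$, and when $H=\theta_p(\alpha_p)$ either let $t\to-\infty$ (a one-line expansion of $\tau_\mu$ near $-p$, or the affine formula of Remark~\ref{Linearisation} when $-p\notin\overset{\,{_\smallfrown}}{\partial}(\tau_\mu^*)$) or invoke continuity of the closed concave conjugate on its domain. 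Second, you invoke Remark~\ref{rem-spectra}(1) for the concavity of $\zeta_{\mu,p}$, but the paper presents that remark as a consequence of Proposition~\ref{lemtetap}, so as written this is circular. It is also dispensable: the supporting-line inequality already yields the global lower bound for $\inf_t(Ht-\zeta_{\mu,p}(t))$ without any concavity of $\zeta_{\mu,p}$ (one only has to check, as above, that the line also dominates $\alpha_{\min}t$ on $[p,+\infty)$), and if you wish to keep the subdifferential phrasing, concavity of $\zeta_{\mu,p}$ follows from the representation of $\zeta_{\mu,p}$ as an infimum of the affine maps $t\mapsto\theta_p(\alpha)t-\tau_\mu^*(\alpha)$ on $(-\infty,p)$ plus a comparison of one-sided slopes at $t=p$.
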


The case $p=+\infty$ is trivial,  since as noticed in Remark \ref{rem-spectra},  $\zeta_{\mu,+\infty} =\tau_\mu$ and $\theta_\infty$ is the identity map.

\subsection{Main features of $\zeta_{\mu,p}$ and $\zeta_{\mu,p}^*$}\label{features}
 
These properties of  $\zeta_{\mu,p}$ and $\zeta_{\mu,p}^*$  follow from Proposition~\ref{lemtetap}, whose proof is  given in Section~\ref{proofof5.1}, or from the definition of $\zeta_{\mu,p}$. 

\mk
$\bullet$  As an immediate consequence of Proposition \ref{lemtetap},  $\tau_\mu'(+\infty)=\zeta_{\mu,p}'(+\infty)$ and $\theta_p(\alpha_p)=\zeta_{\mu,p}'(-\infty)$, although these equalities can be directly checked. Also, by definition of $\theta_p$, $\zeta_{\mu,p}'(-\infty)\le \tau_\mu'(-\infty)+\frac{d}{p}$. 
 
\mk
$\bullet$   When $p=+\infty$, $\zeta_{\mu,+\infty}\equiv \tau_\mu$.

\mk
$\bullet$  When $\tau_\mu^*( \tau_\mu'(+\infty)) =0$ (i.e. when $\theta_p(\tau_\mu'(+\infty)) = \tau_\mu'(+\infty)$),  the function $\zeta_{\mu,p}^*$ reduces to the  map $H\mapsto \tau_\mu^*(\theta_p^{-1}(H))$ on the interval $ [\theta_p(\tau_\mu'(+\infty)), \theta_p(\alpha_p)] $, see Figure~\ref{typical}.

\mk
$\bullet$   When $\tau_\mu^*( \tau_\mu'(+\infty)) >0$ and $p\in [1,+\infty)$,  (equivalently, when $\theta_p(\tau_\mu'(+\infty)) > \tau_\mu'(+\infty)$), $\zeta_{\mu,p}^*$ is linear over   $[\tau_\mu'(+\infty), \theta_p(\tau_\mu'(+\infty))\big )$. This occurs when   $\zeta_{\mu,p}$ is not differentiable at $p$, and in this case   $\zeta_{\mu,p}'(p^+)=\tau_\mu'(+\infty)$ and $\zeta_{\mu,p}'(p^-)=\theta_p(\tau_\mu'(+\infty))$.  

Note that this  affine  part in the singularity spectrum $\zeta_{\mu,p}^*$ of typical  functions $f \in \widetilde{B}^{\mu,p}_{q}(\R^d) $ follows from  the heterogeneous ubiquity property stated in Proposition \ref{ubiquity}.

\mk
$\bullet$   When $[\theta_p(\tau_\mu'(+\infty)), \theta_p(\alpha_p)]$ is non trivial, $\zeta_{\mu,p}^*$ is concave on this interval. 

\sk Moreover, using the notations of Remark~\ref{Linearisation}, $\zeta_{\mu,p}^*$ is differentiable at $\theta_p(\tau_\mu'(+\infty))$ if and only if $t_\infty=(\tau_\mu^*)'(\tau_\mu'(+\infty))=+\infty$. Otherwise, one has $(\zeta_{\mu,p}^*)'(\theta_p(\tau_\mu'(+\infty))^+)=\frac{t_\infty}{t_\infty+p} p<p=(\zeta_{\mu,p}^*)'(\theta_p(\tau_\mu'(+\infty))^-)$. This implies that $\zeta_{\mu,p}$ is affine over the interval $[\frac{t_\infty}{t_\infty+p} p,p]$, with slope $\theta_p(\tau_\mu'(+\infty))$. 

See Figures \ref{typical}  and \ref{fig_functions2} for  some examples of the shape of the spectrum of typical functions $f\in  \widetilde {B}^{\mu,p}_{q}(\R^d)$. 

\mk
$\bullet$  When  $ -p \not\in \overset{\, _\smallfrown}{\partial}(\tau_\mu^*)$, one has $t_{-\infty}>-\infty$, so both $\tau_{\mu}$ and $\zeta_{\mu,p}$ are affine near $-\infty$.

\subsection{Proof of Proposition~\ref{lemtetap}}
\label{proofof5.1}

The case $p=+\infty$ is trivial. Assume  $p\in[1,+\infty)$.

Let $\chi$ be  the mapping defined by the right hand side of  \eqref{taumupstar}. We are going to   prove that $\chi^* = \zeta_{\mu,p}$ (which is  defined by \eqref{deftaumup}). Next, the continuity and concavity of   $\chi$ is shown.   This and the Legendre duality    imply that $\zeta_{\mu,p}^*=\chi$.

Denote $[\tau_\mu'(+\infty),\tau_\mu'(-\infty)]$ by $[\alm ,\alpha_{\max}]$. It is convenient to write $\chi^* = \min( \zeta_1, \zeta_2)$ where, for $t\in\mathbb R$,
\begin{align*}
\zeta_1 (t) &=  \inf \{ tH-p(H -\alpha_{\min}) :\ {H\in [\alm , \theta_p(\alm)) }\}\\
\zeta_2 (t) &=  \inf \{ tH-\tau_\mu^*(\theta_p^{-1}(H)):\ {H\in [\theta_p(\alm), \theta_p(\alpha_p)]}\}.
\end{align*}

When  $t\neq p$,   set 
$$
t_p = \frac{pt}{p-t}.
$$
Then, whenever it exists, let  $\widetilde \alpha_{t_p} $ be the minimum of those real numbers $\alpha$ such that 
$$t_p  \in [(\tau_\mu^*)'(\alpha^+), (\tau_\mu^*)'(\alpha^-)] .$$
Otherwise, set $\widetilde \alpha_{t_p} =\alm$.

\subsubsection{Proof of the equality $\chi^* = \zeta_{\mu,p}$}

Recall that $\zeta_{\mu,p}$ is given by  formula \eqref{deftaumup}, i.e. $ \zeta_{\mu,p}(t)=
 \frac{p-t}{p}\tau_\mu\left (\frac{p}{p-t} t \right ) $ when $t<p $, and $ \zeta_{\mu,p}(t)= t \alm $ when $ t\geq p$.

\medskip

{\bf Case $t> p$.} In this case,   $t_p<-p$ (as shows a simple verification). Moreover, the mapping $H\mapsto  tH-p(H -\alpha_{\min})$ is increasing, hence $\zeta_1(t) = t\alm$. 
 
\smallskip
 
 Setting  $\alpha=\theta^{-1} _p(H)$ for $H\in [\theta_p(\alm), \theta_p(\alpha_p)]$,  one has 
\begin{align} 
\zeta_2 (t) &=  \inf _{\alpha \in [ \alm ,\alpha_p]}  \widetilde\chi_2(\alpha)  \ \mbox{ where } \ 
\label{defchit}
\widetilde\chi_2 (\alpha)  =  t \theta_p(\alpha)-\tau_\mu^*(\alpha).
\end{align}

Suppose that $\alpha_{\min}<\alpha_p$. Differentiating (formally) $ \widetilde\chi _2$ gives 
\begin{equation}\label{formalder} \widetilde\chi'_2(\alpha) = t+ \frac{t-p}{p} (\tau_\mu^*)'(\alpha) =\frac{t-p}{p}((\tau_\mu^*)'(\alpha)-t_p).
\end{equation}
Recall that  $\tau_\mu^*$ is concave, non-decreasing over $[\alpha_{\min},\tau_\mu'(0^+)]$ and non-increasing over  $[\tau_\mu'(0^+), \alpha_{\max}]$. Hence, by definition of $\alpha_p$,  $(\tau_\mu^*)'(\alpha^-)$ and $(\tau_\mu^*)'(\alpha^+)$ are both greater than $-p$ when $\alpha\in [\alpha_{\min},\alpha_p)$. 
So formula \eqref{formalder} and the fact that $t-p>0$ imply that the concave mapping $\widetilde\chi _2$ is non-decreasing over $[\alm, \alpha_p]$.   
Thus,  the infimum defining $ \zeta_2$ is reached at $\alm$, where it equals $t\alm +\frac{t-p}{p}\tau_\mu^*(\alm) \geq t\alm$. 

If $  \alpha_{p} =\alm$,  then $\zeta_2 (t) =t \theta_p(\alpha_{\min})-\tau_\mu^*(\alm) = t\alpha_{\min}$.

In both cases,   $\zeta_2(t)\geq t\alm$, and so $\chi^*(t) = \min(\zeta_1(t),\zeta_2(t)) = t\alm$, and \eqref{deftaumup} holds true. 

 \mk 

  The case $t=p$ follows by continuity.

\medskip
{\bf Case $t< p$.}    The mapping $H\mapsto  tH-p(H -\alpha_{\min})$ is non increasing, so $\zeta_1(t) =  (t-p)\theta_p(\alm) +p\alm= t\alm+ \frac{t-p}{p} \tau_\mu^*(\alm)$. 

Next we  determine  $\zeta_2(t)$. Since $t_p>-p$,   using \eqref{formalder}  and the fact that $t-p<0$ now shows that the convex mapping $\widetilde \chi_2$   reaches its minimum at $\widetilde \alpha_{t_p}$, which necessarily belongs to $[\alpha_{\min}, \alpha_p]$. Consequently,
\begin{align*} 
\zeta_2 (t) &=     t \theta_p(\widetilde \alpha_{t_p})-\tau_\mu^*(\widetilde \alpha_{t_p}).
 \end{align*}

Two subcases are distinguished:

\medskip
$\bullet$ Suppose   that  $t_p\le (\tau_\mu^*)'(\alpha_{\min}^+)$.

 In this case,  $\widetilde \alpha_{t_p}\geq \alm$, and one has  $\tau_\mu^*(\widetilde \alpha_{t_p}) = t_p \widetilde \alpha_{t_p} - \tau_\mu(t_p)  $ (even if $\widetilde \alpha_{t_p}= \alm$, because in this case $t_p=(\tau_\mu^*)'(\alpha_{\min}^+)=t_\infty$, hence $t_\infty< \infty$ and we can use Remark~\ref{Linearisation}). After simplification one gets 
$$\zeta_2 (t) =t\left( \widetilde \alpha_{t_p} +\frac{\tau_\mu^*(\widetilde \alpha_{t_p})}{p}\right) - \tau_\mu^*(\widetilde \alpha_{t_p} )  = \frac{p-t}{p}\tau_\mu(t_p) .$$

 If $\widetilde \alpha_{t_p} = \alm$,   then $\zeta_2 (t)  =     t \theta_p(\alm)-\tau_\mu^*(\alm) = t\alm +  (t-p)\tau_\mu^*(\alm) /p =  \zeta_1(t)$.  And a quick computation shows that $ t\alm +  (t-p)\tau_\mu^*(\alm) /p    =  \frac{p-t}{p}\tau_\mu\left (\frac{p}{p-t} t \right ) $.

If  $\widetilde \alpha_{t_p} > \alm$, then let us show that $\zeta_2 (t) \geq \zeta_1(t) $. Indeed,  this inequality reads  $ t\alm+ \frac{t-p}{p} \tau_\mu^*(\alm) \geq  \frac{p-t}{p}\tau_\mu(t_p)=  t \widetilde \alpha_{t_p}  - \frac{p-t}{p} \tau_\mu^* (\widetilde \alpha_{t_p} )$. The previous inequality is equivalent to   $t(\widetilde \alpha_{t_p}-\alm) \leq \frac{p-t}{p} (  \tau_\mu^* (\widetilde \alpha_{t_p}) -   \tau_\mu^* (\alm))$, i.e.
$$
\frac{ \tau_\mu^* (\widetilde \alpha_{t_p} ) -   \tau_\mu^* (\alm) }{\widetilde \alpha_{t_p}-\alm} \geq t_p .$$ 
The concavity of $\tau_\mu^*$ entails that this last inequality holds true.
     
  Hence, in all cases $\chi^*(t) = \min(\zeta_1(t),\zeta_2(t)) =  \frac{p-t}{p}\tau_\mu\left (\frac{p}{p-t} t \right ) $, so \eqref{deftaumup} holds.
%
%
%
%
%
%

\medskip
$\bullet$ Suppose   that  $t_p>(\tau_\mu^*)'(\alpha_{\min}^+)$.  In this case,  $t_\infty=(\tau_\mu^*)'(\alpha_{\min}^+)<+\infty$, which implies that  $\tau_\mu(t)=\alm t-\tau_\mu^*(\alm) $ for all $t\ge t_\infty$ (see Remark \ref{Linearisation}).

Also, since $t_p>(\tau_\mu^*)'(\alpha_{\min}^+)>0$, $\widetilde \alpha_{t_p}=\alpha_{\min}$ and the image of $\partial \widetilde \chi_2$ is included in  $(0,+\infty)$. In particular the convex mapping $ \widetilde\chi_2$ reaches its minimum at $\alm$. Consequently,   $\zeta_2(t)=\zeta_1(t)=t\alm+ \frac{t-p}{p} \tau_\mu^*(\alm)$. Since  $t_p \ge t_\infty$ and   $\tau_\mu$ is affine on $[t_\infty,+\infty)$, it follows that $\chi^*(t)= \frac{p-t}{p}\tau_\mu(t_p)$, as stated by \eqref{deftaumup}.

Note that the previous case corresponds to $\frac{t_\infty}{t_\infty+p} p<t<p$. In regard to the form taken by $\zeta_{\mu,p}^*$, it is convenient to rewrite $\zeta_{\mu,p}(t)= \theta_p(\alpha_{\min}) t-\tau_\mu^*(\alpha_{\min})$.

\subsubsection{Concavity  of $\chi $}

First, observe that $\chi $ is affine on the interval $[\alpha_{\min},\theta_p(\alpha_{\min})]$.

Let us   explain why $\chi$ is also concave over $[\theta_p(\alpha_{\min}),\theta_p(\alpha_p)]$.

Assume first that $\tau_\mu^*$ is differentiable over $(\alpha_{\min},\theta^{-1}_p(\alpha_p))$. Then this is also the case for $\theta_p^{-1}$ over  $(\theta_p(\alpha_{\min}),\theta_p(\alpha_p))$. For  $H\in (\theta_p(\alpha_{\min}),\theta_p(\alpha_p))$,  denoting $\alpha = \theta_p^{-1}(H)$ and $t= (\tau_\mu^*)'(\alpha)$, one gets $\chi'(H)=\frac{t}{1+t/p}$, which is increasing as a function of $t$. Since $H=\theta_p(\alpha)$ is an increasing function of $\alpha$ and $\alpha$ is a decreasing function of $t$, it follows that $\chi'$ is decreasing over $(\theta_p(\alpha_{\min}),\theta_p(\alpha_p))$. Hence $\chi$ is concave over $[\theta_p(\alpha_{\min}),\theta_p(\alpha_p)]$. If $\tau^*_\mu$ has  non differentiability points over $(\alpha_{\min},\theta^{-1}_p(\alpha_p))$, we get the same  conclusion by approximating $\tau^*_\mu$  by the differentiable $L^q$-spectra associated with the Bernoulli product generated by the probability vectors used to construct $\mu$.

\mk

Thus, one knows that $\chi$ is concave on the two intervals $[\alpha_{\min},\theta_p(\alpha_{\min})]$ and on $[\theta_p(\alpha_{\min}),\theta_p(\alpha_p)]$. If $\theta_p(\alpha_{\min}) = \theta_p(\alpha_p)$, or if $\theta_p(\alpha_{\min})=\alpha_{\min}$, the conclusion is immediate. Otherwise,   to get that $\chi$ is concave, one must check that   $\chi'(\theta_p(\alpha_{\min}^+))\le p=\chi'(\theta_p(\alpha_{\min}^-))$. With the notations used above, a direct computation then yields $\chi'(\theta_p(\alpha_{\min}^+))=p$ if $ (\tau_\mu^*)'(\alpha_{\min}^+)=t_\infty=+\infty$   and $\chi'(\theta_p(\alpha_{\min}^+))=\frac{t_\infty}{t_\infty+p} p$ if $t_\infty<+\infty$. Hence the conclusion that $\chi$ is concave.
 

\section{Lower bound for the $L^q$-spectrum, and upper bound for the  singularity spectrum in $\widetilde B^{\mu,p}_{q}(\R^d)$, when $\mu\in\MDs$}
\label{sec-upperbound}

This section    uses the  wavelet leaders and $L^q$-spectrum of a function introduced in Section~\ref{secMFF}.  Item (1) of Theorem~\ref{main} is proved by establishing a  general lower bound for the $L^q$-spectrum of all  $f\in \widetilde B^{\mu,p}_{q}(\R^d)$ when $\mu\in\MDs$ (Theorem~\ref{validity}(1)). 
 
The main result of this section is the following.  Recall the definition \eqref{rmu} of $s_\mu$.
\begin{theorem}\label{LBzeta}
Let $\mu\in\MDs$ and $p,q\in[1,+\infty]$. Let $\Psi\in \mathcal F_{s_\mu}$.  For all $f\in L^p(\R^d)$ such that $|f_{\mu,p,q}|<+\infty$, one has ${\zeta_f}_{|\R_+}\ge {\zeta_{\mu,p}}_{|\R_+}$. 
\end{theorem}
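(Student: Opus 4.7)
My plan for Theorem~\ref{LBzeta} is to exploit the wavelet characterization of the hypothesis $|f|_{\mu,p,q}<\infty$ (Theorem~\ref{th_equivnorm_2}): setting
$$
C_{j'}:=\bigl\|\bigl(c_\lambda/\mu(\lambda)\bigr)_{\lambda\in\Lambda_{j'}}\bigr\|_{\ell^p(\Lambda_{j'})},
$$
the assumption yields $(C_{j'})_{j'\in\N}\in\ell^q(\N)$; in particular $(C_{j'})$ is uniformly bounded, and $|c_\lambda|\le C_{j'}\mu(\lambda)$ pointwise for every $\lambda\in\Lambda_{j'}$. I will fix $N\in\N^*$, show that $\zeta_f^{N,\Psi}(t)\ge\zeta_{\mu,p}(t)$ for every $t\ge 0$, and conclude by letting $N\to\infty$ since $\zeta_f=\inf_N\zeta_f^{N,\Psi}$. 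The case $t=0$ is trivial from $\#\{\lambda\in\mathcal D_j,\lambda\subset N[0,1]^d\}\le N^d 2^{jd}$, so I focus on $t>0$.

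The starting point is the elementary leader inequality valid for $t>0$, $(L^f_\lambda)^t=(\sup_{\lambda'\subset 3\lambda,\,j'\ge j}|c_{\lambda'}|)^t\le\sum_{j'\ge j}\sum_{\lambda'\in\Lambda_{j'},\,\lambda'\subset 3\lambda}|c_{\lambda'}|^t$, together with the bounded-overlap rearrangement $\sum_{\lambda\subset N[0,1]^d}(L^f_\lambda)^t\le 3^d\sum_{j'\ge j}A_{j'}(t)$, where $A_{j'}(t):=\sum_{\lambda'\in\Lambda_{j'},\,\lambda'\subset (N+2)[0,1]^d}|c_{\lambda'}|^t$. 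I distinguish two regimes according to whether $t\ge p$ or $t<p$. For $t\ge p$, the sequence embedding $\ell^p\hookrightarrow\ell^t$ yields $\sum_{\lambda'}(|c_{\lambda'}|/\mu(\lambda'))^t\le C_{j'}^t$; combined with the uniform H\"older decay $\mu(\lambda')\le C\,2^{-j'(\tau_\mu'(+\infty)-\varepsilon)}$ coming from property~$(P_1)$ for $\mu\in\MDs$, this produces $A_{j'}(t)\le C\,C_{j'}^t\,2^{-j't(\tau_\mu'(+\infty)-\varepsilon)}$. Since $t\tau_\mu'(+\infty)>0$ and $(C_{j'})$ is bounded, the series $\sum_{j'\ge j}A_{j'}(t)$ is geometric and dominated by its first term, hence $\zeta_f^{N,\Psi}(t)\ge t\tau_\mu'(+\infty)=\zeta_{\mu,p}(t)$.

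For $0<t<p$ the decisive move is to apply H\"older on the outer $\lambda$-sum (rather than inside each $A_{j'}$) with conjugate exponents $p/t$ and $p/(p-t)$:
$$
\sum_\lambda(L^f_\lambda)^t=\sum_\lambda(L^f_\lambda/\mu(\lambda))^t\mu(\lambda)^t\le \Bigl(\sum_\lambda(L^f_\lambda/\mu(\lambda))^p\Bigr)^{t/p}\Bigl(\sum_\lambda\mu(\lambda)^{tp/(p-t)}\Bigr)^{(p-t)/p}.
$$
The second factor is bounded by $C\,2^{-j\zeta_{\mu,p}(t)(1-\varepsilon)}$ via the SMF for $\mu\in\MDs$ and the identity $\tfrac{p-t}{p}\tau_\mu(\tfrac{tp}{p-t})=\zeta_{\mu,p}(t)$. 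For the first factor I insert $(L^f_\lambda)^p\le\sum_{\lambda'\subset 3\lambda,\,j'\ge j}|c_{\lambda'}|^p$, replace $\mu(\lambda)$ by $\mu(\lambda')$ via the almost-doubling of $\mu$ (at the cost of a sub-exponential factor $e^{p\phi(j)}$ with $\phi\in\Phi$), and reverse the order of summation to obtain $\sum_\lambda(L^f_\lambda/\mu(\lambda))^p\le 3^dC\,e^{p\phi(j)}\sum_{j'\ge j}C_{j'}^p$. When $q\le p$, the sequence embedding $\ell^q\subset\ell^p$ makes $\sum_{j'}C_{j'}^p$ finite; hence the first factor contributes only $e^{t\phi(j)}=e^{o(j)}$, and letting $\varepsilon\to 0$ yields $\zeta_f^{N,\Psi}(t)\ge\zeta_{\mu,p}(t)$.

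The main obstacle is the subcase $q>p$, where $\sum_{j'\ge j}C_{j'}^p$ may diverge. This is the most technical step: I would truncate the inner series at a scale $j+K(j)$ with $K(j)=o(j)$, estimating the head $j'\in[j,j+K]$ by H\"older on the $j'$-index against the $\ell^q$-norm of $(C_{j'})$ (which costs only a factor $(K+1)^{(q-p)/q}=e^{o(j)}$), and controlling the deep-scale tail directly via the scale-by-scale bound $(L^{f,j'}_\lambda)^p\le C\,C_{j'}^p\mu(\lambda)^p$ coming from $|c_{\lambda'}|\le C_{j'}\mu(\lambda')$ and the almost-doubling of $\mu$, combined with an $\ell^q$-interpolation that trades the potential divergence of $\sum C_{j'}^p$ against the geometric decay $2^{-j't\tau_\mu'(+\infty)}$ available at deep scales. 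Once $\zeta_f^{N,\Psi}(t)\ge\zeta_{\mu,p}(t)$ holds uniformly in $N$, taking the infimum over $N$ completes the proof.
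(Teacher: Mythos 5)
Your overall route is genuinely different from the paper's: the paper bounds the upper large-deviations spectrum of the wavelet leaders (Proposition~\ref{ubLD}, via the counting Lemma~\ref{propmajnb} and a case analysis on $\theta_p$) and then passes to Legendre transforms, whereas you estimate the partition function $\sum_{\lambda}(L^f_\lambda)^t$ directly by H\"older with exponents $p/t$ and $p/(p-t)$, which is a more elementary and attractive idea. Several pieces of your argument are correct: the case $t\ge p$ works for every $q$ (only boundedness of $C_{j'}$ is used), the second H\"older factor is indeed $\le 2^{-j(\zeta_{\mu,p}(t)-\ep)}$ for large $j$ (note that only the definition of $\tau_\mu$ as a liminf is needed, not the SMF, and that the hypothesis $|f|_{\mu,p,q}<\infty$ is already the statement $(C_{j'})\in\ell^q$, so Theorem~\ref{th_equivnorm_2} is not needed), and the replacement of $\mu(\lambda)$ by $\mu(\lambda')$ is legitimate since $\mu(\lambda')\le\mu(3\lambda)\le e^{\phi(j)}\mu(\lambda)$ by monotonicity and almost doubling.

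The gap is in the subcase you call the most technical, $q>p$ -- which contains the essential case $q=+\infty$ and cannot be avoided, since any proof effectively only has $\sup_{j'}C_{j'}<\infty$ at its disposal. A truncation at scale $j+K(j)$ with $K(j)=o(j)$ is too short: beyond it the only smallness available is $|c_{\lambda'}|\le C_{j'}\mu(\lambda')\le C\,2^{-j'(\alpha_{\min}-\ep)}\approx 2^{-j\alpha_{\min}}$, and summing over the $\asymp 2^{jd}$ cubes $\lambda\subset N[0,1]^d$ gives a tail contribution of order $2^{-j(t\alpha_{\min}-d)}$; since $\zeta_{\mu,p}(t)\ge t\alpha_{\min}-\frac{p-t}{p}d> t\alpha_{\min}-d$ for $0<t<p$ (already for $\mu$ a power of Lebesgue), this tail strictly dominates the target $2^{-j\zeta_{\mu,p}(t)}$ and destroys the bound. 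Your alternative scale-by-scale bound $L^{f,j'}_\lambda\lesssim C_{j'}\mu(\lambda)$ has no decay in $j'$ and, used alone, only yields the exponent $\tau_\mu(t)$, which is again strictly below $\zeta_{\mu,p}(t)$ in general, so the proposed ``$\ell^q$-interpolation'' cannot recover the loss. The repair is simple and makes the $q\le p$ versus $q>p$ dichotomy unnecessary: truncate multiplicatively at $j'\le Kj$ with a constant $K=K(t)$ so large that $Kt(\alpha_{\min}-\ep)\ge d+\zeta_{\mu,p}(t)$ (this is exactly the role of the bound $j'\le 2j(H+\ep)/\alpha_{\min}$ in the paper's Lemma~\ref{lemleader}); then the tail is $\le 2^{jd}C^t2^{-Kjt(\alpha_{\min}-\ep)}\le 2^{-j\zeta_{\mu,p}(t)}$, while the head involves only $O(j)$ scales, so $\sum_{j'=j}^{Kj}C_{j'}^p\le Kj\,\sup_{j'}C_{j'}^p=2^{o(j)}$ using nothing more than $\ell^\infty$ boundedness, and your H\"older argument closes for all $q\in[1,+\infty]$.
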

It is implicit in Theorem \ref{LBzeta} that the semi-norm $|f_{\mu,p,q}|$ defined in  \eqref{defbesovwavelet} is computed using the wavelet $\Psi\in \mathcal F_{s_\mu}$  fixed  by the statement.

Theorem \ref{LBzeta} yields the following corollary.
\begin{corollary}
Let $\mu\in\MDs$ and $p,q\in[1,+\infty]$. For all $f\in \widetilde B^{\mu,p}_q(\R^d)$, one has:
\begin{enumerate}
\item  ${\zeta_f}_{|\R_+}\ge {\zeta_{\mu,p}}_{|\R_+}$, i.e. the claim of Theorem~\ref{validity}(1) holds true.
\item For all $H\in\R$, 
\begin{align*}
\sigma_f(H)\le 
\begin{cases} \zeta_{\mu,p}^*(H)&\text{ if }H\le \zeta_{\mu,p}'(0^+)\\
\ \ \ d &\text{ if } H> \zeta_{\mu,p}'(0^+)
\end{cases},
\end{align*}
i.e. part (1) of Theorem~\ref{main} holds true. 
\end{enumerate}
\end{corollary}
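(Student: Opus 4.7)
The proof reduces to the scale-wise inequality
\[
\sum_{\lambda\in\mathcal D_j\cap N[0,1]^d}(\ld^f_\lambda)^t\le C_N\, 2^{-j\zeta_{\mu,p}(t)+o(j)}\qquad (t\ge 0,\ N\ge 1),
\]
from which $\zeta^\Psi_f(t)\ge\zeta_{\mu,p}(t)$ follows upon taking $-\frac{1}{j}\log_2$, $\liminf_j$, and $N\to\infty$. The hypothesis $|f|_{\mu,p,q}<+\infty$ and $\ell^q\hookrightarrow\ell^\infty$ imply that $\epsilon^\mu_{j^*}:=\|(|c_\lambda|/\mu(\lambda))_{\lambda\in\Lambda_{j^*}}\|_{\ell^p}$ is uniformly bounded, and in particular $|c_{\lambda^*}|\le C\mu(\lambda^*)$ for every $\lambda^*\in\Lambda$. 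Using $(\max x_i)^t=\max x_i^t\le\sum x_i^t$ and the fact that each $\lambda^*$ is contained in at most $3^d$ of the neighborhoods $3\lambda$ for $\lambda\in\mathcal D_j$, one swaps sums to obtain the key reduction
\[
\sum_{\lambda\in\mathcal D_j\cap N[0,1]^d}(\ld^f_\lambda)^t\le 3^d\sum_{j^*\ge j}\Sigma_{j^*}(t),\qquad \Sigma_{j^*}(t):=\sum_{\lambda^*\in\Lambda_{j^*}\cap (N+2)[0,1]^d}|c_{\lambda^*}|^t.
\]

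For $t\in[0,p)$, factoring $|c_{\lambda^*}|^t=(|c_{\lambda^*}|/\mu(\lambda^*))^t\mu(\lambda^*)^t$ and applying H\"older's inequality with conjugate exponents $p/t$ and $p/(p-t)$ yields, with $s=pt/(p-t)$,
\[
\Sigma_{j^*}(t)\le(\epsilon^\mu_{j^*})^t\Bigl(\sum_{\lambda^*}\mu(\lambda^*)^{s}\Bigr)^{(p-t)/p}\le C\,2^{-j^*\zeta_{\mu,p}(t)+o(j^*)},
\]
since $\mu_{|[0,1]^d}$ obeys the MF (Theorem~\ref{th-construct-measures}) so that $\sum_{\lambda^*}\mu(\lambda^*)^s\sim 2^{-j^*\tau_\mu(s)+o(j^*)}$ and $\zeta_{\mu,p}(t)=(p-t)\tau_\mu(s)/p$. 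For $t\ge p$, writing $|c_{\lambda^*}|^t=|c_{\lambda^*}|^p\cdot|c_{\lambda^*}|^{t-p}$ and using the uniform bound $|c_{\lambda^*}|^{t-p}\le C\,2^{-j^*(t-p)\tau_\mu'(+\infty)+o(j^*)}$ together with the scale-wise $\ell^p$ bound gives $\Sigma_{j^*}(t)\le C\,2^{-j^*t\tau_\mu'(+\infty)+o(j^*)}=C\,2^{-j^*\zeta_{\mu,p}(t)+o(j^*)}$. Whenever $\zeta_{\mu,p}(t)>0$ the geometric sum in $j^*\ge j$ converges and immediately delivers the desired bound; since $\zeta_{\mu,p}(t)=t\tau_\mu'(+\infty)>0$ for $t\ge p$, the range $t\ge p$ is settled by this alone.

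The hard part will be the regime $t\in(0,p)$ in which $\zeta_{\mu,p}(t)\le 0$: because $\mu_{|[0,1]^d}$ is a probability so $\tau_\mu(1)=0$, this corresponds to $s\le 1$, i.e.\ $t\le p/(p+1)$, and there the geometric sum above diverges. The plan is to truncate the sum $\sum_{j^*\ge j}=\sum_{j\le j^*\le j+J}+\sum_{j^*>j+J}$: on moderate scales the H\"older bound produces only a polynomial factor $O(J)$ in front of $2^{-j\zeta_{\mu,p}(t)}$, while on deep scales we use the alternative estimate $\Sigma_{j^*}(t)\le C\sum_{\lambda^*}\mu(\lambda^*)^t\le C\,2^{-j^*\tau_\mu(t)+o(j^*)}$; the concavity of $\tau_\mu$, together with $\tau_\mu(0)=\zeta_{\mu,p}(0)=-d$ and the explicit formula for $\zeta_{\mu,p}$ coming from Proposition~\ref{lemtetap}, allows an optimal choice of $J=J(t)$ for which both contributions combine into $C\,2^{-j\zeta_{\mu,p}(t)+o(j)}$. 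The extreme case $t=0$ is trivial since $\sum_\lambda(\ld^f_\lambda)^0\le\#(\mathcal D_j\cap N[0,1]^d)\le C N^d 2^{jd}=C N^d 2^{-j\zeta_{\mu,p}(0)}$.

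To deduce the corollary, note that for $f\in\widetilde B^{\mu,p}_q$ one has $|f|_{\mu^{(-\ep)},p,q}<+\infty$ for every small $\ep>0$ by Theorem~\ref{th_equivnorm_2}; the argument above applies verbatim to $\mu^{(-\ep)}$, whose $L^q$-spectrum is $\tau_{\mu^{(-\ep)}}(t)=\tau_\mu(t)-t\ep$ and whose $\zeta$ function is $\zeta_{\mu^{(-\ep)},p}(t)=\zeta_{\mu,p}(t)-t\ep$, and which still satisfies property (P) with slightly shifted exponents. Letting $\ep\downarrow 0$ yields $\zeta_f\ge \zeta_{\mu,p}$, i.e.\ part (1) of Theorem~\ref{validity} and hence part (1) of the corollary. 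Part~(2) then results from combining (1) with the general upper bound $\sigma_f(H)\le(\zeta_f^\Psi)^*(H)\le\zeta_{\mu,p}^*(H)$ (inequality \eqref{UPF}), supplemented by the trivial $\sigma_f(H)\le d$ (since $E_f(H)\subset\R^d$) to cover the range $H>\zeta_{\mu,p}'(0^+)$.
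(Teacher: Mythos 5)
Your reduction of the corollary to the scale-wise bound, the treatment of $t\ge p$, and the deduction of part (2) from part (1) via \eqref{UPF} are fine, and the passage to $\mu^{(-\ep)}$ and $\ep\downarrow 0$ matches the paper. But the regime you defer to a ``plan'' --- $t\in(0,p)$ with $\zeta_{\mu,p}(t)\le 0$ --- is precisely the hard part of the statement, and the plan cannot work as described. On that range one has $\tau_\mu(t)\le\zeta_{\mu,p}(t)\le 0$ (write $\zeta_{\mu,p}(t)=\frac{t}{u}\tau_\mu(u)$ with $u=\frac{pt}{p-t}>t$ and use monotonicity of $\tau_\mu$ together with $\tau_\mu(u)\le0$), so your deep-scale alternative bound $\Sigma_{j^*}(t)\le C2^{-j^*\tau_\mu(t)+o(j^*)}$ has a non-positive exponent too: the tail $\sum_{j^*>j+J}2^{-j^*\tau_\mu(t)}$ diverges for every choice of $J$, and no truncation point can rescue the estimate. (Also, the moderate-scale block is dominated by its deepest term $2^{-(j+J)\zeta_{\mu,p}(t)}$, not by $O(J)\,2^{-j\zeta_{\mu,p}(t)}$, when $\zeta_{\mu,p}(t)<0$.) A side issue: for $\mu\in\MDs$ one has $\tau_\mu(t)=\tau_\nu(st)$ with $\nu$ a probability measure, so $\tau_\mu(1)=0$ need not hold; the sign-change threshold is not $t=p/(p+1)$ in general, but this is cosmetic compared with the divergence problem.

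The loss is structural, not technical: replacing the sup defining $\ld^f_\lambda$ by a sum of $|c_{\lambda^*}|^t$ over \emph{all} deeper scales ignores that a coefficient at depth $j^*\gg j$ produces a leader of size $2^{-j^*H'}\ll 2^{-jH'}$ at scale $j$, and that only those deep coefficients which dominate everything above them inside $3\lambda$ can be leaders at all. Your per-scale H\"older bound uses nothing about $\mu$ beyond $\tau_\mu$, whereas the paper's proof of this step (Proposition~\ref{ubLD} via Lemma~\ref{propmajnb}) performs a large-deviations count of the leaders themselves: for each scale-$j$ cube it tracks jointly the depth $j'$ of the (near-)maximal coefficient, its size exponent $H'$ with $j'H'\approx jH$, and the local exponent of $\mu$ at $\lambda'$, then converts counts at depth $j'$ into counts at scale $j$ using the rescaling factor $j'/j$, the structural inequality \eqref{loulou} specific to $\mu\in\MDs$ (the measure decays along descendants at rate at least $\alpha_{\min}$ up to $2^{o(j)}$), and the map $\theta_p$; the optimization over these parameters is exactly what produces $\zeta_{\mu,p}^*$ in the cases $\theta_p(\alpha_{\min})\le H$. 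A cruder repair (e.g.\ noting that at most $O(2^{jd})$ deep coefficients can actually be argmaxes) only yields the exponent $t\alpha_{\min}-d\le\zeta_{\mu,p}(t)$, which is still too weak unless $\zeta_{\mu,p}^*(\alpha_{\min})=d$. So as written there is a genuine gap exactly where the interaction between the wavelet coefficients and the environment $\mu$ must be exploited.
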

\begin{proof} Part (1) follows from the definition of $\widetilde B^{\mu,p}_q(\R^d)$ and the continuity of ${\zeta_{\mu^{(-\ep)},p}}_{|\R_+}$ as a function of $\ep$. Part (2) is then a consequence of \eqref{UPF}. 
\end{proof}


The wavelets   $\Psi\in \mathcal F_{s_\mu}$ are fixed  for the rest of this section.

To obtain Theorem~\ref{LBzeta}, one needs to   estimate, for any  $f\in L^p(\R^d)$ such that $|f_{\mu,p,q}|<+\infty$ and any $N\in\N$,  the upper large deviations spectrum of the wavelet leaders $(L_\lambda^f)_{\lambda\subset N[0,1]^d}$ associated with     $\Psi$, defined as follows. Recall the notations $H\pm\ep$ introduced in  \eqref{defpmep}, and $N\lambda$ at the beginning of Section~\ref{statement}.

\begin{definition}
\label{defEN}
Let   $f\in L^1_{\rm loc}(\R^d)$ and $N\in \N^*$, with wavelet coefficients and   leaders  computed with the wavelet $\Psi$. For any compact subinterval $I$ of $\R$, set
\begin{equation*}
\label{defEF}
\mathcal{D}^{N}_{f}(j,I) =
  \left\{ \lambda\in \mathcal D_j: \lambda\subset  N[0,1]^d ,\, \frac{\log_2 |L^f_\lambda|}{ {-j}} \in
I\right\},
\end{equation*} 

\noindent 
The  upper wavelet leaders large deviation spectrum of $f$ associated with  $\Psi$ and $N[0,1]^d$ is  
\begin{eqnarray*} 
\overline  {\sigma}^{{\rm LD},N}_{f}(H)  & =  & \lim_{\ep\to 0}\limsup_{j\to + \infty} \frac{\log_2 \#\mathcal{D}^{N}_f(j, H\pm\ep)}{j}.
\end{eqnarray*}
\end{definition}
\begin{proposition}\label{ubLD}
Let $\mu\in\MDs$ and $p,q\in[1,+\infty]$. For all $f\in L^p(\R^d)$ such that $|f_{\mu,p,q}|<+\infty$, and all $N\in \N$, one has 
\begin{equation}\label{LDC}
\overline  {\sigma}^{{\rm LD},N}_{f}(H)  \le \begin{cases} \zeta_{\mu,p}^*(H)&\text{ if }H\le \zeta_{\mu,p}'(0^+)\\
d&\text{ if } H> \zeta_{\mu,p}'(0^+)
\end{cases}.
\end{equation}
\end{proposition}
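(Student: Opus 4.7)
\medskip

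The plan is to prove \eqref{LDC} by bounding, uniformly in $j$, the cardinality of
$\{\lambda\in\mathcal D_j:\lambda\subset N[0,1]^d,\ L_\lambda^f\ge 2^{-j(H+\ep)}\}$
through a careful counting of the wavelet coefficients of $f$ that ``produce'' these leaders, the count being partitioned by the scale $j'$ of the coefficient and the order of magnitude of the $\mu$-mass of the associated dyadic cube. The explicit form of $\zeta^*_{\mu,p}$ given by Proposition~\ref{lemtetap} will then allow to identify the resulting exponential rate as $\zeta^*_{\mu,p}(H)$.

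\medskip
\noindent\textbf{Step 1 (translating the hypothesis).} Since $(\ep_{j}^\mu)_{j\in\N}\in\ell^q(\N)$, in particular $C_0:=\sup_{j\ge 0}(\ep_{j}^\mu)^p<+\infty$, so that for every $j'\ge 0$,
$$\sum_{\lambda'\in\Lambda_{j'}}\Big(\frac{|c_{\lambda'}|}{\mu(\lambda')}\Big)^p\ \le\ C_0.$$

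\medskip
\noindent\textbf{Step 2 (leaders come from coefficients).} Fix $\ep>0$ and $j\in\N$. By \eqref{defleaders}, each $\lambda\in\mathcal D_j$ with $L_\lambda^f\ge 2^{-j(H+\ep)}$ has a ``witness'' $\lambda'=(i',j',k')\in\Lambda$ with $j'\ge j$, $\lambda_{j',k'}\subset 3\lambda\subset (N+2)[0,1]^d$, and $|c_{\lambda'}|\ge 2^{-j(H+\ep)}/2$. Conversely each such $\lambda'$ belongs to $3\lambda$ for at most $C_d=O_d(1)$ cubes $\lambda\in\mathcal D_j$, hence
$$\#\mathcal D^N_f(j,(-\infty,H+\ep])\ \le\ C_d\sum_{j'\ge j}\#\bigl\{\lambda'\in\Lambda_{j'}:\ \lambda'\subset (N+2)[0,1]^d,\ |c_{\lambda'}|\ge 2^{-j(H+\ep)}/2\bigr\}.$$

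\medskip
\noindent\textbf{Step 3 (partition by $\mu$-mass).} Since $\mu\in\MDs$ satisfies property (P$_1$), Proposition~\ref{fm}(5) (applied to $\mu$, which obeys the SMF because $\mu$ is a positive power of some $\nu\in\MD$) ensures that $-j'^{-1}\log_2\mu(\lambda')$ lies in an arbitrarily small neighbourhood of $[\tau_\mu'(+\infty),\tau_\mu'(-\infty)]$ for $j'$ large. Fix $\eta>0$ and discretize this interval into $O(\eta^{-1})$ sub-intervals of length $\eta$ centred at points $\alpha$. For each bin we combine two complementary bounds on $N_{j,j',\alpha}:=\#\{\lambda'\in\Lambda_{j'}:\ \lambda'\subset(N+2)[0,1]^d,\ |c_{\lambda'}|\ge 2^{-j(H+\ep)}/2,\ \mu(\lambda')\in 2^{-j'(\alpha\pm\eta)}\}$:
\begin{itemize}
\item[(a)] From Step 1, since each such $\lambda'$ satisfies $|c_{\lambda'}|/\mu(\lambda')\ge 2^{-j(H+\ep)-1}\cdot 2^{j'(\alpha-\eta)}$,
$$N_{j,j',\alpha}\ \le\ 2^p C_0\,2^{jp(H+\ep)-j'p(\alpha-\eta)}.$$
\item[(b)] From Proposition~\ref{fm}(4) applied to $\mu$,
$$N_{j,j',\alpha}\ \le\ C'_N\,2^{j'(\tau_\mu^*(\alpha)+\eta)}.$$
\end{itemize}

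\medskip
\noindent\textbf{Step 4 (optimisation).} Taking the minimum of (a) and (b), summing over the $O(\eta^{-1})$ bins and over $j'\ge j$, one gets
$$\frac{\log_2\#\mathcal D^N_f(j,(-\infty,H+\ep])}{j}\ \le\ \sup_{\substack{u\ge 1\\ \alpha\in\mathrm{dom}(\tau_\mu^*)}}\!\min\!\bigl(p(H+\ep)-up(\alpha-\eta),\,u(\tau_\mu^*(\alpha)+\eta)\bigr)+o_j(1).$$
Let $\eta,\ep\to 0$: the two expressions in the $\min$ coincide when $u=H/\theta_p(\alpha)$ with $\theta_p$ defined by \eqref{thetap}, the common value being $H\tau_\mu^*(\alpha)/\theta_p(\alpha)$. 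The constraint $u\ge 1$ forces $\theta_p(\alpha)\le H$. Two cases arise:
\begin{itemize}
\item[$(i)$] If $H\in[\theta_p(\tau_\mu'(+\infty)),\theta_p(\alpha_p)]$, the optimum is attained at $u=1$, $\alpha=\theta_p^{-1}(H)$ with value $\tau_\mu^*(\theta_p^{-1}(H))$.
\item[$(ii)$] If $H\in[\tau_\mu'(+\infty),\theta_p(\tau_\mu'(+\infty)))$, no $\alpha\in\mathrm{dom}(\tau_\mu^*)$ satisfies $\theta_p(\alpha)\le H$; at $u=1$ and $\alpha=\tau_\mu'(+\infty)$ the first term of the $\min$ equals $p(H-\tau_\mu'(+\infty))$ and is minimal, giving the upper bound $p(H-\tau_\mu'(+\infty))$.
\end{itemize}
By Proposition~\ref{lemtetap} this supremum equals exactly $\zeta^*_{\mu,p}(H)$ when $H\in[\tau_\mu'(+\infty),\theta_p(\alpha_p)]$, which covers the regime $H\le\zeta'_{\mu,p}(0^+)$ of \eqref{LDC}. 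For $H>\zeta'_{\mu,p}(0^+)$ (in particular in the decreasing part of $\zeta^*_{\mu,p}$), the trivial estimate $\#\mathcal D_f^N(j,H\pm\ep)\le\#\{\lambda\in\mathcal D_j:\lambda\subset N[0,1]^d\}\le C_N 2^{dj}$ provides the required upper bound $d$.

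\medskip
\noindent\textbf{Step 5 (conclusion).} Specialising to the window $H\pm\ep$ instead of $(-\infty,H+\ep]$ yields the same rate (since the latter dominates), so letting $\ep\to 0$ gives \eqref{LDC}. The main obstacle lies in Step~4: one must justify that the discretisation errors and the $o_j(1)$ terms are uniform in $\alpha$ and $u$ (which follows from the uniform convergence granted by Proposition~\ref{fm}(4)--(5)), and identify the two regimes of the constrained optimum with the branches of $\zeta^*_{\mu,p}$ given by Proposition~\ref{lemtetap}; the rest is bookkeeping.
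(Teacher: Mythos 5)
There is a genuine gap in Step 4: the constrained optimum you write down is \emph{not} equal to $\zeta^*_{\mu,p}(H)$ in general, so your two bounds (a)--(b) at the witness scale do not suffice. Along the locus where the two terms of the $\min$ coincide, i.e.\ $u=H/\theta_p(\alpha)$, the common value is $H\,\tau_\mu^*(\alpha)/\theta_p(\alpha)$, and your claim that the supremum over $\{\theta_p(\alpha)\le H\}$ is attained at $u=1$, $\alpha=\theta_p^{-1}(H)$ requires $\alpha\mapsto \tau_\mu^*(\alpha)/\theta_p(\alpha)$ to be nondecreasing; this fails exactly when $\tau_\mu^*(\alpha_{\min})>0$ is large compared with $\alpha_{\min}$ (the regime where the typical spectrum has the extra affine piece). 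Concretely, take $d=1$, $p=1$, $\alpha_{\min}=0.1$, $\tau_\mu^*(0.1)=0.9$ and $\tau_\mu^*$ increasing linearly to $1$ at $\alpha=1$ (an admissible environment in $\MDs$, being a small power of a measure in $\mathcal M_1$). For $H=1.5\le \zeta_{\mu,1}'(0^+)=2$ one has $\zeta^*_{\mu,1}(H)=\tau_\mu^*(\theta_1^{-1}(1.5))=0.95$, while the admissible pair $u=1.5$, $\alpha=0.1$ gives $\min\bigl(p(H-u\alpha),\,u\tau_\mu^*(\alpha)\bigr)=1.35$; even after capping by $d=1$ your argument only yields the bound $1>0.95$. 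So the ``bookkeeping'' you defer is precisely where the proof lives: deep witnesses sitting on cubes of near-minimal exponent are too numerous to be dismissed by the scale-$j'$ counts alone.

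The paper's proof faces exactly this issue and resolves it with an extra ingredient that your scheme lacks. After the same coefficient/measure double count (their Lemma~\ref{propmajnb}, your (a)--(b)), they split according to whether the witness-scale count is below $\zeta^*_{\mu,p}(H)$ (their subcase (3a), where your argument works) or above it (subcases (3b)--(3d)). In the latter regime they invoke Lemma~\ref{discret4}, which uses the structural inequality \eqref{loulou} satisfied by the specific measures of $\MDs$ (descendants lose $\mu$-mass at rate at least $\alpha_{\min}$ per generation) to show that any scale-$j$ cube whose leader is created by such a deep witness must itself satisfy $\frac{\log_2\mu(\lambda)}{-j}\lesssim \theta_p^{-1}(H)$; they then count these cubes directly at scale $j$ via the large deviations of $\mu$, obtaining $2^{j(\tau_\mu^*(\theta_p^{-1}(H))+\eta)}=2^{j(\zeta^*_{\mu,p}(H)+\eta)}$. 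This back-transfer from the witness scale to scale $j$ (and the fact that $\mu\in\MDs$ rather than a general almost-doubling capacity) is the missing idea; without it the regime $\sigma_\mu(\alpha_{\min})>0$ cannot be brought down to $\zeta^*_{\mu,p}(H)$.
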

Assuming that  Proposition \ref{ubLD} is proved, let us    explain how Theorem~\ref{LBzeta} follows. 

\begin{proof}[Proof of Theorem~\ref{LBzeta}] Note that  by large deviations theory \cite{DemboZeitouni},  $\zeta_f^{N,\Psi}$ defined in \eqref{zetaNPsi} is the Legendre transform of the concave hull of $\overline  {\sigma}^{{\rm LD}, N }_{f}$. By  Proposition~\ref{ubLD}, this concave hull is dominated by  the right hand-side of \eqref{LDC}. It is easily seen    that this right-hand side, as a function of $H$,  is  concave, and that its  Legendre transform is  equal to ${\zeta_{\mu,p}}_{|\R_+}$ over $\R_+$ and equal to $-\infty$ over $\R^*_+$. Consequently, ${\zeta_f^{N,\Psi}}_{|\R_+}\ge {\zeta_{\mu,p}}_{|\R_+}$, which allows to conclude since ${\zeta_f^{\Psi}}_{|\R_+}= \lim_{N\to +\infty} { \zeta^{N,\Psi}_f }_{|\R_+}$ does not depend on $\Psi$. 
\end{proof}

The rest of this section is devoted to the proof of Proposition~\ref{ubLD}. It  requires    large deviations estimates on the distribution of the wavelet coefficients of $f$ under the constraint   $|f_{\mu,p,+\infty}|<+\infty$, which holds automatically if $|f_{\mu,p,q}|<+\infty$.

\subsection{Large deviations estimates   for wavelet coefficients}\label{secub1}

\begin{definition}
Let $\mu\in \mathcal C(\R^d)$,  $I_H$ and  $I_\alpha$ be two compact subintervals of $\R$, and $f\in L^1_{\rm{loc}}(\R^d)$ with wavelet coefficients($c_\lambda)_{\lambda\in\Lambda}$. Then, define  
\begin{equation}
\label{defLA}
\Lambda_{f,\mu}(j,I_H,I_\alpha)  = \left\{ \lambda=(i,j,k)\in\Lambda:\lambda_{j,k}\subset 3[0,1]^d,\  \begin{cases}  \ds \frac{\log_2 |c_\lambda|}{ {-j}} \in
I_H\\  \ds \frac{\log_2\mu(\lambda_{j,k}) }{ {-j}} \in
I_\alpha \end{cases} \right\}.
\end{equation}
\end{definition}

In other words, $\Lambda_{f,\mu}(j,I_H,I_\alpha)$ contains those cubes $\la$ of generation $j$ such that  $\mu(\la) \sim |\la|^\alpha$ with $\alpha\in I_\alpha$ and $|c_\la|\sim 2^{-jh}$  with $h\in I_H$. The cube  $3[0,1]^d$ is considered, rather than $[0,1]^d$ because the computation of wavelet leaders on $\zud$  requires some knowledge of $\mu$ and $f$  in this neighborhood of~$\zud$.

\sk

The cardinality of $\Lambda_{f,\mu}(j,I_H,I_\alpha)$ is estimated to get a control of the wavelets leaders large deviations spectrum under the assumptions of Proposition~\ref{ubLD}. 

In the next lemma, the convention $\infty\times  x=+\infty$ for $x\ge 0$ is adopted.  
 \begin{lemma}
 \label{propmajnb}
Let $\mu\in\MDs$ and $p\in[1,+\infty]$. Let $\alpha_{\min} = \tau'_\mu(+\infty)$ and $\alpha_{\max}= \tau'_\mu(-\infty)$. Let $f\in L^p(\R^d)$ be such that $ |f|_{\mu,p,+\infty} <+\infty $ and let $I_H, I_\alpha$ be two compact subintervals of~$\R$. 
\begin{enumerate}

\item
If  $\max I_H <  \min I_\alpha $, then $\Lambda_{f,\mu}(j,I_H,I_\alpha) =\emptyset$ for $j$ large enough.

\item
If $I_\alpha \subset [\alpha_{\min},\alpha_{\max}]$ and $\min I_\alpha \leq \min I_H$, then for every $\eta>0$, there exists $\ep_0>0$ and $J_0\in \N$ such that for every $\ep\in [0,\ep_0]$ and $j\geq J_0$: 
\begin{equation}
\label{majemuf}
  \frac{ \log_2 \# \Lambda_{f,\mu}(j,I_H\pm\ep,I_\alpha\pm\ep) }{ j}  \leq  \max_{\beta\in I_\alpha\cap [0,\max I_H]}\min (p(\max I_H -  \beta),{\tau_\mu^*}(\beta))+ \eta.
\end{equation} 
\end{enumerate}

 \end{lemma}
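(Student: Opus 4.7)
The plan is to derive both parts from a single mechanism: a competition between the summability of the renormalized wavelet coefficients $(c_\la/\mu(\la))_\la$ forced by $|f|_{\mu,p,+\infty}<+\infty$, and the large deviation structure of $\mu$ itself. The latter is available because $\mu\in \MDs$ means $\mu=\nu^s$ with $\nu\in\MD$ obeying the SMF, so Proposition~\ref{fm}(4) applies up to the scaling $\tau_\mu^*(\alpha)=\tau_\nu^*(\alpha/s)$, and the $\Z^d$-invariance of $\nu$ transports the estimate from $\zud$ to the finitely many unit cubes forming $3\zud$ at the cost of a harmless multiplicative constant.

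For Part (1) I would simply use the first ingredient. The hypothesis $|f|_{\mu,p,+\infty}<+\infty$ provides a constant $C$ with $\sum_{\la\in \La_j}(|c_\la|/\mu(\la))^p\le C^p$ (replaced by a supremum when $p=+\infty$). If $\la\in\Lambda_{f,\mu}(j,I_H,I_\alpha)$ and $\max I_H<\min I_\alpha$, then $|c_\la|\ge 2^{-j\max I_H}$ and $\mu(\la)\le 2^{-j\min I_\alpha}$, so $|c_\la|/\mu(\la)\ge 2^{j(\min I_\alpha-\max I_H)}$ grows exponentially. Feeding this back into the $\ell^p$ bound gives $\#\Lambda_{f,\mu}(j,I_H,I_\alpha)\le C^p 2^{-jp(\min I_\alpha-\max I_H)}$, which is strictly less than $1$ for $j$ large, hence the set is empty.

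For Part (2) I would fix $\eta>0$, partition $I_\alpha\pm\ep$ into $O(\ep'^{-1})$ subintervals of length $2\ep'$ centered at exponents $\beta$, and control each piece
$$
\Lambda_{f,\mu,\beta}(j)=\Big\{\la\in \Lambda_{f,\mu}(j,I_H\pm\ep,I_\alpha\pm\ep):\, \tfrac{\log_2\mu(\la)}{-j}\in[\beta-\ep',\beta+\ep']\Big\}
$$
by two complementary bounds. The \emph{analytic bound} observes that $|c_\la|/\mu(\la)\ge 2^{j(\beta-\max I_H-\ep-\ep')}$ for $\la\in \Lambda_{f,\mu,\beta}(j)$, so the $\ell^p$ summability yields $\#\Lambda_{f,\mu,\beta}(j)\le C^p 2^{jp(\max I_H-\beta+\ep+\ep')}$. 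The \emph{multifractal bound} is Proposition~\ref{fm}(4) applied on the small interval $[\beta-\ep',\beta+\ep']$, which for $\ep,\ep'$ small and $j$ large gives $\#\Lambda_{f,\mu,\beta}(j)\le 2^{j(\tau_\mu^*(\beta)+\eta/2)}$ (using continuity of $\tau_\mu^*$ to replace the local supremum by $\tau_\mu^*(\beta)$). Summing over the partition,
$$
\#\Lambda_{f,\mu}(j,I_H\pm\ep,I_\alpha\pm\ep) \le O(\ep'^{-1})\cdot \max_\beta \min\!\bigl(2^{jp(\max I_H-\beta+\ep+\ep')},\,2^{j(\tau_\mu^*(\beta)+\eta/2)}\bigr),
$$
and taking $\log_2$, dividing by $j$, and then letting $j\to\infty$ followed by $\ep,\ep'\to 0$ delivers the announced upper bound with error $\eta$.

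The main point of care is justifying why the supremum can be restricted to $\beta\in I_\alpha\cap[0,\max I_H]$. For $\beta>\max I_H$, the analytic bound is exponentially decaying and those scales contribute nothing asymptotically; for $\beta$ outside $[\alpha_{\min},\alpha_{\max}]$, the multifractal bound gives $\tau_\mu^*(\beta)=-\infty$; and the standing assumption $\min I_\alpha \le \min I_H$ (together with $I_\alpha\subset[\alpha_{\min},\alpha_{\max}]\subset(0,+\infty)$) ensures the intersection is a non-empty compact set on which the maximum is attained by continuity of $\tau_\mu^*$. The other mildly delicate point is tracking the various $\ep$, $\ep'$, $\eta$ errors so that the final inequality absorbs the prefactor $O(\ep'^{-1})$ and the $\eta/2$ from Proposition~\ref{fm}(4) into a single $\eta$.
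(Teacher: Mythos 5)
Your proposal is correct and follows essentially the same route as the paper: both prove (1) by noting that a cube in $\Lambda_{f,\mu}(j,I_H,I_\alpha)$ would force $|c_\lambda|/\mu(\lambda)\ge 2^{j(\min I_\alpha-\max I_H)}$, contradicting the uniform $\ell^p$ bound, and both prove (2) by partitioning $I_\alpha$ into small subintervals and taking, on each piece, the minimum of the counting bound coming from $|f|_{\mu,p,+\infty}<+\infty$ and the large-deviations bound of Proposition~\ref{fm}(4), then summing and absorbing the number of pieces and all $\ep$-errors into $\eta$. The only cosmetic difference is how the restriction to $\beta\le \max I_H$ is justified (the paper uses $|c_\lambda|\le C_f\mu(\lambda)$ to intersect with $[0,\widetilde H+\ep]$ before invoking Proposition~\ref{fm}(4), while you absorb the near-boundary pieces, whose exponent is $O(\ep+\ep')$, into $\eta$ using that the target maximum is nonnegative), and both are fine.
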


\begin{proof} 

We treat the  case $p<+\infty$ and leave the simpler case $p=+\infty$ to the reader.

\medskip

(1) Recall that by definition  $\sup_{j\in\N}\Big \| \Big (\frac{c_\lambda}{ \mu(\lambda)}  \Big )_{\lambda\in \Lambda_j}\Big \|_{\ell_p(\Lambda_j) }  < + \infty$. There is $C_f\ge 1$ such that 
\begin{equation}
\label{defbmui}
\sup_{j\in\N}  \Big \| \Big (\frac{c_\lambda}{ \mu(\lambda)}  \Big )_{\lambda\in \Lambda_j}\Big \|_{\ell_p(\Lambda_j) } \le C_f.
\end{equation}
It follows that item  (1)  holds true, for otherwise \eqref{defbmui} would be contradicted. 

\medskip

\noindent (2)  Fix $\eta,\ep>0$ and set $\widetilde H=\max (I_H)$. Since $I_\alpha$ is compact and $\tau_\mu^*$ is continuous over its compact domain, there are finitely many numbers $\alpha_0<\ldots<\alpha_m$ such that $I_\alpha=\bigcup _{\ell=0}^{m-1} [\alpha_\ell,\alpha_{\ell+1}]$  and for  every $\ell$, $\alpha_{\ell+1}-\alpha_\ell \leq \eta/p$ and  $ |   {\tau_\mu^*}(\beta)- {\tau_\mu^*}(\beta')|\leq   \eta$ for all $\beta,\beta'\in  [\alpha_\ell,\alpha_{\ell+1}]$.

Let $j\in\N$. Consider the subset $ \Lambda_{f,\mu}(j,I_H, [\alpha_\ell,\alpha_{\ell+1}]\pm\ep)$ of $ \Lambda_{f,\mu}(j,I_H\pm\ep,I_\alpha\pm\ep)$. With  each cube $\lambda \in \Lambda_{f,\mu}(j,I_H\pm\ep, [\alpha_\ell,\alpha_{\ell+1}]\pm\ep)$  is associated a wavelet coefficient $c_\la$ whose absolute value is at least equal   to $2^{-j(\widetilde H+\ep)}$.  Thus, for each $\ell \in \{0,..., m-1\}$, 
\begin{equation}\label{Cf}
C^p_f\geq  \sum_{\lambda\in\Lambda_j} \left(\frac{\left| c_\lambda \right|}{ \mu(\lambda)} \right)^p  \geq  \sum_{\lambda\in \Lambda_{f,\mu}(j,I_H\pm\ep, [\alpha_\ell,\alpha_{\ell+1}]\pm\ep) } \left(\frac{ 2^{-j(\widetilde H+\ep)}}{ 2^{-j  (\alpha_{\ell}-\ep)}} \right)^p.
\end{equation}

\begin{remark}\label{rem66}
Recall that for  $\la=(i,j,k)\in \Lambda _j$,   we make a slight abuse of notation by identifying  $\la$ with $\la_{j,k} \in \mathcal{D}_j$  and writing    $\mu(\la) $ for $ \mu(\la_{j,k})$ and  $\la\subset  E$ for $\la_{j,k}\subset E$. 
\end{remark}

It follows from \eqref{Cf} that 
$$  
 \#  \Lambda_{f,\mu}(j,I_H, [\alpha_\ell,\alpha_{\ell+1}]\pm\ep)  \leq  C^p_f2^{jp (\widetilde H-\alpha_\ell+2\ep)}.
$$
On the other hand, observe  that for each $j\ge 0$,   one has 
 $$
 \Lambda_{f,\mu}(j,I_H\pm\ep,[\alpha_\ell,\alpha_{\ell+1}]\pm\ep ) \subset \Big \{\lambda=(i,j,k)\in\Lambda:\, \lambda \subset3[0,1]^d, \   \frac{\log_2 \mu(\lambda )}{-j}\in I \Big \},
 $$
where $I= [\alpha_\ell,\alpha_{\ell+1}]\pm\ep\cap [0,\widetilde H+\ep]$.  Applying  Proposition \ref{fm}(4) to each interval $[\alpha_\ell,\alpha_{\ell+1}]\pm\ep \cap [0,\widetilde H+\ep] $, one finds   $\ep_0>0$ and $J_0\in\N$ such that for all $\ep\in(0,\ep_0]$, $0\le \ell\le m-1$ and $j\geq J_0$, 
$$ 
  \#  \mathcal{D}_\mu(j,[\alpha_\ell,\alpha_{\ell+1}]\pm\ep\cap [0,\widetilde H+ \ep]   )\le   \#  \mathcal{D}_\mu(j,([\alpha_\ell,\alpha_{\ell+1}]\cap [0,\widetilde H]) \pm2\ep  ) \leq  2^{j (\gamma_\ell+\eta)},
 $$
where $\gamma_\ell=\max\{ {\tau_\mu^*}( \beta):\beta\in [\alpha_\ell,\alpha_{\ell+1}]\cap [0,\widetilde H]\}$. 
Then, taking into account the fact that $\mu$ is $\Z^d$-invariant, as well as the fact that with each dyadic cube $\lambda_{j,k}$ are associated $2^d-1$ wavelet coefficients, one  obtains
$$
\#\Lambda_{f,\mu}(j,I_H\pm\ep,[\alpha_\ell,\alpha_{\ell+1}] \pm\ep)\le 3^d(2^d-1)2^{j (\gamma_\ell+\eta)}.
$$
Combining  the previous estimates,  one gets for  $\ep\in(0,\ep_0]$ and $j\ge J_0$
  \begin{align*}
 \#  \Lambda_{f,\mu}(j,I_H,I_\alpha\pm\ep) & \le \sum_{\ell=0}^{m-1}\# \Lambda_{f,\mu}(j,I_H,[\alpha_\ell,\alpha_{\ell+1}]\pm\ep )\\
 & \leq   \sum_{\ell=0}^{m-1} \min\big( C^p_f 2^{jp (\widetilde H-  \alpha_\ell +2\ep)}, 3^d(2^d-1)\cdot 2^{j(\gamma_\ell+\eta)}\big)\\
 &  \leq   3^d(2^d-1)C^p_f\,  m \max \big\{2^{j \min ( p (\widetilde H- \alpha_\ell+2\ep), \gamma_\ell +\eta)}: \ell=0,1,...,m-1\big \}  .
 \end{align*} 
 Also, the constraints imposed to the exponents  $\alpha_\ell$ and the continuity of $\tau_\mu^*$ imply that  
 \begin{align*}&  \max \big\{ \min( p (\widetilde H- \alpha_\ell+2\ep),  \gamma_\ell+\eta):\ell=0,1,...,m-1\big \} \\
 &\leq \max\big\{ \min (p(\widetilde H -  \beta),{\tau_\mu^*}(\beta)): \beta\in I_\alpha\cap [0, \widetilde H]\big\}+2p\ep+3\eta.
  \end{align*}
Taking  $\ep_0\le \eta/p$ and  $J_0$ so large that $2^{J_0\eta} \geq 3^d (2^d-1)C^p_f m$, we finally get the desired upper bound 
 \eqref{majemuf} (with $6\eta$ instead of $\eta$).
\end{proof}
  
 We are now ready to get an upper bound for the wavelet leaders upper large deviations spectrum of $f$.


\subsection{Proof of Proposition~\ref{ubLD}}
\label{secub2}

Note that  since $\mu$ is $\Z^d$-invariant, and by definition of $|\ |_{\mu,p,q}$, any general upper bound for $\overline  {\sigma}^{{\rm LD},1}_{f_{|\zu^d}  }$ holds for  $\overline  {\sigma}^{{\rm LD},N}_{f}$.  Thus, without loss of generality we prove that  $\overline  {\sigma}^{{\rm LD},1}_{f}$ is upper bounded by the right hand side of  \eqref{LDC}.

\medskip

This proof is rather involved because all the possible interactions between the values $\mu(\la)$ and the corresponding wavelet coefficients $c_\la$  must be taken care of.  

\medskip

Note that the inequality $\overline  {\sigma}^{{\rm LD},1}_{f}  \le d$ obviously holds. So it is enough to deal with the case $H\le \zeta'_{\mu,p}(0^+)$. 

\medskip

Fix $H\le \zeta'_{\mu,p}(0^+)$. For $\ep>0$ small enough,   $\#\mathcal D^1_f(j,H\pm\ep)$  is going to be estimated from above (recall Definition \ref{defEN}). We are going to prove that   there exist $C,c>0$ such that for any $\eta>0$,  if $\ep_0\in (0,\eta]$ is chosen small enough, then for $j$ large enough, for all $\ep\in (0,\ep_0)$, 
\begin{equation}\label{estimfinale}
\#\mathcal D_f^1(j,H\pm\ep)\le C j 2^{j(\zeta_{\mu,p}^*(H)+c\eta)}. 
\end{equation}

 It is immediate to check that \eqref{estimfinale} implies \eqref{LDC}, hence Proposition~\ref{ubLD}.
 
 \medskip

Since $|f|_{\mu,p,+\infty}<+ \infty$, there exists $C>0$ such that  $|c_\lambda|\le C \mu(\lambda)$   for every $\la\in \bigcup_{j\ge 0} \Lambda_j$  (recall Remark~\ref{rem66}). Without loss of generality,   suppose that the above constant is equal to 1 and so
\begin{equation}
\label{majcla}
|c_\lambda|\le \mu(\lambda) \mbox{ for every }\lambda\in \bigcup_{j\ge 0} \Lambda_j.
\end{equation}

Recall  the definition \eqref{defleaders} of  wavelet   leaders:  $L^f_\lambda=\sup\{|c_{\lambda'}|: \lambda'=(i,j,k)\in \Lambda, \,\lambda' \subset 3\lambda\}.$
The following   observations are key.
\begin{lemma}
\label{lemleader}
A dyadic cube $\lambda$ belongs to $\mathcal{D}_f^1(j,H\pm\ep)$ if and only if:
\begin{itemize}
\item  $\lambda\subset [0,1]^d$;
\item
 There exists a dyadic cube $\lambda'\subset 3\lambda$ of generation $j' \ge j$  as well as $i\in \{1,\cdots 2^d-1\}$ and $k'\in \Z^d$ such that $\lambda'=\lambda_{j',k'}$, and $|c_{(i,j',k')}|=2^{-j'H'}$ with $H'\in\frac{j}{j'}[H-\ep,H+\ep]$;
 \item when $j$ is large enough,    $j'\le 2j(H+\ep) /\alpha_{\min}$. 
 \end{itemize}
 \end{lemma}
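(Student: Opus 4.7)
The plan is to unpack Definition~\ref{defEN} and realize the wavelet leader $L^f_\lambda$ as an explicit, attained wavelet coefficient. By that definition, $\lambda\in\mathcal{D}_f^1(j,H\pm\epsilon)$ requires $\lambda\in\mathcal{D}_j$ with $\lambda\subset[0,1]^d$ (which gives the first bullet) together with the two-sided bound $L^f_\lambda\in[2^{-j(H+\epsilon)},2^{-j(H-\epsilon)}]$. The remaining two bullets will then come from picking the index $(i,j',k')$ that realizes the leader and translating this two-sided bound into bounds on its scale $j'$ and its exponent $H'$.

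First I would verify that the supremum defining $L^f_\lambda$ in~\eqref{defleaders} is attained. By~\eqref{majcla}, $|c_{\lambda'}|\le\mu(\lambda_{j',k'})$ for every $\lambda'=(i,j',k')\in\Lambda$; since $\mu\in\mathscr{E}_d$ (Definition~\ref{defEd}), Remark~\ref{remark-inclusions}(2) ensures that property~(P$_1$) holds for any exponent $s_1\in(0,\alpha_{\min})$, where $\alpha_{\min}=\tau_\mu'(+\infty)$, yielding $\mu(\lambda_{j',k'})\le C\,2^{-j's_1}$ for some $C>0$. Consequently $|c_{(i,j',k')}|\to 0$ uniformly in $(i,k')$ as $j'\to+\infty$, so only finitely many indices can produce coefficients above $L^f_\lambda/2$, and the supremum is attained at some $(i,j',k')$ with $\lambda_{j',k'}\subset 3\lambda$ and $j'\ge j$ (the latter being the standard wavelet-leader convention, as in~\cite{JaffardPSPUM}). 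Writing $L^f_\lambda=|c_{(i,j',k')}|=2^{-j'H'}$, the constraint $L^f_\lambda\in[2^{-j(H+\epsilon)},2^{-j(H-\epsilon)}]$ is exactly $j(H-\epsilon)\le j'H'\le j(H+\epsilon)$, i.e.\ $H'\in\frac{j}{j'}[H-\epsilon,H+\epsilon]$, which is the second bullet.

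For the third bullet, I would combine the lower bound $|c_{(i,j',k')}|=L^f_\lambda\ge 2^{-j(H+\epsilon)}$ with the upper bound $|c_{(i,j',k')}|\le C\,2^{-j's_1}$ just established. This forces $j's_1\le j(H+\epsilon)+\log_2 C$, so choosing $s_1\ge\alpha_{\min}/2$ and taking $j$ large enough to absorb $\log_2 C$ then yields $j'\le 2j(H+\epsilon)/\alpha_{\min}$. The only step calling for any care is to check that $H+\epsilon>0$ so that this upper bound is non-trivial, which is the main (and very mild) obstacle: the same estimate $|c_{\lambda'}|\le C\,2^{-j's_1}$ gives $L^f_\lambda\le C\,2^{-js_1}$, forcing $H\ge\alpha_{\min}/2$ whenever $\mathcal{D}_f^1(j,H\pm\epsilon)$ is non-empty, $j$ is large, and $\epsilon$ is small. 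The converse direction of the ``iff'' is then immediate from the identity $L^f_\lambda=|c_{(i,j',k')}|=2^{-j'H'}$ at the maximizing index.
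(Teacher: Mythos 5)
Your proof is correct and follows essentially the same route as the paper: the first bullet is immediate from Definition \ref{defEN}, the second comes from reading the two-sided bound on $L^f_\lambda$ at a coefficient realizing the leader, and the third comes from the a priori decay $|c_{(i,j',k')}|\le \mu(\lambda_{j',k'})\lesssim 2^{-j'\alpha_{\min}/2}$ (so $j'H'\le j(H+\ep)$ forces $j'\le 2j(H+\ep)/\alpha_{\min}$), which is exactly the paper's argument. Your explicit verification that the supremum in \eqref{defleaders} is attained, and the remark that $H+\ep>0$ on the non-vacuous range, are harmless additions; just take $s_1$ strictly larger than $\alpha_{\min}/2$ (still below $\alpha_{\min}$, so allowed by (P$_1$)) so that the constant $\log_2 C$ is genuinely absorbed for large $j$.
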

 \begin{proof}
 The first item is trivial, and the second one follows from the definition \eqref{defleaders} of the wavelet leaders and  the fact that $\frac{\log_2 |L^f_\lambda|}{ {-j}} \in H\pm\ep$ if and only if there exists some $\lambda'\subset 3\lambda$ of generation $j' \ge j$ and $i\in \{1,\cdots 2^d-1\}$  such that $ \frac{\log_2 |c_{(i,j',k')}|}{ {-j}} \in H\pm\ep$.
 
 For the third item,  Lemma~\ref{propmajnb}(2) implies that   $|c_{(i,j',k')}|\le 2^{-j'\alpha_{\min}/2}$ when $j$ (and so $j'$) is large. Hence  $H'\ge \alpha_{\min}/2$ and the fact that $j'\leq j(H+\ep)/H'$ implies the claim.
 \end{proof}
 
The second item of Lemma \ref{lemleader} is used repeatedly in the forthcoming pages.

\medskip Three cases are separated.

\medskip

\noindent
{\bf Case 1 : $H<\alpha_{\min}$.}

 Note that $\zeta_{\mu,p}^*(H)=-\infty$. Suppose that  $\ep>0$ is so small that $\alpha_{\min}-\ep>H+\ep$.  Due to Proposition~\ref{fm}(5), and the observation  made just above, for $j$ large enough  
 $$
 \#\mathcal{D}^1_f(j,H\pm\ep)\le \sum_{j\le j'\le 2j(H+\ep)/\alpha_{\min}}  \#\Lambda_{f,\mu}(j', [0,H+\ep], I_\alpha),
 $$ 
 with $I_\alpha =[\alpha_{\min}-\ep,\alpha_{\max}+\ep]$. However, $H+\ep < \alpha_{\min}-\ep$, so by Lemma~\ref{propmajnb}, $\mathcal{D}^1_f(j,H\pm\ep)=\emptyset$. This implies \eqref{LDC}, i.e.   $\overline  {\sigma}^{{\rm LD},1}_{f_{|\zu^d}  }(H)=-\infty$.

\medskip

To deal with the other cases, we  discretize  the interval $[\alpha_{\min},H]$.

\sk  Fix $\eta>0$,  $\ep_0\in (0, \min(1/2,\alpha_{\min}/2,\eta))$,  and split the interval $ [\alpha_{\min}, H]$ into finitely many contiguous closed intervals $I_1, ..., I_m$ ($m=m(\ep_0)$) such that 
\begin{itemize}
\item $|I_\ell|\leq \ep_0$ for every $\ell \in \{1,...,m\}$,
\item  Writing $I_\ell=[h_\ell,h_{\ell+1}]$,  one has $ 1\le h_{\ell+1}/h_\ell \leq 1+\ep_0$  for every $1\le \ell \le m$. 
\end{itemize}
In particular, $H/h_\ell \geq 1$ for every $\ell$.

\sk

By Lemma \ref{lemleader}, if $j\ge J_0$ and $\lambda\in\mathcal{D}^1 _f(j,H\pm\ep)$,  there exist $j'\ge j$ and $\la'=(i,j',k') \in \La_{j'}$   such that $\lambda'\subset 3\lambda$   and $|c_{\la'}|=2^{-j'H'}$ with $H'\in\frac{j}{j'}[H\pm\ep]$.  By \eqref{majcla}, $|c_{\la'}|\leq \mu(\la')$, so  there exist $1\le \ell'\le \ell\le m$ such that $ \la' \in  \La_{f,\mu}(j',I_{\ell}\pm\ep, I_{\ell'}\pm\ep)$ (recall \eqref{defLA}).

In addition,  $H'\in I_\ell\pm\ep\subset I_\ell\pm\ep_0$, $j'\in \frac{j}{H'}[H\pm\ep]\subset \left[j\frac{H-\ep_0}{h_{\ell+1}+\ep_0},j\frac{H+\ep_0}{h_\ell-\ep_0}\right]$,  and $h_{\ell+1}\le H$. Consequently, 
\begin{equation}
\label{discret2}
\mathcal{D}^1_f(j,H\pm\ep)\subset\bigcup_{1\le\ell'\le \ell\le m}\mathcal{D}^{\ell,\ell'}_f(j,H\pm\ep),
\end{equation}
where (recall Remark~\ref{rem66})
$$
\mathcal{D}^{\ell,\ell'}_f(j,H\pm\ep) \!\!=\bigcup_{ {j'}  \in  j\cdot\left[\frac{H-\ep_0}{h_{\ell+1}+\ep_0},\frac{H+\ep_0}{h_\ell-\ep_0}\right]}\left\{\lambda\in\mathcal D_j \cap \zud :\, \begin{cases}  \exists \,  \la' \in \Lambda_{f,\mu}(j',I_{\ell}\pm\ep, I_{\ell'}\pm\ep) \\ \mbox{ such that  } \la' \subset 3\lambda \!\!\!\!  \end{cases}\right\}.
$$

Next, the cardinality of $\mathcal{D}^{\ell,\ell'}_f(j,H\pm\ep)$ (and thus of $\mathcal{D}^1_f(j,H\pm\ep)$)   is going to be bounded from above using different estimates.
 
To do so,   Lemma~\ref{propmajnb}(2)  is applied to each pair $\{I_\ell,I_{\ell'}\}$:  there exist  $\ep\in (0,\ep_0)$ and $J_0\in \N$ such that for all  $j ' \ge J_0$, for all $1\le \ell'\le \ell\le m$, 
\begin{equation}\label{discret}
\frac{\log_2 \#\Lambda_{f,\mu}(j',I_{\ell}\pm\ep, I_{\ell'}\pm\ep)}{j'}\le  d(\ell,\ell')+\eta
\end{equation}
where 
\begin{equation}\label{dll'}
d(\ell,\ell')=\max\left\{\min(p(h_{\ell+1} -   \beta),\tau_\mu^*(\beta)): \beta\in I_{\ell'}\right \}.
\end{equation}

\medskip
\noindent
{\bf Case 2: $\alpha_{\min}\le H <\theta_p(\alpha_{\min})=\alpha_{\min}+\frac{\tau_\mu^*(\alpha_{\min})}{p}$.} This case occurs only when $\tau_\mu^*(\alpha_{\min})>0$. 

\sk Let $j\ge J_0$. For every  $1\le \ell'\le \ell\le m$,   one has    $ p(h_{\ell+1}-h_{\ell'})\le p(H-\alm)  \leq \tau_\mu^*(\alm)  \leq \tau_\mu^*(\beta)$, for every $\beta \in I_{\ell'}$. So,    from \eqref{dll'} one deduces that $d(\ell,\ell')\le  p(h_{\ell+1}-\alpha_{\min})$.  Thus, if $j'\in  \left[j\frac{H-\ep_0}{h_{\ell+1}+\ep_0},j\frac{H+\ep_0}{h_\ell-\ep_0}\right]$, then  $j'd(\ell,\ell')\le j p (H+\ep_0) \frac{h_{\ell+1}-\alpha_{\min}}{h_\ell- \ep_0}  $.   Then observing that $\sup_{\ell\in\{1,...,m\}} \frac{h_{\ell+1}-\alpha_{\min}}{h_\ell}   = \frac{H-\alm}{H} + O(\ep_0)$, one has
$$j'(d(\ell,\ell')+\eta)\le j(p(H-\alpha_{\min})+O(\ep_0)+\eta)=j(\zeta_{\mu,p}^*(H)+O(\ep_0)+\eta).$$
 Consequently, since~\eqref{discret2} implies
\begin{equation*}\label{discret3}
\#\mathcal{D}^1_f(j,H\pm\ep)\le \sum_{1\le\ell'\le \ell\le m}\sum_{j'\in \left[j\frac{H-\ep_0}{h_{\ell+1}+\ep_0},j\frac{H+\ep_0}{h_\ell-\ep_0}\right]} \# \Lambda_{f,\mu}(j',I_{\ell}\pm\ep, I_{\ell'}\pm\ep),
\end{equation*}
the inequality~\eqref{discret}  combined with the previous remarks yields
$$
\#\mathcal{D}^1_f(j,H\pm\ep)\le m^2 j\frac{H+\ep_0}{\alpha_{\min}-\ep_0}2^{j(\zeta_{\mu,p}^*(H)+O(\ep_0)+\eta)}
 = C 2^{j(\zeta_{\mu,p}^*(H)+O(\ep_0)+\eta)},
$$
so \eqref{estimfinale} holds true.

\medskip

\noindent
{\bf Case 3: $\theta_p(\alpha_{\min})\le H \le  \zeta_{\mu,p}'(0^+)=\theta_p(\tau_\mu'(0^+))$.}

  This case is  divided into four subcases in order to estimate $\#\mathcal{D}^{\ell,\ell'}_f(j,H\pm\ep)$. 

 The term  $d(\ell,\ell')$ can easily be expressed in terms of the mappings $\theta_p$ defined  in \eqref{thetap} and $\tau_\mu^*$. The mapping $\theta_p$ is an increasing map over  $[\alpha_{\min}, \alpha_p]$ and $\alpha_p\geq \tau_\mu'(0^+)$, so using that $h_\ell\le H$, one deduces that
\begin{eqnarray}
\label{eq15}
d(\ell,\ell')\ = \,  \begin{cases}
\    {\tau_\mu^*}(h_{\ell'+1})& \mbox{ if  } h_{\ell'+1  } \leq  \theta_p^{-1}(h_{\ell+1}),\\
\   p(h_{\ell+1} -  h_{\ell'}) & \mbox{ if } h_{\ell'} \geq \theta_p^{-1}(h_{\ell+1}), \\
\    \tau_\mu^*(\theta_p^{-1}(h_{\ell+1}))=\zeta_{\mu,p}^*(h_{\ell+1}) & \mbox{ otherwise. } 
\end{cases}
\end{eqnarray}
Moreover, the maximum of the  three possible values is always  $\zeta_{\mu,p}^*(h_{\ell+1})$.

 \medskip
\noindent
{\bf Subcase $(3a)$:} $\frac{H}{h_{\ell+1}}d(\ell,\ell')\le \zeta_{\mu,p}^*(H)$. 
Using the definition of $\mathcal{D}^{\ell,\ell'}_f(j,H\pm\ep)$ and inequality~\eqref{discret},   for $j\ge J_0$  
\begin{align*}
\#\mathcal{D}^{\ell,\ell'}_f(j,H\pm\ep)&\le\sum_{j'\in \left[j\frac{H-\ep_0}{h_{\ell+1}+\ep_0},j\frac{H+\ep_0}{h_\ell-\ep_0}\right]} \# \Lambda_{f,\mu}(j',I_{\ell}\pm\ep, I_{\ell'}\pm\ep) \\
&\le \sum_{j'\in \left[j\frac{H-\ep_0}{h_{\ell+1}+\ep_0},j\frac{H+\ep_0}{h_\ell-\ep_0}\right]} 2^{j' (d(\ell,\ell')+\eta)} \le  j\frac{H+\ep_0}{h_\ell-\ep_0} 2^{ j\frac{H+\ep_0}{h_\ell-\ep_0}(d(\ell,\ell')+\eta)} .
\end{align*}
By our assumption,  $ \frac{H+\ep_0}{h_{\ell}-\ep_0}  d(\ell,\ell') \leq \Big (\frac{H }{h_{\ell+1}} +O(\ep_0)\Big )  d(\ell,\ell') \leq  \zeta_{\mu,p}^*(H) +O(\ep_0)$, this $O(\ep_0)$ being uniform with respect to $\ell$. So 
\begin{align*}
\#\mathcal{D}^{\ell,\ell'}_f(j,H\pm\ep)&  \le  C 2^{j(\zeta_{\mu,p}^*(H)+O(\ep_0)+\eta)}.
\end{align*}

\medskip

\noindent
{\bf Subcase $(3b)$:}   $\frac{H}{h_{\ell+1}}d(\ell,\ell')> \zeta_{\mu,p}^*(H)$ and $h_{\ell'+1} \leq  \theta_p^{-1}(h_{\ell+1})$.

\mk 

  Recall the definition \eqref{thetap} of $\theta_p$.
A technical  lemma  is needed.

\begin{lemma}
\label{discret4}
For every $j$ large enough,  
\begin{equation*}
\mathcal{D}^{\ell,\ell'}_f(j,H\pm\ep)\subset \mathcal{D}_\mu\left (j,\left[\alpha_{\min},\alpha_{\min}+\frac{H}{h_{\ell+1}}(h_{\ell'+1}-\alpha_{\min})\right ]\pm O(\ep_0)\right ),
\end{equation*} 
where $O(\ep_0)$ is independent of $(\ell,\ell')$.
\end{lemma}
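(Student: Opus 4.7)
The plan is to unpack the definition of $\mathcal{D}^{\ell,\ell'}_f(j,H\pm\ep)$ and then use property (P) of the measure $\mu$ (valid for any $\mu\in\MDs$, with exponents $s_1$ arbitrarily close to $\alpha_{\min}$) to control $\mu(\lambda)$ by $\mu(\lambda')$ across the two generations~$j$ and~$j'$. Concretely, any $\lambda\in \mathcal{D}^{\ell,\ell'}_f(j,H\pm\ep)$ comes with some generation $j'$ in the interval $j\cdot\bigl[\tfrac{H-\ep_0}{h_{\ell+1}+\ep_0},\tfrac{H+\ep_0}{h_\ell-\ep_0}\bigr]$ and some $\lambda'\in\Lambda_{f,\mu}(j',I_\ell\pm\ep,I_{\ell'}\pm\ep)$ with $\lambda'\subset 3\lambda$. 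By definition of $\Lambda_{f,\mu}$ this means $\mu(\lambda')\ge 2^{-j'(h_{\ell'+1}+\ep)}$. The goal is to show that $\frac{\log_2\mu(\lambda)}{-j}$ lies in $[\alpha_{\min},\alpha_{\min}+\tfrac{H}{h_{\ell+1}}(h_{\ell'+1}-\alpha_{\min})]$ up to $O(\ep_0)$.

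For the upper bound on $\frac{\log_2\mu(\lambda)}{-j}$, let $\bar\lambda\in\mathcal D_j$ be the unique cube at generation $j$ containing $\lambda'$. Since $\lambda'\subset 3\lambda$, $\bar\lambda$ coincides with $\lambda$ or shares a face with it, so Property~(P$_2$) applied with $\bar\lambda$ playing the role of the first cube and $\lambda$ the role of the neighbor gives, for any $s_1<\alpha_{\min}$,
$$\mu(\lambda)\ \ge\ C^{-1}\,2^{-\phi(j)}\,2^{(j'-j)s_1}\,\mu(\lambda')\ \ge\ C^{-1}\,2^{-\phi(j)}\,2^{(j'-j)s_1-j'(h_{\ell'+1}+\ep)}.$$
Taking $-\tfrac{1}{j}\log_2$, choosing $s_1=\alpha_{\min}-\ep_0$, and using $\phi(j)/j\to 0$, this yields
$$\frac{\log_2\mu(\lambda)}{-j}\ \le\ \alpha_{\min}\ +\ \frac{j'}{j}\bigl(h_{\ell'+1}-\alpha_{\min}+\ep+\ep_0\bigr)\ +\ o(1).$$
The discretization was arranged so that $h_{\ell+1}/h_\ell\le 1+\ep_0$ and $\ep<\ep_0$, hence $\tfrac{j'}{j}\le \tfrac{H+\ep_0}{h_\ell-\ep_0}=\tfrac{H}{h_{\ell+1}}+O(\ep_0)$, and since $h_{\ell'+1}-\alpha_{\min}$ is uniformly bounded one concludes
$$\frac{\log_2\mu(\lambda)}{-j}\ \le\ \alpha_{\min}+\frac{H}{h_{\ell+1}}(h_{\ell'+1}-\alpha_{\min})+O(\ep_0),$$
uniformly in $\ell,\ell'$.

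For the lower bound, Property~(P$_1$) gives $\mu(\lambda)\le C\,2^{-j(\alpha_{\min}-\ep_0)}$ for all $\lambda\in\mathcal D_j$, so $\frac{\log_2\mu(\lambda)}{-j}\ge \alpha_{\min}-\ep_0-\frac{\log_2 C}{j}\ge \alpha_{\min}-O(\ep_0)$ as soon as $j$ is large enough. Combining the two bounds shows that $\lambda$ belongs to $\mathcal{D}_\mu\bigl(j,[\alpha_{\min},\alpha_{\min}+\tfrac{H}{h_{\ell+1}}(h_{\ell'+1}-\alpha_{\min})]\pm O(\ep_0)\bigr)$, with the $O(\ep_0)$ independent of $(\ell,\ell')$ since $h_{\ell'+1}-\alpha_{\min}$ is bounded by $H-\alpha_{\min}$.

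The core technical point — the only place where anything subtle happens — is the use of (P$_2$) with $s_1$ chosen arbitrarily close to $\alpha_{\min}$; this is precisely the mechanism that converts the lower bound $\mu(\lambda')\ge 2^{-j'(h_{\ell'+1}+\ep)}$ into a lower bound for $\mu(\lambda)$ with a controlled loss $2^{(j'-j)s_1}$. Everything else is bookkeeping on the discretization (to turn ratios $j'/j$ into $H/h_{\ell+1}$ up to $O(\ep_0)$) and the fact that $\phi\in\Phi$ so that $\phi(j)=o(j)$.
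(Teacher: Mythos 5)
Your proof is correct and follows essentially the same route as the paper: both transfer the lower bound $\mu(\lambda')\ge 2^{-j'(h_{\ell'+1}+\ep)}$ up to the generation-$j$ cube $\lambda$ via the almost-doubling/(P$_2$)-type estimate with exponent close to $\alpha_{\min}$ (the paper packages this as the two-step comparison $\mu(\lambda)\gtrsim\mu(\lah)\gtrsim 2^{-j\ep_0}2^{(j'-j)(\alpha_{\min}-\ep_0)}\mu(\lambda')$ using \eqref{loulou}, which is exactly your one-shot application of (P$_2$) with $s_1=\alpha_{\min}-\ep_0$), and then the same bookkeeping $j'/j\le\frac{H+\ep_0}{h_\ell-\ep_0}=\frac{H}{h_{\ell+1}}+O(\ep_0)$. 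Your explicit lower bound $\frac{\log_2\mu(\lambda)}{-j}\ge\alpha_{\min}-O(\ep_0)$ via (P$_1$) is left implicit in the paper but is needed and correct.
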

\begin{proof}
Take $\lambda\in \mathcal{D}^{\ell,\ell'}_f(j,H\pm\ep)$ and applying Lemma \ref{lemleader}, consider $j'\in \left[j\frac{H-\ep_0}{h_{\ell+1}+\ep_0},j\frac{H+\ep_0}{h_\ell-\ep_0}\right]$ such that there exists  $\la'= (i,j',k')\in \Lambda_{f,\mu}(j',I_{\ell}\pm\ep, I_{\ell'}\pm\ep)$ for which $\lambda' \subset 3\lambda$. 

Denote by $\lah $ the unique dyadic cube of $\mathcal D_j$   containing $\lambda'$. Then, note that: 
\begin{itemize}
\item
The two cubes $\la$ and $\lah$ are either equal or neighbors.  Hence, by property (P$_2$) of $\mu$,   $\mu(\lambda)\ge 2^{-j\ep_0}\mu(\lah)$ when $j$ is large enough.
\item
$\mu(\lah)= \mu(\lambda')\frac{\mu(\lah)}{\mu(\lambda')}$, and by construction of $\mu$ (see  \eqref{loulou}), $\frac{\mu(\lah)}{\mu(\lambda')}\ge 2^{-j\ep_0}2^{(j'-j)(\alpha_{\min}-\ep_0)}$.
\item 
Since  $\la' \in \Lambda_{f,\mu}(j',I_{\ell}\pm\ep, I_{\ell'}\pm\ep)$, $\mu(\lambda')\ge 2^{-j'(h_{\ell'+1}+\ep_0)}$.
\end{itemize} 

Consequently, 
\begin{align*}
\frac{\log \mu(\lambda )}{-j\log(2)} & \leq  \ep_0+ \frac{\log \mu(\lah )}{-j\log(2)}  \le 2\ep_0 + \frac{j'}{j}(h_{\ell'+1}+\ep_0) + (1-  \frac{j'}{j}) (\alpha_{\min}-\ep_0)\\
&\leq  \alpha_{\min}+\frac{j'}{j}( h_{\ell'+1}-(\alpha_{\min}-4\ep_0)) \leq \alm+   \frac{H}{h_{\ell+1}}(h_{\ell'+1}-\alpha_{\min})+O(\ep_0),
\end{align*}
where $O(\ep_0)$ is independent of $(\ell,\ell')$. This yields the result.
\end{proof}

Let us now  bound $\alpha_{\min}+\frac{H}{h_{\ell+1}}(h_{\ell'+1}-\alpha_{\min})$ from above. Thanks to \eqref{eq15},  $h_{\ell'+1} \leq  \theta_p^{-1}(h_{\ell+1})$ implies that $d(\ell,\ell')= \tau_\mu^*(h_{\ell'+1})$. Using that  $ \theta_p^{-1}(h_{\ell+1})\le  \theta_p^{-1}(H)\le \tau_\mu'(0^+)$ and  that $\tau_\mu^*$ is non decreasing over $[\alpha_{\min},\tau_\mu'(0^+)]$,  one has
$$\frac{H}{h_{\ell+1}}\tau_\mu^*(\theta_p^{-1}(h_{\ell+1})) \ge \frac{H}{h_{\ell+1}} \tau_\mu^*(h_{\ell'+1}) =\frac{H}{h_{\ell+1}} d(\ell,\ell') > \zeta_{\mu,p}^*(H)=\tau_\mu^*(\theta_p^{-1}(H)),$$
from which one deduces that 
\begin{equation}\label{ineq69}
\frac{\tau_\mu^*(\theta_p^{-1}(h_{\ell+1}))}{h_{\ell+1}}> \frac{\tau_\mu^*(\theta_p^{-1}(H))}{H}.
\end{equation}
Observe that the definition \eqref{thetap} of $\theta_p$ implies that
\begin{equation}
\label{thetapinverse}
 {\theta^{-1}_p(\beta)+p^{-1}\tau_\mu^*(\theta_p^{-1}(\beta))} = {\beta}
 \end{equation}  for all $\beta \in [\alpha_{\min}, \zeta_{\mu,p}'(0^+)]$.   Applying \eqref{thetapinverse} 
 to both sides of \eqref{ineq69} yields
\begin{equation}\label{hh}
\frac{\theta_p^{-1}(h_{\ell+1})}{h_{\ell+1}}<\frac{\theta_p^{-1}(H)}{H},
\end{equation}
and since $\frac{H}{h_{\ell+1}}>1$,  the following series of inequalities  holds:
\begin{equation}\label{hihi}
\alpha_{\min}+\frac{H}{h_{\ell+1}}(h_{\ell'+1}-\alpha_{\min})\le \frac{H}{h_{\ell+1}}h_{\ell'+1}\le   \frac{H}{h_{\ell+1}}\theta_p^{-1}(h_{\ell+1})\le  \theta_p^{-1}(H).
\end{equation}
Consequently, Lemma~\ref{discret4} yields
\begin{equation}\label{discret5}
\mathcal{D}^{\ell,\ell'}_f(j,H\pm\ep)\subset \mathcal{D}_\mu\left (j,[\alpha_{\min},\theta_p^{-1}(H)]\pm O(\ep_0)\right ).
\end{equation} 

Recall that  $\tau_\mu^*$ is continuous and non-decreasing over $[\alpha_{\min},\theta_p^{-1}(H)]$ by Proposition \ref{fm}(4). Hence,   choosing initially  $\ep_0$ small enough yields  for  $j$ large enough  that 
\begin{equation}\label{uh}
\#\mathcal{D}^{\ell,\ell'}_f(j,H\pm\ep)\le 2^{j(\tau_\mu^*(\theta^{-1}_p(H))+\eta)}=2^{j(\zeta_{\mu,p}^*(H)+\eta)}.
\end{equation}
 
\medskip

\noindent
{\bf Subcase $(3c )$: $\frac{H}{h_{\ell+1}}d(\ell,\ell')> \zeta_{\mu,p}^*(H)$ and $h_{\ell'}<\theta_p^{-1}(h_{\ell+1})<h_{\ell'+1}$.}

 Here one has  $h_{\ell'+1}\le (1+\ep_0) h_{\ell'} \le (1+\ep_0) \theta_p^{-1}(h_{\ell+1})$, so 
\begin{eqnarray}
\nonumber
\alpha_{\min}+\frac{H}{h_{\ell+1}}(h_{\ell'+1}-\alpha_{\min})   & \le& (1+\ep_0) \frac{H}{h_{\ell+1}}\theta_p^{-1}(h_{\ell+1})+\alpha_{\min} \left (1-\frac{H}{h_{\ell'+1}}\right ) \\
&\leq& (1+\ep_0) \frac{H}{h_{\ell+1}}\theta_p^{-1}(h_{\ell+1}).\label{eq75}
\end{eqnarray}
Also,   \eqref{eq15}  gives  $d(\ell,\ell')=\tau_{\mu}^*(\theta_p^{-1}(h_{\ell+1}))$, so $\frac{H}{h_{\ell+1}}d(\ell,\ell')> \zeta_{\mu,p}^*(H)$ is equivalent to \eqref{ineq69}, and it implies~\eqref{hh}. Finally, arguing as in the subcase $(3b)$ and using \eqref{eq75}, one sees that    \begin{equation}\label{ineqcruciale}
\alpha_{\min}+\frac{H}{h_{\ell+1}}(h_{\ell'+1}-\alpha_{\min})\le  H-\frac{\zeta_{\mu,p}^*(H)}{p}+O(\ep_0)=\theta_p^{-1}(H)+O(\ep_0).
\end{equation}
Applying Lemma \ref{discret4},   one deduces that     \eqref{uh} holds once again.

\medskip

\noindent
{\bf Subcase $(3d)$:   $\frac{H}{h_{\ell+1}}d(\ell,\ell')> \zeta_{\mu,p}^*(H)$ and $h_{\ell'} \geq  \theta_p^{-1}(h_{\ell+1})$.}

  By \eqref{eq15},   $d(\ell,\ell')=p(h_{\ell+1} -  h_{\ell'})$. Consequently,  $h_{\ell'} = h_{\ell'} -\frac{d(\ell,\ell')}{p} <h_{\ell+1}- \frac{h_{\ell+1}}{H} \zeta_{\mu,p}^*(H)/p$, and   
\begin{align*}
\alpha_{\min}+\frac{H}{h_{\ell+1}}(h_{\ell'+1}-\alpha_{\min})& \le  \frac{H}{h_{\ell+1}}h_{\ell'+1} <\frac{H}{h_{\ell+1}} \Big (h_{\ell+1}   -  \frac{h_{\ell+1}}{H}  \frac{\zeta_{\mu,p}^*(H)}{p}\Big)+ H\frac{h_{\ell+1}-h_{\ell}}{h_{\ell+1}}.
\end{align*}  
Thus, \eqref{ineqcruciale} and then  \eqref{uh} hold in this subcase as well.

\medskip

Collecting the estimates obtained along the cases considered above,   \eqref{estimfinale} is proved, and so is Proposition \ref{ubLD}.


\section{Typical singularity spectrum in  $\widetilde B^{\mu,p}_{q}(\R^d)$ }
\label{sec-typical}

In this section, the singularity spectrum of typical functions in  $\widetilde B^{\mu,p}_{q}(\R^d)$ when $\mu\in \MDs$ is computed, proving item (2) of  Theorem~\ref{main}. 

The   strategy   is similar to the one used to derive the generic multifractal behavior in classical Besov  spaces. First,   a   \textit{saturation} function is built, whose multifractal structure is   the one claimed   be generic in $\widetilde B^{\mu,p}_q(\R^d)$. Then,   this particular function is used to perturb a   countable family of dense  sets  in $\widetilde B^{\mu,p}_q(\R^d)$, in order to obtain a countable family of dense open sets on the intersection of which  the desired multifractal behavior holds. However, the construction of the saturation function and the multifractal analysis of typical functions are much more delicate in $\widetilde B^{\mu,p}_q(\R^d)$ than in  $B^{s,p}_q(\R^d)$.  
\medskip

The environment  $\mu\in \MDs$ is fixed for the rest of this section, as well as $(p,q)\in [1,+\infty]^2$ and  $\Psi\in\mathcal F_{s_\mu}$. 

\subsection{A saturation function}
\label{sec_saturation}

In this section, a saturation function $g^{\mu,p,q} \in \widetilde B^{\mu,p}_{q}(\R^d)$ is built via its wavelet coefficients, which are {\em as large as possible in $\widetilde B^{\mu,p}_{q}(\R^d)$}, and  its wavelet leaders are estimated.

The definition of $g^{\mu,p,q}$ demands some preparation. 

When $\alpha_{\min}=\alpha_{\max}$, we set $(M_N:= N^2)_{N\in\N^*}$ and   $I^N_i=\{\alpha_{\min}\}$ for all $1\le i\le M_N$.

When $\alpha_{\min}<\alpha_{\max}$, for every $N\in\N^*$, it is possible to find an integer $M_N$ such that  the interval $[\alpha_{\min}, \alpha_{\max}]=[\tau_\mu'(+\infty),\tau_\mu'(-\infty)]$  can be split into $M_N$ non-trivial contiguous closed intervals $I^N_1,I^N_2,...,I^N_{M_N}$ satisfying for every $i\in \{1,..., M_N\}$,
\begin{equation}
\label{decoup}
 |I^N_i |\leq 1/N\quad\text{and}\quad  \max\{ |{\tau_\mu^*}(\alpha)-{\tau_\mu^*}(\alpha')|: \alpha,\alpha'\in I^N_i\} \leq 1/N.
\end{equation}

Without loss of generality, we assume that the sequence $(M_N)_{N\ge 1}$ is  increasing.

In any case, item (4) of Proposition \ref{fm}  yields a decreasing sequence $(\eta_N)_{N\in\N^*}$ converging to 0 as $N\to\infty$, and for all $N\in\N^*$, $M_N$ integers $J_{N,1}$, $J_{N,2}$, ...,  $J_{N,M_N}$, such that for every $i\in \{1,.., M_N\}$,  for every  $j\geq J_{N,i}$, 
\begin{equation}
\label{maj1}
\left| \frac{ \log_2 \# \mathcal{D}_\mu(j,I^N_i\pm 1/N)}{ j } - 
 \max_{ \alpha\in I^N_i} 
 {\tau_\mu^*}( \alpha) \right| \leq \eta_N .
 \end{equation}
Without loss of generality, we assume that $\eta_N\ge 1/N$. 

Then,   define  inductively the non-decreasing    sequences of integers $(J_N)_{N\in\N^*}$  and $(N_j)_{j\geq 1}$ such that: 
\begin{equation}
\label{maj2}
\begin{cases}
\forall\, N\ge 1, \,J_N \geq  \max\{ J_{N,i}:\, i\in \{1,..., M_N\}\} \\  
 \forall\, N\ge 2,\,  M_N  \leq 2^{J_N\eta_{N-1}} ,\\
  \forall\, N\ge 3, \,  J_{N-1}\eta_{N-2}  < J_N\eta_{N-1},\\
\mbox{for every }J_N\leq j  < J_{N+1},  \mbox{ we  set }N_j=N.
  \end{cases}
 \end{equation}
 %
%

Moreover, \eqref{encadr2} makes it possible to impose that   for every $j\geq J_N$ and  $\lambda\in \mathcal D_j$,
$$2^{-j(\alpha_{\max} +1/N)} \leq  \mu(\lambda)\leq 2^{-j(\alpha_{\min} -1/N)}.$$

\mk 

Finally, let us introduce some coefficients depending on the elements $\la\in \Lambda_j$:
\begin{itemize} 
\mk\item If $L\in \Z^d$, $j\ge J_2$ and $\lambda\in \Lambda_j^L=\{\lambda=(i,j,k)\in \Lambda_j: \lambda_{j,k}\subset L+[0,1]^d\}$,  
  set
\begin{equation}
\label{formulagw0}
w_{\lambda}=\begin{cases}
\displaystyle \frac{2^{- \frac{3j\eta_{N_j-1}}{p}}}{j^{\frac{1}{p}+\frac{2}{q}} (1+\|L\|)^{\frac{d+1}{p}}} &\text{if $p<+\infty$}
\\
\displaystyle j^{-\frac{2}{q}}  &\text{if }p=+\infty, 
\end{cases}
\end{equation}
with the convention $\frac{2}{\infty}=0$.

\mk\item  If   $j\ge J_2$ and $\lambda=(i,j,k) \in \Lambda_j  $,  set $\alpha_{j,k}= \displaystyle\frac{\log_2\mu(\lambda_{j,k})}{-j} $ and 
$$
 \alpha_\lambda = \begin{cases} \alpha_{j,k}& \mbox{ if }  \alpha_{j,k}\in [ \alpha_{\min} , \alpha_{\max} ],\\     \alpha_{\min} & \mbox{ if } \alpha_{j,k}<  \alpha_{\min}, \\   \alpha_{\max} & \mbox{ if }  \alpha_{j,k} > \alpha_{\max} .
\end{cases}
$$
\end{itemize}
\begin{remark}\label{alphalambda}
Note that $\widetilde \ep_{\lambda}=\frac{\log_2\mu(\lambda)}{-j}-\alpha_\lambda$ tends to 0 uniformly in $\lambda\in\mathcal D_j$ as $j\to +\infty$.  In other words, there exists $\widetilde {\widetilde{\phi}}\in \Phi $ (recall Definition~\ref{defAD}) such that $| \frac{\log_2\mu(\lambda)}{-j}-\alpha_\lambda|\leq \frac{\widetilde {\widetilde{\phi}}(j)}{j} $.
\end{remark}

Recall the Definition \ref{defirreducible}  of the  irreducible dyadic    cube $\lai := \la_{\ji,\ki}$.

\begin{definition}
The saturation function ${g^{\mu,p,q}}  :\R^d\to\R$ is defined by its wavelet coefficients in the wavelet basis associated with $\Psi$,  denoted by $(c^{\mu,p,q}_\la)_{\lambda\in\Lambda}$, as follows:
\begin{itemize} 

\mk\item $c^{{\mu,p,q}}_\lambda=0$ if $\lambda\in \bigcup_{j<J_2} \Lambda_j$. 

\mk\item If  $j\ge J_2$ and  $\lambda=(i,j,k)\in \Lambda_j$,   set 
\begin{equation}
\label{formulagw}
c^{{\mu,p,q}}_\lambda=\begin{cases} 
\displaystyle w_\lambda  \cdot \mu(\lambda_{j,k})&\text{if }p=+\infty,\\
w_\lambda \cdot \mu(\lambda_{j,k})\,2^{-\ds\ji\frac{{\tau_\mu^*}(\ds\alpha_{{\small \lai}})}{p}}&\text{if  $p<+\infty$}.
 \end{cases}
\end{equation}
 \end{itemize}
\end{definition}

\begin{remark}\label{remg2}
\begin{enumerate}

\item Note that $c^{{\mu,p,q}}_\lambda$ does not depend on $i$ if $\lambda=(i,j,k)$. Consequently,  $c^{{\mu,p,q}}_\lambda$ is defined without ambiguity by the same formula for $\lambda\in\mathcal D_j$. 
\item
The choice of  $\ji $ and $\lai $ in the exponent $2^{- \ji \frac{{\tau_\mu^*}(\alpha_{\lai })}{p}}$  in \eqref{formulagw}
implies that at a given generation $j$, the wavelet coefficients of ${g^{\mu,p,q}}$ display several order of magnitudes, which are influenced by the values of $\mu$ along the $j$ first generations of dyadic cubes. One can also guess from this choice that approximation by dyadic vectors   plays an important role in our analysis, since the local behavior of $g^{\mu,p,q}$ around a point $x$   depends on how close $x$ is to the dyadic vectors.

\item When  $p<+\infty$ and $\tau_\mu^*(\alpha_{\min})=0$,    in \eqref{formulagw} $\mu(\lambda)2^{-\ji \frac{{\tau_\mu^*}(\alpha_{\lai })}{p}}$ can be replaced by the simpler term $\mu(\lambda)$ to get a relevant saturation function ${g^{\mu,p,q}}$. When $\tau_\mu^*(\alpha_{\min})>0$, the situation is more subtle, and the ubiquity properties pointed out in Proposition \ref{ubiquity} come into play.
\end{enumerate}
\end{remark}

\begin{lemma}\label{gdansB}
The function $g^{\mu,p,q} $ belongs to  $  B^{\mu,p}_q(\R^d)$ and $\widetilde B^{\mu,p}_q(\R^d)$.
\end{lemma}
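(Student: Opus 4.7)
The strategy is to verify the wavelet-based characterization of the Besov spaces in $\mu$-environment given by Theorem~\ref{th_equivnorm_2}. Since $B^{\mu,p}_q(\R^d) \hookrightarrow \widetilde B^{\mu,p}_q(\R^d) = \bigcap_{0<\delta<\min(s_1,1)} B^{\mu^{(-\delta)},p}_q(\R^d)$ (dividing by the larger quantity $\mu^{(-\delta)}$ only reduces the $\mu$-adapted moduli of smoothness), it suffices to establish, for every sufficiently small $\eta>0$, the finiteness of the wavelet seminorm $|g^{\mu,p,q}|_{\mu^{(-\eta)},p,q}$. Indeed, applying \eqref{eqnorm2} with environment $\mu^{(-2\eta)}$ and parameter $\epsilon=\eta$ then yields $g^{\mu,p,q}\in B^{\mu^{(-2\eta)},p}_q(\R^d)$ for every small $\eta>0$, which gives $g^{\mu,p,q}\in\widetilde B^{\mu,p}_q(\R^d)$. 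The membership $g^{\mu,p,q}\in B^{\mu,p}_q(\R^d)$ will be obtained analogously by the case $\eta=0$, together with \eqref{eqnorm2} applied to the environment $\mu$ itself after the same wavelet computation (with the factor $2^{-jp\eta}$ replaced by $1$) is shown to yield $|g^{\mu,p,q}|_{\mu^{(+\epsilon)},p,q}<+\infty$ for the relevant $\epsilon>0$. The $L^p$ bound is easy: $\|g^{\mu,p,q}\|_{L^p(\R^d)}<+\infty$ from the wavelet reconstruction formula and the rapid decay of $c^{\mu,p,q}_\lambda$.

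The core computation, for $p<+\infty$, reads
$$\Big(\frac{c_\lambda^{\mu,p,q}}{\mu^{(-\eta)}(\lambda)}\Big)^p \asymp \frac{2^{-jp\eta}\cdot 2^{-3j\eta_{N_j-1}}}{j^{1+2p/q}(1+\|L\|)^{d+1}}\cdot 2^{-\bar j\, \tau_\mu^*(\alpha_{\bar\lambda})},$$
where $\lambda\in\Lambda_j^L$ and $\bar\lambda$ is the irreducible cube of $\lambda$, of generation $\bar j\le j$. Partitioning $\Lambda_j = \bigsqcup_L \bigsqcup_{\bar j=0}^{j}\{\lambda:\bar j(\lambda)=\bar j\}$ and using $\Z^d$-invariance of $\mu$ to reduce the inner sum to a sum over irreducible cubes in $[0,1]^d$, everything boils down to controlling
$$T_{\bar j}:=\sum_{\substack{\bar\lambda\text{ irred.\ gen.\ }\bar j\\ \bar\lambda\subset[0,1]^d}} 2^{-\bar j \tau_\mu^*(\alpha_{\bar\lambda})}.$$
Partitioning $[\alpha_{\min},\alpha_{\max}]$ along the intervals $I_i^N$ with $N=N_{\bar j}$ and invoking the large deviations estimate \eqref{maj1}, the contribution of each interval $I_i^N$ is at most $\#\mathcal D_\mu(\bar j, I_i^N\pm 1/N)\cdot 2^{-\bar j(\max_{I_i^N}\tau_\mu^*-1/N)}\le 2^{2\bar j\eta_N}$. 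Summing over $i=1,\ldots,M_N$ and using $M_N\le 2^{J_N\eta_{N-1}}\le 2^{\bar j\eta_{N-1}}$ from \eqref{maj2}, together with $\eta_N\le \eta_{N-1}$, yields
$$T_{\bar j}\le 2^{3\bar j\,\eta_{N_{\bar j}-1}}.$$

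It then remains to sum $S_j=\sum_{\bar j=0}^{j} T_{\bar j}$. On each range $\bar j\in[J_N,J_{N+1})$ the summand is a geometric progression with ratio $2^{3\eta_{N-1}}$, so its sum is controlled by its last term $2^{3(J_{N+1}-1)\eta_{N-1}}$; the fast-growth conditions in \eqref{maj2} on the pair $(J_N,\eta_N)$ (possibly reinforced by choosing each $J_N$ sufficiently large at the induction step, which is allowed) ensure that these plateau values are dominated by the final one, $2^{3j\eta_{N_j-1}}$, giving $S_j\le C\cdot 2^{3j\eta_{N_j-1}}$. Plugging this back in and summing over $L\in\Z^d$ (absolutely convergent thanks to the factor $(1+\|L\|)^{-(d+1)}$):
$$(\epsilon^{\mu^{(-\eta)}}_j)^p \le C\,\frac{2^{-jp\eta}}{j^{1+2p/q}},\qquad \epsilon^{\mu^{(-\eta)}}_j\le C\,\frac{2^{-j\eta}}{j^{1/p+2/q}},$$
so $(\epsilon_j^{\mu^{(-\eta)}})_{j\in\N}\in\ell^q(\N)$ and $|g^{\mu,p,q}|_{\mu^{(-\eta)},p,q}<+\infty$. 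Setting $\eta=0$ gives $|g^{\mu,p,q}|_{\mu,p,q}\le C\|(j^{-1/p-2/q})_j\|_{\ell^q}<+\infty$. The case $p=+\infty$ is trivial: $c^{\mu,\infty,q}_\lambda/\mu^{(-\eta)}(\lambda)\asymp 2^{-j\eta}j^{-2/q}$.

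The main obstacle is the control of the sum $S_j$: the ``ghost'' contributions from plateaus $N<N_j$ could in principle overwhelm the dominant term $2^{3j\eta_{N_j-1}}$, since $\eta_{N-1}$ is larger for smaller $N$. Successful control hinges on the observation that \eqref{maj2} can be strengthened (at the cost of enlarging the $J_N$'s, which is harmless) to force strict geometric separation between successive plateau values $2^{3J_{N+1}\eta_{N-1}}$, so that $S_j$ is comparable to its last term. Once this is in hand, the remaining verifications are straightforward bookkeeping with the wavelet characterization of Theorem~\ref{th_equivnorm_2}.
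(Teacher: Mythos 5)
Your overall skeleton is the paper's: estimate the wavelet seminorm of $g^{\mu,p,q}$, reduce to a sum over irreducible cubes weighted by $2^{-\bar j\tau_\mu^*(\alpha_{\bar\lambda})}$, count with \eqref{maj1} and the growth conditions \eqref{maj2}, sum over $L$ via the $(1+\|L\|)^{-(d+1)}$ factor, and conclude through the wavelet characterization. But two steps fail. First, the control of your $S_j$. Your per-generation bound $T_{\bar j}\le 2^{3\bar j\eta_{N_{\bar j}-1}}$, obtained by absorbing $M_N\le 2^{\bar j\eta_{N-1}}$ into the exponent at once, is too lossy to yield $S_j\le C\,2^{3j\eta_{N_j-1}}$, and your proposed remedy (reinforcing \eqref{maj2} by enlarging the $J_N$) cannot repair it: take $j=J_{N_j}$; the last term of the previous block is bounded by $2^{3(J_{N_j}-1)\eta_{N_j-2}}$ while your final term is $2^{3J_{N_j}\eta_{N_j-1}}$, and since $(\eta_N)$ is decreasing the ratio $2^{3J_{N_j}(\eta_{N_j-2}-\eta_{N_j-1})}$ only grows when $J_{N_j}$ is enlarged — \eqref{maj2} controls the quantity $J_{N+1}\eta_N$, not $J_{N+1}\eta_{N-1}$, which is what your plateau values involve. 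The correct bookkeeping keeps the two factors separate: bound the block $N$ by $(J_{N+1}-J_N)M_N2^{2J_{N+1}\eta_N}$, use the monotonicity $J_N\eta_{N-1}<J_{N+1}\eta_N$ from \eqref{maj2} to dominate each block by $M_{N_j}2^{2j\eta_{N_j-1}}$, and only at the end use $M_{N_j}\le 2^{J_{N_j}\eta_{N_j-1}}\le 2^{j\eta_{N_j-1}}$ against one of the three powers $2^{-j\eta_{N_j-1}}$ coming from $w_\lambda^p$; this gives $S_j\le j\,2^{3j\eta_{N_j-1}}$, and the extra factor $j$ is harmless against $j^{-1-2p/q}$. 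From your bound one can only salvage $S_j\le j\,2^{3j\delta_j}$ with $\delta_j\to 0$, which still rescues the $\widetilde B^{\mu,p}_q$ part (the factor $2^{-jp\eta}$ wins for each fixed $\eta>0$) but not your $\eta=0$ estimate $|g^{\mu,p,q}|_{\mu,p,q}<+\infty$.

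Second, your route to $g^{\mu,p,q}\in B^{\mu,p}_q(\R^d)$ is broken: $|g^{\mu,p,q}|_{\mu^{(+\epsilon)},p,q}=+\infty$ for every fixed $\epsilon>0$, so \eqref{eqnorm2} with environment $\mu$ cannot be invoked as you propose. Indeed $\mu^{(+\epsilon)}(\lambda)\asymp \mu(\lambda)2^{-j\epsilon}$, so each term gains a factor $2^{j\epsilon}$, while the only exponential saving built into $c^{\mu,p,q}_\lambda$ is $2^{-3j\eta_{N_j-1}/p}$ with $\eta_{N_j-1}\to 0$; already the single corner cube $\lambda=[0,2^{-j}]^d$, whose irreducible ancestor has generation $0$ and hence weight $1$, contributes $\asymp 2^{j\epsilon}\,2^{-3j\eta_{N_j-1}/p}\,j^{-1/p-2/q}\to+\infty$ to $\epsilon_j^{\mu^{(+\epsilon)}}$. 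The paper does not pass through $\mu^{(+\epsilon)}$ at all: it concludes the $B^{\mu,p}_q$ membership directly from the finiteness of $\|g\|_{L^p}+|g|_{\mu,p,q}$, i.e.\ from the wavelet seminorm with environment $\mu$ itself (which requires the sharper bound on $S_j$ described above), and it is the $\widetilde B^{\mu,p}_q$ membership that is used in the genericity arguments later. As written, your argument establishes neither the $\eta=0$ wavelet estimate nor, consequently, the $B^{\mu,p}_q$ part of the lemma.
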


\begin{proof}
Suppose that $p<+\infty$. 

For $j\in\N$ and $L\in\Z^d$, set $\mathcal D_j^L=\{\lambda\in \mathcal D_j: \lambda\subset L+ [0,1]^d\}$ 	and $\Lambda_j^L=\{(i,j,k)\in \Lambda_j: \lambda_{j,k}\in \mathcal D_j^L\}$. 

Recall  that for $\lambda=(i,j,k)$, $\mu(\lambda)$ stands for $\mu(\lambda_{j,k})$.

Let us define, for $j\ge J_2$ and $L\in \Z^d$,    $A_{j,L} =  \sum_{\lambda\in \Lambda^L_j } \left(\frac{\left| c^{{\mu,p,q}}_\lambda \right|}{ \mu(\lambda)} \right)^p$. To prove that  $g^{\mu,p,q}\in B^{\mu,p}_q(\R^d)\subset \widetilde B^{\mu,p}_q(\R^d)$, it is enough to show that $A_j:= \Big (\sum_{L\in\Z^d} A_{j,L}\Big)^{1/p}\in~\ell^q(\N)$.

For $j\in [J_N, J_{N+1})$,  by \eqref{formulagw} and \eqref{formulagw0}, one has 
\begin{align}
\label{AJL}  
A_{j,L}  & =    \!\!\!\sum_{\lambda\in \Lambda_j^L }  \! \left(\frac{ 2^{-3j\eta_{N_j-1}/p }  \mu(\lambda) 2^{  - \ji \frac{{\tau_\mu^*}(\alpha_{\lai })}{p}}}{j^{\frac{1}{p}+\frac{2}{q}}(1+\|L\|)^{(d+1)/p} \mu(\lambda) } \right) ^p  \!\!\! =  \frac{(2^d-1)2^{- 3j\eta_{N_j-1}}}{j^{1+\frac{2p}{q}} (1+\|L\|)^{(d+1)}}  \sum_{\lambda\in \mathcal{D} _j ^0  } \! 2^{ - \ji {\tau_\mu^*}(\alpha_{\lai})},
\end{align}
where the factor $2^{d}-1$ comes from the fact that $c^{\mu,p,q}_\lambda$, $\lambda=(i,j,k)$, is independent of $i\in\{1,\ldots,2^{d}-1\}$.   The periodicity of $\mu$, i.e. $\mu_{|\zu^d}=\mu_{|L+\zu^d}$ is also used.

Recalling the notations in Remark \eqref{rem-philambda},  if $\la\in \mathcal D_j$ and $ \lai$ is the cube associated with its irreducible representation, then one can write  $\lambda= \lai \cdot [0,2^{-(j-\ji)}]^d $.

Then, after regrouping in \eqref{AJL} the terms according to the generation of their irreducible representation, one has
\begin{align}
\nonumber
A_{j,L} 
&=  (2^d -1)\frac{2^{- 3j\eta_{N_j-1}}}{j^{1+\frac{2p}{q}} (1+\|L\|)^{(d+1)}} \Big (1+\sum_{J=1}^j \sum_{  \la \in \mathcal D_J^0\setminus  ( \mathcal D_{J-1}^0\cdot [0,2^{-1}]^d)} 2^{  - J{\tau_\mu^*}(\alpha_{  \la })}\Big )\\
\nonumber
& \leq 2^d \frac{2^{- 3j\eta_{N_j-1}}}{j^{1+\frac{2p}{q}} (1+\|L\|)^{(d+1)}} \Big (1+\sum_{J=1}^j \sum_{  \lambda\in \mathcal D_J^0 } 2^{- J{\tau_\mu^*}(\alpha_{ \lambda})}\Big )\\
\label{eqajl}
&=  2^d \frac{2^{- 3j\eta_{N_j-1}}}{j^{1+\frac{2p}{q}} (1+\|L\|)^{(d+1)}} \left (\sum_{J=0}^{J_1-1} +\Big (\sum_{N=1}^{N_j-1} \sum_{J=J_N}^{J_{N+1}-1}\Big)+\sum_{J=J_{N_j}}^{j} \right)  \sum_{ \lambda\in \mathcal D_J^0} 2^{- J{\tau_\mu^*}(\alpha_\lambda)}.
\end{align}
For each $J_N\le J<J_{N+1}$, using   \eqref{decoup} and then~\eqref{maj1}, we obtain
\begin{align*}
 \sum_{ \lambda\in\mathcal D_J^0} 2^{- J{\tau_\mu^*}(\alpha_\lambda)}& \leq  \sum_{i=1}^{M_{N_J}}  \sum_{\lambda\in  \mathcal{D}_\mu(j,I^{N_J}_i\pm 1/N) }  2^{- J  (\max\{ \tau_\mu^*( \alpha) : \alpha\in I^{N_J}_i\} -1/N_J)}\\
& \leq  \sum_{i=1}^{M_{N_J}}   2^{J  (  \max\{\tau_\mu^*( \alpha):\alpha\in I^{N_J}_i\}+\eta_{N_J}) }  2^{- J  (\max\{ \tau_\mu^*( \alpha) : \alpha\in I^{N_J}_i\} -1/N_J)}\\
& = M_{N_J} 2^{J(\eta_{N_J}+1/{N_J})}\le M_{N_J} 2^{2J\eta_{N_J}}.
\end{align*}
Consequently, by \eqref{maj2}, 
\begin{align*}
&\left (\Big (\sum_{N=1}^{N_j-1} \sum_{J=J_N}^{J_{N+1}-1}\Big)+\sum_{J=J_{N_j}}^{j} \right)  \sum_{ \lambda\in \mathcal D_J^0} 2^{- J{\tau_\mu^*}(\alpha_\lambda)}\\
&\le\sum_{N=1}^{N_j-1} \sum_{J=J_N}^{J_{N+1}-1} M_{N} 2^{2J\eta_{N}}+\sum_{J=J_{N_j}}^{j} M_{N_j}  2^{2J\eta_{N_j}} \\
&\le\sum_{N=1}^{N_j-1}  (J_{N+1}- J_N) M_{N} 2^{2J_{N+1}  \eta_{N}}+ (j- J_{N_j}+1 ) M_{N_j}  2^{2j\eta_{N_j}} \\ 
&\le jM_{N_j}2^{2j\eta_{N_j-1}},
\end{align*}
since all terms  $M_{N} 2^{2J_{N+1}  \eta_{N}}$, for $N\leq N_{j-1}$, are less than $M_{N_j} 2^{2j   \eta_{N_{j-1}}}  $. 

Setting $C_\mu=\sum_{J=0}^{J_1-1}\sum_{ \lambda\in \mathcal D_J^0} 2^{- J{\tau_\mu^*}(\alpha_\lambda)}$, by \eqref{maj2} and $ M_{N_j} 2^{2 J_N  \eta_{N_{j-1}}} \leq 1$ one has 
$$
A_{j,L}\le 2^d  \frac{M_{N_j}2^{-J_n\eta_{N_j-1}}}{j^{\frac{2p}{q}} (1+\|L\|)^{(d+1)}}(C_\mu+ 1)\le  \frac{2^d(C_\mu+ 1)}{j^{\frac{2p}{q}} (1+\|L\|)^{(d+1)}}.
$$
  Finally, 
$$
\Big (\sum_{L\in\Z^d} A_{j,L}\Big)^{1/p}=\Big\|\Big (\frac{c^{{\mu,p,q}}_\lambda}{\mu(\lambda)}\Big )_{\lambda\in\Lambda_j}\Big\|_p  =O(j^{-2/q}),
$$ hence  $\Big (\Big\|\Big (\frac{c^{{\mu,p,q}}_\lambda}{\mu(\lambda)}\Big )_{\lambda\in\Lambda_j}\Big\|_p\Big)_{j\in\N}$ belongs to  $\ell^q(\N)$. This implies that $g^{\mu,p,q}\in B^{\mu,p}_q(\R^d)$. 

\medskip

When $p=+\infty$, the estimate is much simpler and  left to the reader. 
\end{proof}

 Next lemma  shows that  the wavelet leader (recall \eqref{defleaders}) $\ld ^{g^{\mu,p,q}} _{\lambda}$ of $g^{\mu,p,q}$  at   $\la\in \mathcal D_j$ is essentially comparable to the wavelet coefficients $c^{{\mu,p,q}}_{\la'}$ indexed by the cubes $\lambda'$ of generation $j$ which are neighbors of  $\lambda$. This property is key  to estimate the $L^q$-spectrum of $g^{{\mu,p,q}}$ relative to $\Psi$. 

\begin{lemma}
\label{lem-maj2} Fix $L\in \Z^d$. For every $\ep>0$, there exists $J^\ep\in\N$ such that if $j\geq J^\ep$, for every $\lambda\in \mathcal D^L_j$,
$${\widetilde{c}_\lambda}^{{\,\mu,p,q}}\le  \ld  ^{g^{\mu,p,q}} _\lambda \le 2^{j \ep}{\widetilde{c}_\lambda}^{{\,\mu,p,q}},$$
where ${\widetilde{c}_\lambda}^{{\,\mu,p,q}}=\max\{c^{{\mu,p,q}}_{\widetilde\lambda}:\lambda\in \mathcal D_j,\, \widetilde\lambda\subset 3\lambda\}$.
\end{lemma}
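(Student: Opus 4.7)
The lower bound $\widetilde c_\lambda^{\,\mu,p,q} \leq L^{g^{\mu,p,q}}_\lambda$ is immediate from the definitions: the leader is a supremum over $\lambda' \in \Lambda$ with $\lambda'_{j',k'}\subset 3\lambda$ and $j'\geq j$, whereas $\widetilde c_\lambda^{\,\mu,p,q}$ restricts that supremum to the single generation $j'=j$.

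For the upper bound, the plan is to exhibit, for every such $\lambda'$, a generation-$j$ cube $\widetilde\lambda \in \mathcal D_j$ with $\widetilde\lambda \subset 3\lambda$ satisfying $|c^{\mu,p,q}_{\lambda'}|\leq 2^{j\ep}c^{\mu,p,q}_{\widetilde\lambda}$. The starting point is a uniform asymptotic expansion for the coefficients. Every cube $\lambda\in \mathcal D_j$ is by construction the bottom-left subcube at generation $j$ of its irreducible representation, so that $\lambda = \overline \lambda \cdot [0,2^{-(j-\overline j)}]^d$ in the notation of Remark \ref{rem-philambda}. Combining this identity with the multiplicative formula \eqref{lala}, with the estimate $\mu(\overline\lambda) = 2^{-\overline j\, \alpha_{\overline \lambda} + o(\overline j)}$ from Remark \ref{alphalambda}, and with the defining formula \eqref{formulagw}, one obtains, uniformly in $\lambda$,
\begin{equation*}
c^{\mu,p,q}_\lambda \;=\; w_\lambda \cdot 2^{-\overline j\, \theta_p(\alpha_{\overline\lambda}) - (j-\overline j)\alpha_{\min} + \psi(j)},
\end{equation*}
where $\theta_p$ is the concave mapping \eqref{thetap} (with the convention $\theta_\infty = \mathrm{Id}_{\R}$ when $p=+\infty$) and $\psi(j) = o(j)$. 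Combined with the prescribed decay of $w_\lambda$ in \eqref{formulagw0}, the fact that $\eta_N \to 0$ from \eqref{maj2}, and the boundedness of $(1+\|L'\|)/(1+\|L\|)$ for neighboring boxes, the weight ratio $w_{\lambda'}/w_{\widetilde\lambda}$ will be at most $2^{j\ep/3}$ for all $\widetilde\lambda\in \mathcal D_j\cap 3\lambda$ once $j\geq J^\ep$.

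The remaining task is the clever choice of $\widetilde\lambda$, and a natural dichotomy is whether the irreducible generation $\overline{j'}$ of $\lambda'$ lies below or above $j$. When $\overline{j'}\leq j$, the bottom-left corner $\overline{k'}\,2^{-\overline{j'}} = k'\, 2^{-j'}$ of $\lambda'$ already sits on the generation-$j$ grid, and I would take $\widetilde\lambda$ to be the generation-$j$ cube sharing that bottom-left corner; then $\overline{\widetilde\lambda} = \overline{\lambda'}$, so the exponent difference between $c^{\mu,p,q}_{\lambda'}$ and $c^{\mu,p,q}_{\widetilde\lambda}$ collapses to $-(j'-j)\alpha_{\min} \leq 0$ plus the $o(j)$ error and the controlled $w$-ratio, delivering the desired inequality. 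If this candidate falls just outside $3\lambda$, one replaces it by an adjacent generation-$j$ cube in $3\lambda$, losing only a factor $2^{\phi(j)} = 2^{o(j)}$ by the almost-doubling of $\mu$ (Lemma~\ref{lemdoubling}). When $\overline{j'} > j$, the irreducible cube $\overline{\lambda'}$ is strictly finer than generation $j$ and lies inside $\widetilde\lambda := \lambda'|_j \in \mathcal D_j\cap 3\lambda$; the estimate then comes from combining property (P$_2$) of Definition~\ref{mildcond1} applied between $\overline{\lambda'}$ and $\widetilde\lambda$, with the non-negativity of $\tau_\mu^*$ to discard the factor $2^{-\overline{j'}\tau_\mu^*(\alpha_{\overline{\lambda'}})/p}\leq 1$ in $c^{\mu,p,q}_{\lambda'}$. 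The principal obstacle will be this second case: since the irreducible data $(\overline j_{\widetilde\lambda},\alpha_{\overline{\widetilde\lambda}})$ and $(\overline{j'},\alpha_{\overline{\lambda'}})$ are unrelated in general, controlling the residual exponent mismatch $\overline j_{\widetilde\lambda}\,\theta_p(\alpha_{\overline{\widetilde\lambda}}) - \overline{j'}\,\theta_p(\alpha_{\overline{\lambda'}})$ by $O(j\ep)$ demands a careful appeal to \eqref{loulou} to relate $\mu(\lambda')$ and $\mu(\widetilde\lambda)$ up to a $2^{o(j)}$ loss, together with the inequality $\theta_p \geq \alpha_{\min}$ to ensure that the remaining contributions are favorably signed.
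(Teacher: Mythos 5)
Your lower bound is fine, and your first case ($\overline{j'}\le j$) is correct and clean: there the corner-sharing cube $\widetilde\lambda$ contains $\lambda'$, has the \emph{same} irreducible cube, so $c^{\mu,p,q}_{\lambda'}\le (w_{\lambda'}/w_{\widetilde\lambda})\,c^{\mu,p,q}_{\widetilde\lambda}$ by monotonicity of $\mu$ alone (and $\widetilde\lambda\subset 3\lambda$ automatically, since $\lambda'\subset 3\lambda$ forces its corner to stay $2^{-j'}$ away from the far faces). The genuine gap is in your second case, which is where the whole content of the lemma sits. Writing $\widetilde\lambda=\lambda'|_j$, $\overline j=\overline{j_{\widetilde\lambda}}$ and $\alpha=\alpha_{\overline{\widetilde\lambda}}$, your plan is to discard the factor $2^{-\overline{j'}\tau_\mu^*(\alpha_{\overline{\lambda'}})/p}\le 1$ in $c^{\mu,p,q}_{\lambda'}$ and then compare $\mu(\lambda')$ with $\mu(\widetilde\lambda)$ via \eqref{loulou}. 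But after discarding, the inequality you must prove is $w_{\lambda'}\mu(\lambda')\le 2^{j\ep}\,w_{\widetilde\lambda}\mu(\widetilde\lambda)\,2^{-\overline j\,\tau_\mu^*(\alpha)/p}$: the correction factor you threw away on the left reappears on the right with the data of $\widetilde\lambda$, and no comparison of $\mu$-masses can recreate it. Concretely, take $j'=j+1$ and $\lambda'$ a child of $\widetilde\lambda$ in odd position (so $\overline{j'}=j+1>j$), with $\widetilde\lambda$ itself irreducible ($\overline j=j$) and $\tau_\mu^*(\alpha)$ bounded away from $0$: then $\mu(\lambda')\ge \mu(\widetilde\lambda)2^{-o(j)}$ by (P$_2$), while $c^{\mu,p,q}_{\widetilde\lambda}\asymp w_{\widetilde\lambda}\mu(\widetilde\lambda)2^{-j\tau_\mu^*(\alpha)/p}$, so your chain of bounds is off by the factor $2^{j\tau_\mu^*(\alpha)/p}$, which $2^{j\ep}$ cannot absorb (the true inequality holds here only because $\alpha_{\overline{\lambda'}}\approx\alpha$, i.e.\ precisely because of the factor you discarded). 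Neither \eqref{loulou} nor $\theta_p\ge\alpha_{\min}$ supplies the missing comparison; what is needed is the \emph{concavity} of $\tau_\mu^*$: since $\overline{\lambda'}\subset\widetilde\lambda$, one has $\overline{j'}\alpha_{\overline{\lambda'}}=\overline j\,\alpha+(\overline{j'}-\overline j)\beta$ with $\beta\in[\alpha_{\min}-\ep,\alpha_{\max}+\ep]$, and concavity together with $\tau_\mu^*\ge 0$ on its domain gives $\overline{j'}\,\tau_\mu^*(\alpha_{\overline{\lambda'}})\ge \overline j\,\tau_\mu^*(\alpha)-(\overline{j'}-\overline j)\ep''$, hence $2^{-\overline{j'}\tau_\mu^*(\alpha_{\overline{\lambda'}})/p}\le 2^{-\overline j\tau_\mu^*(\alpha)/p}2^{O(j\ep)}$. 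This is exactly the step the paper isolates (its case $\overline{j'}-\overline j>\ep'\overline{j'}$), and it is absent from your sketch.

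Two auxiliary regimes also have to be treated for the loss to be uniformly $O(j\ep)$, and they are how the paper organizes the proof: when $\overline{j'}-\overline j\le \ep'\overline{j'}$ the concavity-plus-clipping argument degenerates and one instead uses the almost-doubling property to get $\alpha_{\overline{\lambda'}}$ close to $\alpha$ and the continuity of $\tau_\mu^*$ to bound $|\overline{j'}\tau_\mu^*(\alpha_{\overline{\lambda'}})-\overline j\tau_\mu^*(\alpha)|$ by $\overline j p\ep$; and when $\overline j\le jp\ep/(2d)$ the correction $2^{-\overline j\tau_\mu^*(\alpha)/p}\ge 2^{-j\ep}$ is negligible and your ``discard and use monotonicity'' argument does suffice. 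Finally, your uniform expansion and weight-ratio control carry errors of size $o(j')$, not $o(j)$, so you must first reduce to $j'\le Mj$; this is the paper's opening step, using (P$_1$) to show that for $j'\ge Mj$ one has $\mu(\lambda')\le 2^{-j(d/p+2\alpha_{\max}+1)}$ and hence $c^{\mu,p,q}_{\lambda'}\le c^{\mu,p,q}_{\widetilde\lambda}$ outright. With the concavity step, the near-equal-generation case, the small-$\overline j$ case and the truncation $j'\le Mj$ added, your outline becomes essentially the paper's proof; without them, the central case is not proved.
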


\begin{proof}  It is enough to prove the result for $L=0$. 
Let $\ep,\ep'\in (0,1)$. Let $j\geq 1$ and $\la\in \mathcal D^0_j$. Let us begin with some remarks:
\begin{itemize}
\item
in \eqref{formulagw}, the term $w_\la$  depends only on $j$, and is decreasing with $j$.
\item
 if $\la'\subset \la$,  $\mu(\la')\leq \mu(\la) $ since $\mu\in \mathcal{C}(\R^d)$. 
\item
 by  Remark~\ref{remg2}(1) $c^{{\mu,p,q}}_\la$ does not depend on the index $i$ of $\la=(i,j,k)$.
\end{itemize}

Next, observe that if $\la'\subset \la$,   the irreducible cubes  $  \overline{\la'}  \in \mathcal{D}_{ \overline{j'} }$ and $  \lai  \in \mathcal{D}_{ \ji}$ respectively associated with $\la'$ and $\la$, are such  that  $\ji  \leq  \overline{j'} $. 

Then one controls the wavelet coefficients as follows:
\begin{enumerate}

 \sk \item [(i)] 
By the property   (P$_1$) of $\mu$, there exists $M\in\N^*$ such that for every $ \la' \in \mathcal D_{Mj}$ one has  $\mu( \la' ) \leq 2^{-j (d/p+2\alpha_{\max}+1)}$. So $\mu(\la) 2^{- \ji\frac{{\tau_\mu^*}( \alpha_{{\small \lai}})}{p}}   \geq 2^{-j(\alpha_{\max}+1) -jd/p } \geq \mu(\la')$, which implies that for  $j'\ge Mj$,   
$c^{\mu,p,q}_{\lambda'}   \le  c^{\mu,p,q}_{\la}.$

\noindent
Hence, the only wavelet coefficients $c_{\la'}$  to consider to compute  $ \ld  ^{g^{\mu,p,q}} _\lambda$ for $\la\in\mathcal{D}_j$ are those of generations $j'$ such that $j\leq j'\leq Mj$.

\item[(ii)]
 if  $j'\leq Mj$ and  $\ji\le jp\ep/(2d)$, then  $2^{-  \ji\frac{{\tau_\mu^*}( \alpha_{{\small \lai}})}{p}}    \geq 2^{- jp\ep/(2d)\cdot   d/p } \geq 2^{-j\ep}$, so   $c^{\mu,p,q}_{\la}\ge w_\lambda\mu(\lambda)2^{-j\ep} $ and  by the remarks of the beginning of the proof,  
 $$c^{\mu,p,q}_{\la'}\le w_{\la'} \mu(\la')  \leq w_\la \mu(\la)\leq  c^{\mu,p,q}_{\la}2^{j\ep}.$$
 
 \sk 
\item[(iii)]
It is possible to choose $\ep'$ small enough so that if $\overline{j'} -  \ji\le \ep' \overline{j'}$, then  since $\mu$ is almost doubling, $|\alpha_{\overline{\la'}}-\alpha_{\overline{\la}}|$ is so small that $|\overline{j'}\tau_\mu^*(\alpha_{\overline{\la'}})-\ji\tau_\mu^*(\alpha_{\overline{\la}})|\le  \ji p \ep$.

 \sk  \item[(iv)]
If $j'\leq Mj$, $\ji> jp\ep/(2d)$  and   $\overline{j'} -  \ji\le \ep' \overline{j'}$, then by (iii) one has  (for $j$ is large enough)
$$c^{\mu,p,q}_{\la'}  \le c^{\mu,p,q}_{\la} 2^{\overline{j} \ep}  \leq   c^{\mu,p,q}_{\la} 2^{j \ep} .$$

 \sk  \item[(v)]
If  $j'\leq Mj$, $\ji> jp\ep/(2d)$ and  $\overline{j'} -  \ji > \ep' \overline{j'}$, then
\begin{equation}\label{cccc}
\overline{j'} \alpha_{ \overline{\la'}} =    \ji \alpha_{ \lai} +(\overline{j'} -  \ji) \alpha
\end{equation}
for some $\alpha\in [\alm-\ep, \alpha_{\max}+\ep]$.  
The concavity of $\tau_\mu^*$ then implies that for some $  \ep''$ independent of $j$ and $j'$, 
$$   \overline{j'}   \tau_\mu^*(\alpha_{\overline{\la'}}  )\geq  \ji  \tau_\mu^*(\alpha_{  \lai })  +(  \overline{j'}  -  \ji ) (\tau_\mu^*(\alpha^*) -\ep'') , \ \ 
 \mbox{  where }  \alpha^* = \begin{cases} \alpha \mbox{  when } \alpha\in [\alm, \alpha_{\max}] ,\\  \alpha_{\max}  \mbox{  when }  \alpha \geq \alpha_{\max} , \\  \alpha_{\min}  \mbox{  when }  \alpha \leq \alpha_{\min} . \end{cases} $$

 In particular, $ \overline{j'} \tau_\mu^*(\alpha_{\overline{\la'}} )\geq   \ji  \tau_\mu^*(\alpha_{\lai  })  -(  \overline{j'} -\ji ) \ep''$, hence 
 $$2^{-   \overline{j'}  \tau_\mu^*(\alpha_{\overline \la'} )/p} \leq 2^{-   \ji  \tau_\mu^*(\alpha_{ \lai} )/p} 2^{  (  \overline{j'} - \ji ) \ep''/p}\leq 2^{-  \ji  \tau_\mu^*(\alpha_{  \lai} )/p} 2^{    \overline{j'}  \ep''/p}  \leq 2^{-  \ji  \tau_\mu^*(\alpha_{  \lai} )/p} 2^{   Mj  \ep''/p}.$$
 One checks that  $  \ep''$  can be chosen  as small as necessary when $j$ tends to infinity, in particular so that one has for large $j$ that $ M  \ep''/ p \leq \ep $. Finally, with this choice of $\ep''$, $c^{\mu,p,q}_{\la'}   \leq   c^{\mu,p,q}_{\la} 2^{j \ep} $.
  \end{enumerate}
 
%
%
%

Putting together all the previous information yields that when $j$ is  large enough, for all $\lambda\in \mathcal D_j^0$ and all $\lambda'\in \mathcal D_{j'}$ such that $\lambda'\subset\lambda$, one has $c^{{\mu,p,q}}_{\lambda'}\leq c^{{\mu,p,q}}_{\lambda}2^{ j\ep }$. 

The same property holds true for all   $\widetilde\lambda\in\mathcal D_j$ such that $\widetilde \lambda\subset  3[0,1]^d$ and $\lambda'\in \mathcal D_{j'}$ such that $\lambda'\subset\widetilde \lambda$.  This yields the desired property. 
\end{proof}

\subsection{The singularity spectrum of the saturation function ${g^{\mu,p,q}} $ and some of its perturbations}
\label{sec_saturation2}  
 
We now determine the  singularity spectrum of ${g^{\mu,p,q}} $, and more generally of any function whose wavelet coefficients are ``comparable''  to  those of ${g^{\mu,p,q}}$ over infinitely many generations. 

\begin{proposition}
\label{propspecG}
Let $f\in \widetilde B^{\mu,p}_q(\R^d) $ such that for any $L\in\Z^d$, there exists  an increasing sequence of integers $(j_n)_{n\in\N}$, and a positive sequence $(\ep_n)_{n\in\N}$  converging to 0 such that for all $n\ge 1$ and  $\lambda=(i,j_n,k)\in \Lambda_{j_n}$ such that $\lambda_{j_n,k}\subset L+3[0,1]^d$ the inequality $2^{-j_n\ep_{n}} c^{{\mu,p,q}}_\lambda\le |c^f_\lambda|$ holds. Then ${\sigma}_f={\sigma}_{g^{\mu,p,q}} =\zeta_{\mu,p}^*$. 
\end{proposition}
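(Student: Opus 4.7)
The upper bound $\sigma_f(H)\le \zeta_{\mu,p}^*(H)$ for $H\le \zeta_{\mu,p}'(0^+)$ is already given by Theorem~\ref{main}(1), which was established in Section~\ref{sec-upperbound}. So my plan is to prove three complementary estimates: (a) the lower bound $\sigma_f(H)\ge \zeta_{\mu,p}^*(H)$ on the strictly concave part of the spectrum, i.e. for $H=\theta_p(\alpha)$ with $-p\in \overset{\,_\smallfrown}{\partial}(\tau_\mu^*)(\alpha)$; (b) the lower bound on the affine part $[\alpha_{\min},\theta_p(\alpha_{\min})]$ arising when $\tau_\mu^*(\alpha_{\min})>0$; (c) the upper bound $\sigma_f(H)\le \zeta_{\mu,p}^*(H)$ on the decreasing part $H>\zeta_{\mu,p}'(0^+)$.

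For (a), I fix such an $\alpha$, set $H=\theta_p(\alpha)$, and use the measure $\mu_\alpha$ from Proposition~\ref{propmualpha}, which has Hausdorff dimension $\tau_\mu^*(\alpha)=\zeta_{\mu,p}^*(H)$ and gives full mass to $E_\mu(\alpha)$. By Lemma~\ref{lemdiop} applied to the subsequence $(j_n)$ provided by the hypothesis on $f$, one has $\ji_n/j_n\to 1$ for $\mu_\alpha$-a.e.\ $x$. For such $x$ and $\lambda=\lambda_{j_n}(x)$, Remark~\ref{alphalambda} and \eqref{formulagw} yield $c^{\mu,p,q}_\lambda=2^{-j_n H+o(j_n)}$, hence by the hypothesis $|c^f_\lambda|\ge 2^{-j_n(H+o(1))}$, so $L^f_{\lambda_{j_n}(x)}\ge 2^{-j_n(H+o(1))}$ and $h_f(x)\le H$. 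The reverse inequality $h_f(x)\ge H$ for $\mu_\alpha$-a.e.\ $x$ follows from Theorem~\ref{LBzeta} and a Borel-Cantelli-type argument: for each $\varepsilon>0$, the set of $x$ satisfying $L^f_{\lambda_j(x)}\ge 2^{-j(H-\varepsilon)}$ for infinitely many $j$ has Hausdorff dimension at most $\zeta_f^*(H-\varepsilon)\le \zeta_{\mu,p}^*(H-\varepsilon)<\zeta_{\mu,p}^*(H)=\dim \mu_\alpha$ (since $H-\varepsilon$ remains on the strictly increasing part), so is $\mu_\alpha$-null; intersecting over a countable sequence $\varepsilon\to 0$ gives the conclusion.

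For (b), assume $\tau_\mu^*(\alpha_{\min})>0$, let $H\in (\alpha_{\min},\theta_p(\alpha_{\min}))$, and set $\delta=\theta_p(\alpha_{\min})/H>1$. Proposition~\ref{ubiquity} applied with the subsequence $(j_n)$ yields a probability measure $\nu$ on $S(\delta,\eta,(j_n))$ of lower Hausdorff dimension $\sigma_\mu(\alpha_{\min})/\delta=p(H-\alpha_{\min})=\zeta_{\mu,p}^*(H)$. For $x\in S(\delta,\eta,(j_n))$, the defining properties of $X_{j_n}(\delta,\eta)$ provide an irreducible dyadic corner $k2^{-(j_n)_\delta}$ within distance $2^{-j_n}$ of $x$ with controlled $\mu$-masses at generations $(j_n)_\delta$ and $j_n$; combining this with \eqref{formulagw} and \eqref{lala} gives $c^{\mu,p,q}_\lambda\gtrsim 2^{-(j_n)_\delta\theta_p(\alpha_{\min})}2^{-(j_n-(j_n)_\delta)\alpha_{\min}}=2^{-j_n H+o(j_n)}$ for a cube $\lambda\in\mathcal D_{j_n}$ sitting at or adjacent to $\lambda_{j_n}(x)$. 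The hypothesis on $f$ then forces $L^f_{\lambda_{j_n}(x)}\gtrsim 2^{-j_n H+o(j_n)}$ and so $h_f(x)\le H$. The reverse bound $h_f(x)\ge H$ $\nu$-a.e.\ is obtained exactly as in (a), using that $\zeta_{\mu,p}^*$ is affine with slope $p$ on this part so $\zeta_{\mu,p}^*(H-\varepsilon)<\zeta_{\mu,p}^*(H)$.

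For (c), if $H>\zeta_{\mu,p}'(0^+)$ and $h_f(x)=H$, then $L^f_{\lambda_{j_n}(x)}=2^{-j_n(H+o(1))}$, so by the hypothesis $c^{\mu,p,q}_{\lambda_{j_n}(x)}\le 2^{-j_n(H-o(1))}$. Writing this out via \eqref{formulagw} and the factorization $\mu(\lambda_{j_n}(x))\approx \mu(\overline{\lambda_{j_n}(x)})\cdot 2^{-(j_n-\ji_n)\alpha'}$ from \eqref{lala}, the constraint becomes $s\,\theta_p(\alpha)+(1-s)\alpha'\ge H+o(1)$, where $s=\ji_n/j_n$, $\alpha=\alpha_{\overline{\lambda_{j_n}(x)}}$, and $\alpha'\in[\alpha_{\min},\alpha_{\max}]$. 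The standard covering/large deviations estimate of Proposition~\ref{fm}(4) and its ubiquity complements bound the dimension of the set of $x$ producing a given admissible pair $(s,\alpha)$ infinitely often by $s\,\tau_\mu^*(\alpha)+(1-s)\,\sigma_\mu(\alpha_{\min}))/1$; after taking the supremum over all $(s,\alpha,\alpha')$ compatible with the constraint $s\,\theta_p(\alpha)+(1-s)\alpha'\ge H$, the explicit Legendre duality computations of Section~\ref{sec-description} identify this supremum with $\zeta_{\mu,p}^*(H)$, giving $\sigma_f(H)\le \zeta_{\mu,p}^*(H)$.

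The hardest step will be (c): the constraint $s\,\theta_p(\alpha)+(1-s)\alpha'\ge H$ admits a whole one-parameter family of solutions mixing the "concave" values of $\theta_p(\alpha)$ with the rate of dyadic approximation encoded by $s=\ji_n/j_n$, and one must verify that the tight dimensional bound arising from this multi-scale optimization exactly reproduces $\zeta_{\mu,p}^*(H)$. This is a genuine interplay between the multifractal structure of $\mu$ and the diophantine behavior of $x$ with respect to dyadic vectors, and the bookkeeping is quite delicate; the steps (a) and (b) themselves are comparatively routine once the hypothesis is combined with Lemma~\ref{lemdiop}, Proposition~\ref{ubiquity}, and the Borel-Cantelli reduction to Theorem~\ref{LBzeta}.
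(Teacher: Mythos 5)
Your steps (a)--(b) for the direction $h_f(x)\le H$ a.e.\ (via $\mu_\alpha$, Lemma~\ref{lemdiop}, Proposition~\ref{propmualpha} and Proposition~\ref{ubiquity}) follow the paper, but the reverse direction in (a) has a genuine gap. You prove $h_f(x)\ge H$ $\mu_\alpha$-a.e.\ by bounding $\dim E_f^{\le}(H-\varepsilon)$ by $\zeta_f^*(H-\varepsilon)\le \zeta_{\mu,p}^*(H-\varepsilon)$ and noting this is $<\zeta_{\mu,p}^*(H)=\dim\mu_\alpha$; that only works when $H\le \zeta_{\mu,p}'(0^+)$, i.e.\ on the increasing part. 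But (a) must also cover $H=\theta_p(\alpha)$ with $\alpha\in(\tau_\mu'(0^-),\alpha_p]$, where $\zeta_{\mu,p}^*$ is flat or decreasing: there $\zeta_{\mu,p}^*(H-\varepsilon)\ge \zeta_{\mu,p}^*(H)$ (indeed the number of cubes carrying leaders $\ge 2^{-j(H-\varepsilon)}$ is typically of order $2^{jd}$), so the Borel--Cantelli bound says nothing, and no global large-deviations estimate in terms of $\zeta_f$ can work. The paper's Lemma~\ref{lem76} is exactly the missing ingredient: it counts only those cubes that meet $E_{\mu,\eta,J_0}(\alpha)$ (i.e.\ it conditions on the local $\mu$-exponent being $\approx\alpha$), and uses $f\in B^{\mu^{(-\eta/(8M))},p}_q(\R^d)$ to show that a leader $\ge 2^{-J(H-\eta/2)}$ above such a cube forces $|c^f_{\lambda'}|/\mu(\lambda')$ to be so large that the $\ell^p$ bound limits the count to $2^{J(\tau_\mu^*(\alpha)-p\eta/8)}$, whence $\dim\bigl(E_\mu(\alpha)\cap E_f^{\le}(H-\eta)\bigr)<\tau_\mu^*(\alpha)$ and the set is $\mu_\alpha$-null. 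This conditional counting, which uses the membership of $f$ in the space and not just $\zeta_f\ge\zeta_{\mu,p}$, is absent from your proposal. (A smaller slip in (b): the approximation rate must be $\delta=\tau_\mu^*(\alpha_{\min})/(p(H-\alpha_{\min}))$, not $\theta_p(\alpha_{\min})/H$, for both the dimension $\sigma_\mu(\alpha_{\min})/\delta$ and the achieved exponent to equal $\zeta_{\mu,p}^*(H)$ and $H$.)

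Step (c) is also not a proof as written. The claimed dimension bound $s\,\tau_\mu^*(\alpha)+(1-s)\,\sigma_\mu(\alpha_{\min})$ for the limsup sets indexed by $(s,\alpha)$ is not established anywhere and is false in general (as $s\to 0$ the relevant limsup sets have dimension tending to $0$, not to $\sigma_\mu(\alpha_{\min})$); moreover the free parameter $\alpha'$ does not exist: by \eqref{lala} the measure of $\overline{\lambda}\cdot[0,2^{-(j-\ji)}]^d$ decays below the irreducible ancestor at rate $\alpha_{\min}+o(1)$, which is what produces \eqref{yeye}. Once this is used, no multi-scale optimization is needed at all: if $h_f(x)\ge H>\zeta_{\mu,p}'(0^-)\ge\alpha_{\min}$, the hypothesis gives $\liminf_n \frac{\log c^{\mu,p,q}_{\lambda_{j_n}(x)}}{\log 2^{-j_n}}\ge H$, and since the right-hand side of \eqref{yeye} is a convex combination of $\theta_p(\alpha_{\overline{\lambda_{j_n}(x)}})$ and $\alpha_{\min}<H$ up to $o(1)$, one must have $\theta_p(\alpha_{\overline{\lambda_{j_n}(x)}})\ge H-\ep$ infinitely often; hence $E_f(H)\subset \overline E^{\ge}_\mu(\theta_p^{-1}(H))$ (and $E_f(H)=\emptyset$ for $H>\theta_p(\alpha_p)$), and Proposition~\ref{fm}(3) gives $\dim E_f(H)\le \tau_\mu^*(\theta_p^{-1}(H))=\zeta_{\mu,p}^*(H)$ directly. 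So the "delicate bookkeeping" you defer is not only incorrect in its stated form, it is unnecessary; the real difficulty of the proposition sits in the lower bound on the decreasing part, i.e.\ in Lemma~\ref{lem76}, which your proposal does not supply.
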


Only the case  $p<+\infty$ is treated, the   case $p=+\infty$ is simpler and   deduced from arguments similar to those developed below. Fix $(j_n)_{n\in\N}$ and  $(\ep_n)_{n\in\N}$ as in the statement. 
\medskip

It is enough to prove that $\dim E_f(H)\cap (L+[0,1]^d) =\zeta_{\mu,p}^*(H)$ for all $H\in\R$ and $L\in\Z^d$. Without loss of generality we work     with $L=0$ and show  that  $\dim E_f(H)\cap [0,1]^d =\zeta_{\mu,p}^*(H)$ for all $H\in\R$.

Note that the characterization \eqref{defleaders2} and  the assumptions on $(j_n)_{n\in \N}$   imply that   for all $x\in [0,1]^d$, for all  $\la_{j_n}=(i,j_n,k)$  such that $x \in \la_{j_n }$,   
\begin{equation}\label{tutu}
 \liminf_{n\to + \infty} \frac{\log c^{{\mu,p,q}}_{\lambda_{j_n}}} {\log 2^{-j_n} }\ge  \liminf_{n\to + \infty} \frac{\log |c^f_{\lambda_{j_n}}| }{\log 2^{-j_n} }\ge \liminf_{j\to + \infty} \frac{\log \ld ^f_{j_n}(x) }{\log 2^{-j_n} }\ge  h_f(x).
 \end{equation}   
Recall that the value of $c^{{\mu,p,q}}_{\lambda_{j_n} }$ does not depend on the index $i$ of $\la_{j_n}=(i,j_n,k)$.

\subsubsection{The upper bound $\sigma_f\le\zeta_{\mu,p}^*$.}   Theorem~\ref{main}(1)  gives $\si_f(H) \leq \zeta_{\mu,p}^*(H) $ for all $H\leq \zeta_{\mu,p}'(0^+)$. Note also that  $\zeta_{\mu,p}^*(H)=d$ for all $H\in [\zeta_{\mu,p}'(0^+),\zeta_{\mu,p}'(0^-)]$.  Hence it remains us  to treat the case $H>\zeta_{\mu,p}'(0^-)$, which  corresponds to the decreasing part of the spectrum of $f$.

\mk 

Fix $H>\zeta_{\mu,p}'(0^-)$ and $x\in[0,1]^d$ such that  $h_f(x)\ge H$.  


By \eqref{tutu}, denoting  $\la_{j_n}$  any $\la=(i,j_n,k)\in \La_{j_n}$ such that $x \in \la_{j_n }$,   one has
\begin{equation}\label{tutu'}
 \liminf_{n\to+ \infty} \frac{ \log c^{{\mu,p,q}}_{\lambda_{j_n} }} {\log 2^{-j_n} }\ge H.
 \end{equation}

Recall that  $ \overline{\lambda_{j_n} } \in \mathcal{D}_{ \overline{j_{n}} }$ is the  irreducible representation of $\la_{j_n}$. Using the concatenation of cubes introduced   after Definition~\ref{defirreducible}, one writes $\lambda_{j_n} = \overline{\lambda_{j_n} }\cdot [0,2^{-(j_n- \overline{j_n} )}]^d$, and 
\begin{align} 
\frac{\log c^{{\mu,p,q}}_{\lambda_{j_n} } }{\log 2^{-j_n} }
\label{interm}&= \frac{\log_2 w_{ {\lambda_{j_n}}}}{ j_n}  +  \frac{\log_2\mu(\lambda_{j_n}))}{-j_n}  + \frac{\overline{j_n}}{j_n} \frac{\tau_\mu^*(\alpha_{\overline{\lambda_{j_n}}})}{p} .
\end{align}
Recall   \eqref{lala} and  the fact that for $j,j'\in\mathbb N$ and $\lambda\in\mathcal D_j$, one has $\mu( \lambda\cdot [0,2^{-j'}]^d )=\mu(\la) 2^{-\phi_\lambda } 2^{-j'\alpha_{\min}+ \tilde\phi_{\la}( j')}$, where by \eqref{majphiphitilde} $|\phi_\lambda| $ and  $|\widetilde \phi_\lambda(j') |$ are uniformly bounded by a $o(j)$ and a $o(j')$ respectively. So, 
\begin{align*}
\frac{\log_2\mu(\lambda_{j_n})}{-j_n}  &= \frac{\overline{j_n}}{j_n}  \frac{\log_2\mu(\lambda_{\overline {j_n}})}{-\overline j_n}+  \frac{\phi_{\overline {\lambda_{j_n}} } }{j_n} +   \frac{j_n-\overline { {j_n}}}{j_n } \alm + \frac{  \tilde\phi_{\overline{\la_{j_n}}}(j_n-\overline { {j_n}} ) }{j_n}
\end{align*} 
which combined with \eqref{interm} yields
\begin{align}
\label{yeye}
\frac{\log c^{{\mu,p,q}}_{\lambda_{j_n} } }{\log 2^{-j_n} }&=\frac{\overline{j_n}}{j_n} \theta_p(  \alpha_{\overline{\lambda_{j_n} } }  )+\Big (1-\frac{\overline{j_n}}{j_n}\Big )\alpha_{\min} +r_n(x) ,
\end{align}
where 
\begin{align*}
r_n (x) & = \frac{\log_2 w_{ {\lambda_{j_n}}}}{ j_n} +  \frac{\overline{j_n}}{j_n} \Big(  \frac{\log_2\mu(\lambda_{\overline {j_n}})}{\overline {j_n}} - \alpha_{\overline{\lambda_{j_n}}} \Big) +    \frac{\phi_{\overline {\lambda_{j_n}} } }{j_n} + \frac{  \tilde\phi_{\overline{\la_{j_n}}}(j_n-\overline { {j_n}} ) }{j_n}.
\end{align*}
The dependence of $r_n(x)$ on $x$ is explicit,   to remember it. But it does not play any role in the bounds above, which are uniform in $j_n$ and $j_n-\overline{j_n}$.

\begin{lemma}
One has  $\lim_{n\to+\infty}r_n (x)=0$. 
\end{lemma}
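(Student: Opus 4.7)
The strategy is to treat each of the four summands in $r_n(x)$ separately and show that each one is $o(1)$ as $n\to +\infty$, the bounds being uniform in the choice of $\lambda_{j_n}$ containing $x$. All ingredients needed are already available in the paper; this is essentially a bookkeeping argument.

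First, for the term $\frac{\log_2 w_{\lambda_{j_n}}}{j_n}$, I would plug in the definition \eqref{formulagw0}. Since $x\in[0,1]^d$ and only cubes $\lambda_{j_n}$ containing $x$ are considered, the associated index $L\in\Z^d$ stays in a fixed bounded subset, so the factor $(1+\|L\|)^{-(d+1)/p}$ contributes $O(1/j_n)$ after taking logarithm and dividing by $j_n$. The logarithmic prefactor contributes $-(1/p+2/q)\log_2(j_n)/j_n=o(1)$. The dominant piece is $-3\eta_{N_{j_n}-1}/p$; since $j_n\to +\infty$ forces $N_{j_n}\to +\infty$ and $\eta_N\to 0$ by construction, this also goes to $0$.

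Second, for $\frac{\overline{j_n}}{j_n}\bigl(\frac{\log_2\mu(\lambda_{\overline{j_n}})}{-\overline{j_n}} - \alpha_{\overline{\lambda_{j_n}}}\bigr)$, I would invoke Remark \ref{alphalambda}: there exists $\wwphi\in\Phi$, independent of $\lambda$, such that the expression in parentheses is bounded in absolute value by $\wwphi(\overline{j_n})/\overline{j_n}$. Multiplying by $\overline{j_n}/j_n\le 1$ gives an upper bound $\wwphi(\overline{j_n})/j_n$. A short case analysis finishes: if $\overline{j_n}$ stays bounded as $n\to\infty$, then the numerator is bounded while $j_n\to +\infty$; if $\overline{j_n}\to +\infty$, then $\wwphi(\overline{j_n})/j_n\le \wwphi(\overline{j_n})/\overline{j_n}\to 0$ by definition of $\Phi$.

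Third and fourth, for $\frac{\phi_{\overline{\lambda_{j_n}}}}{j_n}$ and $\frac{\widetilde\phi_{\overline{\lambda_{j_n}}}(j_n-\overline{j_n})}{j_n}$, I would apply directly the uniform decay \eqref{majphiphitilde} from Remark \ref{rem-philambda}: $|\phi_\lambda|=o(j)$ uniformly in $\lambda\in\mathcal D_j$ and $|\widetilde\phi_\lambda(j')|=o(j')$ uniformly in $\lambda$ and $j'$. For the third term, one uses $|\phi_{\overline{\lambda_{j_n}}}|/j_n\le |\phi_{\overline{\lambda_{j_n}}}|/\overline{j_n}$ when $\overline{j_n}\ge 1$, concluding as in the second term; for the fourth term, the bound $|\widetilde\phi_{\overline{\lambda_{j_n}}}(j_n-\overline{j_n})|/j_n\le |\widetilde\phi_{\overline{\lambda_{j_n}}}(j_n-\overline{j_n})|/(j_n-\overline{j_n})$ applies when $j_n-\overline{j_n}\ge 1$, and the same dichotomy handles the case when $j_n-\overline{j_n}$ stays bounded. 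Combining the four estimates yields $r_n(x)\to 0$, and the convergence is uniform in $x\in[0,1]^d$ in the sense that all bounds depend only on $j_n$, $\overline{j_n}$ and the universal functions $\wwphi$, $\phi$, $\widetilde\phi\in\Phi$. The only (very mild) obstacle is the bounded-denominator dichotomy, which is resolved by the elementary case split just described.
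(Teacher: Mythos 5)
Your proposal is correct and follows essentially the same route as the paper: a term-by-term estimate of the four summands of $r_n(x)$, using the definition \eqref{formulagw0} of $w_\lambda$ for the first term and the uniform $o(j)$ bounds of Remarks \ref{alphalambda} and \ref{rem-philambda} for the other three, with a dichotomy on the size of $\overline{j_n}$ (resp. $j_n-\overline{j_n}$). The paper makes your ``bounded vs.\ tending to infinity'' case split airtight by instead fixing $\eta>0$ and splitting according to whether the ratio $\overline{j_n}/j_n$ exceeds $\eta$ (using a single global constant $C$ bounding $\wwphi(j)/j$, $|\phi_\lambda|/j$, $|\widetilde\phi_\lambda(j')|/j'$), which avoids the fact that a sequence need not be either bounded or divergent; your argument needs only this cosmetic adjustment (or a subsequence reduction) to be complete.
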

\begin{proof}

The first term in $r_n(x)$   tends to zero when $n\to +\infty$, by definition \eqref{formulagw0} of $w_\la$.

For the other terms in $r_n(x)$, let us define 
$$C=\max \left( \sup_{j\geq 1}  \left\{\frac{\wwphi(j)}{j} \right\}, \sup_{j\geq 1}  \left\{ \frac{ | \phi_\la  |}{j} :  \lambda\in \mathcal D_j\ \right\},\sup_{j'\geq 1}  \left\{  \frac{|\tilde\phi_{ \la}(j')| }{j'}:  \lambda\in \bigcup_{j\in\N}\mathcal D_j \right\}\right)\!\! .$$
By \eqref{lala} and Remark \ref{alphalambda}, one has $C<+\infty$. 

Now fix $\eta\in (0,1)$ and let us treat the second term. Remark \ref{alphalambda} again gives that $\frac{\overline{j_n}}{j_n} \Big(  \frac{\log_2\mu(\lambda_{\overline {j_n}})}{\overline {j_n}} - \alpha_{\overline{\lambda_{j_n}}} \Big) \leq \frac{\wwphi(\overline{j_n})}{j_n}.$
 When $j_n$ is large, one sees that:
\begin{itemize}
\item
if $\frac{\overline{j_n}}{j_n}>\eta$, then  $\overline{j_n}$ is   large  and $\frac{|\wwphi(\overline{j_n}) }{j_n}  | \leq | \frac{\wwphi(\overline{j_n}) }{\overline{j_n}}| \le \eta$,
\item
 if $\frac{\overline{j_n}}{j_n}\le \eta$, then $ \frac{\overline{j_n}}{j_n}  |\wwphi(\overline{j_n})  |   \leq  C\eta$.
\end{itemize}

  In any case, for $n$ large enough      $\frac{\overline{j_n}}{j_n}|\wwphi(\overline{j_n}) |  \le (C+1)\eta$. 
  
  The same argument applies to  the third term $\frac{\overline{j_n}}{j_n}  \phi_{\overline{\lambda_{j_n} }}$. 
  
  Finally, for the fourth term, one has:
  \begin{itemize}
\item
if $\frac{j_n -\overline{j_n}}{j_n} >  \eta$, then $j_n-\overline{j_n}$ is also large and $\frac{  \tilde\phi_{\overline{\la_{j_n}}} (j_n-\overline{j_n} ) }{j_n} \leq  \frac{ \tilde\phi_{\overline{\la_{j_n}}}(j_n-\overline { {j_n}} ) } { j_n-\overline { {j_n}}} \le \eta$,
\item
 if $ \frac{j_n -\overline{j_n}}{j_n} \leq   \eta$ , then   $ \frac{ | \tilde\phi_{\overline{\la_{j_n}}} (j_n-\overline { {j_n}} ) | }{j_n} = \frac{ | \tilde\phi_{\overline{\la_{j_n}}} (j_n-\overline { {j_n}} ) | }{ j_n-\overline { {j_n}}  } \frac{   j_n-\overline { {j_n}}   }{j_n}    \leq  C\eta$.
\end{itemize}

 This concludes the proof of the Lemma.
 \end{proof}

Note now that  $\theta_p(\alpha)\geq \alpha_{\min}$ for all $\alpha\in[\alpha_{\min},\alpha_{\max}]$. Since $\alpha_{\min} \leq \zeta_{\mu,p}'(0^-) <H$,  \eqref{tutu'} and \eqref{yeye}  together imply that  necessarily, for every $\ep>0$, $ \theta_p(  \alpha_{\overline{\lambda_{j_n} }}  ) \geq H-\ep$ for infinitely many integers $n$. Hence,  on one hand $H\le \theta_p(\alpha_p)$ and in particular $E_f(H)=\emptyset$ if $H>\theta_p(\alpha_p)$, and on the other hand
$$ \overline{h}_{\mu}(x)  \geq  \limsup_{j\to +\infty} \frac{\log_2 \mu(\la_{j}(x)) }{-j}  \geq \limsup_{n\to +\infty} \alpha_{\overline{\la_{j_n}}}  \geq   \theta_p^{-1}(H) ,$$
where  the same notations as above are used, i.e. $\la_{j_n}$ is here the unique cube of generation $j_n$ that contains $x$. This implies that    $x\in \overline E_\mu^{\geq} (\theta_p^{-1}(H))$. 

As a conclusion,  $H\le \theta_p(\alpha_p)$ and $E_f(H)\subset \overline E_\mu^{\geq} (\theta_p^{-1}(H))$. Since $\theta_p^{-1}(H)\ge \tau_\mu'(0^-)$ lies in the decreasing part of the singularity spectrum of $\mu$,  Proposition~\ref{fm}(3) yields that     $\dim E_f(H)\le \dim \overline {E^\geq _\mu}(\theta_p^{-1}(H)) =  \tau_\mu^*(\theta_p^{-1}(H))$. This is the desired upper bound.

\medskip

\subsubsection{ The lower bound $\sigma_f \geq \zeta_{\mu,p}^*$  over the range $[\alpha_{\min}, \theta_p(\alpha_p)]=[\zeta_{\mu,p}'(+\infty),\zeta_{\mu,p}'(-\infty)]$.} 

Two cases must be separated. 
\medskip

\noindent {\bf Case 1:}   $H\in [\theta_p(\alpha_{\min}),\theta_p(\alpha_p)]$.  

Let $\alpha\in[\alpha_{\min},\alpha_p]$ such that $H=\theta_p(\alpha)  ( =\alpha+ \tau_\mu^*(\alpha)/p)$.  Our goal is to show that  $\si_f(H) = \dim E_f(H) \geq \zeta_{\mu,p}^*(H) = \tau_\mu^*(\alpha)$. To achieve this, we prove that $\mu_\alpha(E_f(H))>0$, where $\mu_\alpha$ is the measure built in Section \ref{dimensiond}. Since $\mu_\alpha$ is exact dimensional with exponent $\tau_\mu^*(\alpha)$, this yields the claim.
\medskip

For any $H'\ge 0$ set 
$$
 E_f^{\leq}(H')  :=  \{y\in [0,1]^d: h_f(y)\le H'\}.
 $$ 
Let us start with one technical lemma.
 \begin {lemma}
 \label{lem76}
For every  $\eta>0$,  $\mu_\alpha (E_\mu(\alpha)\cap E_f^{\leq}(H-\eta))=0$.
 \end{lemma}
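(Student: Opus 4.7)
The plan is to show that $h_f(x) > H-\eta$ for $\mu_\alpha$-almost every $x\in E_\mu(\alpha)$, i.e.\ that $E_\mu(\alpha)\cap E_f^{\leq}(H-\eta)$ has $\mu_\alpha$-measure zero, via a Borel--Cantelli argument combined with the large deviations upper bound for wavelet leaders from Proposition~\ref{ubLD}. Concretely, I would set
\[
B_j=\big\{\lambda\in\mathcal D_j,\ \lambda\subset[0,1]^d:\ L^f_\lambda\geq 2^{-j(H-\eta/2)}\big\},\qquad U_j=\bigcup_{\lambda\in B_j}\lambda,
\]
observe that $E_f^{\leq}(H-\eta)\cap[0,1]^d\subset\limsup_{j\to\infty}U_j$, and estimate $\mu_\alpha(U_j)$ using two ingredients.

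The first ingredient is the cardinality bound $\#B_j\leq 2^{j(\zeta_{\mu,p}^*(H-\eta/2)+\delta)}$ for arbitrarily small $\delta>0$ and $j$ large, obtained by summing the large deviations estimates of Proposition~\ref{ubLD} over a finite discretization of $[\alpha_{\min},H-\eta/2]$ and using the monotonicity of $\zeta_{\mu,p}^*$. The second is the uniform upper bound $\mu_\alpha(\lambda)\leq C\,2^{-j(\tau_\mu^*(\alpha)-\delta')}$ for every $\lambda\in\mathcal D_j$ and $j$ large, $\delta'>0$ arbitrary, which follows from the Moran construction of $\mu_\alpha$ in Section~\ref{muspectrum}, the counting bound \eqref{cardinalj}, and the asymptotic $\gamma(g+1)/\gamma(g)\to 1$. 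Combining both estimates yields $\mu_\alpha(U_j)\leq C\,2^{j(\zeta_{\mu,p}^*(H-\eta/2)-\tau_\mu^*(\alpha)+\delta+\delta')}$; since $\zeta_{\mu,p}^*(H)=\tau_\mu^*(\alpha)$ by Proposition~\ref{lemtetap} and since $\zeta_{\mu,p}^*$ is strictly increasing on $[\theta_p(\alpha_{\min}),\zeta_{\mu,p}'(0^+)]$ (as the composition of the strictly increasing $\theta_p^{-1}$ with the strictly increasing $\tau_\mu^*$), this exponent is strictly negative for small $\delta,\delta'$ and Borel--Cantelli concludes.

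The hardest part will be handling the regime where $H$ lies in or above the plateau $[\zeta_{\mu,p}'(0^+),\zeta_{\mu,p}'(0^-)]$ of $\zeta_{\mu,p}^*$ (namely, when $\alpha\geq \tau_\mu'(0^+)$ and $\tau_\mu^*(\alpha)=d$), because Proposition~\ref{ubLD} then degenerates to the trivial $\#B_j\leq 2^{jd}$ and the strict inequality $\zeta_{\mu,p}^*(H-\eta/2)<\tau_\mu^*(\alpha)$ fails. The remedy is to restrict the count to cubes compatible with the local behavior of $\mu$ near $\mu_\alpha$-typical points: by Lemma~\ref{lemdiop} together with Property~(P$_2$) and almost doubling, for $\mu_\alpha$-a.e.\ $x$ one has $\mu(\lambda_j(x))=2^{-j(\alpha+o(1))}$ uniformly in large $j$, so only the $\lambda$ with $\mu$-exponent close to $\alpha$ need to be counted. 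For those, the finer \emph{joint} large deviations bound $\#\Lambda_{f,\mu}(j,(H-\eta/2)\pm\ep,\alpha\pm\ep)\leq 2^{j(\min(p(H-\eta/2-\alpha),\tau_\mu^*(\alpha))+\eta)}$ from Proposition~\ref{propmajnb}(2) kicks in, and the identity $p(H-\alpha)=\tau_\mu^*(\alpha)$ coming from $H=\theta_p(\alpha)$ turns the exponent into $\tau_\mu^*(\alpha)-p\eta/2+\eta$, strictly below $\tau_\mu^*(\alpha)$. Passing from wavelet coefficients to leaders introduces only a sum over a bounded range of child generations $j'\geq j$ (controlled thanks to the a priori bound $|c^f_\lambda|\leq C\mu^{(-\ep)}(\lambda)$ from $f\in\widetilde B^{\mu,p}_q(\R^d)$), which does not spoil the strict inequality, and the Borel--Cantelli conclusion goes through uniformly on the plateau and beyond.
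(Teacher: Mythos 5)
Your overall architecture (Borel--Cantelli against a uniform Frostman bound $\mu_\alpha(\lambda)\le C2^{-j(\tau_\mu^*(\alpha)-\delta')}$, with a restriction to cubes on which $\mu$ scales like $2^{-j\alpha}$ in the regime where Proposition~\ref{ubLD} degenerates) is sound, and your ``remedy'' is in substance the paper's actual proof, which never invokes Proposition~\ref{ubLD} here: the paper restricts from the start to sets $E_{\mu,\eta,J_0}(\alpha)$ on which $2^{-J(\alpha+\eta/8)}\le\mu(\lambda)\le2^{-J(\alpha-\eta/8)}$ holds for all $J\ge J_0$ and all cubes in $3\lambda_J(x)$, counts the covering cubes directly from the $\ell^p$ bound on $(|c^f_\lambda|/\mu(\lambda))_\lambda$, deduces a Hausdorff-dimension bound $\tau_\mu^*(\alpha)-p\eta/8$, concludes because $\mu_\alpha$ only charges sets of dimension at least $\tau_\mu^*(\alpha)$, and finally takes a countable union over $J_0$ and $\eta=1/m$. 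That exhaustion is also what you need to make ``$\mu(\lambda_j(x))=2^{-j(\alpha+o(1))}$ uniformly in large $j$ for $\mu_\alpha$-a.e.\ $x$'' usable; and the relevant fact is Proposition~\ref{propmualpha} ($\mu_\alpha$ is carried by $E_\mu(\alpha)$), not Lemma~\ref{lemdiop}, which concerns dyadic irreducibility. Your shortcut via Proposition~\ref{ubLD} and the strict monotonicity of $\zeta_{\mu,p}^*$ is a genuinely different (and valid) route, but only for $H\le\zeta_{\mu,p}'(0^+)$, as you note.

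The genuine gap is in the counting step of your remedy. The condition $L^f_\lambda\ge 2^{-j(H-\eta/2)}$ for $\lambda\in\mathcal D_j$ is witnessed by a coefficient $c^f_{\lambda'}$ at some generation $j'\in[j,Mj]$ with $\lambda'\subset3\lambda$, and such a $\lambda'$ satisfies neither hypothesis of Proposition~\ref{propmajnb}(2) with $(I_H,I_\alpha)=\big((H-\eta/2)\pm\ep,\alpha\pm\ep\big)$ at a single generation: its size threshold is calibrated at scale $j$, not $j'$, and its $\mu$-exponent at its own scale is a mixture of $\alpha$ and $\alpha_{\min}$, not close to $\alpha$. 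Your assertion that the sum over child generations ``does not spoil the strict inequality'' is precisely what has to be proved, and the a priori bound $|c^f_\lambda|\le C\mu^{(-\ep)}(\lambda)$ only limits the range $j'\le Mj$; it does not control the number of admissible $\lambda'$ at a fixed $j'>j$. Indeed, using only $\mu(\lambda')\le 2^{-j'(\alpha_{\min}-\ep)}$ in the $\ell^p$ bound gives a count of order $2^{jp(H-\eta/2-\alpha_{\min})}=2^{j(p(\alpha-\alpha_{\min})+\tau_\mu^*(\alpha)-p\eta/2)}$, which exceeds $2^{j\tau_\mu^*(\alpha)}$ once $\alpha-\alpha_{\min}>\eta/2$, in particular throughout the regime $\alpha\ge\tau_\mu'(0^+)$ you are trying to handle. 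The missing ingredient is the two-scale mass comparison from property (P$_2$) and \eqref{loulou}: $\mu(\lambda')\le\mu(\lambda)\,2^{j\eta/8}\,2^{-(j'-j)\alpha_{\min}/2}$ for $\lambda'\subset3\lambda$, which, combined with $\mu(\lambda)\le2^{-j(\alpha-\eta/8)}$ and the identity $p(H-\alpha)=\tau_\mu^*(\alpha)$, yields $|c^f_{\lambda'}|/\mu(\lambda')\ge 2^{(j'-j)\alpha_{\min}/2}\,2^{-j(\tau_\mu^*(\alpha)/p-\eta/4)}$ and hence a count at generation $j'$ of at most $C2^{-(j'-j)p\alpha_{\min}/2}\,2^{j(\tau_\mu^*(\alpha)-p\eta/8)}$, summable over $j'$. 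With this estimate inserted, your Borel--Cantelli conclusion goes through on the whole range of $H$.
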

 \begin{proof}
Fix $\eta>0$,  $J_0\in\N$, and  set 
$$
E_{\mu,\eta,J_0}(\alpha)=\left\{x\in[0,1]^d: \, \begin{cases}   \forall \, J\ge J_0,\  \forall  \, \lambda\in \mathcal D_J \mbox{ such that } \lambda\subset 3\lambda_J(x)  , \\ 2^{-J(\alpha+\frac{\eta}{8} )}\le  \mu (\lambda)\le 2^{-J(\alpha-\frac{\eta}{8} )} \end{cases}\right\}
$$  
and for $j\geq J\ge J_0$  
\begin{equation}\label{DepJj}
\mathcal{D}_{\eta,J,j}(\alpha) =\left \{\lambda \in \mathcal D_J: \,\begin{cases} \lambda\cap  E_{\mu,\eta,J_0}(\alpha)\cap  E_f^{\leq}(H-\eta)\neq\emptyset \ \mbox{ and }\\\exists \, \lambda'=(i,j,k)\in\Lambda_j,\ \lambda'\subset 3\lambda,\, |c^f_{\lambda'}|\ge 2^{-J(H-\frac{\eta}{2})}
\end{cases}\right\}.
\end{equation}

Recall the following fact  stated along the proof of Lemma \ref{lem-maj2}:  there exists a constant $M$ such that the only wavelet coefficients $c_{\la'}$  to consider to compute  $ \ld  ^{g^{\mu,p,q}} _\lambda$ for $\la\in\mathcal{D}_j$ are the $j'$ such that $j\leq j'\leq Mj$.

\begin{lemma}
There exists $C>0$ such that for $  J_0\leq J \leq j \leq MJ$,
$$  \# \mathcal{D}_{\eta ,J,j} (\alpha) \le  C 2^{-(j-J)p \frac{\alpha_{\min}}{2} } 2^{J(\tau_\mu^*(\alpha)-p\frac{\eta}{8})} , $$
and when $j> MJ$, $\mathcal{D}_{\eta ,J,j} (\alpha) $ is empty.
\end{lemma}

\begin{proof}
The case $j> MJ$ follows from the remark just before the Lemma.

Let  $x\in E_{\mu,\eta,J_0}(\alpha)\cap  E_f^{\leq}(H-\eta) $.  By \eqref{defleaders2}, there  are infinitely many integers $J\ge J_0$ for which $L^f_J(x)\ge 2^{-J(H- \eta/2 )}$. For such a  generation $J$,  the definition of the wavelet leader as a supremum  implies that there exist $MJ\geq j\ge J$ and $\lambda=(i,j,k)\in\Lambda_j$ with $\lambda \subset 3\lambda_J(x)$ such that $|c^f_\lambda|\ge 2^{-J(H-\eta/2)}$. This means that $\la_J(x) \in    \mathcal{D}_{\eta ,J,j} (\alpha) $.

Recalling \eqref{loulou},       assume that $J_0$ is so large that $\mu(\lambda )\le  \mu(\lambda_J(x)) 2^{J\eta/8}2^{-(j-J)\alpha_{\min}/2}$.

Then,  the definition of $E_{\mu,\eta,J_0}(\alpha)$ and the fact that $ \alpha+ \tau_\mu^*(\alpha)/p =H$ give    
\begin{equation}\label{crucial!}
\frac{|c^f_\lambda|}{\mu(\lambda )}\ge   2^{-J\eta/8}2^{(j-J)\frac{\alpha_{\min}}{2}} 2^{-J(H - \frac{\eta}{2})}2^{ J(\alpha-\eta/8) }\ge  2^{(j-J)\frac{\alpha_{\min}}{2}}2^{-J \big (\frac{\tau_\mu^*(\alpha)}{p}-\frac{\eta}{4}  \big)}.
\end{equation}

 Since $f\in \widetilde B^{\mu,p }_{q }(\R^d)$,    $f\in   B^{\mu^{(-\frac{\eta}{8M})},p }_{q }(\R^d)$, and so  $\sum_{\la \in \La_j} \left(2^{-j\frac{\eta}{8M}}\frac{|c^f_\lambda|}{\mu(\lambda)}\right)^p =C< \infty$.  Thus, 
 $$ C   \geq \sum_{ \la\in \La_j}  \left(2^{-j\frac{\eta}{8M }}\frac{|c^f_\lambda|}{\mu(\lambda)}\right)^p  \mathbf{1}_{ \frac{|c^f_\lambda|}{\mu(\lambda )} \ge  2^{(j-J)\frac{\alpha_{\min}}{2}}2^{-J \big (\frac{\tau_\mu^*(\alpha)}{p}-\frac{\eta}{4}  \big)}  }.$$
 The number of cubes $\la \in \La_j$ such that the above indicator function is 1 is by   \eqref{DepJj} larger than the cardinality of $\mathcal{D}_{\eta ,J,j}(\alpha) $. It follows that 
$$
C \geq  \# \mathcal{D}_{\eta ,J,j} (\alpha)   2^{-j p\frac{\eta}{8M}}   \Big ( 2^{(j-J)\frac{\alpha_{\min}}{2}}2^{-J \big (\frac{\tau_\mu^*(\alpha)}{p}-\frac{\eta}{4}  \big)} \Big)^p  .$$
Noting that $j\leq MJ$ implies   $2^{j p\frac{\eta}{8M}} \leq 2^{Jp\frac{\eta}{8}}$, the last inequality  yields the result.
\end{proof}

 In particular, $\mathcal{D}_{\eta,J,j}=\emptyset$ for $j\ge J(p\frac{\alpha_{\min}}{2}+\tau_\mu^*(\alpha_{\min}))$.

Note that 
$$E_{\mu,\eta ,J_0}(\alpha)\cap  E_f^{\leq}(H - \eta ) \subset \bigcap_{J\ge J_0}  \ \bigcup_{j\ge J} \  \bigcup_{\lambda\in \mathcal{D}_{\eta,J,j}(\alpha)} \lambda.
$$
For  any $\delta>0$, denote by $\mathscr{H}^s_{\delta} $ the pre-$s$-Hausdorff measure on $\R^d$ associated with coverings by sets of diameter less than or equal to $\delta$. Using $\bigcup_{j\ge J}  \bigcup_{\lambda\in \mathcal{D}_{\eta,J,j}(\alpha)}  \lambda$ as covering of  $E_{\mu,\eta ,J_0}(\alpha)\cap  E_f^{\leq}(H - \eta )$, one deduces that  for every $J\geq J_0$, 
 \begin{align*}
 \mathscr{H}_{\sqrt{d}\cdot  2^{-J}} ^s\big (E_{\mu,\ep ,J_0}(\alpha)\cap  E_f^{\leq}(H - \eta )\big ) &\leq  \sum _{J\le j\le J(p\frac{\alpha_{\min}}{2}+\tau_\mu^*(\alpha_{\min}))} (\#\mathcal{D}_{\eta,J,j}) (\alpha) (\sqrt{d}\cdot  2^{-j})^s\\
 & \leq (\sqrt{d})^s C\left ( \sum _{m\ge 0} 2^{- mp \frac{\alpha_{\min}}{2} } \right ) 2^{J(\tau_\mu^*(\alpha)-p\frac{\eta}{8}-s )},
 \end{align*}
 which tends to zero as soon as $s> \tau_\mu^*(\alpha)-p\frac{\eta}{8} $. It follows that  
 $$\dim  \big( E_{\mu,\eta ,J_0}(\alpha)\cap  E_f^{\leq}(H-\eta) \big )  \le \tau_\mu^*(\alpha)-p\frac{\eta}{8},$$
 and thus  $\mu_\alpha(E_{\mu,\eta,J_0}(\alpha)\cap E_f^{\leq}(H-\eta))=0$, because $\mu_\alpha$ may give a positive mass to a set $E$ only if $\dim E\geq \tau_\mu^*(\alpha)$.  
 
To conclude, observe that   the almost doubling property of~$\mu$ yields 
$$E_\mu(\alpha)=\bigcap_{m\ge 1}\bigcup_{J_0\in\N}E_{\mu,\frac{1}{m},J_0}(\alpha).$$
 This equality combined with the previous estimate on $\mu_\alpha$ gives 
 $\mu_\alpha (E_\mu(\alpha)\cap E_f^{\leq}(H-\eta))=0$. 
 \end{proof}

  We are now equipped to prove the lower bound $\dim E_f (H )\geq \tau_\mu^*(\alpha)$.
  
  First,     \eqref{yeye} states that   $\frac{\log c^{{\mu,p,q}}_{\lambda_{j_n}(x) } }{\log 2^{-j_n} } =\frac{\overline{j_n(x)}}{j_n(x)} \theta_p(  \alpha_{\overline{\lambda_{j_n}(x)} }  )+\Big (1-\frac{\overline{j_n(x)}}{j_n(x)}\Big )\alpha_{\min} +r_n(x)  $.

 By Proposition \ref{propmualpha},  for $\mu_\alpha$-almost every $x$, $\lim_{j\to + \infty}\alpha_{\la_{j}(x)} = \alpha$. 
 
By Lemma  \ref {lemdiop}, for $\mu_\alpha$-almost every $x$, $\lim_{n \to + \infty}\frac{\overline{j_n(x)}}{j_n(x)} =1$. 
 
One deduces that  $h_f(x)\le  \theta_p(\alpha)= H$ for $\mu_\alpha$-almost every $x$, i.e. $ \mu_\alpha  (E_f^{\leq}(H )  )=1$ (the equality $h_f(x) =H$ does not hold in general, since \eqref{yeye} is true only for a subsequence of integers $(j_n)_{n\geq 1}$).
   
Combining all  the above results, one concludes that
\begin{align*}
 \mu_\alpha( E_f (H )) & = \mu_\alpha(E_\mu(\alpha)\cap E_f (H ))  \\
 & \geq   \mu_\alpha (E_\mu(\alpha)\cap E_f^{\leq}(H )) - \sum_{m\geq 1}   \mu_\alpha (E_\mu(\alpha)\cap E_f^{\leq}(H-1/m) ) =1.
 \end{align*}
This proves that  necessarily $\dim E_f (H )\geq \tau_\mu^*(\alpha)$, as expected.

\medskip\sk

\noindent {\bf Case 2:}  $H\in   [\alpha_{\min}, \theta_p(\alpha_{\min}))$: this corresponds to the affine part of the spectrum, which occurs  only when $\si _\mu(\alpha_{\min}) = \tau_\mu^*(\alpha_{\min})>0$, see Figure \ref{fig_functions2}.

If $H\in [\alpha_{\min}, \theta_p(\alpha_{\min}))$, write $H=\alpha_{\min}+ \frac{\tau_\mu^*(\alpha_{\min})}{\delta p}$, where $\delta>1$.  Observe that   Proposition~\ref{ubiquity} is established when $\mu\in\mathcal M_d$ but immediately extends to the case where $\mu \in \mathcal{E}_d$, i.e. $\mu$ is a positive power of an element of $\mathcal M_d$. So by Proposition~\ref{ubiquity} applied to the sequence   $(j_n)_{n\in\N}$ given by  Proposition \ref{propspecG}, the set $S({\delta},(\eta_j)_{j\in\N^*} ,(j_n)_{n\in\N})$ supports a Borel probability measure $\nu$ of lower Hausdorff dimension at least equal to  $\tau_\mu^*(\alpha_{\min})/\delta= p(H-\alpha_{\min})=\zeta_{\mu,p}^*(H)$. Note that    $(\eta_j)_{j\in\N*}$ depends only on $\mu$.

For $x\in S({\delta},(\eta_j)_{j\in\N^*} ,(j_n)_{n\in\N})$, one checks that 
$$h_f(x)\le \liminf_{n\to+\infty} \frac{\log c^{{\mu,p,q}}_{j_n}(x)}{\log 2^{-j_n}}\le \alpha_{\min}+ \frac{\tau_\mu^*(\alpha_{\min})}{\delta p}=H.$$   

In addition, $\{y\in [0,1]^d: h_f(y)<H\} = \bigcup_{m\geq 1} E_f^{\leq }(H-1/m)$,  and each set  $E_f^{\leq }(H-1/m)$  has a $\nu$-measure equal to 0, since due to Proposition~\ref{fm}(2) applied to the capacity provided by the leaders of $f$, $\dim E_f^{\leq }(H-1/m)\le (\zeta_f^\Psi)^*(H-1/m) < \zeta_{\mu,p}^*(H)$. Consequently, $\nu(E_f(H))=1 $ and $\dim E_f(H)\ge \zeta_{\mu,p}^*(H)$. 

\medskip

Finally, if $H=\alpha_{\min}$, the set $F=\bigcap_{p\in\N} S(p,(\eta_j)_{j\geq 1} ,(j_n)_{n\in\N})$ is easily seen to be non empty (by taking $\delta=p$ at step $p$ of the construction in the proof of proposition~\ref{ubiquity}) and to be included in  $E^{\le }_f(\alpha_{\min})$, by using the previous estimates. However we know that $E^{\le }_f(h)=\emptyset$ for all $h<\alpha_{\min}$ by Theorem~\ref{main}. Consequently, $E^{\le }_f(\alpha_{\min})=E^{\le }_f(\alpha_{\min})\neq\emptyset $, so $\sigma_f(\alpha_{\min}) = \dim E_f(\alpha_{\min})\ge 0$.
 


\begin{figure}
 \begin{tikzpicture}[xscale=2.2,yscale=2.2]
    {\tiny

\draw [dashed, domain=0:5]  plot ({-(exp(\x*ln(1/4))*ln(0.25)+exp(\x*ln(1/4))*ln(0.25)+exp(\x*ln(0.5))*ln(0.5))/(ln(3)*(exp(\x*ln(1/4))+exp(\x*ln(1/4)) +exp(\x*ln(0.5)) ) )} , {-\x*( exp(\x*ln(1/4))*ln(0.25)+ exp(\x*ln(1/4))*ln(0.25)+ exp(\x*ln(0.5))*ln(0.5))/(ln(3)*( exp(\x*ln(1/4)) +  exp(\x*ln(1/4)) +exp(\x*ln(0.5))))+ ln((exp(\x*ln(1/4))+exp(\x*ln(1/4))+ exp(\x*ln(0.5))))/ln(3)});
\draw [dashed, domain=0:5]  plot ({-( ln(0.2)+ ln(0.8))/(ln(2))  -0.27 +(exp(\x*ln(1/5))*ln(0.2)+exp(\x*ln(0.8))*ln(0.8))/(ln(2)*(exp(\x*ln(1/5))+exp(\x*ln(0.8)) ) )} , {-\x*( exp(\x*ln(1/5))*ln(0.2)+exp(\x*ln(0.8))*ln(0.8))/(ln(2)*(exp(\x*ln(1/5))+exp(\x*ln(0.8))))+ ln((exp(\x*ln(1/5))+exp(\x*ln(0.8))))/ln(2)});

\draw [->] (0,-0.2) -- (0,1.25) [radius=0.006] node [above] {$\si_f(H) $};
\draw [->] (-0.2,0) -- (3,0) node [right] {$H$};

\draw [thick, color=black, domain=0:5]  plot ({-(exp(\x*ln(1/4))*ln(0.25)+exp(\x*ln(1/4))*ln(0.25)+exp(\x*ln(0.5))*ln(0.5))/(ln(3)*(exp(\x*ln(1/4))+exp(\x*ln(1/4))+exp(\x*ln(0.5)) )) -\x*( exp(\x*ln(1/4))*ln(0.25)+ exp(\x*ln(1/4))*ln(0.25)+exp(\x*ln(0.5))*ln(0.5))/(ln(3)*(exp(\x*ln(1/4))+exp(\x*ln(1/4))+exp(\x*ln(0.5))))+ ln((exp(\x*ln(1/4))+exp(\x*ln(1/4))+exp(\x*ln(0.5))))/ln(3)} , {-\x*( exp(\x*ln(1/4))*ln(0.25)+exp(\x*ln(1/4))*ln(0.25)+exp(\x*ln(0.5))*ln(0.5))/(ln(3)*(exp(\x*ln(1/4))+exp(\x*ln(1/4))+exp(\x*ln(0.5))))+ ln((exp(\x*ln(1/4))+exp(\x*ln(1/4))+exp(\x*ln(0.5))))/ln(3)});

\draw [thick, color=black, domain=0:1]  plot ({-( ln(0.2)+ ln(0.8))/(ln(2)) - 0.27+(exp(\x*ln(1/5))*ln(0.2)+exp(\x*ln(0.8))*ln(0.8))/(ln(2)*(exp(\x*ln(1/5))+exp(\x*ln(0.8)) ) )-\x*( exp(\x*ln(1/5))*ln(0.2)+exp(\x*ln(0.8))*ln(0.8))/(ln(2)*(exp(\x*ln(1/5))+exp(\x*ln(0.8))))+ ln((exp(\x*ln(1/5))+exp(\x*ln(0.8))))/ln(2)} , {-\x*( exp(\x*ln(1/5))*ln(0.2)+exp(\x*ln(0.8))*ln(0.8))/(ln(2)*(exp(\x*ln(1/5))+exp(\x*ln(0.8))))+ ln((exp(\x*ln(1/5))+exp(\x*ln(0.8))))/ln(2)});

\draw [fill] (0.93,0.26) circle [radius=0.02] ;

\draw [fill, color= red]  (0.93,0.26) --  (0.64,0) circle [radius=0.02, color=black] node [below]  {$\tau_\mu'(+\infty)=\zeta_{\mu,1}'(+\infty) $};

\draw [dashed, color= black]    (0.67,0.26)  [fill,radius=0.02, color=black] circle;

\draw  [fill] (0,0.26)  circle [radius=0.03]   node [left]    {$\si_\mu(\alpha_{\min})>0$}  [dashed] (0,0.26) -- (0.96,0.26)  ;
 
  \draw[dashed] (0,1) -- (2.6,1);
\draw [fill] (-0.1,-0.10)   node [left] {$0$}; 
\draw [fill] (0,1) circle [radius=0.03] ; 
\draw  [fill] (2.32-0.27,0) circle [radius=0.03] (2.32-0.1-0.27,0) node [below] {$\tau'_\mu(-\infty)$};
\draw  [fill] (2.65-0.27 ,0.75) circle [radius=0.03]  [dashed]   (2.65-0.27,0.75) -- (2.65-0.275,0)  [fill] (2.65-0.27 ,0) circle [radius=0.03]  (2.65  ,-0.01) node [below] {$\zeta_{\mu,1}'(-\infty)$};
}
\end{tikzpicture}
\caption{Case where $\si_\mu(\alpha_{\min})>0$ and $p=1$: the dashed graph represents the spectrum of $\mu$, the plain graph represents the multifractal spectrum $\si_f$ of typical functions  $f\in \widetilde B^{\mu,1}_q(\R^d)$. An affine segment (in red) with slope $p=1$ appears in the spectrum $\si_f$.  }
\label{fig_functions2}
\end{figure}
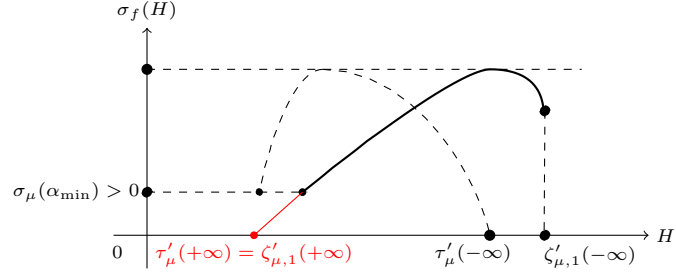


\subsection{Typical multifractal behavior in $\widetilde B^{\mu,p}_q(\R^d)$}
\label{sec_typi} 

We finally prove item (2) of  Theorem~\ref{main}, hence obtaining the \ml behavior of typical functions in $\widetilde B^{\mu,p}_q(\R^d)$. 
 
\medskip

Recall the definition~\eqref{Nm0} of the basis $\{\mathcal N_m\}_{ m\in\N }$ of neighborhoods of the origin in $\widetilde B^{\mu,p}_q(\R^d)$. 

For every integer $m>m_0=\lfloor \max (1, s_1^{-1})\rfloor +1$, set 

$$
V_{m}=\left\{f\in \widetilde B^{\mu,p}_q(\R^d): \forall j\ge J_2, \, \forall \lambda\in \Lambda_j, \, \frac{|c^f_\lambda|}{c^{{\mu,p,q}}_\lambda}\in 
m^{-1}\{1,\ldots,m^2\}\right \}.
$$
Then let 
\begin{equation}\label{Gdelta}
\mathcal{G}=\limsup_{m\to\infty} (V_m+ \mathcal{N}_{2^{\lceil m\log(m)\rceil}}).
\end{equation}
Each $\bigcup_{\ell\ge m}V_\ell$, $m\ge m_0$, is dense in $\widetilde B^{\mu,p}_q(\R^d)$, so $\mathcal{G}$ contains  a dense $G_\delta$ set.

When $f\in \mathcal{G}$, there exists an increasing sequence $(j_n)_{n\ge 0}$ such that  $f\in V_{j_n}+\mathcal{N}_{2^{\lceil j_n \log(j_n)\rceil}}$ for all $n\ge 0$.

Fix $L\in \mathbb{Z}^d$. Looking at the particular generation $j_n$, for all $\lambda\in \Lambda_{j_n}$ such that $\lambda\subset L+3[0,1]^d $, by definition of  $V_{j_n}$ and $\widetilde{\mathcal N}_{2^{\lceil j_n\log(j_n)\rceil}}$, the lower bound  $|c^f_{\lambda}|\ge j_n^{-1}c^{{\mu,p,q}}_{\lambda}-2^{-\lceil j_n\log(j_n)\rceil}\mu(\lambda)2^{j_n2^{-j_n\log(j_n)}} $ holds. By construction of the coefficients $c^{{\mu,p,q}}_{\lambda}$, this implies that for $n$ large enough one has $|c^f_{\lambda}|\ge j_n^{-1}c^{{\mu,p,q}}_{\lambda}/2$,  hence there exists a positive sequence $(\varepsilon_{n})_{n\in\N }$  converging to 0 such that $|c^f_{\lambda}|\ge 2^{-j_n\varepsilon_n}|c^{{\mu,p,q}}_{\lambda}|$ for all $\lambda\in\Lambda_{j_n}$ such that $\lambda\subset L+3[0,1]^d $. Consequently,   Proposition~\ref{propspecG} yields ${\sigma}_f={\sigma}_{g^{\mu,p,q}} =\zeta_{\mu,p}^*$.  

 \begin{remark}
 \label{rk-typical}
In fact, the definitions of  $V_{j_n}$, $\widetilde{\mathcal N}_{2^{\lceil j_n\log(j_n)\rceil}}$, and  $c^{{\mu,p,q}}_{\lambda}$, imply  that if $(j_n)_{n\ge 1}$ is an increasing sequence of integers and $f\in \bigcap_{n\ge 1}V_{j_n}+  \mathcal{N}_{2^{\lceil j_n \log(j_n)\rceil}}$, then for all $N,K\in\N^*$, for all $n\ge 1$ large enough and $\lambda\in \bigcup_{j=j_n}^{K j_n} \Lambda_{j }$ such that $\lambda\subset N[0,1]^d$,  one has
\begin{align*}
 \frac{1}{2 j_n}c^{\mu,p,q}_{\lambda} \le  |c^f_{\lambda}|  \le 2 j_n c^{{\mu,p,q}}_{\lambda} .
 \end{align*} 
 These bounds will be useful to estime the $L^q$-spectrum of $f$.  
 \end{remark}

\section{Validity of the WMF and the WWMF  in $\widetilde B^{\mu,p}_q(\R^d)$}
\label{sec-formalism}

 Recall that the    multifractal formalisms for functions were defined in Section~\ref{secMFF}. In this last section, we first discuss the validity of the WMF  for the saturation function ${g^{\mu,p,q}} $. This helps in establishing part (3) of Theorem~\ref{validity} in Section~\ref{sec-formalism-typical3}, while Section~\ref{sec-formalism-typical2} provides the proof of part (2) of Theorem~\ref{validity}.

\subsection{WMF and WWMF for the saturation function ${g^{\mu,p,q}} $}
\label{sec-formalism-g}

Recall that the wavelet $\Psi$ is fixed, and that ${g^{\mu,p,q}} $ is built via its wavelet coefficients in the wavelet basis generated by $\Psi$. Also, recall  \eqref{zetaNPsi} for the definition of $\zeta^{N,\Psi} _{g^{\mu,p,q},j}$, and the various notations concerning $L^q$-spectra for functions.
\begin{proposition}\label{MFgmupq}
The  WMF  holds for   ${g^{\mu,p,q}} $   on the interval  $[\zeta_{\mu,p}'(+\infty),\zeta_{\mu,p}'(0^+)]$, and the WWMF  holds for   ${g^{\mu,p,q}} $   on the interval  $[\zeta_{\mu,p}'(+\infty),\zeta_{\mu,p}'(-\infty)]$. 

Moreover, for all $N\in \N^*$, one has  $\lim_{j\to+ \infty}\zeta^{N,\Psi} _{g^{\mu,p,q},j}={\zeta_{\mu,p}}$.
\end{proposition}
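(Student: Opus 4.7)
The plan is to first establish that $\lim_{j\to\infty}\zeta^{N,\Psi}_{g^{\mu,p,q},j}(t)=\zeta_{\mu,p}(t)$ for every $t\in\mathbb R$ and $N\in\mathbb N^*$, and then derive both the WMF and WWMF by Legendre duality combined with the identity $\sigma_{g^{\mu,p,q}}=\zeta_{\mu,p}^*$ already obtained in Proposition~\ref{propspecG}. A first step uses Lemma~\ref{lem-maj2}: for every $\epsilon>0$ and $j$ large enough, $L^{g^{\mu,p,q}}_\lambda$ is trapped up to a factor $2^{j\epsilon}$ between two copies of $M_\lambda:=\max\{c^{\mu,p,q}_{\widetilde\lambda}:\widetilde\lambda\in\mathcal D_j,\,\widetilde\lambda\subset 3\lambda\}$, so up to $o(j)$ factors one may replace $L_\lambda$ by $M_\lambda$ in the sum defining $\zeta^{N,\Psi}_{g^{\mu,p,q},j}(t)$.

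Using \eqref{lala}, Remark~\ref{alphalambda} and the definition \eqref{formulagw} of $c^{\mu,p,q}_\lambda$, I would write
\begin{equation*}
c^{\mu,p,q}_\lambda=w_\lambda\cdot 2^{-\bar j\,\theta_p(\alpha_{\bar\lambda})-(j-\bar j)\alpha_{\min}+o(j)},
\end{equation*}
with uniform $o(j)$ and subexponential $w_\lambda$. Each $\lambda\in\mathcal D_j$ corresponds bijectively to an irreducible cube $\bar\lambda$ of generation $\bar j\le j$, and since $\theta_p(\alpha)\ge\alpha_{\min}$, within $3\lambda$ the maximum defining $M_\lambda$ is essentially attained at the cube whose irreducible ancestor $\bar\lambda^*$ minimizes $\bar j^*\theta_p(\alpha_{\bar\lambda^*})$ among the (at most) $3^d$ cubes of $\mathcal D_j$ in $3\lambda$. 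Grouping pairs $(\lambda,\lambda^*)$ by $j^*:=\bar j^*$ and $\alpha:=\alpha_{\bar\lambda^*}$, using Proposition~\ref{fm}(4) to count $\#\{\bar\lambda\in\mathcal D_{j^*}^0\text{ irreducible with }\alpha_{\bar\lambda}\approx\alpha\}=2^{j^*\tau_\mu^*(\alpha)+o(j^*)}$, and observing that each irreducible cube serves as winning cube for only $O(1)$ values of $\lambda$, one obtains
\begin{equation*}
-\frac{1}{j}\log_2\!\!\sum_{\lambda\in\mathcal D_j,\,\lambda\subset N[0,1]^d}\!\!\!\!M_\lambda^t=t\alpha_{\min}-\!\!\!\!\!\!\!\sup_{u\in[0,1],\,\alpha\in[\alpha_{\min},\alpha_{\max}]}\!\!\!\!\!\!\!\!u\big(\tau_\mu^*(\alpha)-t(\theta_p(\alpha)-\alpha_{\min})\big)+o(1),
\end{equation*}
with $u=j^*/j$.

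A direct calculus exercise evaluates this supremum. Using $\theta_p(\alpha)=\alpha+\tau_\mu^*(\alpha)/p$ and the Legendre identity $(1-t/p)t_p=t$ with $t_p:=pt/(p-t)$: for $t<p$ the optimum is at $u=1$ and $\alpha^*$ with $(\tau_\mu^*)'(\alpha^*)=t_p$, giving supremum $t\alpha_{\min}-\zeta_{\mu,p}(t)$; for $t\ge p$ the optimum is at $u=0$, giving supremum $0$, consistent with $\zeta_{\mu,p}(t)=t\alpha_{\min}$. Since Proposition~\ref{fm}(4) provides two-sided uniform asymptotics, the estimate is a genuine limit. Hence $\lim_{j\to\infty}\zeta^{N,\Psi}_{g^{\mu,p,q},j}(t)=\zeta_{\mu,p}(t)$ for every $t\in\mathbb R$ and $N\in\mathbb N^*$.

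Combining this with $\sigma_{g^{\mu,p,q}}=\zeta_{\mu,p}^*$ and the concavity of $\zeta_{\mu,p}$, Legendre duality yields $\sigma_{g^{\mu,p,q}}=(\zeta^\Psi_{g^{\mu,p,q}})^*$ on the increasing part $[\zeta'_{\mu,p}(+\infty),\zeta'_{\mu,p}(0^+)]$, which is the WMF there. Taking the whole limit (not just a subsequential one) as $\zeta^{(N),\Psi}_{g^{\mu,p,q},w}:=\zeta_{\mu,p}$ yields $\zeta^\Psi_{g^{\mu,p,q},w}=\zeta_{\mu,p}$, and Legendre duality delivers the WWMF over the full domain $[\zeta'_{\mu,p}(+\infty),\zeta'_{\mu,p}(-\infty)]$. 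The hard part will be the $t\le 0$ regime: the pointwise bound $L_\lambda\ge c^{\mu,p,q}_\lambda$ only gives $\sum L_\lambda^t\le\sum(c^{\mu,p,q}_\lambda)^t$, and the matching lower bound on $\sum L_\lambda^t$ via $\sum M_\lambda^t$ requires the combinatorial argument above to be carried out with enough precision to account for the contribution of cubes near dyadic vectors of small irreducible generation, whose anomalously large wavelet coefficients must be counted exactly to recover $\zeta_{\mu,p}$ rather than some larger scaling function.
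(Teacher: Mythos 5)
Your route is essentially the paper's: reduce everything to the full limit $\lim_{j\to+\infty}\zeta^{N,\Psi}_{g^{\mu,p,q},j}=\zeta_{\mu,p}$ (the WMF/WWMF then follow from $\sigma_{g^{\mu,p,q}}=\zeta_{\mu,p}^*$), pass from leaders to coefficients via Lemma~\ref{lem-maj2}, rewrite $c^{\mu,p,q}_\lambda=2^{-\overline{j}\,\theta_p(\alpha_{\overline{\lambda}})-(j-\overline{j})\alpha_{\min}+o(j)}$ using \eqref{lala} and Remark~\ref{alphalambda}, count cubes by irreducible generation and exponent with Proposition~\ref{fm}(4), and finish with Legendre calculus. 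Your two-parameter supremum in $(u,\alpha)$ equals $\max\big(0,t\alpha_{\min}-\zeta_3(t)\big)$ with $\zeta_3(t)=\min_\alpha\{t\theta_p(\alpha)-\tau_\mu^*(\alpha)\}$, which is exactly what the paper obtains by summing a geometric series over the irreducible generation $J$; the case analysis identifying $\min(t\alpha_{\min},\zeta_3(t))$ with $\zeta_{\mu,p}(t)$ (including the boundary cases $t_p>t_\infty$, $t\ge p$, $\alpha_{\min}=\alpha_{\max}$, which you gloss over) is the same calculus as in Proposition~\ref{lemtetap}. Two small slips in the prose: the winning cube in $3\lambda$ minimizes $\overline{j}\,(\theta_p(\alpha_{\overline{\lambda}})-\alpha_{\min})$, not $\overline{j}\,\theta_p(\alpha_{\overline{\lambda}})$ (your final formula has the correct exponent), and the restriction of Proposition~\ref{fm}(4) to \emph{irreducible} cubes requires the neighbor-comparability given by (P$_2$) (each cube has an irreducible neighbor with almost the same exponent, with bounded multiplicity).

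The point you flag for $t\le 0$ is the only genuinely unfinished step, and it is precisely the step the paper compresses into ``the reader can check''; it closes with (P$_2$) rather than with a finer count of cubes near low-generation dyadic points. Indeed, for $t<0$ one has $\sum_\lambda M_\lambda^t\le\sum_\lambda (c^{\mu,p,q}_\lambda)^t$ trivially. For the converse, fix $\delta>0$ and note that if $\lambda'\subset 3\lambda$ with $\lambda,\lambda'\in\mathcal D_j$, then $\mu(\lambda')\le 2^{o(j)}\mu(\lambda)$ by almost doubling, so $c^{\mu,p,q}_{\lambda'}\ge 2^{j\delta}c^{\mu,p,q}_\lambda$ forces $\overline{j'}\,\tau_\mu^*(\alpha_{\overline{\lambda'}})\le \overline{j}\,\tau_\mu^*(\alpha_{\overline{\lambda}})-p\delta j+o(j)$. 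The sum $\sum_\lambda (c^{\mu,p,q}_\lambda)^t$ for $t<0$ is dominated by the group of irreducible cubes of generation $j$ with $\alpha_{\overline\lambda}\approx\alpha^\dagger$, where $\alpha^\dagger$ realizes $\zeta_3(t)$; its cardinality is $2^{j\tau_\mu^*(\alpha^\dagger)+o(j)}$. A member of this group can have a neighbor larger by $2^{j\delta}$ only if that neighbor satisfies $\overline{j'}\,\tau_\mu^*(\alpha_{\overline{\lambda'}})\le j\tau_\mu^*(\alpha^\dagger)-p\delta j+o(j)$, and grouping such cubes by $(\overline{j'},\alpha_{\overline{\lambda'}})$ and counting with Proposition~\ref{fm}(4) shows there are at most $2^{j(\tau_\mu^*(\alpha^\dagger)-p\delta)+o(j)}$ of them, each contaminating at most $3^d$ windows. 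Hence an exponentially overwhelming proportion of the dominant group satisfies $M_\lambda\le 2^{j\delta}c^{\mu,p,q}_\lambda$, which gives $\sum_\lambda M_\lambda^t\ge 2^{jt\delta+o(j)}\sum_\lambda (c^{\mu,p,q}_\lambda)^t$, and letting $\delta\to0$ yields the two-sided comparison at the level of logarithmic rates. With this inserted, your argument is complete and coincides with the paper's proof.
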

The second part of the statement shows that   the convergence  of the sequence $\big (\zeta^{N,\Psi} _{g^{\mu,p,q},j}\big )_{j\ge 1}$ is stronger than what is required for the WWMF  to hold (only the convergence over a subsequence is needed). 
\begin{proof} Suppose that it is established that for all $N\in \N^*$, one has  $\lim_{j\to +\infty}\zeta^{N,\Psi} _{g^{\mu,p,q},j}={\zeta_{\mu,p} }$. In particular $\zeta^{N,\Psi} _{g^{\mu,p,q}}={\zeta_{\mu,p} }$ for all $N\in\mathbb N^*$, so $\zeta^{\Psi} _{g^{\mu,p,q}}={\zeta_{\mu,p} }$. Since it was shown in the previous section that $\sigma_{g^{\mu,p,q}}  = \zeta_{\mu,p}^*$, one concludes  that $g^{\mu,p,q}$ satisfies the WMF.  

\sk 

Now, fix $N\in \N^*$. Let us prove that $\lim_{j\to+\infty}\zeta^{N,\Psi} _{g^{\mu,p,q},j}={\zeta_{\mu,p} }$. 

The $\Z^d$-invariance of $\mu$ and the definition of $g^{\mu,p,q}$ show that if is enough to work on $\zu^d$ and to prove that $\lim_{j\to+ \infty}j^{-1} \log \sum_{\lambda\in \mathcal D^0_j}(L^{g^{\mu,p,q}}_\lambda)^t=\zeta_{\mu,p}(t)$. 

Fix $t\in\R$. Recall Remark~\ref{remg2}(1) and Lemma~\ref{lem-maj2}. The reader  can check that due to these two facts,
$$
\lim_{j\to\infty}j^{-1} \log \frac{\sum_{\lambda\in \mathcal D^0_j}(L^{g^{\mu,p,q}}_\lambda)^t}{\sum_{\lambda\in \mathcal D^0_j}(c^{g^{\mu,p,q}}_\lambda)^t}=0.
$$
Moreover, by definition of the coefficients $c^{g^{\mu,p,q}}_\lambda$, and since  $\log (w_\la)=o(\log(\mu(\lambda)))$ uniformly in $\lambda\in\Lambda_j$ as $j\to+\infty$,  $$
\lim_{j\to\infty}j^{-1} \log \frac{\sum_{\lambda\in \mathcal D^0_j}(c^{g^{\mu,p,q}}_\lambda)^t}{B (j,t)  }=0,\text{ where }
B (j,t) =    \sum_{\lambda\in \mathcal D^0_j} \left( \mu(\lambda) 2^{\ds -\overline{j} \frac{{\tau_\mu^*}(\alpha_{ \lai })}{p}} \right)^t.
$$  
Thus, one must prove that 
\begin{equation}
\label{limbjt}
\lim_{j\to+\infty} j^{-1} \log_2  B(j,t) =\zeta_{\mu,p}(t).
\end{equation}
 When $p=+ \infty$, this was established when $\mu$ is an element of $\mathcal M_d$ in Section~\ref{sectaumu}, but in the general case where $\mu$ is a positive power of such a measure the result holds as well by a direct calculation. 

\sk 

Assume now that $p<+\infty$. Fix $t\in\R^*$, the case $t=0$ being obvious.  

\medskip

Fix $\ep>0$. Using  the same decomposition as that used in the proof of Lemma \ref{gdansB},  
$$
B(j,t)  = \sum_{J=0}^j \sum_{  \lambda\in \mathcal D^0_J\setminus  ( \mathcal D^0_{J-1}\cdot [0,2^{-1}]^d)} 
 \mu( \la\cdot[0,2^{-(j-J)}]^d)^t  2^{-  \frac{t}{p} J {\tau_\mu^*}(\alpha_{  \la})}.
 $$
 Then, from \eqref{lala} we deduce that there exists a positive sequence $(C_j)_{j\ge 1}$ depending on $t$ and $\mu$ such that $\lim_{j\to+ \infty}\frac{\log(C_j)}{j}=0$ and for all $j\ge 1$,
 $$ 2^{(j-J)(\alm+\ep) }C_j ^{-1} \leq \frac{\mu(\la)}{\mu( \la\cdot[0,2^{-(j-J)}]^d)} \leq C_j2^{(j-J)(\alm-\ep)}.$$
 
Observe that when $\la$ and $\la'$ are neighbors in $\La_J$, the two numbers $ \mu( \la)^t   2^{-  \frac{t}{p} J {\tau_\mu^*}(\alpha_{  \la})}$ and $ \mu( \la')^t   2^{-  \frac{t}{p} J {\tau_\mu^*}(\alpha_{  \la'})}$ differ by a factor at most $2^{J\ep}$. This follows from the almost doubling property (P$_2$) of $\mu$ and the continuity of $\tau_\mu^*$.

These considerations prove that  there exists another positive sequence $(\widetilde C_j)_{j\ge 1}$ depending on $t$ and $\mu$ such that $\lim_{j\to+ \infty}\frac{\log(\widetilde C_j)}{j}=0$ and 
\begin{align}
\label{interm33}
\widetilde C_j^{-1}\widetilde B(j,t,\alpha_{\min}, s(t)\ep )\le B(j,t) \le  \widetilde C_j \widetilde B(j,t,\alpha_{\min}, -   s(t)\ep ),
\end{align} 
where   $s(t)$ is the sign of $t$ and 
 \begin{equation}\label{tildeBjbeta}
\widetilde B(j,t,\beta,\gamma )=\sum_{J=0}^j 2^{-(j-J)t(\beta+\gamma)}2^{-J\gamma} \sum_{  \lambda\in \mathcal D^0_J} 
 \mu( \la)^t   2^{-  \frac{t}{p} J {\tau_\mu^*}(\alpha_{  \la})},
\end{equation}

The quantity  $\sum_{  \lambda\in \mathcal D^0_J} 
 \mu( \la)^t   2^{-  \frac{t}{p} J {\tau_\mu^*}(\alpha_{  \la})}$ is now controlled. Using Proposition \ref{fm}(4),  the interval $[\alm , \alpha_{\max}]$ can be split  into $M$ contiguous intervals $I_i=[\alpha_i,\alpha_{i+1}]$, $i=1, ...M$,  of length less than $\ep$ such that for every $i\in \{1,...,M\}$, 
 $$ \Big |\sup_{\alpha \in I_i}\tau^*_\mu(\alpha)-  \frac{\log_2 \#\mathcal{D}_\mu(j,I_i)}{j}\Big |\leq \ep \ \  \mbox{ and } \ \  \sup_{\alpha,\alpha' \in I_i}| \tau^*_\mu(\alpha)-\tau^*_\mu(\alpha') \big| \leq \ep  .$$
Define the mapping $\chi_3:\alpha\in[\alpha_{\min},\alpha_{\max}]\mapsto t\theta_p(\alpha)-\tau^*_\mu(\alpha)$ (in \eqref{defchit}   its restriction $\widetilde \chi_2$ to the interval $[\alpha_{\min},\alpha_p]$ was considered). Without loss of generality,   suppose that  there exists $1\le i_0 \le M$ such that $t\theta_p(\alpha_{i_0})-\tau^*_\mu(\alpha_{i_0} )= \min \{ \chi_3 (\alpha)   :\alpha\in[\alpha_{\min},\alpha_{\max}]\}:=\zeta_3(t)$. 

Also, by Remark~\ref{alphalambda}, there exists $C\ge 1$ such that for all $j\in\N$ and $\lambda\in\mathcal D_{j}^0$, one has $C^{-1}2^{-j(\alpha_\lambda+\ep)}\le \mu(\lambda)\le C  2^{-j(\alpha_\lambda-\ep)}$.

If follows from the previous information that
$$
\sum_{  \lambda\in \mathcal D^0_J} 
 \mu( \la)^t   2^{-  \frac{t}{p} J {\tau_\mu^*}(\alpha_{  \la})}
\begin{cases}
    \le \displaystyle C^{|t|} \sum_{i=1}^M 2^{J (\tau^*_\mu(\alpha_i )+\ep) } 2^{-Jt(\alpha_i -2s(t)\ep)}  2^{-  \frac{t}{p} J {(\tau_\mu^*}(\alpha_{ i}) -s(t)\ep)} \\
 \ge \displaystyle C^{-|t|} \sum_{i=1}^M 2^{J (\tau^*_\mu(\alpha_i )-\ep) } 2^{-Jt(\alpha_i +2s(t)\ep)}  2^{-  \frac{t}{p} J {(\tau_\mu^*}(\alpha_{ i}) +s(t)\ep)}, 
\end{cases}
$$ 
which implies that 
\begin{equation}\label{interm44}
  2^{J  s(t)\ep} \sum_{  \lambda\in \mathcal D^0_J}   \mu( \lambda)^t   2^{-  \frac{t}{p} J {\tau_\mu^*}(\alpha_{ \lambda})} 
  =m_J(t,\ep)\sum_{i=1}^M 2^{-J \chi_3(\alpha_i ) }
\end{equation}
where $|\log (m_J(t,\ep))|\le |t|\log (C)+ (2+2|t|+\frac{|t|}{p})J\ep$. 

Then, incorporating \eqref{interm44} in \eqref{tildeBjbeta}  and using that the infimum of $\chi_3(\alpha_i)$ is reached at $i_0$, i.e. $\chi_3(\alpha_{i_0})=\zeta_3(t)$, one gets
\begin{equation}\label{tildeBjbetabis}
\widetilde B(j,t,\beta, \pm  \ep )=\sum_{J=0}^j  2^{-(j-J)t(\beta+\gamma)}\widetilde m_J(t,\ep)2^{-J \zeta_3(t)},
\end{equation}
where $|\log (\widetilde m_J(t,\ep))|\le \log (M)+|t|\log (C)+ (2+2|t|+\frac{|t|}{p})J\ep$. Incorporating \eqref{tildeBjbetabis} in  \eqref{interm33} then implies 
\begin{equation}\label{Bjfin}
B_j=\widehat m_j(t,\ep)2^{-jt\alpha_{\min}}\sum_{J=0}^j \overline m_J (t,\ep)\widetilde m_J(t,\ep)2^{-J(\zeta_3(t)-t\alpha_{\min})},
\end{equation}
where $\max (|\log (\overline  m_j(t,\ep)), |\log (\widehat m_j(t,\ep))|)\le j|t|\ep+\log (\widetilde C_j)$.  

It follows from \eqref{Bjfin} and the fact that $\ep$ is arbitrary, that:\begin{itemize}
\item
 $\zeta_3 (t)-t\alpha_{\min}\ge 0$ implies $\lim_{j\to+\infty}\frac{\log_2 B(j,t)  }{-j}=t\alpha_{\min}$, 
 \item
  $\zeta_3(t)-t\alpha_{\min}\le 0$ implies $\lim_{j\to+\infty}\frac{\log_2 B(j,t) }{-j}=\zeta_3(t)$. 
  \end{itemize}
  hence, to   prove \eqref{limbjt} and   Proposition \ref{MFgmupq},     the value of  $\zeta_3(t)$ and   the sign of $\zeta_3(t)-t\alpha_{\min}$ must be investigated. According to the previous observations, this will give the desired conclusion. 

\sk 

The two cases  $\alpha_{\min} = \alpha_{\max}$ and $\alpha_{\min} < \alpha_{\max}$ are split.

\sk 

Suppose first that $\alpha_{\min} = \alpha_{\max}$. Then, $\tau_\mu(t)=\alpha_{\min}t-d$ for all $t\in\R$, and 
$$\zeta_{\mu,p}(t)= \begin{cases} (\alpha_{\min}+\frac{d}{p})t-d \ \  \mbox { when } t<p ,\\   \alpha_{\min}t  \ \ \mbox { when } t\ge p.\end{cases}$$

A straightforward computation gives   $\zeta_3(t)= t\alpha_{\min}+\Big (\frac{t}{p}-1\Big)d$.  Thus  when $t<p$, $\zeta_3 (t)=\zeta_{\mu,p}(t)$. Moreover,  $\zeta_3 (t)-t\alpha_{\min}=\Big (\frac{t}{p}-1\Big)\tau_\mu^*(\alpha_{\min})$  is non negative if and only if $t\ge p$, i.e. $\zeta_{\mu,p}(t)=\alpha_{\min}t$; and  when $p> t$ one has $\zeta_{\mu,p}(t)=\zeta_3(t)$, hence the result.

\medskip

Assume next that $[\alpha_{\min},\alpha_{\max}]$ is non trivial. 

When  $t\ge p$, the mapping $\chi_3$ rewrites  $\chi_3(\alpha)=t\alpha+\Big (\frac{t}{p}-1\Big)\tau_\mu^*(\alpha)$ so it is concave, and it reaches  its minimum $\zeta_3 (t)$  either at $\alpha_{\min}$ or  at $\alpha_{\max}$. In either case, $\zeta_3  (t)-t\alpha_{\min}\ge 0$. Moreover, in this range  $\zeta_{\mu,p}(t)=t\alpha_{\min}$, so \eqref{limbjt} holds true.

\medskip

When $t<p$, recall the notations introduced and the fact established in the proof of Proposition~\ref{lemtetap}.

If  $t_p=\frac{pt}{p-t}\le t_\infty=(\tau_\mu^*)'(\alpha_{\min}^+)$,   the convex function $\chi_3 $ reaches its minimum  $\frac{p-t}{p}\tau_\mu(\frac{p}{p-t} t)=\zeta_{\mu,p}(t)$ at $\widetilde \alpha_t$, i.e. $\zeta_3 (t)=\zeta_{\mu,p}(t)$.

If  $t_p>t_\infty$, then   $ \chi_3 $ is increasing and reaches  at $\alpha_{\min}$ its minimum  equal to $ t\alpha_{\min}+\Big (\frac{t}{p}-1\Big)\tau_\mu^*(\alpha_{\min})=\zeta_{\mu,p}(t)$ (here   $\zeta_3(t)=\zeta_{\mu,p}(t)$ as well). In both cases,    $\zeta_3(t)-t\alpha_{\min}\le  \zeta_3(t)-\chi_3(\alpha_{\min})\le 0$ and  \eqref{limbjt} holds true.
 \end{proof}

\subsection{Proof of Theorem~\ref{validity}(2)}
\label{sec-formalism-typical2}

As recalled in the introduction,  it is known  \cite{JaffardPSPUM}  that for any smooth function $f$, one has $
{\sigma}_f\le  \zeta_f^*$. Since it was  shown in Section \ref{sec_typi}  that ${\sigma}_f = (\zeta _{\mu,p})^*$  for typical functions in $\widetilde B^{\mu,p}_q(\R^d) $,   for such   functions one necessarily has $\zeta_f \le  \zeta_{\mu,p}$ by inverse Legendre transform. Simultaneously,   Theorem~\ref{LBzeta} states   that ${\zeta_f}_{|\R_+}={\zeta_f^\Psi}_{|\R_+}\ge  {\zeta_{\mu,p}}_{|\R_+}$, which yields the desired result.

\subsection{Proof of Theorem~\ref{validity}(3)}
\label{sec-formalism-typical3}
It is enough to get part (i). Then part (ii) follows from the fact that the class of residual sets is stable by countable intersection. 

Let $f\in \mathcal{G}$, where $\mathcal{G}$ is the $G_\delta$ set defined by \eqref{Gdelta}, and consider a sequence $(j_n)_{n\ge 1}$ such that $f\in V_{j_n}+\mathcal{N}_{2^{\lceil m\log(m)\rceil}} $ for all $n\ge 1$. Fix $N\in\N^*$. We prove that $\zeta^{\Psi,N}_{f,j_n} $  converges pointwise to $\zeta_{\mu,p}$ as $n\to + \infty$,  which is enough to show that the WWMF  holds  relatively to $\Psi$ over $[\zeta_{\mu,p}'(+\infty), \zeta_{\mu,p}'(-\infty)]$, since it was established that  $\sigma_f=\zeta_{\mu,p}^*$. 

Since a  function  $f\in \mathcal{G}$  necessarily belongs to ${\mathscr C}^{\alpha_{\min} -\ep}(\R^d) $ (for every $\ep>0$), one has $|c_\la^f | \leq 2^{-j(\alpha_{\min}-\ep)}$ for every large $j$ and $\la\in \La_j$ such that $\lambda\subset (N+1)[0,1]^d$.  

Fix $\ep=\alpha_{\min}/2$. By construction, when $j$ is large and $\la\in \La_j$, $c^{{\mu,p,q}}_{\lambda}\ge 2^{-2j\alpha_{\max}}$. Hence,    from the previous fact and  Remark \ref{rk-typical} applied with $K=\lfloor 4\alpha_{\max}/\alpha_{\min}\rfloor +1$, one sees that  when $n$ becomes large,  for all $j\ge j_n$ and $\la\in \La_j$  such that $\lambda\subset (N+1)[0,1]^d$:
\begin{itemize}
\item
 either $j\in \{j_n,\ldots,Kj_n\}$ and the wavelet coefficient $c_\la^f$ of $f$ satisfies $ \frac{1}{2j_n} c^{{\mu,p,q}}_{\lambda} \leq 
  |c^f_{\lambda}|   \leq 2 j_n c^{{\mu,p,q}}_{\lambda}$, 
  ìtem
  or $j> Kj_n$ and $|c^f_{\lambda}|\le c^{{\mu,p,q}}_{\lambda}$.  This implies that for all $\lambda\in \mathcal D_{j_n}$ such that $\lambda\subset N[0,1]^d$, the wavelets leader $L^f_{\lambda}  $ of $f$  satisfies 
  $$\frac{1}{2j_n}\,  L^{g^{\mu,p,q}}_{\lambda}  \leq 
  L^f_{\lambda}   \leq 2j_n\, L^{g^{\mu,p,q}} _{\lambda}.$$
  \end{itemize}
Consequently,  $\lim_{n\to+ \infty}j_n^{-1}\log_2 \Big (\frac{\zeta^{\Psi,N}_{f,j_n}}{ \zeta^{\Psi,N}_{g,j_n}}\Big )=0$, and by Proposition~\ref{MFgmupq},  $\zeta^{\Psi,N}_{f,j_n}$ indeed converges to $\zeta_{\mu,p}$ as $n\to\infty$.  

\medskip

Finally, when $q<+\infty$, to establish that for a typical $f\in\widetilde B^{\mu,p}_q(\R^d)$ one has ${\zeta^{\Psi}_f}_{|\R^*_-}=-\infty$,   consider for all $m\in \N^* $ the set 
$$
\widetilde V_{m}=\left\{f\in \widetilde B^{\mu,p}_q(\R^d):\forall \ m\le j\le m\log (m), \, \forall \lambda\in \Lambda_j, \, c^f_\lambda=0\right \}.
$$
The set $\limsup_{m\to\infty}\widetilde V_{m}$ is dense in $\widetilde B^{\mu,p}_q(\R^d)$ and  \begin{equation*}
\widetilde {\mathcal{G}}=\mathcal{G}\cap \limsup_{m\to\infty} (\widetilde V_m+\mathcal{N}_{2^{\lceil m\log(m)\rceil}} ).
\end{equation*}
is a  dense $G_\delta$-set. 
When $f\in \widetilde{\mathcal{G}}$, there exists  an increasing sequence of integers $(m_n)_{n\in\N}$  such that $f\in \widetilde V_{m_n}+\mathcal \mathcal{N}_{2^{\lceil m_n\log(m_n)\rceil}} $ for all $n\in\N$. It is easily checked that for any $A>0$ and $N\in\N$, for $n$ large enough, if  $\lambda\in\mathcal D_{m_n}$ and $\lambda\subset  N\zud$,  one has $L^f_\lambda\le 2^{-A m_n}$. This implies that for $t<0$,
$$
\sum_{\lambda\in \mathcal D_{m_n},\  \lambda\subset N\zud }\mathbf{1}_{L^f_\lambda>0}(L^f_\lambda)^t\ge \#\{\lambda\in \mathcal D_{m_n},\  \lambda\subset N\zud : L^f_\lambda>0\}\cdot 2^{-A tm_n},
$$ 
hence $\zeta^{\Psi,N}_f(t)\le At$.  Consequently, $A$ being  arbitrary and $t<0$, the desired conclusion holds. 

\section{Proof of Theorem~\ref{exhaustion}}\label{proof of exhaustion}

Part (1) follows from the fact that for $\sigma\in\mathscr{S}_s$ to be the typical singularity spectrum in $\widetilde {B}^{\mu,p}_{q}(\R^d)$ with $p<+\infty$, by Theorem~\ref{main} and Proposition~\ref{lemtetap} it is necessary that $\sigma(H_{\min})=0$, and by Theorem~\ref{main} the function $\sigma^*$ is linear over $[p,+\infty]$ so $\sigma'(H_{\min}^+)\le p$ by Remark~\ref{Linearisation}. 

To prove part (2),   the cases  $p\not\in\overset{\, _\smallfrown}{\partial} \sigma((H_{\min},H_{\max}])$ and  $p\in\overset{\, _\smallfrown}{\partial}\sigma((H_{\min},H_{\max}])$ are separated.

\medskip

{\bf Case $p\not\in\overset{\, _\smallfrown}{\partial}\sigma ((H_{\min},H_{\max}])$:} Define the mapping
$$
A: H\in [H_{\min},H_{\max}]\mapsto H-\frac{\sigma(H)}{p}.
$$ 
It is a continuous increasing bijection onto its image,  that we denote by $I=[\alpha_{\min},\alpha_{\max}]$. For $\alpha\in I$, denote $A^{-1}(\alpha)$ by $H(\alpha)$. It is easily checked that the mapping 
$$
\widetilde\sigma : \alpha\in I\mapsto p(H(\alpha)-\alpha)$$
belongs to $\mathscr{S}_d$ as well, and that if  $\mu\in\mathscr{E}_d$ is chosen such that $\sigma_\mu=\widetilde\sigma$, the study achieved in Section~\ref{sec-description} implies that $\sigma$ is the singularity spectrum of the typical functions in $\widetilde {B}^{\mu,p}_{q}(\R^d)$, for all $q\in[1,+\infty]$ (the function $A$ is then the inverse of the function $\theta_p$ defined in \eqref{thetap}). 

Suppose, moreover, that $\sigma'(H_{\max}^-)=-\infty$ and  $\sigma(H_{\max})>0$. This is equivalent to suppose that  $\widetilde\sigma' (\alpha_{\max}^-)=-p$ and  $\widetilde\sigma(\alpha_{\max})>0$. Again, the study achieved in Section~\ref{sec-description} shows that for any element $\widehat\sigma$ of $\mathscr{S}_d$  whose domain takes the form $[\alpha_{\min},\alpha_{\max}']$ with $\alpha_{\max}'>\alpha_{\max}$ and $\widehat\sigma_{|[\alpha_{\min},\alpha_{\max}]}=\widetilde\sigma$, for any $\nu\in  \mathscr{E}_d$ such that $\sigma_\nu=\widehat\sigma$, $\sigma$ is still the singularity spectrum of the typical functions in $\widetilde {B}^{\nu,p}_{q}(\R^d)$, for all $q\in[1,+\infty]$. Note that there are infinitely many ways to consider such an extension. 
 
\medskip

{\bf Case $p\in\overset{\, _\smallfrown}{\partial}\sigma((H_{\min},H_{\max}])$:} this means that there is a non-trivial maximal subinterval $[H_{\min},\widetilde H_{\min}]$ of $[H_{\min},H_{\max}]$ such that for all $H\in [H_{\min},\widetilde H_{\min}]$ one has $\sigma(H)=p(H-H_{\min})$. 

If $\widetilde H_{\min}=H_{\max}$, one  chooses   $\widetilde\sigma=d\cdot \mathbf{1}_{H_{\max}}$, so that $\mu=(\mathcal {L}^d)^{\frac{H_{\max}}{d}}$ is such that $\sigma_\mu=\widetilde\sigma$ and $\sigma$ is the singularity spectrum of the typical functions in $\widetilde {B}^{\mu,p}_{q}(\R^d)$, for all $q\in[1,+\infty]$. 

If $\widetilde H_{\min}<H_{\max}$,   the same choices as in the case $p\not\in\partial ((H_{\min},H_{\max}])$ are made, except that  $ \sigma$  is replaced by its restriction to $[\widetilde H_{\min},H_{\max}]$, the difference here being now that $\sigma(\widetilde H_{\min})>0$. 

The claim about the validity of the WMF and WWMF  follows from Theorem~\ref{validity}.



\bibliographystyle{plain}
\bibliography{Biblio_BME}

\end{document}